\numberwithin{equation}{section}
\DeclareMathOperator{\supp}{supp}
\begin{document}

\baselineskip 16.1pt \hfuzz=6pt

\theoremstyle{plain}
\newtheorem{theorem}{Theorem}[section]
\newtheorem*{main}{Main Theorem}
\newtheorem{prop}[theorem]{Proposition}
\newtheorem{lemma}[theorem]{Lemma}
\newtheorem{corollary}[theorem]{Corollary}
\newtheorem*{corollaryA}{Corollary A}
\newtheorem*{corollaryB}{Corollary B}
\newtheorem*{corollaryC}{Corollary C}
\newtheorem*{coro}{Corollary}
\newtheorem{example}[theorem]{Example}
\renewcommand{\theequation}
{\thesection.\arabic{equation}}

\theoremstyle{definition}
\newtheorem{definition}[theorem]{Definition}
\newtheorem{remark}[theorem]{Remark}

\allowdisplaybreaks

\newcommand{\XX}{X}
\newcommand{\X}{\widetilde{X}}
\newcommand{\GG}{\mathop G \limits^{    \circ}}
\newcommand{\GGs}{{\mathop G\limits^{\circ}}_{\eta}}
\newcommand{\xoneandxtwo}{X_1\times\mathcal X_2}

\newcommand{\GGp}{{\mathop G\limits^{\circ}}_{\eta_1,\eta_2}}
\newcommand{\GGpp}{{\mathop G\limits^{\circ}}_{\theta_1,\theta_2}}

\newcommand{\e}{\varepsilon}
\newcommand{\bmo}{{\rm BMO}}
\newcommand{\vmo}{{\rm VMO}}
\newcommand{\cmo}{{\rm CMO}}

\newcommand{\Hatomic}{H_{\rm at}}


\newcommand{\tpoissonx}{\frac{\displaystyle r_{1}^{\gamma_{1}}}{\displaystyle (r_{1}+\rho(x,x_{0}))^{1+\gamma_{1}} } }
\newcommand{\tpoissony}{\frac{\displaystyle r_{2}^{\gamma_{2}}}{\displaystyle (r_{2}+\rho(y,y_{0}))^{1+\gamma_{2}} } }
\newcommand{\tsmoothx}{ \big( \frac{\displaystyle \rho(x,x^{'})}{\displaystyle r_{1}+\rho(x,x_{0})} \big)^{\beta_{1}} }
\newcommand{\tsmoothy}{ \big( \frac{\displaystyle \rho(y,y^{'})}{\displaystyle r_{2}+\rho(y,y_{0})} \big)^{\beta_{2}} }
\newcommand{\tconditionx}{\rho(x,x^{'})\leq \frac{1}{\displaystyle
2A}[r_{1}+\rho(x,x_{0})]}
\newcommand{\tconditiony}{\rho(y,y^{'})\leq \frac{1}{\displaystyle
2A}[r_{2}+\rho(y,y_{0})]}



\newcommand{\sumkonektwo}{\sum_{k_{1}=-\infty}^{\infty}\sum_{k_{2}=-\infty}^{\infty}}
\newcommand{\sumtauone}{\sum_{\tau_1\in I_{k_1}}}
\newcommand{\sumtautwo}{\sum_{\tau_2\in I_{k_2}}}
\newcommand{\sumvone}{\sum_{v_1=1}^{N(k_1,\tau_1)}}
\newcommand{\sumvtwo}{\sum_{v_2=1}^{N(k_2,\tau_2)}}

\newcommand{\sumkonektwop}{\sum_{k_{1}^{'}=-\infty}^{\infty}\sum_{k_{2}^{'}=-\infty}^{\infty}}
\newcommand{\sumtauonep}{\sum_{\tau_1^{'}\in I_{k_1}^{'}}}
\newcommand{\sumtautwop}{\sum_{\tau_2^{'}\in I_{k_2}^{'}}}
\newcommand{\sumvonep}{\sum_{v_1^{'}=1}^{N(k_1^{'},\tau_1^{'})}}
\newcommand{\sumvtwop}{\sum_{v_2^{'}=1}^{N(k_2^{'},\tau_2^{'})}}



\newcommand{\DD}{D_{k_1}D_{k_2}}
\newcommand{\DDqta}{\tilde{D}_{k_1}\tilde{D}_{k_2}}
\newcommand{\DDbar}{\bar{D}_{k_1}\bar{D}_{k_2}}
\newcommand{\EE}{E_{k_1}E_{k_2}}
\newcommand{\EEqta}{\tilde{E}_{k_1}\tilde{E}_{k_2}}
\newcommand{\EEbar}{\bar{E}_{k_1}\bar{E}_{k_2}}

\newcommand{\DDp}{D_{k_1}'D_{k_2}'}
\newcommand{\DDqtap}{\tilde{D}_{k_1}'\tilde{D}_{k_2}'}
\newcommand{\DDbarp}{\bar{D}_{k_1}'\bar{D}_{k_2}'}
\newcommand{\EEp}{E_{k_1}'E_{k_2}'}
\newcommand{\EEqtap}{\tilde{E}_{k_1}'\tilde{E}_{k_2}'}
\newcommand{\EEbarp}{\bar{E}_{k_1}'\bar{E}_{k_2}'}



\newcommand{\Qone}{Q_{\tau_1}^{k_1,v_1}}
\newcommand{\Qtwo}{Q_{\tau_2}^{k_2,v_2}}
\newcommand{\Qonep}{Q_{\tau_1^{'}}^{k_1^{'},v_1^{'}}}
\newcommand{\Qtwop}{Q_{\tau_2^{'}}^{k_2^{'},v_2^{'}}}

\newcommand{\sumktauv}{\sum_{k_1,k_2}\sum_{\tau_1,\tau_2}\sum_{v_1,v_2}}
\newcommand{\sumktauvp}{\sum_{k_1^{'},k_2^{'}}\sum_{\tau_1^{'},\tau_2^{'}}\sum_{v_1^{'},v_2^{'}}}

\newcommand{\SR}{\sup_{x_1\in Q_{\tau_1}^{k_1,v_1},x_2\in Q_{\tau_2}^{k_2,v_2} }\big|D_{k_1}D_{k_2}(f)(x_1,x_2) \big|^2}
\newcommand{\TR}{\inf_{x_1\in Q_{\tau_1}^{k_1,v_1},x_2\in Q_{\tau_2}^{k_2,v_2} }\big|E_{k_1}E_{k_2}(f)(x_1,x_2) \big|^2}

\newcommand{\kaiR}{\displaystyle\chi_{ \{Q_{\tau_1}^{k_1,v_1}\times Q_{\tau_2}^{k_2,v_2}\subset
\Omega\} }(k_1,k_2,\tau_1,\tau_2,v_1,v_2)}
\newcommand{\kaiRp}{\displaystyle\chi_{ \{Q_{\tau_1^{'}}^{k_1^{'},v_1^{'}}\times Q_{\tau_2^{'}}^{k_2^{'},v_2^{'}}\subset
\Omega\} }(k_1^{'},k_2^{'},\tau_1^{'},\tau_2^{'},v_1^{'},v_2^{'})}

\newcommand{\muR}{\mu(Q_{\tau_1}^{k_1,v_1})\mu(Q_{\tau_2}^{k_2,v_2})}
\newcommand{\muRp}{\mu(Q_{\tau_1^{'}}^{k_1^{'},v_1^{'}})\mu(Q_{\tau_2^{'}}^{k_2^{'},v_2^{'}})}
\newcommand{\muQone}{\mu(Q_{\tau_1}^{k_1,v_1})}
\newcommand{\muQtwo}{\mu(Q_{\tau_2}^{k_2,v_2})}
\newcommand{\muQonep}{\mu(Q_{\tau_1^{'}}^{k_1^{'},v_1^{'}})}
\newcommand{\muQtwop}{\mu(Q_{\tau_2^{'}}^{k_2^{'},v_2^{'}})}
\newcommand{\omegareverse}{\frac{\displaystyle 1}{\displaystyle \mu(\Omega)}}

\newcommand{\vone}{\mu(Q_1)\vee\mu(Q_1^{'})}
\newcommand{\vtwo}{\mu(Q_2)\vee\mu(Q_2^{'})}



\newcommand{\dxone}{d\mu(x_{1})}
\newcommand{\dxtwo}{d\mu(x_{2})}
\newcommand{\dyone}{d\mu(y_{1})}
\newcommand{\dytwo}{d\mu(y_{2})}
\newcommand{\dzone}{d\mu(z_{1})}
\newcommand{\dztwo}{d\mu(z_{2})}
\newcommand{\dyonep}{d\mu(y_{1}^{'})}
\newcommand{\dytwop}{d\mu(y_{2}^{'})}



\newcommand{\hardy}{H^{1}(\XX\times\XX)}

\newcommand{\poissonone}{\frac{\displaystyle 2^{-k_1\epsilon}}{\displaystyle (2^{-k_1}+\rho(x_1,y_1))^{1+\epsilon}}}
\newcommand{\poissontwo}{\frac{\displaystyle 2^{-k_2\epsilon}}{\displaystyle (2^{-k_2}+\rho(x_2,y_2))^{1+\epsilon}}}

\newcommand{\smoothxone}{\bigg(\frac{\displaystyle \rho(x_1,x_1^{'})}{\displaystyle (2^{-k_1}+\rho(x_1,y_1))}\bigg)^{\epsilon}}
\newcommand{\smoothxtwo}{\bigg(\frac{\displaystyle \rho(x_2,x_2^{'})}{\displaystyle (2^{-k_2}+\rho(x_2,y_2))}\bigg)^{\epsilon}}
\newcommand{\smoothyone}{\bigg(\frac{\displaystyle \rho(y_1,y_1^{'})}{\displaystyle (2^{-k_1}+\rho(x_1,y_1))}\bigg)^{\epsilon}}
\newcommand{\smoothytwo}{\bigg(\frac{\displaystyle \rho(y_2,y_2^{'})}{\displaystyle (2^{-k_2}+\rho(x_2,y_2))}\bigg)^{\epsilon}}

\newcommand{\conditionxone}{\rho(x_1,x_1^{'})\leq \frac{1}{\displaystyle 2A}(2^{-k_1}+\rho(x_1,y_1))}
\newcommand{\conditionxtwo}{\rho(x_2,x_2^{'})\leq \frac{1}{\displaystyle 2A}(2^{-k_1}+\rho(x_2,y_2))}
\newcommand{\conditionyone}{\rho(y_1,y_1^{'})\leq \frac{1}{\displaystyle 2A}(2^{-k_2}+\rho(x_1,y_1))}
\newcommand{\conditionytwo}{\rho(y_2,y_2^{'})\leq \frac{1}{\displaystyle 2A}(2^{-k_2}+\rho(x_2,y_2))}

\newcommand{\xy}{(x_1,x_2,y_1,y_2)}
\newcommand{\xyp}{(x_1,x_2,y_{\tau_1^{'}}^{k_1^{'},v_1^{'}},y_{\tau_2^{'}}^{k_2^{'},v_2^{'}})}



\newcommand{\orth}{2^{-|k_1-k_1^{'}|\epsilon^{'}}2^{-|k_2-k_2^{'}|\epsilon^{'}}}
\newcommand{\orthpone}{\frac{\displaystyle 2^{-(k_1\wedge k_1^{'})\epsilon}}{\displaystyle (2^{-(k_1\wedge k_1^{'})}+\rho(x_1,y_1))^{1+\epsilon}}}
\newcommand{\orthptwo}{\frac{\displaystyle 2^{-(k_2\wedge k_2^{'})\epsilon}}{\displaystyle (2^{-(k_2\wedge k_2^{'})}+\rho(x_2,y_2))^{1+\epsilon}}}
\newcommand{\orthR}{
\bigg( \frac{\displaystyle \mu(\Qone)}{\displaystyle\mu(\Qonep)}
\wedge \frac{\displaystyle \mu(\Qonep)}{\displaystyle\mu(\Qone)}
\bigg)^{\epsilon^{'}} \bigg( \frac{\displaystyle
\mu(\Qtwo)}{\displaystyle\mu(\Qtwop)} \wedge \frac{\displaystyle
\mu(\Qtwop)}{\displaystyle\mu(\Qtwo)} \bigg)^{\epsilon^{'}} }
\newcommand{\orthQone}{\bigg( \frac{\displaystyle \mu(\Qone)}{\displaystyle\mu(\Qonep)} \wedge \frac{\displaystyle \mu(\Qonep)}{\displaystyle\mu(\Qone)} \bigg)^{\epsilon^{'}}}
\newcommand{\orthQtwo}{\bigg( \frac{\displaystyle \mu(\Qtwo)}{\displaystyle\mu(\Qtwop)} \wedge \frac{\displaystyle \mu(\Qtwop)}{\displaystyle\mu(\Qtwo)} \bigg)^{\epsilon^{'}}}
\newcommand{\orthponeR}{\frac{\displaystyle (\muQone\vee\muQonep)^{\epsilon}}{\displaystyle (\muQone\vee\muQonep+dist(\Qone,\Qonep))^{1+\epsilon}}}
\newcommand{\orthptwoR}{\frac{\displaystyle (\muQtwo\vee\muQtwop)^{\epsilon}}{\displaystyle (\muQtwo\vee\muQtwop+dist(\Qtwo,\Qtwop))^{1+\epsilon}}}
\newcommand{\orthRfinal}{
\bigg(\frac{\displaystyle\mu(\Qone)}{\displaystyle\mu(\Qonep)}\wedge\frac{\displaystyle\mu(\Qonep)}{\displaystyle\mu(\Qone)}\bigg)^{1+\epsilon^{'}}
\bigg(\frac{\displaystyle\mu(\Qtwo)}{\displaystyle\mu(\Qtwop)}\wedge\frac{\displaystyle\mu(\Qtwop)}{\displaystyle\mu(\Qtwo)}\bigg)^{1+\epsilon^{'}}
}
\newcommand{\orthQonefinal}{\bigg( \frac{\displaystyle \mu(\Qone)}{\displaystyle\mu(\Qonep)} \wedge \frac{\displaystyle \mu(\Qonep)}{\displaystyle\mu(\Qone)} \bigg)^{1+\epsilon^{'}}}
\newcommand{\orthQtwofinal}{\bigg( \frac{\displaystyle \mu(\Qtwo)}{\displaystyle\mu(\Qtwop)} \wedge \frac{\displaystyle \mu(\Qtwop)}{\displaystyle\mu(\Qtwo)} \bigg)^{1+\epsilon^{'}}}
\newcommand{\orthponeRfinal}{\frac{\displaystyle (\muQone\vee\muQonep)^{1+\epsilon}}{\displaystyle (\muQone\vee\muQonep+dist(\Qone,\Qonep))^{1+\epsilon}}}
\newcommand{\orthptwoRfinal}{\frac{\displaystyle (\muQtwo\vee\muQtwop)^{1+\epsilon}}{\displaystyle (\muQtwo\vee\muQtwop+dist(\Qtwo,\Qtwop))^{1+\epsilon}}}



\newcommand{\yone}{y_{\tau_1}^{k_1}}
\newcommand{\ytwo}{y_{\tau_2}^{k_2}}

\newcommand{\qone}{Q_{\tau_1}^{k_1}}
\newcommand{\qtwo}{Q_{\tau_2}^{k_2}}

\newcommand{\mR}{m_{\qone\times\qtwo}}
\newcommand{\muqone}{\mu(\qone)}
\newcommand{\muqtwo}{\mu(\qtwo)}

\newcommand{\pone}{\frac{\displaystyle 2^{-k_1\epsilon}}{\displaystyle (2^{-k_1}+\rho(x_1,y_{\tau_1}^{k_1}))^{1+\epsilon}}}
\newcommand{\ptwo}{\frac{\displaystyle 2^{-k_2\epsilon}}{\displaystyle (2^{-k_2}+\rho(x_2,y_{\tau_2}^{k_2}))^{1+\epsilon}}}

\newcommand{\ponep}{\frac{\displaystyle 2^{-k_1\epsilon}}{\displaystyle (2^{-k_1}+\rho(x_1^{'},y_{\tau_1}^{k_1}))^{1+\epsilon}}}
\newcommand{\ptwop}{\frac{\displaystyle 2^{-k_2\epsilon}}{\displaystyle (2^{-k_2}+\rho(x_2^{'},y_{\tau_2}^{k_2}))^{1+\epsilon}}}

\newcommand{\poney}{\frac{\displaystyle 2^{-k_1\epsilon}}{\displaystyle (2^{-k_1}+\rho(y_1,y_{\tau_1}^{k_1}))^{1+\epsilon}}}
\newcommand{\ptwoy}{\frac{\displaystyle 2^{-k_2\epsilon}}{\displaystyle (2^{-k_2}+\rho(y_2,y_{\tau_2}^{k_2}))^{1+\epsilon}}}

\newcommand{\sone}{ \bigg(\frac{\displaystyle \rho(x_1,y_1)}{\displaystyle 2^{-k_1}+\rho(x_1,y_{\tau_1}^{k_1})} \bigg)^{\epsilon^{'}} }
\newcommand{\stwo}{ \bigg(\frac{\displaystyle \rho(x_2,y_2)}{\displaystyle 2^{-k_2}+\rho(x_2,y_{\tau_2}^{k_2})} \bigg)^{\epsilon^{'}} }

\newcommand{\pdone}{\frac{\displaystyle 2^{-j_1\epsilon}}{\displaystyle (2^{-j_1}+\rho(x_1,y_1))^{1+\epsilon}}}
\newcommand{\pdtwo}{\frac{\displaystyle 2^{-j_2\epsilon}}{\displaystyle (2^{-j_2}+\rho(x_2,y_2))^{1+\epsilon}}}

\newcommand{\pdonep}{\frac{\displaystyle 2^{-j_1\epsilon}}{\displaystyle (2^{-j_1}+\rho(x_1,\yone))^{1+\epsilon}}}
\newcommand{\pdtwop}{\frac{\displaystyle 2^{-j_2\epsilon}}{\displaystyle (2^{-j_2}+\rho(x_2,\ytwo))^{1+\epsilon}}}


\pagestyle{myheadings}\markboth{\rm\small Yongsheng Han, Ji Li,
M.~Cristina Pereyra and Lesley A.~Ward}{\rm\small Equivalent
definitions of product $\bmo$}

\title[Atomic decomposition of product Hardy spaces $H^p(\widetilde{X})$]{Atomic decomposition of product Hardy spaces
 via wavelet bases on spaces of homogeneous type}

\author{Yongsheng Han, Ji Li, M.~Cristina Pereyra and Lesley A.~Ward}

\thanks{J.L.\ and L.A.W.\ are supported by the
Australian Research Council under Grant No.~ARC-DP160100153
which also supported M.C.P.'s travel. M.C.P.\ is supported by
the National Science Foundation under Grant
No.~NSF-DMS1800587.}

\subjclass[2010]{Primary 42B35; Secondary 43A85, 30L99, 42B30,
42C40}

\keywords{Product Hardy spaces, spaces of homogeneous type,
orthonormal wavelet basis, test functions, distributions,
$(p,q)$-atoms, product atomic Hardy spaces.}

\begin{abstract}
    We provide an atomic decomposition of the product Hardy
    spaces~$H^p(\widetilde{X})$ which were recently developed
    by Han, Li, and Ward in the setting of product spaces of
    homogeneous type $\widetilde{X} = X_1 \times  X_2$. Here
    each factor $(X_i,d_i,\mu_i)$, for $i = 1$, $2$, is a space
    of homogeneous type in the sense of Coifman and Weiss.
    These Hardy spaces make use of the orthogonal wavelet bases
    of Auscher and Hyt\"onen and their underlying reference dyadic grids.
     However, no additional
    assumptions on the quasi-metric or on the doubling measure
    for each factor space are made. To carry out this program,
    we introduce product $(p,q)$-atoms on~$\X$ and product
    atomic Hardy spaces $H^{p,q}_{{\rm at}}(\widetilde{X})$. As
    consequences of the atomic decomposition of~$H^p(\X)$, we
    show that for all $q > 1$ the product atomic Hardy spaces
    coincide with the product Hardy spaces,
    and we show that the product Hardy spaces are independent of the
    particular choices of both the wavelet bases and the
    reference dyadic grids. Likewise, the product Carleson
    measure spaces~${\rm CMO}^p(\widetilde{X})$, the bounded
    mean oscillation space~${\rm BMO}(\widetilde{X})$, and the
    vanishing mean oscillation space~${\rm
    VMO}(\widetilde{X})$, as defined by Han, Li, and Ward, are
    also independent of the particular choices of both wavelets
    and reference dyadic grids.
\end{abstract}

\maketitle

\vspace{-1cm}

\tableofcontents

\section{Introduction}\label{sec:introduction}
\setcounter{equation}{0}

The product Hardy spaces $H^p(\widetilde{X})$ were recently
developed in~\cite{HLW} in the setting of product spaces of
homogeneous type $\widetilde{X} = X_1 \times X_2$, where each
factor $(X_i,d_i,\mu_i)$, $i = 1$, 2, is a space of homogeneous
type in the sense of Coifman and Weiss. In this paper we
provide an atomic decomposition of these product Hardy spaces
$H^p(\widetilde{X})$.

Spaces of homogeneous type were introduced by Coifman and Weiss
in the early 1970s~\cite{CW}. We say that $(\XX,d,\mu)$ is a
\emph{space of homogeneous type in the sense of Coifman and
Weiss} if $X$ is a set, $d$ is a quasi-metric on~$\XX$, and
$\mu$ is a nonzero {Borel}-regular measure on~$X$ satisfying
the doubling condition.  A \emph{quasi-metric}~$d$ on a
set~$\XX$ is a function
$d:\XX\times\XX\longrightarrow[0,\infty)$ satisfying~(i)
$d(x,y) = d(y,x) \geq 0$ for all $x$, $y\in\XX$; (ii) $d(x,y) =
0$ if and only if $x = y$; and (iii) the \emph{quasi-triangle
inequality}: there is a constant $A_0\in
[1,\infty)$ such that for all $x$, $y$, $z\in\XX$, 
\begin{equation}\label{eqn:quasitriangleineq}
    d(x,y)
    \leq A_0 \big[d(x,z) + d(z,y)\big].
\end{equation}
The quasi-metric ball is defined by $B(x,r) := \{y\in X: d(x,y)
< r\}$ for $x\in X$ and $r > 0$. Note that the quasi-metric, in
contrast to a metric, may not be H\"older regular and
quasi-metric balls may not be
open\footnote{\textcolor{black}{Any quasi-metric defines a
topology, for which the balls $B(x, r)$  form a neighbourhood base of each $x\in X$, in particular, a set $A$ is open
if for every $x\in A$ there is $r>0$ such that $B(x,r)\subset A$, see \cite[Section~4,  Theorem~4.5]{Wi}. However
when $A_0 > 1$ the balls need not be open.  The measure $\mu$
is assumed to be defined on a $\sigma$-algebra that contains
all balls $B(x,r)$ and all Borel sets induced by this
topology.}}. We say that a nonzero  measure $\mu$ satisfies the
\emph{doubling condition} if there is a constant~$C_\mu \geq 1$
such that for all $x\in\XX$ and $r > 0$,
\begin{equation}\label{eqn:doubling condition}
   0
   < \mu \big (B(x,2r) \big )
   \leq C_\mu \,  \mu \big (B(x,r) \big )
   < \infty.
\end{equation}
We say a measure $\mu$ is \emph{Borel regular} if for each
measurable set $A$ there is a Borel set $B$ such that $B\subset
A$ and $\mu(B) = \mu(A)$. This Borel regularity ensures that
the Lebesgue Differentiation Theorem holds on~$(X,d,\mu)$ and
that step functions are dense in $L^2(X,\mu)$~\cite{AM,AH2}.

We point out that the doubling condition~\eqref{eqn:doubling
condition} implies that there exist positive constants~$C$
and~$\omega$ (known as an \emph{upper dimension} of~$X$) such
that for all $x\in X$, $\lambda\geq 1$ and $r > 0$,
\begin{equation}\label{eqn:upper dimension}
    \mu \big (B(x, \lambda r) \big )
    \leq C\lambda^{\omega} \mu \big (B(x,r) \big ).
\end{equation}
We can express $C$ and $\omega$ in condition~\eqref{eqn:upper
dimension} in terms of the doubling constant~$C_{\mu}$ of the
measure. In fact we can and will choose $C = C_{\mu}\geq 1$ and
$\omega = \log_2{C_{\mu}}$.

Throughout this paper we assume that $\mu(X) = \infty$. Given a
space of homogeneous type $(X,d,\mu)$, the quasi-triangle
constant $A_0$, the doubling constant $C_{\mu}$, and an upper
dimension~$\omega$ are referred to as the \emph{geometric
constants} of the space $X$.

In the classical theory, the Hardy spaces $H^p$ can be defined
via maximal functions, via approximations to the identity and
Littlewood-Paley theory, via square functions, or via atomic
decompositions, and all these definitions coincide. When moving
to more exotic settings one can start with any of the
equivalent definitions and then hope to show that they all
define the same space.
In the one-parameter setting of spaces of homogeneous type this
program was carried out, but additional conditions were
required on the quasi-metric or on the measure. The first
author was involved in many of these developments. For more
details see Section~\ref{sec:history}.

A natural question arises: can one develop the theory of the
spaces $H^p$ and $\bmo$ on spaces of homogeneous type in the
sense of Coifman and Weiss, with only the original
quasi-metric~$d$ and a Borel-regular doubling measure~$\mu$?

This question was posed, and answered in the affirmative, in
\cite{HLW}, in both the one-parameter and product settings. The
key idea used in \cite{HLW} was to employ the remarkable
orthonormal wavelet basis constructed by Auscher and Hyt\"onen
for spaces of homogeneous type~\cite{AH} to define appropriate
product square functions and Hardy spaces. Note that it is in
the construction of the wavelets that the Borel regularity of
the measure is required~\cite{AH2}. In the current paper we
provide an atomic decomposition in the product setting and, as
a consequence of our main result, we show that the
$H^p(\widetilde{X})$ spaces defined via a wavelet basis in
\cite{HLW} are independent {not only of the chosen wavelet
basis, but also of the choice of underlying reference dyadic
grids.}

In the one-parameter setting the Hardy space $H^p(X)$ was
built in~\cite{HLW} using the Hyt\"onen-Auscher wavelets
(themselves built upon a fixed reference dyadic grid). Using
the Plancherel-P\'olya inequalities proved in~\cite{HLW} (see
also~\cite{H2}), {one can observe}
that the spaces $H^p(X)$ are well defined, meaning they are
independent of the choice of wavelet basis (built upon the same
reference dyadic grid).
Later, in \cite{HHL}, the atomic and molecular
characterizations of the one-parameter Hardy space were
studied; it was shown that $H^p(X)$ is equivalent to $H^p_{{\rm
at}}(X)$, the Coifman-Weiss atomic Hardy space, and therefore
the definition of~$H^p(X)$ is independent of the choice of the
wavelets and of the underlying reference dyadic grid.   See also the work in \cite{FY}
characterizing the atomic Hardy space via wavelet basis.
More recently, in \cite{HHLLYY}, a complete real-variable theory 
of one-parameter Hardy spaces on spaces of homogeneous type was provided, 
in particular proving the  radial maximal characterization of $H^p_{{\rm
at}}(X)$ and   completely answering a question  by Coifman and Weiss \cite[p.642]{CW2}.

We now turn to the product case. As in the one-parameter
case, the product Plancherel-P\'olya inequalities proved
in~\cite{HLW} would imply  that $H^p(\widetilde{X})$ is
independent of the choice of wavelet basis (built upon fixed
reference dyadic grids on each component of
the product~$\widetilde{X}$ of spaces of homogenenous type). 
In this paper, instead we introduce the product $(p,q)$-atoms
for $0<p\leq 1<q$ and corresponding atomic product Hardy spaces
$H^{p,q}_{{\rm at}}(\widetilde{X})$, whose definition is
independent of any wavelet bases and also of the reference
dyadic grids.
As a direct application, we deduce that the product Hardy
spaces $H^p(\widetilde{X})$ are independent of the choices of
wavelets and of underlying reference dyadic grids. This result
is consistent with the product theory on the Euclidean setting
$\mathbb{R}^n\times \mathbb{R}^m$, and parallel to the
one-parameter theory on spaces of homogenenous type $(X,d,\mu)$
as presented in \cite{HHL}.

Important features in the one-parameter case, treated
in~\cite{HHL}, are that $H^p(X)\cap L^2(X)$ is dense in
$H^p(X)$ and functions in  $H^p(X)\cap L^2(X)$ have a nice
atomic decomposition  which converges both in $L^2(X)$ and
$H^p(X)$. These features allow a linear operator bounded on
$L^2(X)$ to pass through the sum in an atomic decomposition,
hence reducing the proof of the boundedness of the operator to
verifying uniform boundedness on atoms. See the discussion
in~\cite[p.3431--3432]{HHL} regarding  applications of these
features to prove  $T(1)$ theorems.   Similar features hold in
the product case, as shown in~\cite{HLLin}. In this paper, we
will show that for all $q > 1$ and all $p$ with $0<p\leq 1$,
not only is $H^p(\X)\cap L^q(\X)$ dense in $H^p(\X)$, but also
$H^p(\X)\cap L^q(\X)$ is a subset of $L^p(\widetilde{\XX})$,
with the $L^p$-(semi)norm controlled by the $H^p$-(semi)norm.
These facts will be an important cornerstone in proving the
atomic decomposition for~$H^p(\widetilde{X})$.

The product Carleson measure space $\cmo^p (\widetilde{\XX})$
was introduced in \cite{HLW}. It was shown in the same paper
that $\cmo^p (\widetilde{\XX})$ is the dual of
$H^p(\widetilde{\XX})$, that the space of bounded mean
oscillation  $\bmo (\widetilde{\XX})$ coincides with
$\cmo^1(\widetilde{\XX})$ and hence is the dual of
$H^1(\widetilde{X})$, and that the vanishing mean oscillation
space~$\vmo (\widetilde{\XX})$ is the predual of
$H^1(\widetilde{\XX})$. As a consequence of our result for the
product Hardy spaces, we see that the spaces $\cmo^p
(\widetilde{\XX})$, $\bmo(\widetilde{X})$, and $\vmo
(\widetilde{\XX})$ are also independent not only of the chosen
wavelet basis, but also of the  chosen  reference dyadic grids.
{Note that in the one-parameter case it was shown in~
\cite[Proposition 4.3]{HHL} that $\cmo^p(X)$ coincides with the
Campanato space $\mathcal{C}_{\frac1p-1}(X)$, which is the dual
of Coifman-Weiss atomic Hardy space $H^p_{{\rm at}}(X)$, and is
a space defined independently of any wavelets and their
reference dyadic grids.

When $\widetilde{\XX} = X_1\times \dots \times X_n$, the spaces
$H^p(\widetilde{X})$ constructed in~\cite{HLW} are defined for
all $p > \max\big\{\omega_i/(\omega_i + \eta_i) : i =
1, 2, \dots, n\big\}$.
Here $\eta_i\in (0,1)$ is the H\"older regularity exponent
of the Auscher-Hyt\"onen wavelets, defined on the spaces of
homogeneous type $(X_i,d_i,\mu_i)$, that are used in the
construction of $H^p(\widetilde{\XX})$, and $\omega_i > 0$ is
an upper dimension of $X_i$, for~$i = 1, 2, \dots , n$.

Our main result is the following.

\begin{main}
    Let $\widetilde{X} = X_1\times X_2$, where for $i = 1$,
    $2$, $(X_i,d_i,\mu_i)$ are spaces of homogeneous type in
    the sense of Coifman and Weiss as described above, with
    quasi-metrics~$d_i$ and Borel-regular doubling
    measures~$\mu_i$. Let ${\omega}_i$ be an upper dimension
    for~$X_i$, and let $\eta_i$ be the exponent of regularity
    of the Auscher-Hyt\"onen wavelets used in the construction
    of the Hardy space~$H^p(\widetilde{\XX})$. Suppose that
    $\max\big\{\omega_i/(\omega_i + \eta_i) : i = 1, 2\big\}
    < p \leq 1 < q < \infty$, and {$f\in L^q(\widetilde{\XX})$}.
    Then $f\in H^p( \widetilde{\XX} )$ if and only if $f$ has
    an atomic decomposition, that is,
    \begin{eqnarray}\label{atom decom_copy}
        f=\sum_{j=-\infty}^\infty\lambda_ja_j,
    \end{eqnarray}
    where the $a_j$ are $(p, q)$-atoms, $\sum_{j =
    -\infty}^{\infty}|\lambda_j|^p<\infty,$
    and {the series converges in 
    $L^q(\widetilde{\XX})$}.  Moreover, {the series also
    converges in $H^p(\widetilde{\XX})$ and}
    \begin{eqnarray*}
        \|f\|_{H^p( \widetilde{\XX})}
        \sim \inf \Big\{ \Big (\sum_{j = -\infty}^{\infty}|\lambda_j |^p \Big )^{\frac{1}{p}}:
            \,f = \sum_{j = -\infty}^{\infty}\lambda_ja_j\Big\},
    \end{eqnarray*}
    where the infimum is taken over all decompositions as in
    \eqref{atom decom_copy}. The implicit constants are
    independent of the $L^q( \widetilde{\XX})$-norm and the
    $H^p( \widetilde{\XX} )$-{\rm (}semi{\rm )}norm  of $f$,
    they are only dependent on the geometric constants of the
    spaces $X_i$ for $i = 1$, 2.
\end{main}

For simplicity we work in the case of two factors:
$\widetilde{X} = X_1 \times X_2$. However, we expect our
results and proofs to go through for arbitrarily many factors;
in particular one would need a $n$-parameter version of
Journ\'e's Lemma on spaces of homogeneous type, which would
generalise both Pipher's $n$-parameter Euclidean
version~\cite{P} and Han, Li and Lin's two-parameter version on
spaces of homogeneous type~\cite{HLLin}.

We deduce three corollaries from the Main Theorem. First, the
atomic product spaces~$H^{p,q}_{{\rm at}}$ we define coincide,
for all $q > 1$, with the product Hardy spaces $H^p$ defined
in~\cite{HLW}.

\begin{corollaryA}
    \label{cor:A_Hp_atomic_is_Hp}
    For all $q$ with $1<q<\infty$ and $p$ with
    $\max\big\{\omega_i/(\omega_i + \eta_i): i = 1, 2\big\} < p\leq 1$, we have
    $$H^{p,q}_{{\rm at}}(X_1\times X_2) = H^p(X_1\times X_2).$$
\end{corollaryA}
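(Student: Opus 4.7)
My plan is to deduce Corollary~A from the Main Theorem by establishing two inclusions of function spaces with equivalent (semi)norms. The Main Theorem already addresses the case $f\in L^q(\widetilde X)$, so the work here is to remove that hypothesis, which I will do via a density argument in one direction and a uniform-boundedness-of-atoms argument in the other.

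For the inclusion $H^{p,q}_{\rm at}(\widetilde X)\subseteq H^p(\widetilde X)$, I would first verify that every product $(p,q)$-atom $a$ belongs to $H^p(\widetilde X)$ with $\|a\|_{H^p}\leq C$ for a constant $C$ depending only on the geometric constants. Since each atom lies in $L^q\subseteq L^2$ by construction, the Main Theorem applied to the trivial decomposition $a = 1\cdot a$ immediately gives the uniform estimate. Given $f\in H^{p,q}_{\rm at}$ with decomposition $f = \sum_j \lambda_j a_j$ and $\sum_j |\lambda_j|^p < \infty$, the $p$-subadditivity of the $H^p$-(semi)norm for $0 < p \leq 1$ yields
\begin{equation*}
    \Big\|\sum_{|j|\leq N} \lambda_j a_j - \sum_{|j|\leq M} \lambda_j a_j\Big\|_{H^p}^p
    \leq C^p \sum_{M < |j|\leq N} |\lambda_j|^p,
\end{equation*}
so the partial sums are Cauchy in $H^p$. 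Completeness of $H^p(\widetilde X)$ (proved in \cite{HLW}) and uniqueness of the $L^q$-limit identify the $H^p$-limit with $f$.

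For the reverse inclusion $H^p(\widetilde X)\subseteq H^{p,q}_{\rm at}(\widetilde X)$, I would exploit the key density statement flagged in the introduction: $H^p(\widetilde X)\cap L^q(\widetilde X)$ is dense in $H^p(\widetilde X)$, and the $L^p$-(semi)norm on this intersection is controlled by the $H^p$-(semi)norm. Given $f\in H^p$, select a sequence $\{f_n\}\subset H^p\cap L^q$ with $f_n\to f$ in $H^p$, passing to a subsequence if necessary so that $\|f_{n+1}-f_n\|_{H^p}^p \leq 2^{-n}$. Applying the Main Theorem to each difference $f_{n+1}-f_n$ and to $f_1$ produces atomic decompositions with coefficients in $\ell^p$ whose $\ell^p$-quasi-norms are summable. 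Concatenating these decompositions yields a single atomic decomposition of $f$ whose coefficient sequence satisfies $\sum_j |\lambda_j|^p \lesssim \|f\|_{H^p}^p$, and the series converges in $H^p$ by the bound from the previous paragraph. A separate argument, using the $L^p$-control on $H^p\cap L^q$ together with the embedding of $\ell^p$ atomic series into $L^p$ after telescoping, ensures the $L^q$-convergence required by the definition of $H^{p,q}_{\rm at}$.

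The main obstacle I anticipate is establishing the uniform $H^p$-bound on an arbitrary product $(p,q)$-atom, together with the compatibility of $L^q$- and $H^p$-convergence needed to pass from the density approximation to a genuine atomic decomposition of $f$. Unlike the classical one-parameter situation, product atoms are supported on general open sets of finite measure rather than on rectangles, so estimating the product Littlewood--Paley square function applied to such an atom requires the careful decomposition of the support into enlarged rectangles and an application of Journ\'e's lemma in the space of homogeneous type setting (the two-parameter version of Han--Li--Lin \cite{HLLin}). Once these two ingredients are in place, the two inclusions match with comparable (semi)norms, and Corollary~A follows.
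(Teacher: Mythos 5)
Your overall strategy---deducing Corollary~A from the Main Theorem via a uniform $H^p$-bound on individual $(p,q)$-atoms in one direction and a telescoping approximation by elements of $H^p\cap L^q$ in the other---is in substance the same route the paper takes, which compresses it to: by the Main Theorem, $H^{p,q}_{\rm at}\cap L^q$ and $H^p\cap L^q$ coincide with equivalent (semi)norms, and each space is the closure of this common subset in its own (semi)norm. However, there is a genuine flaw in how you identify limits: you repeatedly invoke $L^q$-convergence where it is neither available nor required. Definition~\ref{def:atomicHp} only asks that the atomic series converge in the distribution space $(\GG)'$; a general element of $H^{p,q}_{\rm at}(X_1\times X_2)$, or of $H^p(X_1\times X_2)$, need not lie in $L^q(X_1\times X_2)$ at all. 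Consequently, in your first inclusion the appeal to ``uniqueness of the $L^q$-limit'' does not apply: the partial sums converge to $f$ only in $(\GG)'$, so you must identify their $H^p$-limit with $f$ through the continuous embedding of $H^p$ into $(\GG)'$ (available from the theory in \cite{HLW}, e.g.\ via duality with ${\rm CMO}^p$), not through $L^q$. Worse, in the reverse inclusion the ``separate argument'' aimed at ``the $L^q$-convergence required by the definition of $H^{p,q}_{\rm at}$'' is pursuing something that is both unnecessary and, for generic $f\in H^p\setminus L^q$, impossible: each atom lies in $L^q$ and $L^q$ is complete, so an $L^q$-convergent series would force $f\in L^q$. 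That step, as stated, would fail; the Main Theorem's $L^q$-convergence assertion pertains only to functions $f$ that are themselves in $L^q$.

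The repair is uniform and easy: replace every use of $L^q$-limits for general elements by convergence in $(\GG)'$, which follows from convergence in the $H^p$-(semi)norm; once this is done your two inclusions, together with the norm comparisons they produce, are correct and essentially reproduce the paper's proof. Two further small points: the assertion $L^q\subseteq L^2$ is false here (the measures are infinite; even on the atom's finite-measure support it needs $q\geq 2$), but it is also not needed, since the Main Theorem applies directly to $L^q$ functions and the trivial decomposition $a=1\cdot a$ already yields the uniform bound $\|a\|_{H^p}\leq C$; and in the telescoping step one should choose the approximating sequence quantitatively (e.g.\ $\|f-f_n\|_{H^p}^p\leq 2^{-n}\|f\|_{H^p}^p$) so that the concatenated coefficients satisfy $\sum_j|\lambda_j|^p\lesssim\|f\|_{H^p}^p$ rather than merely being finite.
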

\noindent Thus, we can define $H^p_{{\rm at}}(X_1\times X_2)$ to be $H^{p,q}_{{\rm at}}(X_1\times X_2)$
for any $q>1$. 

Second, as a consequence, we deduce that the product Hardy
spaces are independent both of wavelets and of reference dyadic
grids.

\begin{corollaryB}\label{cor:B_Hp_well_defined}
   For all $p$ with $p >
    \max\big\{\omega_i/(\omega_i + \eta_i): i = 1,
    2\big\}$, the product Hardy spaces
    $H^p(X_1\times X_2)$ as defined in~\cite{HLW} are
    independent of the particular choices of the
    Auscher-Hyt\"onen wavelets and of the reference dyadic
    grids used in their construction.
\end{corollaryB}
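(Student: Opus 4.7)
The plan is to deduce Corollary~B as a direct structural consequence of Corollary~A. The product atomic Hardy space $H^{p,q}_{\rm at}(\X)$ is built from $(p,q)$-atoms, whose defining size, cancellation, and support conditions refer only to the quasi-metrics $d_i$ and the doubling measures $\mu_i$ on the factors. In particular, neither the Auscher--Hyt\"onen wavelets nor any reference dyadic grid enter the definition of $(p,q)$-atoms or of the atomic quasi-norm, so $H^{p,q}_{\rm at}(\X)$ is intrinsic to $(X_1,d_1,\mu_1)$ and $(X_2,d_2,\mu_2)$ by construction.

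Given this, I would argue as follows. Fix $p$ as in the hypothesis and any $q>1$. Let $H^p_{(1)}(\X)$ and $H^p_{(2)}(\X)$ denote the product Hardy spaces obtained from two different choices of Auscher--Hyt\"onen wavelet bases and reference dyadic grids on each factor~$X_i$. Applying Corollary~A to each choice separately yields, for the same~$q$,
$$ H^p_{(j)}(\X) = H^{p,q}_{\rm at}(\X), \qquad j=1,2,$$
with equivalent (semi)norms. The Main Theorem explicitly records that the implicit constants in this equivalence depend only on $p$, $q$, the regularity exponents $\eta_i$, the upper dimensions $\omega_i$, and the geometric constants of $X_1, X_2$, and in particular not on the particular wavelet system or dyadic grid used to build $H^p_{(j)}(\X)$. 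Composing the two identifications gives $H^p_{(1)}(\X) = H^p_{(2)}(\X)$ as sets with equivalent quasi-norms, which is the statement of Corollary~B.

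One point warranting care is that Corollary~A establishes the atomic characterisation first on the dense subspace $H^p_{(j)}(\X)\cap L^q(\X)$ (whose density and embedding into $L^p(\X)$ are the features emphasised in the discussion preceding the Main Theorem), and then extends by a standard completion argument. Since the atomic decomposition converges in $L^q(\X)$, the identification map on the dense subspace is literally the identity, so the identification $H^p_{(1)}(\X)=H^p_{(2)}(\X)$ is realised by the identity on a common dense subspace rather than merely as an abstract isomorphism. I do not anticipate any real obstacle: the ``proof'' is essentially bookkeeping, consisting of verifying that the constants in Corollary~A are indeed geometric, as the statement asserts, and that the completion argument is symmetric in the two wavelet/grid choices. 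The substantive content of Corollary~B has already been absorbed into the proof of the Main Theorem, which produces atomic decompositions whose quantitative control is untethered from any specific wavelet expansion.
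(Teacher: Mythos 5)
Your argument is essentially the paper's own: the paper also deduces Corollary~B from the Main Theorem by observing that $H^p(X_1\times X_2)\cap L^q(X_1\times X_2)$ is characterised, independently of wavelets and reference grids, as the set of $L^q$ functions admitting $(p,q)$-atomic decompositions, and that the closure of this dense subspace may be taken in the atomic (semi)norm, which is equivalent to the square-function (semi)norm with constants depending only on the geometric constants. Your phrasing via two applications of Corollary~A to the two wavelet/grid choices, identified through the identity map on the common dense subspace, is the same mechanism in slightly different packaging, and your remark that the atomic side is wavelet-free matches the paper's definition (where the atoms' underlying grids range over a regular family containing all possible reference grids).

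One omission: Corollary~A (and the Main Theorem) only covers $p_0<p\leq 1$, whereas Corollary~B is stated for all $p>p_0$, including $p>1$. The paper disposes of that range separately by recalling from~\cite{HLW} that $H^p(X_1\times X_2)=L^p(X_1\times X_2)$ for $p>1$, which is manifestly independent of the wavelets and grids; your proposal should add this one line, since your reduction to the atomic space is not available for $p>1$.
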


Third, the Carleson measure spaces and the spaces of bounded
mean oscillation and of vanishing mean oscillation are also
independent of both wavelets and reference dyadic grids.

\begin{corollaryC}
    \label{cor:C_CMOp_BMO_VMO_well_defined} 
   For all $p$ with  $ \max\big\{\omega_i/(\omega_i + \eta_i)
    : i = 1, 2\big\}<p\leq 1$, 
    the Carleson measure spaces ${\rm CMO}^p(X_1\times X_2)$,
    the space of bounded mean oscillation ${\rm BMO}(X_1\times
    X_2)$, and the space of vanishing mean oscillation ${\rm
    VMO}(X_1\times X_2)$, as defined in~\cite{HLW}, are
    independent of the particular choices of the
    Auscher-Hyt\"onen wavelets and of the reference dyadic
    grids used in their construction.
\end{corollaryC}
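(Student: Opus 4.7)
The plan is to deduce Corollary~C directly from Corollary~B by invoking the duality and preduality results established in~\cite{HLW}. Recall that in~\cite{HLW} the Carleson measure space $\cmo^p(\widetilde{X})$ is shown to be the dual of $H^p(\widetilde{X})$ for the admissible range of $p$, that $\bmo(\widetilde{X})=\cmo^1(\widetilde{X})$ is the dual of $H^1(\widetilde{X})$, and that $\vmo(\widetilde{X})$ is the predual of $H^1(\widetilde{X})$. By Corollary~B, the spaces $H^p(\widetilde{X})$ are Banach spaces that do not depend on the choice of Auscher-Hyt\"onen wavelets or of reference dyadic grids; consequently their dual and predual spaces are intrinsically determined as abstract Banach spaces. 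The task is then to verify that the \emph{concrete} wavelet-sequence realizations of $\cmo^p$, $\bmo$, and $\vmo$ in~\cite{HLW} coincide as spaces of distributions (modulo constants, as appropriate) across different wavelet/grid choices.

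The execution proceeds as follows. Fix two choices $(\Psi,\mathcal{D})$ and $(\Psi',\mathcal{D}')$ of wavelets and reference dyadic grids, and let $\cmo^p$, $\cmo^p{}'$ be the corresponding concrete spaces defined in~\cite{HLW}. Given $f\in \cmo^p$, the duality from~\cite{HLW} associates a bounded linear functional $\ell_f$ on $H^p(\widetilde{X})$ via a wavelet-expansion pairing, with $\|\ell_f\|_{(H^p)^*}\sim \|f\|_{\cmo^p}$. When $g\in H^p(\widetilde{X})\cap L^q(\widetilde{X})$, which is a dense subspace of $H^p(\widetilde{X})$ by the Main Theorem, the wavelet pairing reduces to the canonical integral pairing $\int_{\widetilde{X}} fg\,d\mu$, which is manifestly independent of the wavelet choice. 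Since $H^p(\widetilde{X})$ is wavelet/grid-independent by Corollary~B, the operator norm $\|\ell_f\|_{(H^p)^*}$ is intrinsic. Running the analogous duality for the second choice then yields $\|f\|_{\cmo^p}\sim \|\ell_f\|_{(H^p)^*}\sim \|f\|_{\cmo^p{}'}$, proving that $\cmo^p=\cmo^p{}'$ as spaces of distributions modulo constants with equivalent norms. The case $\bmo=\cmo^1$ follows as the specialization $p=1$.

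For $\vmo(\widetilde{X})$, a parallel preduality argument applies. In~\cite{HLW}, $\vmo$ is realized either via a vanishing condition on wavelet-sequence Carleson sums or, equivalently, as the $\bmo$-closure of a suitable dense subspace (for instance, finite linear combinations of $(1,q)$-atoms, which by Corollary~A is manifestly wavelet/grid-independent). Its dual is identified with $H^1(\widetilde{X})$. Combining the independence of $\bmo$ just proved with the intrinsic nature of the dense subspace and of the pairing then forces $\vmo$ itself to be wavelet/grid-independent. The main obstacle is the compatibility verification in Step~2: one must check that the wavelet-expansion pairing used in~\cite{HLW} to realize $\cmo^p=(H^p)^*$ agrees, on the common dense subspace $H^p\cap L^q$, with a canonical pairing that does not reference the wavelets. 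This compatibility rests on the wavelet-independent atomic decomposition supplied by the Main Theorem and Corollary~A, which lets us identify the action of $\ell_f$ on atoms with integration against $f$. Once this step is in hand, the three independence statements follow mechanically from Corollary~B.
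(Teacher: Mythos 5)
Your proposal follows essentially the same route as the paper: the paper's proof of Corollary~C is exactly the combination of the duality theorems (${\rm CMO}^p = (H^p)'$, ${\rm BMO}={\rm CMO}^1$, $({\rm VMO})'=H^1$) with Corollary~B, concluding that duals and preduals of wavelet/grid-independent spaces are themselves independent. Your additional care in Step~2 --- checking that the concrete wavelet-coefficient realizations are identified compatibly across choices via the dense subspace $H^p\cap L^q$ and the atomic decomposition --- addresses a point the paper leaves implicit (and is particularly relevant for ${\rm VMO}$, where abstract preduals need not be unique), but it does not constitute a different method.
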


In the special case when $p = 1$ and $q = 2$, the $(p,q)$-atoms
defined in this paper, and the corresponding atomic
decomposition found for $H^p(\widetilde{X})\cap
L^q(\widetilde{X})$, were used in establishing dyadic structure
theorems for $H^1(\widetilde{\XX})$ and $\bmo(\widetilde{\XX})$
\cite[Definition~5.3 and Theorem~5.4]{KLPW}. To achieve this
goal, corresponding dyadic atomic Hardy spaces were introduced
in~\cite[Definition~6.3 and Theorem~6.5]{KLPW}.

We would like to mention that Fu and Yang~\cite{FY} present a
characterization of the Coifman and Weiss atomic Hardy space
$\Hatomic^1(X)$ in the one-parameter case, using the
Auscher-Hyt\"onen wavelets, under the assumptions that $(X, d,
\mu)$ is a metric measure space of homogeneous type, diam$(X) =
\infty$, and $X$ is a non-atomic space, meaning that
$\mu(\{x\})=0$ for all $x\in X$.  They prove that the
Auscher-Hyt\"onen wavelets form an unconditional basis in
$H^1(X)$ and from there they deduce that a function being in
$\Hatomic^1(X)$ is equivalent to the unconditional convergence
in $L^1(X)$ of the function's wavelet expansion, and equivalent
to the boundedness on $L^1(X)$ of  each of  three different discrete
square functions, one of them coinciding with that in the
definition of $H^1(X)$ presented in \cite{HLW}. All these
one-parameter Hardy spaces $H^1(X)$ coincide when the
conditions assumed in~\cite{FY} are met. Fu and Yang did not
address the case $p<1$, nor the product 
case, which are the focus of this article.

The paper is organized as follows. In Section~\ref{sec:history}
we place our work in historical context, describing some of
the progress made to date, from the original work of Coifman
and Weiss until the present setting, mostly in the one-parameter case.

In Section~\ref{sec:preliminaries} we recall the basic
ingredients involved in the definition of product Hardy and
$\bmo$ spaces, on spaces of homogeneous type in the sense of
Coifman and Weiss with only the original quasi-metric and a
Borel-regular doubling measure $\mu$, as introduced
in~\cite{HLW}. These preliminaries include  the
Hyt\"onen-Kairema systems of dyadic cubes \cite{HK}, the
Auscher-Hyt\"onen orthonormal basis and reference dyadic grids
\cite{AH,AH2}, and the test functions and distributions in both
the one-parameter and product settings~\cite{HLW}.

In Section~\ref{sec:productHp} we recall the definitions
in~\cite{HLW} of product Hardy spaces $H^p(X_1\times X_2)$;
{their duals, the Carleson measure spaces ${\rm
CMO}^p(X_1\times X_2)$; the space of bounded mean oscillation
${\rm BMO}(X_1\times X_2)$; and the space of vanishing mean
oscillation ${\rm VMO}(X_1\times X_2)$, which turns out to be
the predual of $H^1(X_1\times X_2)$}. These definitions are
based on product square functions, themselves defined using the
Auscher-Hyt\"onen wavelets and the reference dyadic grids used
in their construction~\cite{HLW}. We prove a key new lemma in
Section~\ref{sec:productHp} that allows us to decompose the
Auscher-Hyt\"onen wavelets into compactly supported building
blocks rescaled as needed and with appropriate size,
smoothness, and cancellation properties, following the approach
in Nagel and Stein~\cite{NS}. In turn this lemma allows us to
show that, within the allowed range of $p$ dictated by the
geometric constants and the H\"older-continuity parameters of
the wavelets, functions in $H^p(X_1\times X_2)\cap
L^q(X_1\times X_2)$  for $1<q<\infty$
are $L^p$-integrable, with $L^p$-(semi)norm controlled by their $H^p$-(semi)norm.

In Section 5 we introduce the product $(p,q)$-atoms and atomic
product Hardy spaces $H^{p,q}_{{\rm at}}(X_1\times X_2)$ for
$1<q <\infty$ and for $p$ in the same range for which the
product Hardy spaces $H^p(X_1\times X_2)$ are defined.
We restate the Main Theorem, and use it to prove
Corollaries~A, B, and~C, thus establishing that the atomic
product Hardy spaces $H^{p,q}_{{\rm at}}(X_1\times X_2)$
coincide with the product Hardy spaces $H^p(X_1\times X_2)$ for
all $q > 1$, and that the spaces ${\rm CMO}^p(X_1\times X_2)$,
${\rm BMO}(X_1\times X_2)$, and ${\rm VMO}(X_1\times X_2)$ are
independent of the choices of wavelet bases and of reference
dyadic grids on $X_1$ and~$X_2$ used in their construction.
Finally we prove the Main Theorem, yielding an atomic
decomposition for $H^p(X_1\times X_2)\cap L^q(X_1\times X_2)$
in terms of $(p,q)$-atoms for each $q$ with $1< q<\infty$, with
convergence in both $H^p$ and~$L^q$ and showing that $(p,q)$-atoms
 are uniformly in $H^p(X_1\times X_2)$. Key in this decomposition
is the use of a Journ\'e-type covering lemma in the product
setting, which was proved in~\cite{HLLin}.

Throughout the paper the following notation is used. First,
$A\lesssim B$ means there is a constant $C>0$ depending only on
the geometric constants (quasi-triangle constants of the
quasi-metrics, doubling constants of the measures, and upper
dimensions of $X_i$ for $i = 1$, 2) such that $A\leq CB$.
Second, $A\sim B$ means that $A\lesssim B$ and $B\lesssim A$.
Third, the value of a constant $C > 0$ may change from line to
line within a string of inequalities. If the constant~$C$
depends on some other parameter(s), for example on $q > 1$ and
$\delta > 0$, then we may denote it by $C_{q,\delta}$.
Likewise, the notation $\lesssim_{q,\delta}$ indicates that the implied
constant in the inequality depends also on the parameters $q$ and
$\delta$. We denote by $\chi_A$ the characteristic function of
a set $A\subset X$, that is, $\chi_A(x)=1$ if $x\in A$ and
$\chi_A(x)=0$ otherwise.


\section{Context and significance}\label{sec:history}

In this section we discuss the developments in the theory of
one-parameter Hardy spaces that led to the results presented in
this paper. This is by no means a comprehensive historical
survey, rather a series of snapshots that will give some
perspective to our work. For a  more complete survey
see~\cite{HHL2} and also \cite{HHLLYY}.

We  recall the atomic Hardy space $\Hatomic^p(X)$ on a space of
homogeneous type, following~\cite{CW2}.
Given $(X, d, \mu)$, a space of homogeneous type in the sense
of Coifman and Weiss, as presented in the Introduction, the
atomic Hardy space $\Hatomic^p(X)$ is defined to be a certain
subcollection of the bounded linear functionals on the
Campanato space $\mathcal{C}_\alpha(X)$ with $\alpha =
\frac{1}{p} - 1$, $0 < p \leq 1$.
Namely, $\Hatomic^p(X)$ is defined to be those bounded linear
functionals on $\mathcal{C}_\alpha(X)$ that admit an atomic
decomposition
\begin{equation}\label{atomic hs}
    f = \sum_{j=1}^\infty \lambda_j a_j,
\end{equation}
where the functions $a_j$ are $(p, 2)$-atoms,
$\sum_{j=1}^\infty |\lambda_j|^p<\infty$, and the series in
(\ref{atomic hs}) converges in the dual space of
$\mathcal{C}_\alpha(X)$.  The quasi-norm of $f$ in $\Hatomic^p(X)$ is defined by
\[
    \|f\|_{H^p_{{\rm at}}(X)}
    := \inf \Big \lbrace \Big (\sum_{j=1}^\infty |\lambda_j|^p\Big)^{\frac{1}{p}}
        \Big \rbrace,
\]
where the infimum is taken over all such atomic representations
of $f$.

Here a function $a(x)$ is said to be a \emph{$(p, 2)$-atom} if
the following conditions hold:
\begin{itemize}
    \item[(i)] (Support condition) the support of $a(x)$ is
        contained in a ball $B(x_0,r)$ for some $x_0\in X$
        and $r>0$;
    \item[(ii)] (Size condition)  $\|a\|_{L^2(X)} \leq
        \mu \big (B(x_0,r)\big )^{\frac{1}{2}-\frac{1}{p}}$; and
    \item[(iii)] (Cancellation condition) $\int_X a(x)\,
        d\mu(x) = 0$.
\end{itemize}

Recall that the Campanato spaces $\mathcal{C}_\alpha(X)$,
$\alpha \geq 0$, consists of those functions $f$ for which
\begin{eqnarray}\label{Lip space}
    \left\lbrace \frac{1}{\mu(B)} \int_B|f(x)-f_B|^2 d\mu(x)
        \right\rbrace^{\frac{1}{2}}
    \leq C[\mu(B)]^\alpha,
\end{eqnarray}
where $B$ is any quasi-metric ball, $f_B :=
\frac{1}{\mu(B)}\int_B f(x)\,d\mu(x)$, and the constant $C>0$
is independent of the ball $B$.
Let $||f||_{{\mathcal C}_{\alpha}(X)}$ be the infimum of all
$C$ for which (\ref{Lip space}) holds. {On~$\mathbb{R}^n$ the
Campanato spaces $\mathcal{C}_{\alpha}(\mathbb{R}^n)$ coincide
with the $\alpha$-Lipschitz class when $0<\alpha\leq 1$ and
with $\bmo$ when $\alpha =0$, thanks to the John-Nirenberg
Lemma.}

The Coifman-Weiss definition of the atomic Hardy space $\Hatomic^p(X)$ does
not require any regularity on the quasi-metric $d$, and
requires only the doubling property on the Borel-regular
measure~$\mu$. Moreover, for each atomic decomposition
$\sum_{j=1}^\infty \lambda_j a_j$ where the functions $a_j$ are
$(p, 2)$-atoms with $\sum_{j=1}^\infty |\lambda_j|^p<\infty,$
the series automatically converges in the dual space of
$\mathcal{C}_\alpha(X)$ with $\alpha =\frac{1}{p} - 1.$ Indeed,
if $a$ is a $(p,2)$-atom and $g\in \mathcal{C}_\alpha(X)$ with
$\alpha =\frac{1}{p} - 1,$ then, applying first the support and
cancellation conditions on the atom  $a$ and second  H\"older's
inequality together with the size condition on the atom $a$, we
obtain
$$
    \Big |\int_B a(x)g(x)\,d\mu(x) \Big |
    =\Big |\int_B a(x)[g(x)-g_B]\,d\mu (x) \Big|
    \leq \|a\|_2 \Big (\int_B [g(x)-g_B]^2\,d\mu(x )\Big )^{\frac{1}{2}}
    \leq \|g\|_{\mathcal{C}_\alpha(X)},
$$
where $B = B(x_0,r)$. 

Therefore, if $\sum_{j=1}^\infty \lambda_j a_j$ is an atomic
decomposition, $g\in \mathcal{C}_\alpha(X)$, and  $\alpha
=\frac{1}{p} - 1,$ then
$$\Big | \big\langle\sum_{j=1}^\infty \lambda_j a_j, g\big\rangle
\Big |\leq \sum_{j=1}^{\infty}|\lambda_j| \, \|g\|_{\mathcal{C}_\alpha(X)}
\leq \Big \{ \sum_{j=1}^{\infty}|\lambda_j|^p \Big \}^{\frac{1}{p}}
\|g\|_{\mathcal{C}_\alpha(X)},$$
which implies that the atomic decomposition $\sum_{j=1}^\infty
\lambda_j a_j$ converges in the dual space
of~$\mathcal{C}_\alpha(X)$.

In fact, in \cite[Theorem A, p.592]{CW2}, Coifman and Weiss
define $(p,q)$-atoms, replacing 2 by $q > 1$ in the definition
above, and define corresponding atomic Hardy spaces $\Hatomic^{p,q}(X)$.
They show that for each fixed~$p\leq 1$, the spaces
$\Hatomic^{p,q}(X)$ for $q > 1$ all coincide. We will show in Section~\ref{sec:atomicHp} that the analogue of this result
holds for appropriately defined product $(p,q)$-atoms and product atomic spaces $H^{p,q}_{{\rm at}}(X_1\times X_2)$.

The atomic Hardy spaces have many applications. For example, if
an operator $T$ is bounded on $L^2(X)$ and from $\Hatomic^p(X)$
to $L^p(X)$ for some $p\leq 1,$ then $T$ is bounded on $L^q(X)$
for $1<q\leq 2.$ See~\cite{CW2} for this and for more
applications.

We would like to point out that Coifman and Weiss introduced
the atomic Hardy spaces $\Hatomic^p(X)$ on spaces of
homogeneous type $(X, d, \mu )$ where the quasi-metric balls
were required to be open; see \cite{CW2} for more details.
To establish the maximal function characterization of
the atomic Hardy space of Coifman and Weiss, some additional
geometrical considerations on the quasi-metric $d$ and the
measure $\mu$ were imposed. For this purpose, Mac\'ias and
Segovia~\cite{MS1} proved the following fundamental results.
The first pertains to quasi-metric spaces; the second to spaces
of homogeneous type.

{First, suppose that $(X,d)$ is a space endowed with a
quasi-metric $d$
that may have no regularity. Then there exists a quasi-metric
$d'$  {that is topologically equivalent to $d$} such that
$d(x,y) \sim d'(x,y)$ for all $x$ ,$y\in X$ and there exist
constants $\theta\in(0,1)$ and $C > 0$ so that $d'$ has the
following regularity:
\begin{eqnarray}\label{smetric}
    |d'(x,y) - d'(x',y)|
    \le C \, d'(x,x')^\theta \,
        [d'(x,y) + d'(x',y)]^{1 - \theta}
\end{eqnarray}
for all $x$, $x'$, $y\in X$. Moreover, if the quasi-metric
balls are defined by this new quasi-metric~$d'$, that is,
$B'(x,r) := \{y\in X: d'(x,y) < r\}$ for $r > 0$, then these
balls are open in the topology induced by~$d'$. See
\cite[Theorem 2, p.259]{MS1}.}

Second, suppose that $(X,d,\mu)$ is a space of homogeneous
type in the sense of Coifman and Weiss, with the property that
the balls are open subsets. Then the function $d'':X\times X\to
\mathbb{R}$ defined by $d''(x,y) := \inf{\{\mu{(B)} : x,y\in B,
B \;\mbox{is a $d$-ball}\} }$ if $x\neq y$,
 and $d''(x,y) = 0$ if $x = y$, is a quasi-metric
topologically equivalent to $d$. Furthermore, the measure $\mu$
satisfies the following property for all $d''$-balls~
$B''(x,r)$, where $x\in X$ and $r > 0$:
\begin{eqnarray}\label{regular}
    \mu \big (B''(x,r) \big )\sim r.
\end{eqnarray}
See \cite[Theorem 3, p.259]{MS1}.
Spaces satisfying property \eqref{regular} are called
\emph{$1$-Ahlfors regular quasi-metric spaces}\footnote{A
quasi-metric Borel measure space $(X,d,\mu)$ is
$n$-\emph{Ahlfors regular} if $ \mu \big (B(x,r) \big )\sim
r^n$.}. Note that property \eqref{regular} is much stronger
than the doubling condition.

Starting with a quasi-metric~$d$ for which the balls are not
necessarily open, we can obtain $d'$, 
and we can then pass to its topologically equivalent
quasi-metric $d''(x,y) := \inf\{\mu{(B')}: x,y\in B', B'
\;\mbox{is a $d'$-ball}\}$ to obtain a quasi-metric
satisfying~\eqref{smetric} 
and with the measure $\mu$ satisfying~\eqref{regular}.

Mac\'{i}as and Segovia obtained a grand maximal function
characterization for the atomic Hardy spaces $H^p (X)$
on spaces of homogeneous type $(X,d,\mu)$ 
that satisfy the regularity condition \eqref{smetric} on the
quasi-metric~$d$,
and property~\eqref{regular} on the measure $\mu$, with
${1}/{(1 + \theta)} < p \leq 1$, where $\theta$ is the
regularity exponent of the quasi-metric~\cite[Theorem (5.9),
p.306]{MS2}.

For an authoritative modern account of Hardy spaces on
$n$-Ahlfors regular quasi-metric spaces, see the book by
Alvarado and Mitrea~\cite{AM}. Given a quasi-metric~$d$, the
authors work with an equivalence class of quasi-metrics that
includes $d$ and the Mac\'ias-Segovia quasi-metric. In
contrast, the approach in the present paper is to keep the
original quasi-metric~$d$ untouched but to allow for a certain
randomness in the cubes that enter into the construction of the
wavelets.

To develop the Littlewood-Paley characterization of Hardy
spaces on \emph{normal spaces of homogeneous type $(X,d,\mu)$
of order~$\theta$}, in other words, spaces satisfying the
regularity condition~\eqref{smetric} on the quasi-metric $d$
and property \eqref{regular} on the measure $\mu$, a suitable
approximation to the identity was required. The construction of
such an approximation to the identity is due to Coifman
\cite{DJS}, and this construction leads to a corresponding
Calder\'on-type reproducing formula and Littlewood-Paley theory
\cite[p.3--4]{DH}.
A further discretization of this  Calder\'on reproducing
formula is needed, and it was achieved, using the dyadic cubes
of  Christ \cite{Chr},  by the first author and  Sawyer. See
\cite{H1, H2, HS} for more details. In the present paper, a
further discretization will also be needed; we will instead use
the dyadic cubes of  Hyt\"onen and Kairema~\cite{HK} on which
the wavelets of Auscher and Hyt\"onen \cite{AH, AH2} are based.

To carry out the Littlewood-Paley characterization of the
atomic Hardy space on a normal space $(X, d,\mu)$ of
order~$\theta$,
the following test function spaces were introduced
in~\cite{HS}.

\begin{definition}[Test functions \cite{HS}]\label{def-of-test-func-space1}
    Let $(X,d,\mu)$ be a normal space of homogeneous type of
    order $\theta$. Fix $x_0\in X$, $r > 0$,
    $\beta\in(0,\theta]$ where $\theta$ is the regularity
    exponent of~$d$, and $\gamma > 0$. A function $f$ defined
    on $X$ is said to be a {\it test function of type
    $(x_0,r,\beta,\gamma)$ centered at $x_0\in X$} if $f$
    satisfies the following three conditions:
    \begin{enumerate}
        \item[(i)] \textup{(Size condition)} For all $x\in
            X$,
            \[
                |f(x)|
                \leq C \,\frac{r^\gamma }{\big (r + d(x,x_0)\big )^{1+\gamma}}.
            \]

        \item[(ii)] \textup{(H\"older regularity
            condition)} For all $x$, $y\in X$ with $d(x,y)
            < (2A_0)^{-1}\big (r + d(x,x_0)\big )$,
            \[
                |f(x) - f(y)|
                \leq C \Big(\frac{d(x,y)}{r + d(x,x_0)}\Big)^{\beta}
                \frac{r^\gamma}{\big (r + d(x,x_0)\big )^{1+\gamma}}.
            \]

        \item[(iii)] \textup{(Cancellation condition)}
            \[
                \int_X f(x) \, d\mu(x)
                = 0.
            \]
    \end{enumerate}
\end{definition}

Denote by $\mathcal M(x_0,r,\beta,\gamma)$ the set of all test
functions of type $(x_0,r,\beta,\gamma)$. The norm of $f$ in $
\mathcal M(x_0,r,\beta,\gamma)$ is defined by
\[
    \|f\|_{\mathcal M(x_0,r,\beta,\gamma)}
    := \inf\{C>0:\ {\rm(i)\  and \ (ii)}\ {\rm hold} \}.
\]

For each fixed $x_0$, let $\mathcal M(\beta,\gamma) :=
\mathcal M(x_0,1,\beta,\gamma)$. It is easy to check that for each fixed
$x_0'\in X$ and $r > 0$, we have $\mathcal M(x_0',r,\beta,\gamma) =
\mathcal M(\beta,\gamma)$ with equivalent norms. Furthermore, it is also
easy to see that $\mathcal M(\beta,\gamma)$ is a Banach space with
respect to the norm on $\mathcal M(\beta,\gamma)$.

We remark that the above test function space $\mathcal
M(\beta,\gamma)$ on $(X,d,\mu)$ offers the same service as the
Schwartz test function space $\mathcal S_\infty=\{f\in \mathcal
S: \int f(x)x^\alpha \,dx=0, |\alpha|\geq 0\}$ does on $\mathbb{R}^n$, {and as the Campanato space $\mathcal{C}_{\alpha}(X)$
does on a space $X$ of homogenenous type in the sense of
Coifman and Weiss.}

In \cite{NS}, Nagel and Stein developed the product $L^p$-theory
$(1 < p < \infty)$ in the setting of
Carnot-Carath\'eodory spaces formed by vector fields satisfying
H\"{o}rmander's $m$-finite rank condition, {where $m\geq 2$ is a positive integer}. The
Carnot-Carath\'eodory spaces studied in~\cite{NS} are spaces of
homogeneous type with a regular quasi-metric $d$ and a
measure~$\mu$ satisfying the conditions $\mu \big (B(x, sr)
\big ) \sim s^{m+2} \mu \big (B(x,r) \big )$ for $s\geq 1$
and $\mu \big (B(x, sr) \big ) \sim s^4\mu \big (B(x,r)\big )$ for
$s\leq 1.$ These conditions on the measure are weaker than
property~\eqref{regular} but are still stronger than the
original doubling condition~\eqref{eqn:doubling condition}.

Motivated by the work of Nagel and Stein, Hardy spaces via
Littlewood-Paley theory were developed by the first author,
M\"uller and Yang~\cite{HMY1, HMY2} on spaces of homogeneous
type with a regular quasi-metric and a measure satisfying some
additional conditions. To be precise, let $(X,d,\mu)$ be a space of homogeneous type
where the quasi-metric $d$ satisfies the H\"older regularity
property~\eqref{smetric}, and the measure $\mu$ satisfies the
doubling condition~\eqref{eqn:doubling condition} and the
\emph{reverse doubling condition}; that is, there are constants
$\kappa \in (0, \omega ]$ and $c \in (0, 1]$ such that
\begin{equation}\label{eqn:reverse doubling}
    c \lambda^\kappa \mu \big ( B (x, r) \big )
    \leq \mu \big ( B(x, \lambda r) \big )
\end{equation}
for all $x \in X$, $r$ with $0 < r < \displaystyle\sup_{x, y \in X} d (x, y) / 2$
and $\lambda$ with $1\leq \lambda < \displaystyle\sup_{x, y \in X} d (x, y) / 2r.$
The first author, M\"uller, and Yang observed in~\cite{HMY1,HMY2} 
that Coifman's construction of an approximation to the identity
still works on spaces of homogeneous type $(X, d, \mu)$ with
these properties.
They also showed how to define the corresponding test functions
of type $(x_0,r,\beta, \gamma )$. {Their definition is very
similar to Definition~\ref{def-of-test-func-space1} above,
except that one power of $\big (r+d(x,x_0)\big )$ in the
denominator is replaced by $\big ( \mu \big ( B(x,r)\big ) +
\mu  \big (B(x,d(x,x_0) ) \big ) \big )$. Also, their
definition is identical to the definition of test functions
needed in our setting, Definition~\ref{def-of-test-func-space},
except that in their case $\beta\in [0,\theta]$ where $\theta$
is the regularity exponent of the metric, while in our case
$\beta\in[0,\eta]$ where $\eta$ is the H\"older exponent of the
wavelets.}

Applying Coifman's approximation to the identity and a proof
similar to the one in~\cite{H1, H2, HS},
the first author, M\"uller, and Yang proved that a discrete
Calder\'on reproducing formula still holds on $(X,d,\mu)$ when
the quasi-metric $d$ satisfies the regularity
condition~\eqref{smetric} and the measure $\mu$ satisfies the
doubling condition~\eqref{eqn:doubling condition} and the
reverse doubling condition~\eqref{eqn:reverse doubling}. As a
consequence, the Hardy spaces defined via the Littlewood-Paley
theory were established for such spaces of homogeneous type
and, moreover, these Hardy spaces have atomic decompositions.
See~\cite{HMY1} for more details.

However, there are settings for which the reverse doubling
condition is not available. One specific example of such a
space of homogeneous type appears in the Bessel setting treated
by Muckenhoupt and Stein~\cite{MuS}. They studied the Bessel
operator
\[
    \Delta_{\lambda}
    = -\frac{d}{dx^2} - \frac{2\lambda}{x}\frac{d}{dx},
    \quad\quad
    \lambda\in\big(-{1}/{2},\infty\big), \; x\in (0,\infty),
\]
with the underlying space $(X,d,\mu) = \big ((0,\infty),
|\cdot|, x^{2\lambda}\,dx\big )$. The corresponding Hardy space
was studied in \cite{BDT} and the weak factorization was
obtained in \cite{DLWY}. We note that the measure
$x^{2\lambda}\,dx$ is doubling when
$\lambda\in(-1/2,\infty)$, however when
$\lambda\in(-1/2,0)$ the measure does not satisfy a reverse
doubling condition. We also note that we cannot change the
metric twice as in~\cite{MS1}, for if we did we would be
changing the whole setting, including the Bessel operator in
question.

In \cite{HLW}, the first, second and fourth authors developed a
theory of Hardy spaces $H^p$ and $\bmo$ on spaces of
homogeneous type in the sense of Coifman and Weiss, with only
the original quasi-metric~$d$ and a (Borel-regular) doubling
measure~$\mu$, in both the one-parameter and product settings.
A crucial idea in~\cite{HLW} was to use a square-function
characterization where the square function was built using the
Auscher-Hyt\"onen orthonormal wavelet basis on spaces of
homogeneous type~\cite{AH,AH2}. 
In the current paper we provide an atomic decomposition for
$H^p(\widetilde{X})\cap L^q(\widetilde{X})$ for each $q$ with
$1<q<\infty$, for $\widetilde{X}=X_1\times X_2$ with $X_i$ a
space of homogenenous type in the sense of Coifman and Weiss
for $i = 1$, 2. This atomic decomposition is completely
independent of any wavelet bases and reference dyadic grids on
$X_i$ for $i=1,2$ used to define $H^p(\widetilde{X})$. As a
consequence of the main result of this paper, the
$H^p(\widetilde{X})$ spaces defined in~\cite{HLW} via a
particular Auscher-Hyt\"onen wavelet basis are independent {not
only of the chosen wavelet bases, but also of the choice of
reference dyadic grids.}

\section{Preliminaries}\label{sec:preliminaries}
\setcounter{equation}{0}

In this section, we recall first Hyt\"onen and Kairema's systems of
dyadic cubes~\cite{HK}, second Auscher and Hyt\"onen's orthonormal
basis~\cite{AH} paying close attention to their underlying reference dyadic grids, and third  the sets of test functions and distributions developed in \cite{HLW} in both the one-parametr and the product settings. We recall that the Auscher and Hyt\"onen wavelets in both one-parameter and product setting are suitable test functions. These are all necessary ingredients in the definition of product Hardy spaces introduced in \cite{HLW}   that we present in Section~\ref{sec:productHp}.

\subsection{Systems of dyadic cubes}
\label{sec:dyadiccubes}

We now describe the Hyt\"onen and Kairema \cite{HK} families of dyadic ``cubes" built on  geometrically doubling quasi-metric spaces. A quasi-metric space $(X,d)$ is \emph{geometrically doubling}
if  there exists a natural number $N$ such that any quasi-metric ball  $B(x,r)$ can be covered with no more than $N$ balls of half the radius. Coifman and Weiss \cite{CW} showed that  spaces of homogeneous type $(X,d,\mu )$ are geometrically doubling quasi-metric spaces. The Hyt\"onen-Kairema construction builds on seminal work of Guy David
\cite{Da},  Christ \cite{Chr}, and Sawyer and Wheeden \cite{SW}.

\begin{theorem}[\cite{HK}, Theorem 2.2] 
\label{thm:dyadiccubes}
  Given a geometrically doubling quasi-metric space $(X,d)$, let $A_0>0$ denote the quasi-triangle constant for the metric $d$.
    Given constants $c_0$ and $C_0$ with $0 < c_0 \leq C_0 < \infty$, and
    constant $\delta\in(0,1)$ satisfying
    \begin{equation}\label{eqn:testconditionforcubes}
        12 A_0^3 C_0\delta
        \leq c_0.
    \end{equation}
    Given a set of points $\{z_\alpha^k\}_{\alpha \in \mathscr{A}_k}$,  where $\mathscr{A}_k$ is a countable set of indices for each $k\in\mathbb{Z}$, with the properties that
    \begin{equation}\label{eqn:sparseproperty}
        d(z_\alpha^k,z_\beta^k)
        \geq c_0\delta^k\
            (\alpha\not=\beta),\hskip1cm
        \min_{\alpha
    \in \mathscr{A}_k} d(x,z_\alpha^k)
        < C_0\delta^k,
            \qquad \text{for all $x\in X$},
    \end{equation}
    $($called a $(c_0, C_0)$-\emph{maximal set of $\delta^k$-separated points}$)$,
    we can construct families of sets $\widetilde{Q}_\alpha^k
    \subseteq Q_\alpha^k \subseteq
    \overline{Q}_\alpha^k$  $($called open, half-open and closed
    \emph{dyadic cubes}$)$, such that:
    \begin{align}
        & \widetilde{Q}_\alpha^k \mbox{ and } \overline{Q}_\alpha^k
            \mbox{ are the interior and closure of } Q_\alpha^k, \mbox{ respectively};\\
        &  (\mbox{\emph{Nested family}}) \ \ \mbox{if } \ell\geq k, \mbox{ then either } Q_\beta^\ell\subseteq
            Q_\alpha^k \mbox{ or } Q_\alpha^k
            \cap Q_\beta^\ell=\emptyset  ;\label {DyadicP1}\\
        &  (\mbox{\emph{Disjoint union}})  \ \ X = \bigcup_{\alpha
    \in \mathscr{A}_k} Q_\alpha^k  \qquad
            \text{for all $k\in\mathbb{Z}$};\label {DyadicP2} \\
        & (\mbox{{\emph{Inner and outer balls}}}) \ \ B(z_\alpha^k,c_1\delta^k)\subseteq Q_\alpha^k\subseteq
            B(z_\alpha^k,C_1\delta^k),\ \  \mbox{where } c_1 := (3 A_0^2)^{-1}c_0 \label{prop_cube3}\\
         &   \mbox{and}\  C_1 := 2A_0C_0; \nonumber  \\
        &\mbox{if } \ell \geq k \mbox{ and } Q_\beta^\ell\subseteq Q_\alpha^k,
            \mbox{ then } B(z_\beta^\ell,C_1\delta^\ell)\subseteq
            B(z_\alpha^k,C_1\delta^k).\label {DyadicP4}
    \end{align}
    The open and closed cubes $\widetilde{Q}_\alpha^k$ and
    $\overline{Q}_\alpha^k$ depend only on the points
    $z_\beta^\ell$ for $\ell\geq k$. The half-open cubes
    $Q_\alpha^k$ depend on $z_\beta^\ell$ for $\ell\geq
    \min(k,k_0)$, where $k_0\in\mathbb{Z}$ is a preassigned
    number entering the construction.
\end{theorem}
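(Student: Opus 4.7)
The plan is to construct the half-open cubes level-by-level via a parent--child tree built from the given maximal separated sets, a strategy going back to Christ and refined by Hyt\"onen and Kairema. First, for each $k\in\mathbb{Z}$ and each $\beta\in\mathscr{A}_{k+1}$ I would define a parent $\pi(\beta)\in\mathscr{A}_k$ by choosing an $\alpha$ minimizing $d(z_\beta^{k+1},z_\alpha^k)$, with ties broken by an arbitrary fixed well-ordering of each $\mathscr{A}_k$. By the covering part of \eqref{eqn:sparseproperty}, $d(z_\beta^{k+1},z_{\pi(\beta)}^k)<C_0\delta^k$, so every finer-level center has a unique chain of ancestors. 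At the preassigned level $k_0$, I would then build the starting half-open cubes $Q_\alpha^{k_0}$ as a Voronoi-type partition of $X$: assign each $x$ to the $Q_\alpha^{k_0}$ whose center is the smallest-indexed minimizer of $d(x,z_\alpha^{k_0})$. For $k>k_0$, define $Q_\beta^{k}$ recursively as the set of $x\in Q_{\pi(\beta)}^{k-1}$ assigned to $z_\beta^k$ under the analogous level-$k$ tiebreaking; for $k<k_0$, set $Q_\alpha^k:=\bigcup_{\pi(\beta)=\alpha}Q_\beta^{k+1}$. The nested property \eqref{DyadicP1}, the disjoint-union property \eqref{DyadicP2}, and the stated dependence on the $\{z_\beta^\ell\}$ follow immediately by induction from this construction.

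The heart of the argument, and the main obstacle, is proving the outer and inner ball inclusions \eqref{prop_cube3} together with the monotonicity \eqref{DyadicP4}. For the outer inclusion I would argue by induction on $\ell-k$: if $z_{\beta_\ell}^\ell$ is any descendant of $z_\alpha^k$, iterating the quasi-triangle inequality along the chain of ancestors and using $d(z_{\pi(\beta)}^j,z_\beta^{j+1})<C_0\delta^j$ yields
\[
d(z_{\beta_\ell}^\ell,z_\alpha^k)\;\le\;C_0\sum_{j=k}^{\ell-1}A_0^{j-k+1}\delta^{j}\;\le\;\frac{A_0C_0\delta^k}{1-A_0\delta}.
\]
Letting $\ell\to\infty$, the diameter of $Q_{\beta_\ell}^\ell$ shrinks to $0$, and invoking the smallness hypothesis \eqref{eqn:testconditionforcubes} converts this series bound into $Q_\alpha^k\subseteq B(z_\alpha^k,C_1\delta^k)$ with $C_1=2A_0C_0$; the same estimate applied at level $\ell$ directly gives \eqref{DyadicP4}. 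The inner inclusion $B(z_\alpha^k,c_1\delta^k)\subseteq Q_\alpha^k$ then follows by contradiction: if $x$ satisfies $d(x,z_\alpha^k)<c_1\delta^k$ but $x\in Q_\gamma^k$ for some $\gamma\neq\alpha$, then the outer inclusion gives $d(x,z_\gamma^k)\le C_1\delta^k$, so the quasi-triangle inequality yields $d(z_\alpha^k,z_\gamma^k)\le A_0(c_1+C_1)\delta^k$; with $c_1=(3A_0^2)^{-1}c_0$ and $C_1=2A_0C_0$, condition \eqref{eqn:testconditionforcubes} forces this to be strictly less than $c_0\delta^k$, contradicting the separation in \eqref{eqn:sparseproperty}.

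Finally, I would define $\widetilde{Q}_\alpha^k$ and $\overline{Q}_\alpha^k$ as the topological interior and closure of $Q_\alpha^k$; these depend only on $\{z_\beta^\ell\}_{\ell\ge k}$ because the lower-level choices enter the construction only through the tiebreaking convention, which affects the half-open cubes only on their (topologically small) boundaries, and these boundaries vanish upon taking interior and closure. The delicate bookkeeping of constants---ensuring that exactly $c_1=(3A_0^2)^{-1}c_0$ and $C_1=2A_0C_0$ emerge from the iterated quasi-triangle estimates---is where the technical effort lies; the hypothesis $12A_0^3C_0\delta\le c_0$ is calibrated precisely so that the geometric series argument and the separation contradiction both close up with these sharp constants.
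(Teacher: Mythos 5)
This theorem is quoted verbatim from Hyt\"onen--Kairema \cite{HK}; the paper under review does not reprove it, so the only question is whether your sketch is correct on its own terms. Your construction of the tree and the outer-ball argument via the iterated quasi-triangle inequality are essentially sound: the geometric series bound $A_0C_0\delta^k/(1-A_0\delta)$ indeed falls below $C_1\delta^k=2A_0C_0\delta^k$ once $12A_0^3C_0\delta\leq c_0\leq C_0$ forces $A_0\delta\leq 1/12$.

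The inner-ball argument, however, is wrong. You claim that if $d(x,z_\alpha^k)<c_1\delta^k$ and $x\in Q_\gamma^k$ with $\gamma\neq\alpha$, then $d(z_\alpha^k,z_\gamma^k)\leq A_0(c_1+C_1)\delta^k<c_0\delta^k$, contradicting separation. But with $c_1=(3A_0^2)^{-1}c_0$ and $C_1=2A_0C_0$ one has
\[
A_0(c_1+C_1)=\frac{c_0}{3A_0}+2A_0^2C_0\;\geq\;\frac{c_0}{3A_0}+2c_0\;>\;c_0,
\]
since $C_0\geq c_0$ and $A_0\geq 1$; the hypothesis $12A_0^3C_0\delta\leq c_0$ bounds $\delta$, not $C_0/c_0$, and cannot rescue the inequality. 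No contradiction is reached, and the argument collapses at precisely the point you flagged as ``where the technical effort lies.'' The correct proof in \cite{HK} does not compare $z_\alpha^k$ to $z_\gamma^k$ at the single level $k$. Instead one shows that a point $x$ in the small ball $B(z_\alpha^k,c_1\delta^k)$ is, at \emph{every} finer level $\ell\geq k$, assigned to a center whose entire ancestor chain passes through $z_\alpha^k$; the constant $c_1=(3A_0^2)^{-1}c_0$ is calibrated so that the quasi-triangle inequality applied along that descending chain (together with the separation at level $\ell$, not level $k$) keeps the assignment from ever jumping to a different level-$k$ ancestor. This is the genuine content of the Hyt\"onen--Kairema construction and is missing from your sketch. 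A secondary, lesser concern: your claim that the tiebreaking at levels below $k$ affects $Q_\alpha^k$ ``only on topologically small boundaries'' that vanish upon taking interior and closure is asserted rather than proved, and is itself one of the nontrivial points in \cite{HK}.
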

We denote by $\mathscr{D}$ the family of dyadic cubes $\{Q^k_{\alpha}\}_{k\in\mathbb{Z}, \mathscr{A}_k}$ as in Theorem~\ref{thm:dyadiccubes}. We will refer to $\mathscr{D}$ as a \emph{Hyt\"onen-Kairema dyadic system} or \emph{grid on $X$}.  We will refer to any cube $Q^{k+1}_{\beta}\in\mathscr{D}$ that is contained in  $Q^k_{\alpha}\in\mathscr{D}$ as a \emph{child of $Q^k_{\alpha}$}. Note that every cube has at least one child and no more than $M$ children where $M$ is a uniform bound determined by the geometric doubling condition.

The existence of countable sets of  separated points as in~\eqref{eqn:sparseproperty}
 is ensured by the geometric doubling property of the quasi-metric space $(X,d)$.
For a given Hyt\"onen-Kairema dyadic system of cubes, we will call $c_0$ and $C_0$ the \emph{separation constants} of the system, $c_1$ and  $C_1$ the \emph{dilation constants} of the system, and $\delta$ the \emph{base side length} of the cubes,  collectively these will be called  \emph{structural constants} of the dyadic system or of the dyadic grid.
Note that in \eqref{prop_cube3},  as it should be,  the
dilation constants $c_1$ and $C_1$, determining the radii of the inner
and outer balls for each cube, satisfy  $0<c_1<C_1$,  since by
hypothesis the separation constants $0< c_0\leq C_0$, but \emph{a priori}  $C_1$ is not
necessarily less than one. We will sometimes denote by $B'_Q$ and $B''_Q$ the inner and outer balls of a dyadic cube $Q$.

Given a cube $Q^k_\alpha$, we denote the quantity $\delta^k$ by
$\ell(Q^k_\alpha)$, by analogy with the sidelength of a
Euclidean cube. We  define the  dilate $\lambda Q_{\alpha}^k$ of a dyadic cube
to be the  $\lambda$-dilate of its outer ball. That is,  for~$\lambda>0$,
\[
 \lambda Q_{\alpha}^k := B(z_\alpha^k,\lambda C_1\delta^k).
 \]

 By construction, the cubes are unions of quasi-metric balls,
hence in the setting of a space of homogeneous type, the cubes
are measurable.
 In the presence of a doubling measure $\mu$ (doubling with respect to balls)  the measure $\mu$  is ``doubling" with respect to Hyt\"onen-Kairema cubes. More precisely, 
 \begin{equation}\label{doubling-dilate-cubes1}
 \mu (\lambda Q^k_{\alpha})\leq \Big (\lambda {C_1}/{c_1}\Big  )^{\omega} \mu \big (B(z^k_1, c_1\delta^k) \big ) \leq \lambda^{\omega} \Big ({C_1}/{c_1}\Big )^{\omega} \mu (Q^k_{\alpha}).
 \end{equation}

Where the first inequality is a consequence of the doubling property~\eqref{eqn:upper dimension}, and the second simply because the inner ball of a cube sits inside the cube. Also note that by construction, specifically properties~\eqref{prop_cube3} and~\eqref{eqn:testconditionforcubes}, the ratio $C_1/c_1 = 6A_0^3 (C_0/c_0)\leq \delta^{-1}/2$, where $\delta\in (0,1)$ is the base side length of the cubes. Potentially the base side length parameter $\delta$ can be arbitrarily small, therefore making the upper bound in~\eqref{doubling-dilate-cubes1} arbitrarily  large.  Also, the ratio $C_1/c_1$ maybe under control, but that does not imply the outer dilation constant cannot be arbitrarily large, since a priori we could allow the inner dilation constant to also be arbitrarily large. These can be problematic,  therefore we single out the dyadic systems that do not suffer from these problems, and we call them  \emph{regular families of dyadic systems} or \emph{grids}.

\begin{definition}[Regular families of  dyadic systems]\label{def:regular-dyadic-grids}
Given a geometric doubling quasi-metric space $(X,d)$.
 A family  $\{\mathscr{D}^b\}_{b\in\mathscr{B}}$ of Hyt\"onen-Kairema  dyadic systems on $X$  is \emph{regular} if
the outer dilation constants $\{C_1^b\}_{b\in\mathscr{B}}$ and the ratio of the outer and inner dilation constants $\{C_1^b/c_1^b\}_{b\in\mathscr{B}}$ of the systems in the family are uniformly bounded by  constants   depending  only on the quasi-triangle constant $A_0$ of the quasi-metric $d$.
\end{definition}

In the proof of the main theorem in Section~\ref{Proof-Main-Theorem} we will have atomic decompositions in the setting  of a product of spaces of homogenenous type, $X_1\times X_2$,   with  atoms $a$ associated to dyadic grids $\mathcal{D}^a_i$ belonging to   regular families  on $(X_i, d_i,\mu_i)$ for $i=1,2$.  Often we will estimate the measure of  dilates of cubes $Q_i\in\mathscr{D}^a_i$ as in~\eqref{doubling-dilate-cubes1}, and will  say ``by doubling"
\begin{equation}\label{doubling-dilate-cubes}
\mu_i(\lambda Q_i)\lesssim \lambda^{\omega_i} \mu_i (Q_i).
\end{equation}
 The $\lesssim$  will only depend on the geometric constants of the spaces$X_i$ for $i=1,2$, but not on the structural constants of the dyadic grids, because  $\mathcal{D}_i^a$ belong to a regular family of dyadic systems. Elsewhere in the proof of the main theorem the outer dilation constants $ C_1^i$ will come into the estimates, and we will also need them to  be uniformly bounded by a constant depending only on the geometric constants of $X_i$ for $i=1,2$.

\subsection{Orthonormal basis, reproducing formula, and cut-off functions}
\label{sec:onb}

Auscher and Hyt\"onen \cite{AH} constructed a remarkable
orthonormal basis of $L^2(X)$, where $(X,d,\mu)$ is a space of
homogeneous type. To state their result, we first recall the
{\it reference dyadic points} $x_\alpha^k$ as follows.

Let $\delta$ be a fixed small positive parameter ($\delta \leq
10^{-3}A_0^{-10}$, where $A_0$ is the quasi-triangle constant
of the quasi-metric $d$). For $k=0$, let
$\mathscr{X}^0:=\{x_\alpha^0\}_{\alpha  \in \mathscr{A}_0}$ be
a maximal set  of  1-separated points in~$\XX$. Inductively,
for $k\in\mathbb{Z}_+$, let
$\mathscr{X}^k:=\{x_\alpha^k\}_{\alpha
    \in \mathscr{A}_k}
\supseteq \mathscr{X}^{k-1}$ and
$\mathscr{X}^{-k}:=\{x_\alpha^{-k}\}_{\alpha
    \in \mathscr{A}_{-k}} \subseteq
\mathscr{X}^{-(k-1)}$ be maximal $\delta^k$- and
$\delta^{-k}$-separated collections in $\mathscr{X}^{k-1}$ and
$\mathscr{X}^{-(k-1)}$, respectively. The families
$\mathscr{X}^k$ have the separation properties required in
Theorem~\ref{thm:dyadiccubes} for the construction of cubes, with separation constants $c_0=1$, $C_0=2A_0$,  base side length  the given $\delta\in (0,1)$, and
with the additional property that  $\mathscr{X}^{k}\subseteq
\mathscr{X}^{k+1}$ for $k\in\mathbb{Z}$. We denote the
corresponding cubes by $Q_{\alpha}^k$, and the dyadic system $\mathscr{D}^W$.  We will call $\mathscr{D}^W$, the \emph{reference dyadic system} or \emph{grid} underlying the wavelets.

 A randomization $\mathscr{X}^k(\omega )$ of the
families $\mathscr{X}^k$, as discussed in~\cite{HK,HM}, has the  separation properties for
each random parameter $\omega$ (in a certain space $\Omega$
equipped with a probability measure $\mathbb{P}_{\omega}$) needed to construct the
dyadic cubes $Q^k_{\alpha}(\omega)$ according to
Theorem~\ref{thm:dyadiccubes}. However, in \cite[Theorem 2.11]{AH})  they modify the construction
so that the randomized dyadic cubes
$Q^k_{\alpha}(\omega)$ have uniform (in the random parameter $\omega\in\Omega$) inner and
outer balls constants (in fact $c_1(\omega)= \frac16A_0^{-5}$ and $C_1(\omega)=
6A_0^4>1$ for all $\omega\in \Omega$), and   an
 additional ``small
boundary layer property" on average with respect to the
probability measure introduced by the randomization
\cite[Equation (2.3)]{AH}. It is in measuring the smallness of
the boundary layer that a small parameter $\eta>0$ appears, dependent only on the geometric constants of the space $X$.
This parameter $\eta$ is the H\"older regularity of the wavelets defined
in Theorem~\ref{thm:AH_orthonormal_basis}. In this randomized
construction, the reference dyadic point $x_{\alpha}^k$ may
also be viewed as the center of the random cubes
$Q^k_{\alpha}(\omega)$ for all $\omega$ belonging to the
parameter space $\Omega$. For the details of this beautiful
construction see \cite[Section 2]{AH}.

Now denote $\mathscr{Y}^{k}:=\mathscr{X}^{k+1}\backslash
\mathscr{X}^{k}$, and relabel the points $x_\alpha^k$ that
belong to~$\mathscr{Y}^k$ as $y_\alpha^k$, where $\alpha\in
\mathscr{A}_{k+1}\backslash \mathscr{A}_k$ and $k\in
\mathbb{Z}$. To each such point $y^k_{\alpha}$, Auscher and
Hyt\"onen associate a  function $\psi^k_{\alpha}$
that is almost supported near $y_{\alpha}^k$ at scale
$\delta^k$ (these functions are not compactly supported, but
have exponential decay). Also note that to each
Hyt\"onen-Kairema cube $Q_{\alpha}^k$ there corresponds the
point $x_{\alpha}^k$ and to each of the children of
$Q^k_{\alpha}$  there correspond other points
$x_{\beta}^{k+1}$, one of which coincides by construction with
$x_{\alpha}^k$.  Thus the number of indices $\alpha$ in
$\mathscr{A}_{k+1}\backslash \mathscr{A}_k$ corresponding to
$Q_{\alpha}^k$ is exactly  $N(Q_{\alpha}^k) - 1$, where
$N(Q_{\alpha}^k)$ denotes the number of children of
$Q_{\alpha}^k$. This is the right number of wavelets we will
need per cube if our intuition is guided by tensor product
wavelets in $\mathbb{R}^n$, or Haar functions on spaces of
homogeneous type based on Hyt\"onen-Kairema cubes, as
constructed for example in  \cite{KLPW}.  Later on we will
write $\alpha\in \mathscr{Y}^k$ meaning
$\alpha\in \mathscr{A}_{k+1}\backslash \mathscr{A}_k$.

We now state the theorem describing precisely the wavelets of Auscher and Hyt\"onen.

\begin{theorem}
[\cite{AH}, Theorem 7.1]
\label{thm:AH_orthonormal_basis}
    Let $(\XX,d,\mu)$ be a space of homogeneous type with
    quasi-triangle constant~$A_0$,  with  reference dyadic system of cubes $\mathscr{D}^W=\{Q^k_{\alpha}\}_{k\in\mathbb{Z}, \alpha\in\mathscr{A}^k}$ that has  base side length $\delta\in (0,1)$ and  small boundary layer parameter $\eta \in (0,1]$.
    Let
    \begin{equation}\label{eqn:defn_of_a}
        a
        := (1 + 2\log_2 A_0)^{-1}.
    \end{equation}
    There exists an orthonormal  basis
    $\{\psi_\alpha^k\}_{k\in\mathbb{Z}, \alpha\in \mathscr{A}_{k+1}\setminus\mathscr{A}_k}$ of $L^2(X)$ and finite constants $C>0$ and $\nu >0$
    such that for all $k\in\mathbb{Z}$ and $\alpha\in\mathscr{A}_{k+1}\setminus\mathscr{A}_k$ each function
    $\psi^k_{\alpha}$  satisfies the following conditions:
     {\rm  (i)} $\psi^k_{\alpha}$ is  centered at $y_\alpha^k\in  \mathscr{Y}^k$;
       {\rm  (ii)} $\psi^k_{\alpha}$ has  exponential decay determined by parameters $a$ and $\nu$, namely for all $x\in X$,
    \begin{equation}\label{eqn:exponential_decay}
        |\psi_\alpha^k(x)|
        \leq \frac{C}{ \sqrt{\mu \big (B(y_\alpha^k,\delta^k)\big )}}
            \exp\Big(-\nu\Big( \frac{d(y^k_\alpha,x)}{\delta^k}\Big)^a\Big);
    \end{equation}
      {\rm (iii)} $\psi^k_{\alpha}$ has {\rm (local)}  H\"older regularity with H\"older exponent $\eta$, namely  for all $x,y\in X$ such that  $d(x,y)\leq \delta^k$,
    \begin{equation}\label{eqn:Holder_regularity}
        |\psi_\alpha^k(x)-\psi_\alpha^k(y)|
        \leq \frac{C}{\sqrt{\mu \big (B(y_\alpha^k,\delta^k)\big )}}
            \Big( \frac{d(x,y)}{\delta^k}\Big)^\eta
            \exp\Big(-\nu\Big( \frac{d(y^k_\alpha,x)}{\delta^k}\Big)^a\Big);
    \end{equation}
  {\rm (iv)} $\psi^k_{\alpha}$ has vanishing mean, namely
    \begin{equation}\label{eqn:cancellation}
        \int_X \psi_\alpha^k(x)\,d\mu(x) = 0.
    \end{equation}

\end{theorem}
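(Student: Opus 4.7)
I would follow a multiresolution-analysis (MRA) strategy adapted to a space of homogeneous type equipped only with a quasi-metric and a doubling measure. The three target properties—$L^2$-orthonormality, exponential decay, and H\"older regularity at exponent $\eta$—are in genuine tension outside the Euclidean setting, which dictates the structure of the argument.

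First, I would construct nested approximation spaces $V_k\subset L^2(X)$ spanned by ``spline'' functions $s^k_\alpha$ indexed by the reference points $x^k_\alpha\in\mathscr{X}^k$. A natural choice is to smooth the sharp indicators $\chi_{Q^k_\alpha(\omega)}$ by repeated averaging across nested levels; the small boundary layer property of the randomised dyadic system, guaranteed on average with respect to the probability measure $\mathbb{P}_\omega$, is precisely what controls the modulus of continuity of the resulting splines at exponent $\eta$, while doubling plus iteration of the averaging operator converts volume bounds into distance bounds with decay rate $a=(1+2\log_2 A_0)^{-1}$. Nestedness $V_k\subset V_{k+1}$ then produces the orthogonal decomposition $L^2(X)=\bigoplus_{k\in\mathbb{Z}} W_k$ with $W_k := V_{k+1}\ominus V_k$.

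Second, I would produce a ``pre-wavelet'' family $\{\widetilde\psi^k_\alpha\}_{\alpha\in\mathscr{Y}^k}$ spanning $W_k$, by orthogonalising one level-$(k+1)$ spline centred at the new point $y^k_\alpha\in\mathscr{X}^{k+1}\setminus\mathscr{X}^k$ against the level-$k$ splines. Membership in $W_k$ automatically delivers the vanishing mean~\eqref{eqn:cancellation}, since constants lie locally in $V_k$, and the size, exponential decay, and H\"older estimates for $\widetilde\psi^k_\alpha$ are inherited from the splines. The main obstacle is then to orthonormalise within each fixed scale $k$ without destroying locality, which naive Gram--Schmidt would ruin. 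I would compute the Gram matrix $G^k=\bigl(\langle\widetilde\psi^k_\alpha,\widetilde\psi^k_\beta\rangle\bigr)_{\alpha,\beta\in\mathscr{Y}^k}$ and show that its off-diagonal entries decay exponentially in $d(y^k_\alpha,y^k_\beta)/\delta^k$ with the same rate $a$. A Schur/Wiener-algebra Neumann series argument, in the spirit of Jaffard's localisation theorem for almost-diagonal matrices, then shows that $(G^k)^{-1/2}$ enjoys the same decay pattern, so
\[
\psi^k_\alpha := \sum_{\beta\in\mathscr{Y}^k} \bigl((G^k)^{-1/2}\bigr)_{\alpha\beta}\,\widetilde\psi^k_\beta
\]
is an orthonormal basis of $W_k$ satisfying \eqref{eqn:exponential_decay} and \eqref{eqn:Holder_regularity}. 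A final Fubini/probabilistic selection fixes a single realisation $\omega\in\Omega$ for which all the estimates hold uniformly with the stated constants.

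The hardest step is preserving H\"older regularity at the target exponent $\eta$ through \emph{both} the spline construction and the matrix orthonormalisation, because the available regularity is bottlenecked by the quantitative small-boundary-layer estimate and must survive inversion of an almost-diagonal matrix whose near-diagonal structure is governed by the quasi-triangle constant $A_0$ and the doubling constant $C_\mu$. Once this quantitative matrix analysis is in hand, the size bound, the exponential decay, and the vanishing mean follow routinely from the MRA framework.
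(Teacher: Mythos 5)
The statement you were asked to prove is not proved in this paper at all: it is imported verbatim from Auscher and Hyt\"onen (cited here as \cite{AH}, Theorem 7.1), and the paper only records it as a black box on which the Hardy space theory is built. So there is no internal proof to compare against; the relevant benchmark is the original Auscher--Hyt\"onen construction, and your plan is essentially a faithful reconstruction of it: splines built from the randomised dyadic cubes whose small boundary layer estimate yields the H\"older exponent $\eta$, a nested MRA $V_k$ with $W_k=V_{k+1}\ominus V_k$ giving the cancellation \eqref{eqn:cancellation} for free, pre-wavelets attached to the new points $y^k_\alpha\in\mathscr{X}^{k+1}\setminus\mathscr{X}^k$, and a within-scale orthonormalisation by $(G^k)^{-1/2}$ using a Jaffard/Combes--Thomas-type stability result for matrices with stretched-exponential off-diagonal decay, the exponent $a=(1+2\log_2A_0)^{-1}$ being exactly the loss forced by iterating the quasi-triangle inequality along chains. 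Two points in your sketch deviate from the actual construction and are worth correcting. First, the splines are not obtained by smoothing indicators through repeated averaging across levels and there is no ``final Fubini/probabilistic selection'' of a single realisation $\omega$: Auscher--Hyt\"onen define $s^k_\alpha(x):=\mathbb{P}_\omega\big(x\in\overline{Q}^k_\alpha(\omega)\big)$, so the randomness is integrated out at the spline stage and the resulting basis is deterministic once the reference points are fixed (this is also how the present paper quotes the splines in Section 3.2). Second, your plan correctly isolates but does not actually carry out the hard quantitative steps: the two-scale refinement relation and the completeness of the MRA ($\overline{\bigcup_k V_k}=L^2(X)$, $\bigcap_k V_k=\{0\}$), the proof that the pre-wavelets indeed span $W_k$ with the right count ($N(Q)-1$ wavelets per cube), the exponential decay of the projection kernels onto $V_k$, and the preservation of both the decay \eqref{eqn:exponential_decay} and the regularity \eqref{eqn:Holder_regularity} under $(G^k)^{-1/2}$. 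As an outline it identifies the correct architecture, but as a proof it is incomplete precisely where the original paper does its heaviest work; for the purposes of the present paper, the intended ``proof'' is simply the citation.
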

In Theorem~\ref{thm:AH_orthonormal_basis},  the constants $C $, $\nu$,
$\eta$, and $\delta$ are  independent of $k$, $\alpha$,
and~$y_\alpha^k$, they only depend on the geometric constants of the space $X$: quasi-triangle inequality, doubling constant, and upper dimension.
The constant $\delta\in (0,1)$,
 determining the side length of the reference dyadic cubes, is a fixed small parameter, more precisely, $\delta \leq 10^{-3} A_0^{-10}$.

In what follows, we refer to the functions $\psi_\alpha^k$ as \emph{Auscher-Hyt\"onen
wavelets} or simply \emph{wavelets}. The wavelet expansion, convergent in the sense of $L^2(X)$, is given by
\begin{equation}\label{eqn:AH_reproducing formula}
    f(x)
    = \sum_{k\in\mathbb{Z}}\sum_{\alpha \in \mathscr{Y}^k}
        \langle f,\psi_{\alpha}^k \rangle \psi_{\alpha}^k(x).
\end{equation}
Here $\langle f,g\rangle:= \int_X f(x)\overline{g(x)} d\mu(x)$ denotes the $L^2$-pairing.
The Auscher-Hyt\"onen  wavelets
$\{\psi_\alpha^k\}_{k\in\mathbb Z,\alpha\in \mathscr{Y}_k}$
 form an unconditional basis of $L^q(X)$ for all $q$ with $1 < q < \infty$;
see \cite[Corollary~10.4]{AH}. Therefore, the reproducing
formula~\eqref{eqn:AH_reproducing formula} also holds for $f\in
L^q(X)$. Note that for the reproducing
formula~\eqref{eqn:AH_reproducing formula} to hold, it suffices
that the measure~$\mu$ is Borel regular; see
addendum~\cite{AH2}. Also note that it is possible  to build different wavelets based on the same reference dyadic points \cite{AH}.

In the Auscher-Hyt\"onen  construction of wavelets, the reference dyadic grids $\mathscr{D}^W$  form a regular family of dyadic systems according to Definition~\ref{def:regular-dyadic-grids}, because the outer dilation constants and the ratio of the outer and inner dilation constants are respectively, $C_1=
6A_0^4>1$ and   $C_1/c_1= 36A_0^9$, for all the systems in the family.

For a general space of homogeneous type, the H\"older exponent~$\eta$ of the wavelets is bounded above by a constant $\eta_0$  ($0<\eta  < \eta_0$) 
that only depends  on the geometric parameters of the geometrically doubling space $(X,d)$ \cite{AH}. The constant $\eta_0$ is  usually  much smaller than one, even in the case of metric spaces.  In \cite{HT}, Hyt\"onen and Tapiola presented a different construction of the metric wavelets  that  allows-to-obtain H\"older-regularity for any exponent $\eta<1$, strictly below but arbitrarily close to one.

The  wavelets' regularity parameter $\eta$  enters into the definition of the Hardy spaces $H^p(X)$ on  spaces of homogeneous type $(X,d,\mu)$. In particular, $\eta$ together with an upper dimension $\omega$ of the doubling measure $\mu$, determines the range of $p$ for which the Hardy space is defined, namely ${\omega}/(\eta +\omega) < p \leq 1$.  The larger $\eta$ is, the smaller $p$ can be chosen. A similar phenomenon occurs  for the Hardy spaces on product  spaces of homogeneous type, as pointed out in \cite{HLW}, see also Section~\ref{sec:productHp}. This is parallel to the theory on $\mathbb{R}^n$ where the theory of $H^p$-spaces with just the cancellation property is limited to $n/(n+1) <p\leq 1$, and to access smaller values of $p$, the test functions must have larger number of vanishing moments, unavailable in general spaces of homogeneous type.

{The construction of the wavelets hinges on the construction of certain ``splines" on $X$ defined using the probability measure $\mathbb{P}_{\omega}$ on the space $\Omega$.  For every $(k,\alpha)\in \mathbb{Z}\times \mathscr{Y}^k$ Auscher and Hyt\"onen \cite[Equation (3.1)]{AH} define the spline function $s^k_{\alpha}:X\to [0,1]$ by
$$ s^k_{\alpha}(x):= \mathbb{P}_{\omega}\big (x\in \overline{Q}^k_{\alpha}(\omega)\big ).$$
The spline function $s^k_{\alpha}$ are bumps supported on a ball centered at $x^k_{\alpha}$ and radius roughly $\delta^k$, and they satisfy some interpolation, reproducing, and H\"older-continuity properties, described precisely in
\cite[Theorem 3.1]{AH}.}

The splines in turn were used in~\cite{HLW} to construct
smooth cut-off functions.

\begin{lemma}[\cite{HLW}, Lemma 3.8]\label{lem:cut-off-functions}
    For each fixed $x_0\in X$ and $R_0\in (0,\infty)$, there exists
    a smooth cut-off function $h(x)$ such that $0\leq h(x)\leq 1$,
    \begin{equation*}
        h(x)
        \equiv 1 \;\; \mbox{when $x\in B(x_0,R_0/4)$},
        \quad\quad
        h(x)
        \equiv 0 \;\; \mbox{when $x\in B(x_0,A_0^2R_0)^c$},
    \end{equation*}
    and there exists a positive constant $C$, independent of $x_0$,
    $R_0$, $x$, and $y$ (dependent only on geometric constants of the space $X$) such that
    \begin{equation*}
        |h(x)-h(y)|
        \leq C \Big ({d(x,y)}/{R_0}\Big )^{\eta}.
    \end{equation*}
\end{lemma}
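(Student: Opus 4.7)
The plan is to construct $h$ as a finite sum of Auscher-Hyt\"onen splines $s_\alpha^{k_0}$ at a suitably chosen dyadic scale $k_0$, exploiting the partition-of-unity, compact support, and H\"older-continuity properties of the splines recorded in \cite[Theorem~3.1]{AH}. First, I would choose $k_0\in\mathbb{Z}$ so that the support radius of each $s_\alpha^{k_0}$ (a fixed geometric multiple of $\delta^{k_0}$) is less than $R_0/(8A_0^2)$; since $\delta\in(0,1)$ this is always possible, and the choice can be arranged so that $\delta^{k_0}\sim R_0$ up to constants depending only on the geometric constants of $X$.

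Next, define the index set and the cut-off function by
\[
    I := \{\alpha\in\mathscr{A}_{k_0}: x_\alpha^{k_0}\in B(x_0, A_0 R_0/2)\},
    \qquad
    h(x) := \sum_{\alpha\in I} s_\alpha^{k_0}(x).
\]
The bound $0\le h\le 1$ is immediate from $s_\alpha^{k_0}\ge 0$ together with the partition of unity $\sum_\alpha s_\alpha^{k_0}\equiv 1$. For $x\in B(x_0, R_0/4)$, any $\alpha$ with $s_\alpha^{k_0}(x)\ne 0$ satisfies $d(x, x_\alpha^{k_0}) < R_0/(8A_0^2)$, so by the quasi-triangle inequality $d(x_0, x_\alpha^{k_0}) < A_0(R_0/4 + R_0/(8A_0^2)) < A_0 R_0/2$ and hence $\alpha\in I$; the sum defining $h(x)$ then equals the full partition-of-unity sum, giving $h(x)=1$. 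Conversely, for $\alpha\in I$ the support of $s_\alpha^{k_0}$ lies in $B(x_\alpha^{k_0}, R_0/(8A_0^2))$, and a second application of the quasi-triangle inequality places this ball inside $B(x_0, A_0(A_0 R_0/2 + R_0/(8A_0^2))) \subset B(x_0, A_0^2 R_0)$, so $h$ vanishes outside $B(x_0, A_0^2 R_0)$.

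The H\"older estimate is handled by a case split. If $d(x,y)\ge R_0$, then $|h(x)-h(y)|\le 1\le (d(x,y)/R_0)^\eta$ is trivial. If $d(x,y)<R_0$, then only boundedly many indices $\alpha\in I$ have $s_\alpha^{k_0}$ nonvanishing at either $x$ or $y$, the uniform bound following from geometric doubling applied to the $\delta^{k_0}$-separated centers $x_\alpha^{k_0}$ in a fixed dilate of $B(x_0,A_0^2R_0)$. For each such $\alpha$ the H\"older estimate $|s_\alpha^{k_0}(x)-s_\alpha^{k_0}(y)|\lesssim (d(x,y)/\delta^{k_0})^\eta$ from \cite[Theorem~3.1]{AH}, combined with $\delta^{k_0}\sim R_0$, yields the required bound with constant depending only on the geometric constants of $X$. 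The main obstacle is the tight bookkeeping of quasi-triangle constants: the ratio of the outer radius $A_0^2 R_0$ to the inner radius $R_0/4$ is only $4A_0^2$, and the choice of $k_0$ making the spline support radius $\lesssim R_0/A_0^2$ is exactly what creates the slack to simultaneously force $h\equiv 1$ on $B(x_0,R_0/4)$ and $h\equiv 0$ on $B(x_0,A_0^2 R_0)^c$.
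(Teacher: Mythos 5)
Your construction is correct and is essentially the same as the proof of the cited result \cite[Lemma~3.8]{HLW}: one sums the Auscher--Hyt\"onen splines $s^{k_0}_\alpha$ at a single scale $\delta^{k_0}\sim R_0$ over those centers $x^{k_0}_\alpha$ lying in a fixed ball around $x_0$, and then the partition-of-unity, support, and H\"older properties of the splines from \cite[Theorem~3.1]{AH} give $0\le h\le 1$, the identities $h\equiv 1$ on $B(x_0,R_0/4)$ and $h\equiv 0$ off $B(x_0,A_0^2R_0)$, and the global H\"older bound with exponent $\eta$. Your quasi-triangle bookkeeping and the bounded-overlap count via the $\delta^{k_0}$-separation of the centers are exactly the ingredients used there, so no gap remains.
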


Note that the cut-off functions satisfy a global H\"older
regularity condition with the same exponent $\eta$ as the
wavelets in Theorem~\ref{thm:AH_orthonormal_basis}. We will use
these smooth cut-off functions on $X$ in the proof of the key
decomposition Lemma~\ref{lemma-decomposition} for the
wavelets.

\subsection{Test function spaces and distributions}\label{sec:testfunctions}

We now recall the definition of the  test functions and
distributions on $(X,d,\mu)$ that will enter into the
definition of the Hardy spaces on product of spaces of
homogeneous type. In particular, we observe that  the normalized Auscher-Hyt\"onen
wavelets are test functions.

Let $V_r(x) := \mu \big (B(x,r)\big ) \;\; \mbox{for $x\in X$,
$r>0$ and} \;\; V(x,y):=\mu\big (B(x,d(x,y))\big ) \;\;
\mbox{for $x,y\in X$}.$

\begin{definition}[Test functions \cite{HLW}, Definition~3.1]
    \label{def-of-test-func-space}
    Fix $x_0\in X$, $r > 0$, $\beta\in(0,\eta]$ where $\eta
    \leq 1$ is the H\"older regularity exponent from
    Theorem~\ref{thm:AH_orthonormal_basis}, and $\gamma > 0$. A
    $\mu$-measurable function $f$ defined on~$X$ is said to be
    a {\it test function of type $(x_0,r,\beta,\gamma)$
    centered at $x_0\in X$} if $f$ satisfies the following
    three conditions:
    \begin{enumerate}
        \item[(i)] \textup{(Size condition)} There is a constant $C > 0$ such that for all $x\in
            X$ 
            \[
                |f(x)|
                \leq C \,\frac{1}{V_{r}(x_0) + V(x,x_0)}
                \Big(\frac{r}{r + d(x,x_0)}\Big)^{\gamma}.
            \]

        \item[(ii)] \textup{(Local H\"older regularity
            condition)} There is a constant $C > 0$ such that for all $x, y\in X$ with $d(x,y)
            < (2A_0)^{-1}(r + d(x,x_0))$  
            \[
                |f(x) - f(y)|
                \leq C \Big(\frac{d(x,y)}{r + d(x,x_0)}\Big)^{\beta}
                \frac{1}{V_{r}(x_0) + V(x,x_0)} \,
                \Big(\frac{r}{r + d(x,x_0)}\Big)^{\gamma}.
            \]

        \item[(iii)] \textup{(Cancellation condition)}
            \[
                \int_X f(x) \, d\mu(x)
                = 0.
            \]
    \end{enumerate}
\end{definition}

These test functions generalize the test functions in
Definition~\ref{def-of-test-func-space1}, in the case when $\mu
(B'(x,r))\sim r$ and the quasi-metric $d'$ has the H\"older
regularity~\eqref{smetric} with exponent~$\theta$. Notice that
in this case  $\big (V_{r}(x_0) + V(x,x_0)\big ) \sim \big
(r+d'(x,x_0)\big )$, and both definitions coincide. One can
also compare to corresponding definitions in~\cite{HMY1,HMY2}
in the case when the quasi-metric $d$ satisfies the H\"older
regularity \eqref{smetric} with exponent $\theta$ and the
measure satisfies the doubling condition~\eqref{eqn:doubling
condition} and the reverse doubling
condition~\eqref{eqn:reverse doubling}. In these cases the only
difference is that $\beta$ is in $(0,\theta ]$ instead of being
in $(0,\eta ]$; otherwise the definitions are identical.

Let  $G(x_0,r,\beta,\gamma)$ denote the set of all test
functions of type $(x_0,r,\beta,\gamma)$. The norm on
$G(x_0,r,\beta,\gamma)$ is defined by
$
    \|f\|_{G(x_0,r,\beta,\gamma)}
    := \inf\{C>0:\ {\rm(i)\  and \ (ii)}\ {\rm hold} \}.
$

Now fix $x_0\in X$. Let $G(\beta,\gamma) :=
G(x_0,1,\beta,\gamma)$. It is easy to check that
$G(x_1,r,\beta,\gamma) = G(\beta,\gamma)$ with equivalent norms
for each fixed $x_1\in X$ and $r > 0$. Furthermore, it is also
easy to see that if $0 < \beta \leq \eta$ then
$G(\eta,\gamma)\subset G(\beta,\gamma)$ and $G(\eta,\gamma)$ is
a Banach space with respect to the norm on $G(\eta,\gamma)$.

For $0<\beta\leq \eta$, let $\GGs(\beta,\gamma)$ be the completion
of the space $G(\eta,\gamma)$ in the norm of
$G(\beta,\gamma)$. For $f\in \GGs(\beta,\gamma)$, we define
$\|f\|_{\GGs(\beta,\gamma)} := \|f\|_{G(\beta,\gamma)}$. The spaces $\GGs(\beta,\gamma)$ are nested, if $0<\beta\leq \beta'$ and $0<\gamma\leq \gamma'$ then $\GGs(\beta',\gamma') \subset \GGs(\beta,\gamma)$.

The distribution space $(\GGs(\beta,\gamma))'$ is
the set of all bounded linear functionals on 
$\GGs(\beta,\gamma)$. We denote by $\langle f,h\rangle$ the
natural pairing of elements $h\in \GGs(\beta,\gamma)$ and $f\in
(\GGs(\beta,\gamma))'$.

The normalized Auscher-Hyt\"onen wavelets are test functions in $G(\eta,\gamma)$ for any $\gamma>0$.
Later on we will take advantage of this fact, inherited from the exponential decay of the wavelets, and choose $\gamma$ to be large enough.
The reproducing formula holds in the space of test
functions  and distributions with parameters $\beta',\gamma'\in (0,\eta)$. More precisely, the following
propositions hold.

\begin{prop}[\cite{HLW}, Theorem~3.3]
    \label{prop wavelet is test function}
    Suppose $\{\psi_\alpha^k\}_{k\in\mathbb{Z}, \alpha\in
    \mathscr{Y}_k}$ is an orthonormal basis as in
    Theorem~\ref{thm:AH_orthonormal_basis}, with H\"older
    regularity of order~$\eta$. Then for each $k\in\mathbb{Z}$,
    $\alpha\in\mathcal{Y}^k$, and $\gamma>0$, the normalized wavelet
    $\psi_\alpha^k(x)/ \sqrt{\mu \big (B(y_\alpha^k,\delta^k)
    \big)}$ belongs to the set $G(y^k_{\alpha},\delta^k,\eta,\gamma)$ of test functions of type
    $(y_{\alpha}^k,\delta^k,\eta,\gamma)$ centered at $y^k_{\alpha}\in X$.

\end{prop}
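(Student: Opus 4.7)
The plan is to check the three defining properties of a test function in $G(y_\alpha^k,\delta^k,\eta,\gamma)$ for the normalized wavelet $f(x):=\psi_\alpha^k(x)/\sqrt{\mu(B(y_\alpha^k,\delta^k))}$, using the size, H\"older, and cancellation estimates of Theorem~\ref{thm:AH_orthonormal_basis}. The cancellation condition (iii) is immediate: it is precisely~\eqref{eqn:cancellation}, scaled by a positive constant. The two remaining ingredients are (a) the elementary inequality $e^{-\nu t^a}\lesssim_{N,\nu,a}(1+t)^{-N}$ valid for every $N>0$, which converts the exponential decay in~\eqref{eqn:exponential_decay}--\eqref{eqn:Holder_regularity} into polynomial decay of any desired order; and (b) the doubling bound $\mu(B(y_\alpha^k,D))\lesssim (D/\delta^k)^{\omega}\mu(B(y_\alpha^k,\delta^k))$ whenever $D\geq\delta^k$, which allows one to replace $\mu(B(y_\alpha^k,\delta^k))$ in the denominator by $V_{\delta^k}(y_\alpha^k)+V(x,y_\alpha^k)$ at the cost of a factor $(D/\delta^k)^{\omega}$ with $D:=\delta^k+d(x,y_\alpha^k)$.

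For the size condition (i), I first apply~\eqref{eqn:exponential_decay} to bound $|f(x)|\lesssim \mu(B(y_\alpha^k,\delta^k))^{-1}\exp(-\nu(d(y_\alpha^k,x)/\delta^k)^a)$. Choosing $N=\gamma+\omega$ in (a) gives $\exp(-\nu(d(y_\alpha^k,x)/\delta^k)^a)\lesssim (\delta^k/D)^{\gamma+\omega}$. In the regime $d(x,y_\alpha^k)\leq\delta^k$ we have $V_{\delta^k}(y_\alpha^k)+V(x,y_\alpha^k)\sim\mu(B(y_\alpha^k,\delta^k))$ by doubling, and the result follows. In the regime $d(x,y_\alpha^k)>\delta^k$, (b) yields $\mu(B(y_\alpha^k,\delta^k))^{-1}\lesssim (D/\delta^k)^{\omega}/(V_{\delta^k}(y_\alpha^k)+V(x,y_\alpha^k))$, and the surplus factor $(D/\delta^k)^{\omega}$ is absorbed by the $(\delta^k/D)^{\gamma+\omega}$ produced in (a), leaving exactly $(\delta^k/D)^{\gamma}/(V_{\delta^k}(y_\alpha^k)+V(x,y_\alpha^k))$, as required.

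For the local H\"older condition (ii), suppose $d(x,y)<(2A_0)^{-1}D$ and split into two subcases. In the subcase $d(x,y)\leq\delta^k$, apply~\eqref{eqn:Holder_regularity} directly: it produces the factor $(d(x,y)/\delta^k)^{\eta}$, which I rewrite as $(d(x,y)/D)^{\eta}(D/\delta^k)^{\eta}$. The surplus $(D/\delta^k)^{\eta}$ together with the doubling factor $(D/\delta^k)^{\omega}$ from (b) is absorbed by choosing $N=\gamma+\omega+\eta$ in the exponential-to-polynomial conversion (a). In the subcase $d(x,y)>\delta^k$, I use the trivial bound $|f(x)-f(y)|\leq|f(x)|+|f(y)|$ and apply the size estimate already proved to each term. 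Here the restriction $d(x,y)<(2A_0)^{-1}D$ forces $d(y,y_\alpha^k)\sim d(x,y_\alpha^k)$ by the quasi-triangle inequality, so both $|f(x)|$ and $|f(y)|$ enjoy the same size bound in terms of $D$; finally I manufacture the missing factor $(d(x,y)/D)^{\eta}$ from the identity $1<(d(x,y)/\delta^k)^{\eta}=(d(x,y)/D)^{\eta}(D/\delta^k)^{\eta}$, absorbing the extra $(D/\delta^k)^{\eta}$ into the polynomial decay as before.

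The main bookkeeping obstacle is keeping track of the three separate sources of $D/\delta^k$ factors (from the doubling comparison of $\mu(B(y_\alpha^k,\delta^k))$ with $V_{\delta^k}(y_\alpha^k)+V(x,y_\alpha^k)$, from rewriting $(d(x,y)/\delta^k)^{\eta}$ in terms of $D$, and from the requirement to produce $(\delta^k/D)^{\gamma}$) and verifying that the exponential decay in~\eqref{eqn:exponential_decay}--\eqref{eqn:Holder_regularity} dominates all of them simultaneously once $N$ in the conversion inequality is taken large enough (specifically $N\geq\gamma+\omega+\eta$). Once this is done, the implicit constants depend only on $\gamma$, $\eta$, $a$, $\nu$, and the geometric constants of $X$, so the normalized wavelet belongs to $G(y_\alpha^k,\delta^k,\eta,\gamma)$ with a uniform bound.
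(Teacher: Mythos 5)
Your proposal is correct: the three conditions of Definition~\ref{def-of-test-func-space} follow exactly as you say, by combining the cancellation~\eqref{eqn:cancellation}, the conversion of the exponential decay in~\eqref{eqn:exponential_decay}--\eqref{eqn:Holder_regularity} into polynomial decay of arbitrarily high order, and the doubling comparison of $\mu\big(B(y_\alpha^k,\delta^k)\big)$ with $V_{\delta^k}(y_\alpha^k)+V(x,y_\alpha^k)$, with the two-case treatment of the H\"older condition (direct use of~\eqref{eqn:Holder_regularity} when $d(x,y)\leq\delta^k$, and the size bound with an extra $(\delta^k/D)^{\eta}$ of decay when $d(x,y)>\delta^k$) being the standard device. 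The present paper does not reprove this statement but cites it from \cite{HLW}, and your argument is essentially the canonical proof used there and mirrored in the estimates of Lemma~\ref{lemma-decomposition}, so there is nothing to flag beyond noting that the comparability of the denominators $V_{\delta^k}(y_\alpha^k)+V(y,y_\alpha^k)\sim V_{\delta^k}(y_\alpha^k)+V(x,y_\alpha^k)$ in your second subcase deserves the one-line doubling argument (both are comparable to $\mu\big(B(y_\alpha^k,\delta^k+d(x,y_\alpha^k))\big)$) that your sketch implicitly uses.
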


\begin{prop}[\cite{HLW}, Theorem~3.4]
    \label{thm reproducing formula test function}
    Suppose that $f\in \GGs(\beta,\gamma)$ with
    $\beta$, $\gamma \in (0,\eta)$.      Then the reproducing
    formula~\eqref{eqn:AH_reproducing formula} holds in
    $\GGs(\beta',\gamma')$ for each $\beta'\in(0,\beta)$ and
    $\gamma'\in(0,\gamma)$.
\end{prop}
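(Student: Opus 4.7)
The plan is a standard three-step strategy: reduce to the dense subclass $G(\eta,\gamma)$, prove the formula there with uniform bounds, and extend by density. The reduction is free because, by definition, $G(\eta,\gamma)$ is dense in $\GGs(\beta,\gamma)$ under the $G(\beta,\gamma)$-norm, and since $\beta'<\beta$, $\gamma'<\gamma$ we have a continuous embedding $G(\beta,\gamma)\hookrightarrow G(\beta',\gamma')$. Thus it suffices to prove the formula for $f\in G(\eta,\gamma)$ and to check that the map $g\mapsto\sum_{k,\alpha}\langle g,\psi_\alpha^k\rangle\psi_\alpha^k$ is bounded from $G(\beta,\gamma)$ into $G(\beta',\gamma')$; extension by continuity to $\GGs(\beta,\gamma)$ then gives the result in $\GGs(\beta',\gamma')$.

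So fix $f\in G(\eta,\gamma)$. The size condition with $\gamma>0$ combined with the doubling of $\mu$ forces $f\in L^2(X)$, so the wavelet expansion~\eqref{eqn:AH_reproducing formula} already converges to $f$ in $L^2(X)$ by Theorem~\ref{thm:AH_orthonormal_basis}. What remains is to upgrade this convergence to $G(\beta',\gamma')$. Writing the partial sum
\[
S_{M,N}f(x):=\sum_{k=M}^{N}\sum_{\alpha\in\mathscr{Y}^k}\langle f,\psi_\alpha^k\rangle\psi_\alpha^k(x),
\]
the cancellation condition (iii) of Definition~\ref{def-of-test-func-space} for $S_{M,N}f$ is automatic from~\eqref{eqn:cancellation}, so the job is to control the size and local H\"older conditions uniformly in $M,N$, and to show $f-S_{M,N}f\to 0$ in $G(\beta',\gamma')$ as $M\to-\infty$ and $N\to+\infty$.

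The central input is an almost-orthogonality estimate on $\langle f,\psi_\alpha^k\rangle$. By Proposition~\ref{prop wavelet is test function} the normalized wavelet $\psi_\alpha^k/\sqrt{\mu(B(y_\alpha^k,\delta^k))}$ is itself a test function of type $(y_\alpha^k,\delta^k,\eta,\sigma)$ for arbitrarily large $\sigma>0$, so a standard cancellation-plus-H\"older computation paired against $f\in G(x_0,1,\beta,\gamma)$ produces a gain of $\delta^{|k|\beta}$ from the scale mismatch, together with a spatial decay envelope at scale $\max(1,\delta^k)$ centered at~$x_0$. Summing over $\alpha\in\mathscr{Y}^k$ (the counting is controlled by doubling because the points $y_\alpha^k$ are $\delta^k$-separated), and then over $k$, produces geometric series of ratios $\delta^{\beta-\beta'}$ and $\delta^{\gamma-\gamma'}$ which converge precisely because $\beta'<\beta$ and $\gamma'<\gamma$; using~\eqref{eqn:Holder_regularity} in place of~\eqref{eqn:exponential_decay} handles the H\"older differences. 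Running the same argument on the tail $k\notin[M,N]$ yields $\|f-S_{M,N}f\|_{G(\beta',\gamma')}\to 0$.

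The main obstacle is the H\"older estimate for the tail: for $x,y$ with $d(x,y)\leq (2A_0)^{-1}(1+d(x,x_0))$ one must split the $k$-sum at the threshold $\delta^k\sim d(x,y)$, applying the wavelet H\"older bound~\eqref{eqn:Holder_regularity} at scales $\delta^k\geq d(x,y)$ and invoking~\eqref{eqn:exponential_decay} twice at finer scales, and then align each contribution with the normalization $\bigl(V_r(x_0)+V(x,x_0)\bigr)^{-1}$ demanded by Definition~\ref{def-of-test-func-space}(ii). The small loss $\beta-\beta'>0$ is exactly what is needed to render this split summable while retaining a H\"older exponent~$\beta'$. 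Once these uniform bounds are established on the dense subclass, the density argument of the opening paragraph finishes the proof.
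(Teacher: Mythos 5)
The paper itself contains no proof of this proposition: it is quoted verbatim from \cite{HLW}, Theorem~3.4, so there is no in-paper argument to compare line by line. Your outline --- reduce to the dense class $G(\eta,\gamma)$, prove coefficient almost-orthogonality estimates against the wavelets, sum over $\alpha\in\mathscr{Y}^k$ using the $\delta^k$-separation and doubling, sum over $k$ using the losses $\beta-\beta'$ and $\gamma-\gamma'$, split the H\"older estimate at the threshold $\delta^k\sim d(x,y)$, and identify the limit through the $L^2$ convergence of~\eqref{eqn:AH_reproducing formula} --- is the standard route and is essentially how the cited result is established; I see no step that would fail. Two points should be stated more carefully. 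First, the gain $\delta^{|k|\beta}$ you announce is only what the fine scales ($k\ge 0$) deliver, from the wavelet's cancellation paired with the $\beta$-H\"older regularity of $f$; at coarse scales ($k<0$) one instead uses the cancellation of $f$ against the wavelet's $\eta$-H\"older regularity~\eqref{eqn:Holder_regularity}, and because $\gamma<\eta$ the usable gain there is capped by the decay exponent $\gamma$ of $f$ --- which is precisely where the loss $\gamma-\gamma'$ is spent, so your later sentence about the two geometric ratios is the correct accounting, but the blanket ``$\delta^{|k|\beta}$'' is not. Second, for the density step you need the partial-sum operators $S_{M,N}$ to be bounded from $G(\beta,\gamma)$ into $G(\beta',\gamma')$ uniformly in $M,N$ (boundedness of the full map alone does not transfer convergence from the dense class to all of $\GGs(\beta,\gamma)$); this does follow from your coefficient estimates, since they use only the $(\beta,\gamma)$ size, regularity and cancellation of $f$ and not its $\eta$-regularity, but it is the hinge of the argument and deserves to be made explicit, together with the routine check that the partial sums (and hence the limit) lie in $\GGs(\beta',\gamma')$ and not merely in $G(\beta',\gamma')$.
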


As a consequence, the reproducing formula also
holds for distributions.

\begin{corollary}[\cite{HLW}, Corollary~3.5]
    \label{coro reproducing formula distribution}
    The reproducing formula~\eqref{eqn:AH_reproducing formula}
    holds in $(\GGs(\beta',\gamma'))'$, when $\beta', \gamma' \in (0,\eta)$.
\end{corollary}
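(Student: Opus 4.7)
The plan is to deduce the distributional reproducing formula from the test–function version in Proposition~\ref{thm reproducing formula test function} by a duality argument. Fix $\beta',\gamma'\in(0,\eta)$ and $f\in(\GGs(\beta',\gamma'))'$. The goal is to show that for every $h\in\GGs(\beta',\gamma')$,
\[
  \langle f,h\rangle
  = \sum_{k\in\mathbb{Z}}\sum_{\alpha\in\mathscr{Y}^k}
      \langle f,\psi_\alpha^k\rangle\,\langle \psi_\alpha^k,h\rangle,
\]
which is precisely the assertion that \eqref{eqn:AH_reproducing formula} holds in $(\GGs(\beta',\gamma'))'$. The individual coefficients $\langle f,\psi_\alpha^k\rangle$ make sense: by Proposition~\ref{prop wavelet is test function} the normalized wavelets lie in $G(y_\alpha^k,\delta^k,\eta,\gamma)$ for any $\gamma>0$, and these spaces coincide (with equivalent norms) with $G(\beta',\gamma')$ for the fixed base point, so each $\psi_\alpha^k$ belongs to $\GGs(\beta',\gamma')$ and may be paired with $f$.

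First, I would arrange to apply Proposition~\ref{thm reproducing formula test function} to $h$ with convergence taking place in the very same space $\GGs(\beta',\gamma')$ on which $f$ acts continuously. Proposition~\ref{thm reproducing formula test function} gives convergence in $\GGs(\beta'',\gamma'')$ for $\beta''<\beta$ and $\gamma''<\gamma$, so directly it would require $h$ to lie in a strictly smaller space. I would circumvent this by a standard approximation: since $\GGs(\beta',\gamma')$ is the $G(\beta',\gamma')$–completion of $G(\eta,\gamma')$, pick a sequence $h_n\in G(\eta,\gamma')$ converging to $h$ in $\GGs(\beta',\gamma')$. Each $h_n$ lies in $\GGs(\beta,\gamma)$ for any $\beta\in(\beta',\eta)$ and $\gamma\in(\gamma',\eta)$, so Proposition~\ref{thm reproducing formula test function} yields
\[
  h_n = \sum_{k,\alpha}\langle h_n,\psi_\alpha^k\rangle\,\psi_\alpha^k
  \qquad\text{in }\GGs(\beta',\gamma').
\]

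Second, applying the continuous functional $f$ term by term to this convergent series and using conjugate symmetry of the $L^2$–pairing,
\[
  \langle f,h_n\rangle
  = \sum_{k,\alpha}\overline{\langle h_n,\psi_\alpha^k\rangle}\,\langle f,\psi_\alpha^k\rangle
  = \sum_{k,\alpha}\langle f,\psi_\alpha^k\rangle\,\langle \psi_\alpha^k,h_n\rangle.
\]
Passing to the limit $n\to\infty$ on the left is immediate from the continuity of $f$ on $\GGs(\beta',\gamma')$ and $h_n\to h$. On the right one uses that the partial sums of the expansion form a Cauchy net in $\GGs(\beta',\gamma')$ uniformly in $n$, and that $f$ commutes with the limit in $n$ and the sum in $(k,\alpha)$ simultaneously; equivalently, one may directly expand $h$ itself by combining Proposition~\ref{thm reproducing formula test function} applied to an auxiliary $h'\in\GGs(\beta'+\epsilon,\gamma'+\epsilon)$ approximating $h$, and then take $\epsilon\to 0$ using that $f$ is continuous for the $G(\beta',\gamma')$ norm.

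The main obstacle is this last interchange: Proposition~\ref{thm reproducing formula test function} is formulated with strict inequality between the regularity parameters of the source space and the convergence space, whereas here we need convergence in the same space $\GGs(\beta',\gamma')$ where $f$ is defined. The approximation argument outlined above resolves this by exploiting the density of $G(\eta,\gamma')$ in $\GGs(\beta',\gamma')$ together with the nesting $\GGs(\beta'',\gamma'')\subset\GGs(\beta',\gamma')$; the rest of the proof is pure bookkeeping in duality pairings.
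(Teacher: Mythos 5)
There is a genuine gap, and it is exactly at the point you flag as the ``main obstacle.'' The approximation scheme as written does not resolve the strict-inequality mismatch, and the reason is that the slack you exploit exists only in the $\beta$-parameter, not in the $\gamma$-parameter.

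Concretely: you take $h_n\in G(\eta,\gamma')$ converging to $h$ in the $G(\beta',\gamma')$-norm, and then assert ``each $h_n$ lies in $\GGs(\beta,\gamma)$ for any $\beta\in(\beta',\eta)$ and $\gamma\in(\gamma',\eta)$.'' This is false for the $\gamma$-parameter. A function $h_n\in G(\eta,\gamma')$ satisfies the size and regularity conditions with decay exponent exactly $\gamma'$; since larger $\gamma$ imposes a \emph{stronger} decay requirement, there is no inclusion $G(\eta,\gamma')\subset G(\eta,\gamma)$ when $\gamma>\gamma'$, and consequently no inclusion into $\GGs(\beta,\gamma)$ either. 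The only inclusions available are $h_n\in\GGs(\beta,\gamma'')$ for $\beta<\eta$ and $\gamma''\le\gamma'$. Feeding that into Proposition~\ref{thm reproducing formula test function} gives convergence of the wavelet expansion of $h_n$ only in $\GGs(\beta'',\gamma'')$ with $\gamma''<\gamma'$ --- a \emph{strictly larger} space than $\GGs(\beta',\gamma')$, on which $f$ need not act continuously. So the term-by-term application of $f$ in your step (2) is not justified. Your alternative suggestion at the end --- approximating $h$ by auxiliary functions $h'\in\GGs(\beta'+\epsilon,\gamma'+\epsilon)$ --- has the same defect: since the completion defining $\GGs(\beta',\gamma')$ is taken over $G(\eta,\gamma')$, whose elements have decay exactly $\gamma'$, there is no reason a general $h\in\GGs(\beta',\gamma')$ can be approximated in the $G(\beta',\gamma')$-norm by functions with the extra $\gamma'+\epsilon$ decay; in fact a generic test function with decay exactly $\gamma'$ cannot be so approximated. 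The second auxiliary claim --- that the partial sums form a Cauchy net in $\GGs(\beta',\gamma')$ uniformly in $n$ --- is also asserted without proof and is precisely the content one would need to establish, not a free ingredient.

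In short, the approximation argument you propose uses the available gap $\beta'<\eta$ but tacitly requires a gap $\gamma'<\gamma$ at the level of the approximating sequence, and no such gap is available. A correct proof of the corollary must either (a) show directly that for $h\in\GGs(\beta',\gamma')$ the wavelet expansion converges in $\GGs(\beta',\gamma')$ itself (i.e., a version of Proposition~\ref{thm reproducing formula test function} without loss in $\gamma$), or (b) interpret the distributional reproducing formula as a statement of convergence against the strictly smaller test space $\GGs(\beta,\gamma)$ with $\beta>\beta',\gamma>\gamma'$ --- the reading under which Proposition~\ref{thm reproducing formula test function} dualizes immediately with no approximation step at all. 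Your write-up follows neither route.
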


\subsection{Product setting}
Consider the product setting $(X_1,d_1,\mu_1)\times
(X_2,d_2,\mu_2)$, where  each $(X_i,d_i,\mu_i)$, $i = 1$, 2, is a
space of homogeneous type as defined in Section~1. For $i = 1$,
2, let $A_0^{(i)}$ be the constant in the quasi-triangle
inequality~\eqref{eqn:quasitriangleineq}, let $C_{\mu_i}$ be
the doubling constant as in inequality~\eqref{eqn:doubling
condition}, and let $\omega_i$ be an upper dimension of~$X_i$
as in inequality~\eqref{eqn:upper dimension}.
By Theorem~\ref{thm:AH_orthonormal_basis},   on each space of homogeneous type $(X_i,d_i,\mu_i)$ for $i=1,2$,  there
is a wavelet basis~$\{\psi^{k_i}_{\alpha_i}\}_{k_i\in\mathbb{Z},\alpha_i\in\mathscr{Y}^{k_i}}$, with H\"older
regularity  exponent~$\eta_i\in (0,1]$ as in
inequality~\eqref{eqn:Holder_regularity}, and reference dyadic grid $\mathscr{D}_i^W$ with dilation constants $c^i_1$, $C^i_1$ and their ratio $C^i_1/c^i_1$ depending uniformly on $A_0^{(i)}$.

The spaces of product test functions and distributions on
the product space $X_1\times X_2$ are defined as follows.

\begin{definition}[Product test functions \cite{HLW}, Definition~3.9]
    \label{def-of-test-func-space-product} Suppose
    $(x_0,y_0)\in X_1\times X_2$ and $r_i > 0$, take $\beta_i$
    so that $0 < \beta_i \leq \eta_i$, and take $\gamma_i > 0$,
    for $i = 1$, $2$. A function $f(x,y)$ defined on $X_1\times X_2$
    is said to be a {\it test function of type}
    $(x_0,y_0;r_1,r_2;\beta_1,\beta_2;\gamma_1,\gamma_2)$ if
    the following conditions hold. First, for each fixed $y \in
    X_2,$ $f(x,y),$ as a function of the variable $x$, is a
    test function in $G(x_0,r_1,\beta_1,\gamma_1)$ on $X_1$.
    Second, for each fixed $x \in X_1$, $f(x,y)$, as a function
    of the variable~$y$, is a test function in
    $G(y_0,r_2,\beta_2,\gamma_2)$ on $X_2$. Third, the
    following mixed conditions are satisfied:
\begin{enumerate}
    \item[(i)] (Size condition in $y$ variable) For all  $y\in X_2$,
    $$\Vert f(\cdot,y)\Vert_{G(x_0,r_1,\beta_1,\gamma_1)}\leq
    C \frac{\displaystyle 1}{\displaystyle
    V_{2,r_2}(y_0)+V_2(y_0,y)}\Big(\frac{\displaystyle
    r_2}{\displaystyle r_2+d_2(y,y_0)}\Big)^{\gamma_2},$$
    where $V_{2,r_2}(y_0):=\mu_2(B_{\XX_2}(y_0,r_2)$, and
    $V_2(y_0,y):=\mu_2 \big (B_{\XX_2}(y_0,d_2(y,y_0)) \big
    )$.

    \item[(ii)] (H\"older regularity condition in $y$
        variable) For all $y,y'\in X_2$ with $d_2(y,y')\leq
        \big (r_2+d_2(y,y_0) \big)/2 A_0^{(2)}$, we have
    $$\Vert
    f(\cdot,y)-f(\cdot,y')\Vert_{G(x_0,r_1,\beta_1,\gamma_1)}\leq C
    \Big(\frac{\displaystyle d_2(y,y')}{\displaystyle
    r_2+d_2(y,y_0)}\Big)^{\beta_2} \frac{\displaystyle 1}{\displaystyle
    V_{2,r_2}(y_0)+V_2(y_0,y)}\Big(\frac{\displaystyle r_2}{\displaystyle
    r_2+d_2(y,y_0)}\Big)^{\gamma_2}.$$

   \item[(iii)]  (Size and regularity conditions in $x$ variable) Properties (i)
   and (ii) also hold interchanging the roles of $x$ and $y$.
    \end{enumerate}
\end{definition}

\noindent When $f$ is a test function of type
$(x_0,y_0;r_1,r_2;\beta_1,\beta_2;\gamma_1,\gamma_2)$, we write
$f\in G(x_{0},y_{0};r_{1},r_{2};\beta_{1},\beta_{2};$
$\gamma_{1},\gamma_{2})$.
The expression
$
    \|f\|_{G(x_{0},y_{0};r_{1},r_{2};\beta_{1},\beta_{2};\gamma_{1},\gamma_{2})}
    := \inf\{C:\ {\rm(i),\ (ii)\ and\ (iii)}\ \ {\rm hold}\}
$
defines a norm on
$G(x_{0},y_{0};r_{1},r_{2};\beta_{1},\beta_{2};\gamma_{1},\gamma_{2})$.

We denote by $G(\beta_{1},\beta_{2};\gamma_{1},\gamma_{2})$ the
class
$G(x_{0},y_{0};1,1;\beta_{1},\beta_{2};\gamma_{1},\gamma_{2})$
for any fixed $(x_{0},y_{0})\in X_1\times X_2.$  Then
$G(x_{0},y_{0};r_{1},r_{2};\beta_{1},\beta_{2};\gamma_{1},\gamma_{2})
= G(\beta_{1},\beta_{2};\gamma_{1},\gamma_{2})$, with
equivalent norms, for all $(x_{0},y_{0})\in X_1\times X_2$ and
$r_1>0$, $r_2>0$. Furthermore,
$G(\beta_{1},\beta_{2};\gamma_{1},\gamma_{2})$ is a Banach
space with respect to the norm on
$G(\beta_{1},\beta_{2};\gamma_{1},\gamma_{2})$.

For
$\beta_i \in (0,\eta_i]$ and $\gamma_i>0$, for $i = 1$, 2, let
$\GGp(\beta_1,\beta_2;\gamma_1,\gamma_2)$ be the completion of
the space $G(\eta_1,\eta_2;\gamma_1,\gamma_2)$ in
$G(\beta_1,\beta_2;\gamma_1,\gamma_2)$ in the norm of
$G(\beta_1,\beta_2;\gamma_1,\gamma_2)$.  For
$f\in\GGp(\beta_{1},\beta_{2};\gamma_{1},\gamma_{2}) $, we
define $\|f\|_{\GGp(\beta_{1},\beta_{2};\gamma_{1},\gamma_{2})}
:= \|f\|_{G(\beta_{1},\beta_{2};\gamma_{1},\gamma_{2})}$.

We define the distribution space
$\big(\GGp(\beta_{1},\beta_{2};\gamma_{1},\gamma_{2})\big)'$
to consist of all bounded linear functionals on
$\GGp(\beta_{1},\beta_{2};\gamma_{1},\gamma_{2})$. We denote by
$\langle f,h\rangle$ the natural pairing of elements $h\in
\GGs(\beta_{1},\beta_{2};\gamma_{1},\gamma_{2})$ and $f\in \big
(\GGs(\beta_{1},\beta_{2};\gamma_{1},\gamma_{2}) \big )'$.

Given Auscher-Hyt\"onen wavelets
$\{\psi_{\alpha_i}^{k_i}\}_{k_i\in\mathbb{Z},
\alpha_i\in\mathscr{Y}^{k_i}}$ with H\"older regularity
$\eta_i$ on each space of homogeneous type $(X_i,d_i,\mu_i)$
for $i = 1$, $2$, the corresponding normalized tensor product
wavelets $\widetilde{\psi}_{\alpha_1}^{k_1}(x_1)
\widetilde{\psi}_{\alpha_2}^{k_2}(x_2)$
belong to $\GGp(\beta_{1},\beta_{2};\gamma_{1},\gamma_{2})$ when
$\beta_i\in (0,\eta_i]$ and $\gamma_i>0$ for  $i=1,2$. See \cite[p.124]{HLW}.
Here $\widetilde{\psi}_{\alpha_i}^{k_i}(x_i) :=
{\psi_{\alpha_i}^{k_i}(x_i)} /\sqrt{\mu_i\big
(B_{X_i}(y^{k_i}_{\alpha_i}, \delta_i^{k_i})\big )}$ for
$i = 1$, $2$.

The following reproducing formula holds on the product
space~$X_1\times X_2$.

\begin{theorem}[\cite{HLW}, Theorem~3.11]
    \label{thm product reproducing formula test function} For
    $i = 1$, $2$, let
    $\{\psi_{\alpha_i}^{k_i}\}_{k_i \in \mathbb{Z},
    \alpha_i \in \mathscr{Y}^{k_i}}$ be Auscher-Hyt\"onen
    wavelets with H\"older regularity $\eta_i > 0$ with reference dyadic grids $\mathscr{D}_i^W$ on the
    space of homogeneous type $(X_i,d_i,\mu_i )$, and fix
    constants $\beta_i$, $\gamma_i
    \in (0,\eta_i)$ . Then  the following hold:

    \begin{itemize}
        \item[(a)] The reproducing formula
            \begin{equation}\label{product reproducing formula}
                f(x_1,x_2)
                = \sum_{k_1\in\mathbb{Z}}\sum_{{\alpha_1}\in \mathscr{Y}^{k_1}}
                \sum_{k_2\in\mathbb{Z}}\sum_{{\alpha_2} \in \mathscr{Y}^{k_2}}
                \langle f,\psi_{\alpha_1}^{k_1}\psi_{\alpha_2}^{k_2} \rangle
                \psi_{\alpha_1}^{k_1}(x_1)\psi_{\alpha_2}^{k_2}(x_2)
            \end{equation}
            holds in
            $\GGp(\beta_{1}',\beta_{2}';\gamma_{1}',\gamma_{2}')$,
            for each $\beta_i'\in (0,\beta_i)$ and
            $\gamma_i'\in (0,\gamma_i)$, for $i = 1$, $2$.
        \item[(b)] The reproducing formula
            \eqref{product reproducing formula}
            also holds in  $(\GGp(\beta_{1},\beta_{2};\gamma_{1},\gamma_{2}))'$, the space of distributions.
    \end{itemize}
\end{theorem}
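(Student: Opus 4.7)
The plan is to iterate the one-parameter reproducing formula from Proposition~\ref{thm reproducing formula test function} in each of the two variables, exploiting the product structure built into Definition~\ref{def-of-test-func-space-product}. Throughout, one pays for the iteration by a small loss in the regularity and decay parameters.

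For part (a), fix $f\in \GGp(\beta_1,\beta_2;\gamma_1,\gamma_2)$ with $\beta_i,\gamma_i\in(0,\eta_i)$. Definition~\ref{def-of-test-func-space-product} says that for each fixed $x_2\in X_2$, the slice $f(\cdot,x_2)$ lies in $\GGs(\beta_1,\gamma_1)$ on $X_1$, with its test-function norm controlled by a test-function-type expression in $x_2$. Applying Proposition~\ref{thm reproducing formula test function} to $f(\cdot,x_2)$ on $X_1$ yields
\[
f(x_1,x_2) \;=\; \sum_{k_1\in\mathbb{Z}}\sum_{\alpha_1\in\mathscr{Y}^{k_1}} \phi_{k_1,\alpha_1}(x_2)\,\psi^{k_1}_{\alpha_1}(x_1), \qquad \phi_{k_1,\alpha_1}(x_2) := \bigl\langle f(\cdot,x_2),\psi^{k_1}_{\alpha_1}\bigr\rangle,
\]
with convergence in $\GGs(\beta_1',\gamma_1')$ for each $\beta_1'\in(0,\beta_1)$, $\gamma_1'\in(0,\gamma_1)$.

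The key technical step is to verify that each coefficient $\phi_{k_1,\alpha_1}$, viewed as a function of $x_2$, is itself a test function on $X_2$ in $\GGs(\beta_2,\gamma_2)$, with quantitative control of its norm in $(k_1,\alpha_1)$. The size condition on $\phi_{k_1,\alpha_1}$ in $x_2$ is inherited directly from condition (i) of Definition~\ref{def-of-test-func-space-product}; the H\"older regularity in $x_2$ comes from condition (ii); and the cancellation in $x_2$ follows from condition (iii) (of $f$ in its second variable), via Fubini. Here one uses that the normalized wavelet $\psi^{k_1}_{\alpha_1}$ is a genuine test function on $X_1$ by Proposition~\ref{prop wavelet is test function}. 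Applying Proposition~\ref{thm reproducing formula test function} a second time, now to $\phi_{k_1,\alpha_1}$ as a function on $X_2$, expands each coefficient as $\phi_{k_1,\alpha_1}(x_2)=\sum_{k_2,\alpha_2}\langle\phi_{k_1,\alpha_1},\psi^{k_2}_{\alpha_2}\rangle\psi^{k_2}_{\alpha_2}(x_2)$, and Fubini identifies $\langle\phi_{k_1,\alpha_1},\psi^{k_2}_{\alpha_2}\rangle=\langle f,\psi^{k_1}_{\alpha_1}\psi^{k_2}_{\alpha_2}\rangle$. Substituting produces~\eqref{product reproducing formula}.

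I expect the main obstacle to be upgrading the iterated one-variable convergence to joint convergence of the full quadruple sum in the product test-function norm $\GGp(\beta_1',\beta_2';\gamma_1',\gamma_2')$. This requires uniform estimates on truncated partial sums $\sum_{|k_i|\leq N_i,\alpha_i}$ in the product norm, which in turn rest on almost-orthogonality estimates between tensor-product wavelets and the test function~$f$. The small sacrifices $\beta_i\to \beta_i'$ and $\gamma_i\to \gamma_i'$ are crucial, because they furnish geometric factors $\delta_i^{|k_i|(\beta_i-\beta_i')}$ and analogous decay in $\alpha_i$ that render the double sum absolutely convergent in the product norm. Once these estimates are in place, letting the truncation tend to infinity gives~\eqref{product reproducing formula} in $\GGp(\beta_1',\beta_2';\gamma_1',\gamma_2')$.

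Part (b) then follows by duality, in exact parallel with the one-parameter argument behind Corollary~\ref{coro reproducing formula distribution}. Given $F\in(\GGp(\beta_1,\beta_2;\gamma_1,\gamma_2))'$ and any test function $g$ in that space, we approximate $g$ in the $\GGp(\beta_1,\beta_2;\gamma_1,\gamma_2)$-norm by elements of $G(\eta_1,\eta_2;\gamma_1,\gamma_2)$ (available from the definition of $\GGp$ as a completion), apply part (a) to the approximants (where the convergence parameters can be chosen strictly above $(\beta_1,\beta_2;\gamma_1,\gamma_2)$ by picking $\tilde\beta_i\in(\beta_i,\eta_i)$ and then $\tilde\beta_i'\geq\beta_i$), and pass $F$ through the sum by continuity. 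This yields the expansion of $F$ in the wavelet basis $\{\psi^{k_1}_{\alpha_1}\psi^{k_2}_{\alpha_2}\}$ with coefficients $\langle F,\psi^{k_1}_{\alpha_1}\psi^{k_2}_{\alpha_2}\rangle$, which is precisely~\eqref{product reproducing formula} interpreted in the distribution space $(\GGp(\beta_1,\beta_2;\gamma_1,\gamma_2))'$.
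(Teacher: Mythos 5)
This statement is imported: the paper does not prove it, but cites it as \cite{HLW}, Theorem~3.11, so there is no internal proof to compare your argument against. Your skeleton — iterate the one-parameter reproducing formula of Proposition~\ref{thm reproducing formula test function} in each variable, check that the slice $f(\cdot,x_2)$ lies in $\GGs(\beta_1,\gamma_1)$, that each coefficient $\phi_{k_1,\alpha_1}$ is a test function on $X_2$ (size and regularity from the mixed conditions of Definition~\ref{def-of-test-func-space-product}, cancellation via Fubini), and identify the coefficients — is the natural route and consistent with how the product theory in \cite{HLW} is built on the one-parameter results; those verifications are fine. However, the part you label as "the main obstacle", namely joint convergence of the quadruple sum in the product norm of $\GGp(\beta_{1}',\beta_{2}';\gamma_{1}',\gamma_{2}')$ (including the mixed size/H\"older conditions (i)--(iii) of Definition~\ref{def-of-test-func-space-product}), is the entire technical content of part (a); you only assert that almost-orthogonality estimates "would" furnish the needed geometric gains from $\beta_i\to\beta_i'$, $\gamma_i\to\gamma_i'$, without formulating or proving them. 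As a proof this is a placeholder, not a step.

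There is also a concrete flaw in your argument for (b). You approximate $g\in\GGp(\beta_{1},\beta_{2};\gamma_{1},\gamma_{2})$ by elements of $G(\eta_1,\eta_2;\gamma_1,\gamma_2)$ and claim the convergence parameters in part (a) "can be chosen strictly above $(\beta_1,\beta_2;\gamma_1,\gamma_2)$". That is only possible for the smoothness indices: an approximant has H\"older regularity up to $\eta_i$, but its decay index is exactly $\gamma_i$, and part (a) forces a strict loss there, giving convergence of the partial sums only in $\GGp(\beta_{1},\beta_{2};\gamma_{1}',\gamma_{2}')$ with $\gamma_i'<\gamma_i$. That norm is strictly weaker, and a distribution $F\in\big(\GGp(\beta_{1},\beta_{2};\gamma_{1},\gamma_{2})\big)'$ need not be continuous with respect to it, so "pass $F$ through the sum by continuity" is unjustified as written; in addition, replacing $g$ by approximants requires tail estimates uniform in the approximant, which you do not address. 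Handling this loss in the decay parameter (e.g., by proving the partial-sum convergence with no loss in $\gamma_i$ for the dense class, or by a more careful formulation of the distributional convergence) is precisely the delicate point the cited proof must and does confront, and your sketch does not resolve it.
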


{Furthermore, when $f\in L^q(X_1\times X_2)$ with $q > 1$, the
series \eqref{product reproducing formula} converges
unconditionally in the $L^q(X_1\times X_2)$-norm. This is a
consequence of the Auscher-Hyt\"onen wavelets being an
unconditional basis on $L^q(X_i)$ for $i = 1$, 2;
see~\cite[Corollary 10.4]{AH}.}

\section{Product Hardy spaces, duals, predual, key auxiliary result and theorem}\label{sec:productHp}
\setcounter{equation}{0}

In this section we first recall the Hardy spaces
$H^p(\widetilde{\XX})$, {their duals the Carleson measure
spaces  ${\rm CMO}^p(\widetilde{X})$, and the spaces of bounded
and vanishing mean oscillation, ${\rm BMO}(\widetilde{X})$ and
${\rm VMO}(\widetilde{X})$, respectively dual and predual of
$H^1(\widetilde{X})$}. All these spaces, in the setting of
product spaces of homogeneous type, were introduced
in~\cite{HLW} in terms of a square function defined via the
Auscher-Hyt\"onen wavelet bases and their reference dyadic grids. We  prove a key lemma that
shows each of the Auscher-Hyt\"onen wavelets can themselves be
further decomposed into compactly supported building blocks with appropriate size, smoothness, and cancellation conditions inherited from the wavelets. Finally, we use the key lemma  to prove a key  auxiliary theorem stating that for $1<q<\infty$ and $p_0<p\leq 1$ the set $H^p(\widetilde{\XX})\cap
L^q(\widetilde{\XX})$  is a subset of $L^p(\widetilde{\XX})$
with $L^p$-(semi)norm controlled by the $H^p$-(semi)norm. 
Here $p_0:=\max\{\omega_i/(\omega_i+\eta_i): \, i=1,2\}$ where $\omega_i$ is an upper dimension for $X_i$ and $\eta_i$ is the H\"older regularity exponent of the wavelets on $X_i$, for $i=1,2$,  used on the definition of $H^p(\widetilde{X})$ where $\widetilde{X}=X_1\times X_2)$. 
The key auxiliary results proved in this section will be needed in the proof of the Main Theorem in Section~\ref{sec:atomicHp}.

\subsection{Biparameter Hardy spaces, CMO$^p$, BMO, and VMO, via wavelets}

We focus on the bi-parameter setting $\widetilde{X} =
\XX_1\times\XX_2$, where each factor $(\XX_i,d_i,\mu_i)$
is a space of homogeneous type as defined in
Section~\ref{sec:introduction}, with the constant $\omega_i$
being an upper dimension of $\XX_i$ for $i = 1$, $2$.

The family  $\{\psi_{\alpha_i}^{k_i}\}_{k_i\in\mathbb{Z},\alpha_i\in\mathscr{Y}^{k_i}}$
is an Auscher-Hyt\"onen orthonormal  wavelet basis  on $\XX_i$  with reference dyadic grid~$\mathscr{D}_i^W$, exponential decay constant $a_i$ and $\nu_i$, and order of
regularity $\eta_i\in (0,1)$ for $i = 1$, 2, as in
Theorem~\ref{thm:AH_orthonormal_basis}. All the dyadic rectangles in this section are of the form $R=Q^{k_1}_{\alpha_1}\times Q^{k_2}_{\alpha_2}$ where $Q^{k_i}_{\alpha_i}\in \mathscr{D}_i^W$ for $i=1,2$.

We denote by $\GG$ and $(\GG)'$ for short the product test function
spaces $\GGp(\beta_{1}',\beta_{2}';\gamma_{1}',\gamma_{2}')$ and
spaces of distributions
$\big(\GGp(\beta_{1}',\beta_{2}';\gamma_{1}',\gamma_{2}')\big)^{'}$,
respectively, where $ \beta_i', \gamma_i' \in (0,\eta_i)$ for $i
= 1$, 2. Note that we fix some $\beta_i'$, $\gamma_i'$ in
$(0,\eta_i)$ and work with those test functions and the
distributions in the dual space. At the end of the day it does
not matter which $\beta_i',\gamma_i'$ were chosen, as long as
they belong to the interval $(0,\eta_i)$. The product wavelets $\psi^{k_1}_{\alpha_1}\psi^{k_2}_{\alpha_2}\in \GG$ and therefore if $f\in(\GG )'$ the notation $\langle f, \psi^{k_1}_{\alpha_1}\psi^{k_2}_{\alpha_2}\rangle$  means the action of the functional $f$ on the product wavelet, which is an appropriate test function.
We have ``color-coded" the parameters $\beta_i'$ and $\gamma_i'$ in definition of $\GG$ and $(\GG)'$  not to confuse them with the parameters $\beta_i$ and $\gamma_i$ for which the wavelets $\psi^{k_i}_{\alpha_i}$  belong to $G(\beta_i,\gamma_i)$, namely all $\beta_i\in (0,\eta_i)$ and $\gamma_i>0$ for $i=1,2$.  In the proofs below, we will want to choose the wavelets' parameter $\gamma_i$ as large as necessary. The space of distributions $(\GG)'$  appear in the definition of  the product $H^p$, ${\rm CMO}^p$, ${\rm BMO}$, and ${\rm VMO}$-spaces presented in this section as well as in the definition of atomic $H^{p,q}_{{\rm at}}$-spaces in Section~\ref{sec:atomicHp}.

In \cite{HLW}, the Hardy spaces $H^p(\XX_1\times\XX_2)$ are
defined  as follows  for $p_0<p\leq 1$, where  we let   $p_0:=\max\{\omega_i/(\omega_i + \eta_i): i=1,2\} $.
\begin{definition}[\cite{HLW}, Definition 5.1]\label{def-Hp}
    Suppose $p_0 < p \leq 1$. 
     The {\it Hardy space} $H^p(X_1\times X_2)$ is defined  to be the collection of distributions in $(\GG)'$ whose square function in  $L^p(X_1\times X_2)$,
    $$
    H^p(X_1\times X_2):=\big\lbrace f \in (\GG)': S(f)\in
    L^p(\XX_1\times\XX_2)\big\rbrace .
    $$
    Here  the {\it product Littlewood-Paley square function $S(f)$ of $f$
    related to the given orthonormal basis $\{\psi^k_{\alpha}\}_{k\in\mathbb{Z},\alpha\in\mathscr{Y}^k}$ and
    reference dyadic grids $\mathscr{D}^W_i$  on $X_i$ for $i=1,2$}, is defined by
    \begin{equation}\label{g function}
        S(f)(x_1,x_2)
        :=\Big\{ \sum_{k_1\in\mathbb{Z}}\sum_{\alpha_1\in \mathscr{Y}^{k_1}}
            \sum_{k_2\in\mathbb{Z}}\sum_{\alpha_2\in \mathscr{Y}^{k_2}} \Big|
            \langle f, \psi_{\alpha_1}^{k_1}\psi_{\alpha_2}^{k_2}\rangle \,
            \widetilde{\chi}_{Q_{\alpha_1}^{k_1}}(x_1)
            \widetilde{\chi}_{Q_{\alpha_2}^{k_2}}(x_2) \Big|^2 \Big\}^{1/2}
    \end{equation}
    with $Q^{k_i}_{\alpha_i}\in \mathscr{D}^W_i$ and $\widetilde{\chi}_{Q_{\alpha_i}^{k_i}}(x_i) :=
    \chi_{Q_{\alpha_i}^{k_i}}(x_i)\, \mu_i(Q_{\alpha_i}^{k_i})^{-1/2}$
    for $i = 1$, 2.
    For $f\in H^p(X_1\times X_2)$,  define the $H^p$-(semi)norm\footnote{For $p<1$, the semi-norm $\|\cdot\|_{H^p(X_1\times X_2)}$  satisfies all the axioms of a norm except the triangle inequality, instead it satisfies
    $\|f+g\|^p_{H^p(X_1\times X_2)}\leq \|f\|^p_{H^p(X_1\times X_2)} + \|g\|^p_{H^p(X_1\times X_2)}$.}
    \[
        \|f\|_{H^p(X_1\times X_2)}
        := \|S(f)\|_{L^p(X_1\times X_2)}.
    \]
\end{definition}

Definition~\eqref{g function} corresponds to \cite[Definition
4.7, equation (4.10)]{HLW}, where the product square function is called
$\widetilde{S}$ instead of $S$.

In \cite{HLW}  the Carleson measure space ${\rm CMO}^p(X_1\times X_2)$ are defined as follows.

\begin{definition}[\cite{HLW}, Definition 5.2]\label{def-CMOp}
    Suppose $p_0< p \leq 1$. 
    The {\it Carleson measure space} ${\rm CMO}^p(X_1\times X_2)$ is defined by
    \[
        {\rm CMO}^p(X_1\times X_2)
        := \big\lbrace f \in (\GG)': \mathcal{C}_p(f) < \infty\big\rbrace.
    \]
    Here the  quantity $\mathcal{C}_p(f)$ is  defined by
    \begin{equation}\label{Cp(f)quantity}
        \mathcal{C}_p(f)
        := \sup_{\Omega}\Big\lbrace \frac{1}{\mu(\Omega)^{\frac2p-1}}
            \sum_{R=Q^{k_1}_{\alpha_1}\times Q^{k_2}_{\alpha_1}\subset \Omega}
            |\langle f,\psi^{k_1}_{\alpha_1}\psi^{k_2}_{\alpha_2}\rangle |^2\Big\rbrace^{1/2},
   \end{equation}
   where $\Omega$ runs over all open sets in $X_1\times X_2$
   with finite measure, and it is understood, here and in the
   sequel, that the indices $k_i\in \mathbb{Z}$ and
   $\alpha_i\in \mathcal{Y}^{k_i}$ for $i = 1$, 2. The {\it space
   $\bmo$ of functions of bounded mean oscillation} is defined by
   \[
       {\rm BMO}(X_1\times X_2)
       := {\rm CMO}^1(X_1\times X_2).
   \]
   \end{definition}

One of the main results in~\cite{HLW} establishes the duality
between the Hardy spaces and the Carleson measure spaces.

\begin{theorem}[\cite{HLW}, Theorem 5.3]\label{thm:CMOp-duality-Hp}
    Suppose $p_0 < p \leq 1$. 
    Then
    $
        (H^p(X_1\times X_2))'
        = {\rm CMO}^p(X_1\times X_2).
    $
    In particular, when $p = 1$ we have
    $
        \big (H^1(X_1\times X_2)\big )'
        = {\rm BMO}(X_1\times X_2).
    $
\end{theorem}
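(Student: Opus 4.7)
The plan is to reduce the claim to a duality between two sequence spaces indexed by the dyadic rectangles $R = Q^{k_1}_{\alpha_1}\times Q^{k_2}_{\alpha_2}$, $Q^{k_i}_{\alpha_i}\in\mathscr{D}^W_i$, that appear in Definitions~\ref{def-Hp} and~\ref{def-CMOp}. Write $\psi_R:=\psi^{k_1}_{\alpha_1}\psi^{k_2}_{\alpha_2}$ and $\widetilde{\chi}_R(x_1,x_2):=\widetilde{\chi}_{Q^{k_1}_{\alpha_1}}(x_1)\widetilde{\chi}_{Q^{k_2}_{\alpha_2}}(x_2)$. For any $h\in(\GG)'$ set $s_R(h):=\langle h,\psi_R\rangle$ and introduce the sequence (semi)norms
\[
\|\{s_R\}\|_{s^p}:=\Big\|\Big(\sum_R |s_R|^2\,\widetilde{\chi}_R^{\,2}\Big)^{1/2}\Big\|_{L^p(\widetilde X)},\qquad
\|\{s_R\}\|_{c^p}:=\sup_{\Omega}\Big(\mu(\Omega)^{1-2/p}\sum_{R\subset\Omega}|s_R|^2\Big)^{1/2},
\]
with $\Omega$ open of finite measure in $\widetilde X$. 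By Definitions~\ref{def-Hp} and~\ref{def-CMOp}, the map $h\mapsto\{s_R(h)\}$ isometrically embeds $H^p(\widetilde X)$ into $s^p$ and $\cmo^p(\widetilde X)$ into $c^p$, and by the reproducing formula of Theorem~\ref{thm product reproducing formula test function} and Corollary~\ref{coro reproducing formula distribution} the inverse $\{t_R\}\mapsto\sum_R t_R\psi_R$ is defined on $(\GG)'$. Consequently the theorem reduces to the sequence-space duality $(s^p)'=c^p$ under the pairing $\sum_R s_R\overline{t_R}$.

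For the embedding $c^p\hookrightarrow (s^p)'$, given $t\in c^p$ define $\ell_t(s):=\sum_R s_R\overline{t_R}$ on finitely supported $s\in s^p$. To bound $|\ell_t(s)|\lesssim\|t\|_{c^p}\|s\|_{s^p}$, introduce the level sets $\Omega_k:=\{x\in\widetilde X:S_s(x)>2^k\}$ of the discrete square function $S_s:=\bigl(\sum_R |s_R|^2\widetilde{\chi}_R^{\,2}\bigr)^{1/2}$, together with enlargements $\widetilde\Omega_k$ obtained by applying the strong maximal function in each coordinate to $\chi_{\Omega_k}$ and thresholding. Group the rectangles into the stopping classes $\mathcal R_k:=\{R:R\subset\widetilde\Omega_k,\,R\not\subset\widetilde\Omega_{k+1}\}$ and apply Cauchy-Schwarz on each class: the Carleson condition gives $\sum_{R\in\mathcal R_k}|t_R|^2\leq\|t\|_{c^p}^2\mu(\widetilde\Omega_k)^{2/p-1}$, while the stopping yields $\sum_{R\in\mathcal R_k}|s_R|^2\lesssim 2^{2k}\mu(\widetilde\Omega_k)$. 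Multiplying and summing in $k$ produces the quantity $\sum_k 2^{kp}\mu(\widetilde\Omega_k)$, and the Journ\'e-type covering lemma for products of spaces of homogeneous type of~\cite{HLLin} supplies $\mu(\widetilde\Omega_k)\lesssim\mu(\Omega_k)$, so that $\sum_k 2^{kp}\mu(\Omega_k)\sim\|S_s\|_{L^p}^p=\|s\|_{s^p}^p$, as required.

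For the reverse embedding $(s^p)'\hookrightarrow c^p$, fix $\ell\in(s^p)'$ and an open set $\Omega\subset\widetilde X$ of finite measure. On the $\ell^2$-subspace $E_\Omega:=\{s\in s^p:s_R=0\text{ for all }R\not\subset\Omega\}$, H\"older's inequality gives $\|s\|_{s^p}\leq\mu(\Omega)^{1/p-1/2}\|s\|_{\ell^2}$, so the Riesz representation theorem furnishes a unique sequence $\{t_R^{\Omega}\}$ supported on $\{R\subset\Omega\}$ with $\ell(s)=\sum_R s_R\overline{t_R^{\Omega}}$ on $E_\Omega$ and $\bigl(\sum_{R\subset\Omega}|t_R^{\Omega}|^2\bigr)^{1/2}\leq\|\ell\|\,\mu(\Omega)^{1/p-1/2}$. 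These sequences are consistent across nested $\Omega$ and therefore assemble into a single $\{t_R\}\in c^p$ with $\|\{t_R\}\|_{c^p}\leq\|\ell\|$; the distribution $g:=\sum_R t_R\psi_R$, convergent in $(\GG)'$ by Corollary~\ref{coro reproducing formula distribution}, lies in $\cmo^p(\widetilde X)$ and represents $\ell$ through the pairing of Theorem~\ref{thm product reproducing formula test function}. The case $p=1$ is the identification $\bmo=\cmo^1$, yielding the stated dual of $H^1$.

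The main obstacle is the enlargement step $\Omega_k\mapsto\widetilde\Omega_k$ in the first direction: because an open set in $\widetilde X$ is \emph{not} a disjoint union of maximal dyadic rectangles, the naive enlargement of $\Omega_k$ need not be controlled in measure, and Cauchy-Schwarz over $\mathcal R_k$ with only the raw level sets loses an unacceptable factor. The remedy is the product-type Journ\'e covering lemma in the Hyt\"onen-Kairema dyadic framework supplied by~\cite{HLLin}, whose quantitative output depends only on the geometric constants of $X_1$ and $X_2$ precisely because the reference dyadic grids of the Auscher-Hyt\"onen wavelets form a regular family in the sense of Definition~\ref{def:regular-dyadic-grids}.
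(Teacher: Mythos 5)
This theorem is not proved in the present paper: it is quoted from \cite{HLW} (Theorem 5.3 there), and your overall architecture --- pass to the coefficient spaces $s^p$ and $c^p$, prove the Carleson-embedding direction by a Chang--Fefferman stopping-time argument, and produce the representing sequence by Riesz representation on the subspaces $E_\Omega$ --- is essentially the strategy of \cite{HLW}. Your first direction is sound except for one misattribution that also drives your closing ``main obstacle'' paragraph: the measure control $\mu(\widetilde\Omega_k)\lesssim\mu(\Omega_k)$ has nothing to do with the Journ\'e-type covering lemma of \cite{HLLin}. It follows from the $L^2$ (or weak-type) boundedness of the strong maximal function applied to $\chi_{\Omega_k}$, exactly as in \eqref{weak-L^2-Ms}; Journ\'e's lemma is a statement about sums over \emph{maximal} dyadic rectangles weighted by eccentricity ratios and enters this paper only in the atomic decomposition, not in the duality.

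The genuine gap is the sentence ``Consequently the theorem reduces to the sequence-space duality $(s^p)'=c^p$.'' The inclusion $\cmo^p\subset (H^p)'$ does follow from your $c^p\hookrightarrow(s^p)'$ estimate (after the routine density/extension step), but the converse inclusion $(H^p)'\subset\cmo^p$ does not reduce to $(s^p)'\subset c^p$ by what you have written. Given $L\in(H^p(X_1\times X_2))'$ you must manufacture a functional on \emph{all} of $s^p$: Hahn--Banach extension from the isometric copy of $H^p$ inside $s^p$ is unavailable, since for $p<1$ these are non-locally-convex quasi-Banach spaces, and the alternative $\ell:=L\circ T^{*}$ requires the synthesis map $\{t_R\}\mapsto\sum_R t_R\psi_R$ to be \emph{bounded} from $s^p$ into $H^p$ (and from $c^p$ into $\cmo^p$), not merely ``defined on $(\GG)'$'' via Theorem~\ref{thm product reproducing formula test function}. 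Establishing that boundedness is precisely where the product Plancherel--P\'olya inequalities and almost-orthogonality estimates of \cite{HLW} do the real work, and your proposal never supplies it. The gap is repairable along lines already implicit in your Riesz argument: run it directly in $H^p$ rather than in $s^p$ --- for finite sums $f=\sum_{R\subset\Omega}a_R\psi_R$ orthonormality gives $S(f)=\big(\sum_{R\subset\Omega}|a_R|^2\widetilde{\chi}_R^{\,2}\big)^{1/2}$, supported in $\Omega$, so $\|f\|_{H^p}\le\mu(\Omega)^{1/p-1/2}\big(\sum|a_R|^2\big)^{1/2}$; testing $L$ on such $f$ with $a_R=\overline{L(\psi_R)}$ yields the Carleson condition for $t_R:=L(\psi_R)$ without any extension theorem. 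One must then also check that the wavelet expansion of $f\in H^p$ converges in the $H^p$ quasi-norm (a dominated-convergence argument on the discrete square function), so that $L(f)=\sum_R\langle f,\psi_R\rangle t_R=\langle f,g\rangle$ with $g=\sum_R t_R\psi_R\in\cmo^p$; this convergence step is used tacitly in your last sentence but needs to be stated and proved.
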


The vanishing mean oscillation space~$\vmo(X_1\times X_2)$ was
introduced in~\cite{HLW}, and it was shown in the same paper to
be the predual of~$H^1(X_1\times X_2)$. For the convenience of
the reader we record the definition and the duality theorem.

\begin{definition}[\cite{HLW}, Definition 5.9]\label{def-VMO}
    The {\it space ${\rm VMO}(X_1\times X_2)$ of functions of
    vanishing mean oscillation} is the subspace of ${\rm
    BMO}(X_1\times X_2)$ whose elements satisfy the following three
    properties:
    \begin{itemize}
    \item[(a)] $\displaystyle{\quad \lim_{\delta\to 0^+}
        \sup_{\mu(\Omega)<\delta}\Big\lbrace
        \frac{1}{\mu(\Omega)}\sum_{R=Q^{k_1}_{\alpha_1}\times
        Q^{k_2}_{\alpha_1}\subset \Omega} |\langle
        f,\psi^{k_1}_{\alpha_1}\psi^{k_2}_{\alpha_2}\rangle
        |^2\Big\rbrace^{1/2}=0}$;
    \item[(b)] $\displaystyle{\quad \lim_{N\to \infty}
        \sup_{{\rm diam}(\Omega)> N}\Big\lbrace
        \frac{1}{\mu(\Omega)}\sum_{R=Q^{k_1}_{\alpha_1}\times
        Q^{k_2}_{\alpha_1}\subset \Omega} |\langle
        f,\psi^{k_1}_{\alpha_1}\psi^{k_2}_{\alpha_2}\rangle
        |^2\Big\rbrace^{1/2}=0}$; and
    \item[(c)] $\displaystyle{\quad \lim_{N\to \infty}
        \sup_{\Omega:\, \Omega\subset\big (B(x_1,N)\times
        B(x_2,N)\big )^c} \Big\lbrace
        \frac{1}{\mu(\Omega)}\sum_{R=Q^{k_1}_{\alpha_1}\times
        Q^{k_2}_{\alpha_1}\subset \Omega} |\langle
        f,\psi^{k_1}_{\alpha_1}\psi^{k_2}_{\alpha_2}\rangle
        |^2\Big\rbrace^{1/2}=0}$.
    \end{itemize}
    Here the suprema run over all open sets $\Omega$ in $X_1\times
    X_2$ with finite measure, and either with small measure in~(a), with
    large diameter in~(b), or living far away from an arbitrary fixed
    point $(x_1,x_2)\in X_1\times X_2$ in~(c).
\end{definition}

\begin{theorem}[\cite{HLW}, Theorem 5.10]\label{thm:VMO-duality-H1}
    The Hardy space $H^1(X_1\times X_2)$ is the dual of the
    space of vanishing mean oscillation ${\rm VMO}(X_1\times
    X_2)$. Namely,
    $
        \big ({\rm VMO}(X_1\times X_2)\big )'
        = H^1(X_1\times X_2).
    $
\end{theorem}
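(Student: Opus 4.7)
The plan is to deduce $\big(\vmo(X_1\times X_2)\big)'=H^1(X_1\times X_2)$ by combining the already established duality $\big(H^1(X_1\times X_2)\big)'=\bmo(X_1\times X_2)$ from Theorem~\ref{thm:CMOp-duality-Hp} (case $p=1$) with the wavelet coefficient characterisations of $H^1$, $\bmo$, and $\vmo$. The natural strategy has three steps: embed $H^1$ into $(\vmo)'$ via the pairing coming from $(H^1)'=\bmo$; verify this embedding is an isometry; and prove it is surjective.

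For the embedding $f\mapsto L_f$ with $L_f(g):=\langle g,f\rangle$, the bound $\|L_f\|_{(\vmo)'}\le \|f\|_{H^1}$ follows immediately from Theorem~\ref{thm:CMOp-duality-Hp} and the inclusion $\vmo\subset\bmo$. To obtain isometry, one must approximate the extremal $\bmo$-function realising $\|f\|_{H^1}$ by elements of $\vmo$. A natural device is to truncate the wavelet expansion of an extremal $g\in\bmo$, keeping only scales $|k_i|\le N$ and indices $\alpha_i$ for which $y_{\alpha_i}^{k_i}$ lies in a bounded set. A truncation with only finitely many nonzero wavelet coefficients automatically satisfies conditions (a), (b), and (c) of Definition~\ref{def-VMO}; provided one checks uniform $\bmo$ bounds and that $\langle g_N,f\rangle \to \langle g,f\rangle$, this yields $\|L_f\|_{(\vmo)'}\ge \|f\|_{H^1}$.

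Surjectivity is the main obstacle. Given $L\in(\vmo)'$, note that each normalised product wavelet $\widetilde{\psi}_{\alpha_1}^{k_1}\widetilde{\psi}_{\alpha_2}^{k_2}$ lies in $\vmo$ (its wavelet expansion has a single non-vanishing coefficient, so Definition~\ref{def-VMO} is trivially met). Set $\lambda_{k_1,\alpha_1,k_2,\alpha_2}:=L\bigl(\widetilde{\psi}_{\alpha_1}^{k_1}\widetilde{\psi}_{\alpha_2}^{k_2}\bigr)$ and form the candidate
\[
 f:=\sum_{k_1,\alpha_1}\sum_{k_2,\alpha_2}\lambda_{k_1,\alpha_1,k_2,\alpha_2}\,\widetilde{\psi}_{\alpha_1}^{k_1}\widetilde{\psi}_{\alpha_2}^{k_2}\in(\GG)'.
\]
The task is then to show $f\in H^1(X_1\times X_2)$, i.e.\ $S(f)\in L^1(X_1\times X_2)$ with $\|S(f)\|_{L^1}\lesssim\|L\|_{(\vmo)'}$, and to verify $L(g)=\langle g,f\rangle$ for every $g\in\vmo$.

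To prove the $L^1$ bound on $S(f)$, I would dualise each Carleson-type sum appearing in the $\vmo$ and $\cmo^1$ definitions against suitable $\ell^\infty$-normalised sequences of wavelet coefficients, essentially a tent-space $T^1$--$T^\infty$ duality at the level of wavelet coefficients; the boundedness of $L$ on $\vmo$ then forces the Carleson condition on $\{\lambda_{k_1,\alpha_1,k_2,\alpha_2}\}$ equivalent to $f\in H^1$. The identity $L(g)=\langle g,f\rangle$ for $g\in\vmo$ follows by expanding $g$ in its wavelet series and invoking continuity of $L$ together with the truncation/density argument from the isometry step. The most delicate point throughout is that the open sets $\Omega$ appearing in Definitions~\ref{def-CMOp} and~\ref{def-VMO} are arbitrary, so no simple Whitney structure is available in the product setting; controlling Carleson sums over such arbitrary $\Omega$ will most likely require a Journ\'e-type covering lemma for products of spaces of homogeneous type, such as the one from~\cite{HLLin} invoked in the proof of the Main Theorem.
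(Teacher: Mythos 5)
This statement is cited in the paper from \cite{HLW}, Theorem~5.10, without proof; so there is no internal argument to compare yours against. Taking your sketch on its own merits, the overall architecture (embedding $H^1\hookrightarrow(\vmo)'$ via the $(H^1)'=\bmo$ pairing from Theorem~\ref{thm:CMOp-duality-Hp}, isometry by approximation within $\bmo$, surjectivity by extracting wavelet coefficients $\lambda_{k,\alpha}=L(\widetilde\psi^{k_1}_{\alpha_1}\widetilde\psi^{k_2}_{\alpha_2})$) is the standard route and is the right shape of argument. But there is a concrete gap that undermines both the isometry and the surjectivity steps as you present them: you assert that a single normalised product wavelet, and likewise any finite truncation of a $\bmo$ function's wavelet expansion, \emph{trivially} satisfies conditions (a), (b), (c) of Definition~\ref{def-VMO}. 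Conditions (a) and (c) do indeed hold for a finite wavelet sum, but condition (b) --- vanishing of the Carleson quantity as ${\rm diam}(\Omega)\to\infty$ over open $\Omega$ of finite measure --- does not follow automatically. If $R_0$ is the dyadic rectangle carrying a nonzero coefficient, one may take $\Omega$ to be (a small open neighbourhood of) $R_0$ together with a tiny ball placed arbitrarily far away: this has arbitrarily large diameter, finite measure, and $\mu(\Omega)$ arbitrarily close to $\mu(R_0)$, so the quantity in~(b) stays bounded below by roughly $|\langle f,\psi^{k_1}_{\alpha_1}\psi^{k_2}_{\alpha_2}\rangle|/\mu(R_0)^{1/2}$ for every $N$. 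Thus the membership of wavelets and truncations in $\vmo$, as literally defined, needs to be argued (or Definition~\ref{def-VMO} interpreted in the measure-growth sense as in the classical one-parameter $\mathrm{CMO}$ conditions), not asserted as obvious.

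Separately, the surjectivity step is considerably more than a hand-wave at ``tent-space $T^1$--$T^\infty$ duality at the level of wavelet coefficients.'' Showing that the candidate $f=\sum\lambda_{k,\alpha}\widetilde\psi^{k_1}_{\alpha_1}\widetilde\psi^{k_2}_{\alpha_2}$ has $S(f)\in L^1$ with $\|S(f)\|_{L^1}\lesssim\|L\|_{(\vmo)'}$ is the heart of the Chang--Fefferman-type duality and requires a genuine argument, e.g.\ a stopping-time/atomic decomposition of the square-function level sets together with the Journ\'e-type covering lemma you already anticipate (Lemma~\ref{theorem-cover lemma}), analogous to the $(H^1)'=\bmo$ argument but run in reverse. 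Your sketch correctly identifies the right ingredients, but neither the VMO-membership assertions nor the coefficient-to-$H^1$ step is carried to a proof.
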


Note that the definitions for the $H^p$, $CMO^p$, $BMO$, and $VMO$ spaces all use given Auscher-Hyt\"onen wavelets and their underlying reference grids in $X_i$ for $i=1,2$. Whether these definitions are independent of the chosen wavelets  and reference grids is an important question, answered in the affirmative in this paper.

\subsection{Key decomposition lemma and $H^p \cap L^q \subset L^p$ theorem}\label{sec:key-lemma-and-theorem}
We point out that $\GG$,  and thus 
$\displaystyle{H^p(X_1\times X_2)\cap L^q(X_1\times X_2)}$ for $q> 1$, 
are dense in $H^p(X_1\times X_2)$ with respect to the 
$H^p(X_1\times X_2)$-(semi)norm, see  \cite[p.40-41]{HLW}.
We now show that
functions in the dense subset  $\displaystyle{H^p(X_1\times X_2)\cap L^q(X_1\times X_2)}$ also lie in $L^p(X_1\times
X_2)$, in other words  for $q>1$, 
$$H^p(X_1\times X_2)\cap L^{{q}}(X_1\times X_2) \subset L^p(X_1\times X_2),$$
 with $L^p$-(semi)norm  controlled by the $H^p$-(semi)norm. As an aside recall that the $L^p$-(semi)norm  is not a norm when $0<p<1$, satisfying  instead of the triangle inequality the following inequality: $\|f+g\|_{L^p(X_1\times X_2)}^p\leq \|f\|_{L^p(X_1\times X_2)}^p+\|g\|_{L^p(X_1\times X_2)}^p$.\\

Our key auxiliary  theorem in this section is the following.

\begin{theorem}\label{theorem-of-fLp-lessthan-fHp-on-product-case}
Given spaces of homogeneous type $(X_i, d_i, \mu_i)$ with  an upper dimension $\omega_i$, with reference dyadic grids $\mathscr{D}^W_i$, and  associated Auscher-Hyt\"onen wavelet basis $\{\psi^{k_i}_{\alpha_i}\}_{k_i\in\mathbb{Z}, \alpha_i\in \mathcal{Y}^{k_i}}$  with H\"older regularity $\eta_i\in (0,1)$,   for $i =
    1$,~$2$. 
  Let $p_0:=\max\{ \omega_i/(\omega_i+ \eta_i): \, i=1,2\}$, suppose $p_0 < p \leq 1$, and take $q >1$.  If a function $f\in H^p(\XX_1\times\XX_2)\cap
    L^{{q}}(\XX_1\times\XX_2)$, then $f\in
    L^p(\XX_1\times\XX_2)$ and there exists a constant $C_p >
    0$, independent of the $L^{{q}}$-norm of~$f$, such that
    \[
        \|f\|_{L^p( X_1\times X_2)}
        \leq C_p\|f\|_{H^p(\XX_1\times\XX_2)}.
    \]
\end{theorem}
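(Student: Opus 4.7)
The strategy is standard in product Hardy space theory, combining a Calder\'on--Zygmund-type decomposition indexed by the level sets of $S(f)$ with the Nagel--Stein compact decomposition of the wavelets and a Journ\'e-style enlargement in the product setting. Since $f\in L^q(\widetilde{X})$, the wavelet reproducing formula from Theorem \ref{thm product reproducing formula test function} converges in $L^q$, so I can write
\[
    f = \sum_{R} c_R\,\psi_R,
    \qquad
    c_R := \langle f,\psi_R\rangle,
    \quad
    \psi_R(x_1,x_2) := \psi^{k_1}_{\alpha_1}(x_1)\psi^{k_2}_{\alpha_2}(x_2),
\]
where $R = Q^{k_1}_{\alpha_1}\times Q^{k_2}_{\alpha_2}$ runs over product dyadic rectangles from $\mathscr{D}_1^W\times\mathscr{D}_2^W$.

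\textbf{Step 1: Level set decomposition.} Set $\Omega_j := \{(x_1,x_2) : S(f)(x_1,x_2) > 2^j\}$ and its strong-maximal-function enlargement $\widetilde{\Omega}_j := \{M_s(\chi_{\Omega_j}) > 1/2\}$, which satisfies $\mu(\widetilde{\Omega}_j)\lesssim \mu(\Omega_j)$. Assign each rectangle $R$ to the unique class $\mathcal{R}_j$ for which $\mu(R\cap\Omega_j)> \mu(R)/2$ and $\mu(R\cap\Omega_{j+1})\leq \mu(R)/2$, so that $R\subset \widetilde{\Omega}_j$. Writing $f = \sum_j f_j$ with $f_j := \sum_{R\in\mathcal{R}_j}c_R\psi_R$, the orthonormality of the wavelets and the definition of $\mathcal{R}_j$ yield
\[
    \|f_j\|_{L^2}^2 = \sum_{R\in\mathcal{R}_j}|c_R|^2
    \leq 2\sum_{R\in\mathcal{R}_j}\int_{\Omega_{j+1}^c}|c_R|^2\widetilde{\chi}_R(x)^2\,d\mu
    \leq 2\int_{\widetilde{\Omega}_j\cap\Omega_{j+1}^c}S(f)^2\,d\mu
    \lesssim 2^{2j}\mu(\Omega_j).
\]

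\textbf{Step 2: Localization via the key decomposition lemma.} Apply the key decomposition lemma to each wavelet $\psi^{k_i}_{\alpha_i}$ to expand it as $\psi^{k_i}_{\alpha_i} = \sum_{m_i\geq 0}\phi^{k_i,m_i}_{\alpha_i}$, where each $\phi^{k_i,m_i}_{\alpha_i}$ is compactly supported in a ball of radius $\sim 2^{m_i}\delta_i^{k_i}$ centered at $y^{k_i}_{\alpha_i}$, has the same cancellation, size, and H\"older regularity as $\psi^{k_i}_{\alpha_i}$ (up to a loss factor that decays super-polynomially in $m_i$ thanks to the exponential decay \eqref{eqn:exponential_decay}). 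Then $\psi_R = \sum_{m_1,m_2\geq 0}\phi^{k_1,m_1}_{\alpha_1}\phi^{k_2,m_2}_{\alpha_2}$, and each product piece is supported in a dyadic-rectangle dilate $R^{(m_1,m_2)}$ of $R$.

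\textbf{Step 3: Inside/outside splitting and the Journ\'e enlargement.} For each $j$, let $\widetilde{\widetilde{\Omega}}_j$ denote the Journ\'e-type enlargement of $\widetilde{\Omega}_j$ (from Han--Li--Lin \cite{HLLin}) so that $\mu(\widetilde{\widetilde{\Omega}}_j)\lesssim \mu(\Omega_j)$ with implicit constant depending only on the geometric constants. Split
\[
    \|f\|_{L^p}^p
    \;\leq\; \sum_{j}\bigl\|f_j\,\chi_{\widetilde{\widetilde{\Omega}}_j}\bigr\|_{L^p}^p
    \;+\; \sum_{j}\bigl\|f_j\,\chi_{\widetilde{\widetilde{\Omega}}_j^c}\bigr\|_{L^p}^p ,
\]
using $p\leq 1$. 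For the inside term, H\"older's inequality with exponents $2/p$ and its conjugate gives
\[
    \bigl\|f_j\,\chi_{\widetilde{\widetilde{\Omega}}_j}\bigr\|_{L^p}^p
    \leq \mu(\widetilde{\widetilde{\Omega}}_j)^{1-p/2}\|f_j\|_{L^2}^p
    \lesssim \mu(\Omega_j)^{1-p/2}\cdot 2^{jp}\mu(\Omega_j)^{p/2}
    = 2^{jp}\mu(\Omega_j),
\]
which sums to $\int_0^\infty p\lambda^{p-1}\mu(\Omega_\lambda)\,d\lambda \sim \|S(f)\|_{L^p}^p$.

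\textbf{Step 4: Outside term.} This is the main obstacle. For $(x_1,x_2)\notin\widetilde{\widetilde{\Omega}}_j$, only the long-range pieces $\phi^{k_1,m_1}_{\alpha_1}\phi^{k_2,m_2}_{\alpha_2}$ of $\psi_R$ with $R\in\mathcal{R}_j$ contribute, and only through those $(m_1,m_2)$ for which the dilate $R^{(m_1,m_2)}$ reaches outside $\widetilde{\widetilde{\Omega}}_j$. Using the super-polynomial decay of $\phi^{k_i,m_i}_{\alpha_i}$ in $m_i$, a pointwise bound on $|f_j(x)|\chi_{\widetilde{\widetilde{\Omega}}_j^c}(x)$ in terms of the strong maximal function $M_s$ applied to $\chi_{\widetilde{\Omega}_j}$ raised to a power $>1$, together with the Journ\'e enlargement estimate, will control this term by
$
    \bigl\|f_j\,\chi_{\widetilde{\widetilde{\Omega}}_j^c}\bigr\|_{L^p}^p
    \lesssim 2^{jp}\mu(\Omega_j),
$
where the geometric series in $m_1$ and $m_2$ converges precisely when $p>\omega_i/(\omega_i+\eta_i)$ for both $i=1,2$, i.e.\ $p>p_0$. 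This is where the H\"older regularity $\eta_i$ of the wavelets and the upper dimensions $\omega_i$ balance, exactly as in the one-parameter theory of \cite{HMY1,HMY2} and its product analogue in \cite{HLLin}.

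Summing in $j$ and combining with Step 3 yields $\|f\|_{L^p}^p\lesssim \sum_j 2^{jp}\mu(\Omega_j)\sim \|S(f)\|_{L^p}^p = \|f\|_{H^p(\widetilde{X})}^p$. The main obstacle is Step 4: controlling the long-range interactions between rectangles in $\mathcal{R}_j$ and points outside the Journ\'e enlargement, which is exactly where both the compact-support decomposition of the wavelets and the product-space Journ\'e covering lemma are indispensable, and where the restriction $p>p_0$ enters.
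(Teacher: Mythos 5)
Your overall skeleton (level sets of $S(f)$, the stopping classes $\mathcal{R}_j$, compactly supported pieces of the wavelets, maximal-function enlargements) matches the paper's, and your Steps 1 and 3 are essentially sound: the coefficient estimate $\sum_{R\in\mathcal{R}_j}|c_R|^2\lesssim 2^{2j}\mu(\Omega_j)$ and the inside estimate via H\"older against $\mu(\widetilde{\widetilde{\Omega}}_j)$ are exactly the kind of computations the paper performs. The genuine gap is Step 4. You declare it the main obstacle and then only assert that a pointwise bound of $|f_j|\chi_{\widetilde{\widetilde{\Omega}}_j^c}$ by a power $>1$ of $M_s(\chi_{\widetilde{\Omega}_j})$ ``will control'' the outside term; no such bound is derived, and it is not clear it can be: off the enlargement you must control $\sum_{R\in\mathcal{R}_j}|c_R|\,|\psi_R(x_1,x_2)|$, and the only information available on the coefficients is the $\ell^2$ bound from Step 1, which does not convert into a pointwise or $L^p$ tail estimate without summing $|c_R|$ or $|c_R|^p$ over $\mathcal{R}_j$ --- quantities you have not controlled. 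The paper's proof is organized precisely so that this outside term never appears: Lemma~\ref{lemma-decomposition} writes each wavelet as $\sum_{\ell\ge0}2^{-\ell\gamma}\varphi^{\gamma}_{\ell,k,\alpha}$ with \emph{compactly supported} blocks and a \emph{free} exponent $\gamma>\omega$, so the $(\ell_1,\ell_2)$-piece of $f_{\mathcal{B}_j}$ is entirely supported in the enlargement $\widetilde{\Omega}_{j,\ell_1,\ell_2}$ of \eqref{(ell1,ell2)-enlargement}; its measure grows only like $(1+\ell_1\omega_1+\ell_2\omega_2)2^{\ell_1\omega_1+\ell_2\omega_2}\mu(\Omega_j)$ by \eqref{eqn:measure-enlargements}, the $L^q$ norm is bounded by duality using the almost-orthogonality Lemma~\ref{lemma LittlewoodPaley} and the Fefferman--Stein vector-valued inequality, and the growth in $\ell_1,\ell_2$ is absorbed by choosing $\gamma_i>\omega_i(1/p+1/q')$. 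Everything is an ``inside'' estimate, done scale by scale in $(\ell_1,\ell_2)$.

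A second, related error is where you locate the hypothesis $p>p_0$. The decay of the localized pieces in $m_i$ (your analogue of $\ell_i$) comes from the exponential decay of the wavelets and can be made as fast as you like, so no relation between $p$, $\omega_i$ and $\eta_i$ arises from that series; in the paper the only constraint at this stage is on the freely chosen $\gamma_i$. The condition $p>\omega_i/(\omega_i+\eta_i)$ is genuinely needed elsewhere, namely in the converse half of the Main Theorem, where one shows that a $(p,q)$-atom lies in $H^p$ and the exponent $p\eta_i+(p-1)\omega_i$ in the weight $w(\ell(Q_1)/\ell(\widehat{Q}_1))$ must be positive before Journ\'e's lemma can be applied. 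Relatedly, the Journ\'e covering lemma you invoke plays no role in the paper's proof of this theorem; the tools actually used are the strong maximal function (its $L^2$ and $L\log_+L$ estimates), H\"older, duality, and the Plancherel--P\'olya-type estimate for the building blocks. As written, your Step 4 is a missing proof rather than a proof, and the mechanism you sketch for it does not reflect how either the compact-support decomposition or the restriction $p>p_0$ actually functions.
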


As a consequence of Theorem
\ref{theorem-of-fLp-lessthan-fHp-on-product-case}, we have the
following result. 

\begin{corollary}\label{coro H1 in L1}
 Let $q>1$ then   $\displaystyle{H^1(X_1\times X_2) \cap L^q(X_1\times X_2 )}$ is a subset of $L^1(X_1\times X_2)$.
\end{corollary}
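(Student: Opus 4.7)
The plan is to obtain this corollary as a direct specialization of Theorem~\ref{theorem-of-fLp-lessthan-fHp-on-product-case} to the endpoint $p = 1$. First I would verify that $p = 1$ lies in the admissible range $p_0 < p \leq 1$ stipulated by the theorem. Since $p_0 = \max\{\omega_i/(\omega_i + \eta_i) : i = 1, 2\}$ and each $\eta_i \in (0,1)$ satisfies $\eta_i > 0$ (so that $\omega_i/(\omega_i + \eta_i) < 1$ strictly), we have $p_0 < 1$, and thus $p = 1$ is in the allowed interval.

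Given any $f \in H^1(X_1 \times X_2) \cap L^q(X_1 \times X_2)$ with $q > 1$, Theorem~\ref{theorem-of-fLp-lessthan-fHp-on-product-case} applied with $p = 1$ yields $f \in L^1(X_1 \times X_2)$ together with the quantitative bound
\[
    \|f\|_{L^1(X_1 \times X_2)}
    \leq C \, \|f\|_{H^1(X_1 \times X_2)},
\]
where $C$ is independent of $\|f\|_{L^q}$. This establishes the claimed inclusion.

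I expect this corollary to require essentially no additional work beyond invoking the auxiliary theorem, since its sole content is the containment at the specific exponent $p = 1$, which is already covered by the hypothesis $p_0 < p \leq 1$. The only verification needed is that the strict inequality $p_0 < 1$ holds, which follows from the strict positivity of the H\"older regularity exponents $\eta_1, \eta_2$ of the Auscher--Hyt\"onen wavelets. No additional reasoning about the decomposition lemma, atoms, or square functions is required at this stage; all the real work lies in the proof of Theorem~\ref{theorem-of-fLp-lessthan-fHp-on-product-case} itself.
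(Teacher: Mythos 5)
Your proposal is correct and is exactly the paper's route: the corollary is stated there as an immediate consequence of Theorem~\ref{theorem-of-fLp-lessthan-fHp-on-product-case} applied with $p=1$, which is admissible since $\eta_1,\eta_2>0$ force $p_0<1$. No further comment is needed.
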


To prove  Theorem~\ref{theorem-of-fLp-lessthan-fHp-on-product-case}, we first establish an
auxiliary result, Lemma~\ref{lemma-decomposition}, on the
decomposition of the orthonormal basis functions
$\psi_\alpha^k$ into building blocks with compact support and other
convenient properties. These building blocks will inherit   from the wavelets, appropriately scaled, size and smoothness conditions as well as cancellation.
We follow  the approach of Nagel and Stein (see \cite[Section 3.5]{NS}). 
\begin{lemma}\label{lemma-decomposition}
Let $(X,d,\mu )$ be a space of homogeneous type with $A_0$ the quasi-triangle constant of the quasi-metric $d$, and $\omega$ an upper dimension of the Borel regular doubling measure $\mu$.
   Fix  parameters {$\gamma >\omega$} and $\overline{C} >1$.
    Suppose that $\psi_\alpha^k$ is a basis function (a wavelet)  as in
    Theorem~\ref{thm:AH_orthonormal_basis},  with exponential decay exponents $\nu>0$ and   $a=(1+2\log_2A_0)^{-1}$  and with
H\"older-regularity exponent $\eta$. 
    Then  there exist functions $\varphi^{\gamma,\overline{C}}_{\ell, k,\alpha}$ for each integer $\ell\geq 0$
    such that for all $x\in X$ and  for each $k\in\mathbb{Z}$, $\alpha\in \mathscr{Y}^k$, we have the
    following decomposition for the normalized wavelets
    $\widetilde{\psi}_\alpha^k :=  \psi_\alpha^k(x)/\sqrt{\mu \big (B(y_\alpha^k,\delta^k) \big )}$,
     \begin{equation}\label{decomposition of wavelet into atom}
    \widetilde{\psi}_\alpha^k(x)
        = \sum_{\ell=0}^\infty (2^{\ell}\overline{C})^{-\gamma} \varphi^{\gamma, \overline{C}}_{\ell,k,\alpha}(x).
    \end{equation}
    Here each $\varphi^{\gamma}_{\ell,k,\alpha}$ satisfies the following  properties.
\begin{itemize}
    \item[(i)] {\rm (Compact  support)} $\;\;\supp\varphi^{\gamma,\overline{C}}_{\ell,k,\alpha} \subset B(y_\alpha^k, {2A_0^2}\,  \overline{C} 2^{\ell}\,\delta^k).$
     \item[(ii)] {\rm (Boundedness)} There is a constant $C_{\gamma}>0$ such that for all $x\in X$
$$|\varphi^{\gamma,\overline{C}}_{\ell,k,\alpha}(x)| \leq C_{\gamma} (\overline{C}2^{\ell})^{\omega}/
\mu \big (B(y_\alpha^k, \overline{C}2^{\ell}\delta^k)\big ).$$ 
  \item[(iii)] {\rm (Local H\"older regularity)}   There is a constant $C_{\gamma}>0$ such that   for all $x,y\in X$ with $d(x,y)\leq \delta^k,$
    $$ | \varphi^{\gamma,\overline{C}}_{\ell,k,\alpha}(x) - \varphi^{\gamma,\overline{C}}_{\ell,k,\alpha}(y)|
     \leq C_{\gamma}\,(\overline{C}2^\ell\delta^k)^{-\eta} \, (\overline{C}2^{\ell})^{\omega} 
      d(x,y)^{\eta}/\mu \big (B(y_\alpha^k,  \overline{C}2^{\ell} \delta^k)\big ).$$          
     \item[(iv)] {\rm (Cancellation)} $\;\;\int_X \varphi^{\gamma}_{\ell,k,\alpha}(x)\,d\mu(x) = 0$.
    \end{itemize}
 Here $C_{\gamma}$ is a positive constant independent of
    $y_\alpha^k$,  $\delta^k$, and~$\ell$. However $C_{\gamma}$ will depend on the fixed $\gamma >0$ and the geometric constants of the space $X$.
    The equality \eqref{decomposition    of wavelet into atom} holds pointwise, as well as in
    $L^q(X)$ for $q\in (1,\infty)$.
\end{lemma}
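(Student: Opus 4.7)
The plan is to follow the Nagel--Stein strategy in \cite[Section~3.5]{NS}: decompose the normalized wavelet via dyadic cutoffs centered at $y_\alpha^k$, correct each piece to restore cancellation, and then absorb the prefactor $(2^\ell\overline{C})^\gamma$ using the wavelet's stretched-exponential decay. Invoking Lemma~\ref{lem:cut-off-functions} with $x_0=y_\alpha^k$ and $R_0=2\overline{C}2^\ell\delta^k$, for each $\ell\geq 0$ I would produce a smooth cutoff $h_\ell$ with $h_\ell\equiv 1$ on $B(y_\alpha^k,\overline{C}2^\ell\delta^k/2)$, $\supp h_\ell\subset B(y_\alpha^k,2A_0^2\overline{C}2^\ell\delta^k)$, and global H\"older regularity of exponent $\eta$ at scale $\overline{C}2^\ell\delta^k$. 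Setting $h_{-1}\equiv 0$ and $g_\ell:=(h_\ell-h_{\ell-1})\widetilde{\psi}_\alpha^k$, the telescoping identity $\sum_{\ell=0}^N g_\ell=h_N\widetilde{\psi}_\alpha^k$ converges to $\widetilde{\psi}_\alpha^k$ pointwise and, by dominated convergence, in $L^q(X)$ for $1<q<\infty$. Critically, for $\ell\geq 1$ the set $\supp g_\ell$ avoids $B(y_\alpha^k,\overline{C}2^{\ell-1}\delta^k/2)$, so there $d(y_\alpha^k,x)\gtrsim\overline{C}2^\ell\delta^k$ and the wavelet bound \eqref{eqn:exponential_decay} produces the key stretched-exponential factor $\exp(-\nu'(\overline{C}2^\ell)^a)$ for some $\nu'>0$ depending on $\nu$ and $A_0$.

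The pieces $g_\ell$ lack cancellation, but their integrals $c_\ell:=\int_X g_\ell\,d\mu$ satisfy $\sum_\ell c_\ell=\int\widetilde{\psi}_\alpha^k\,d\mu=0$, and combining the preceding bound with $\mu(\supp g_\ell)\lesssim(\overline{C}2^\ell)^\omega\mu(B(y_\alpha^k,\delta^k))$ from doubling yields $|c_\ell|\lesssim(\overline{C}2^\ell)^\omega\exp(-\nu'(\overline{C}2^\ell)^a)$ for $\ell\geq 1$ (and $|c_0|\lesssim 1$). I then fix a H\"older-continuous bump $\phi$ supported in $B(y_\alpha^k,2A_0^2\overline{C}\delta^k)$ with $\int_X\phi\,d\mu=1$ and $\|\phi\|_\infty\lesssim 1/\mu(B(y_\alpha^k,\delta^k))$, obtained from Lemma~\ref{lem:cut-off-functions} after normalization. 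Setting $\widetilde{g}_\ell:=g_\ell-c_\ell\phi$ preserves the telescoping sum $\sum_\ell\widetilde{g}_\ell=\widetilde{\psi}_\alpha^k$ (since $\sum_\ell c_\ell=0$) while enforcing $\int_X\widetilde{g}_\ell\,d\mu=0$. Defining
\[
\varphi_{\ell,k,\alpha}^{\gamma,\overline{C}}:=(2^\ell\overline{C})^\gamma\widetilde{g}_\ell
\]
then yields the decomposition \eqref{decomposition of wavelet into atom}, with pointwise and $L^q(X)$ convergence inherited from the convergence of $h_N\widetilde{\psi}_\alpha^k$ and the vanishing of the telescoped correction.

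Verification of (i)--(iv) proceeds as follows. (i) Support is immediate from the supports of $g_\ell$ and $\phi$. (ii) On $\supp g_\ell$ the wavelet bound combined with the doubling estimate $1/\mu(B(y_\alpha^k,\delta^k))\lesssim(\overline{C}2^\ell)^\omega/\mu(B(y_\alpha^k,\overline{C}2^\ell\delta^k))$ gives the claimed size bound once one absorbs $(2^\ell\overline{C})^{\gamma+\omega}$ into $\exp(-\nu'(\overline{C}2^\ell)^a)$, producing a uniform-in-$\ell$ constant $C_\gamma$; the correction $(2^\ell\overline{C})^\gamma c_\ell\phi$ is handled the same way. (iii) Split $\widetilde{g}_\ell(x)-\widetilde{g}_\ell(y)$ into cutoff-difference and wavelet-difference terms; the cutoff's H\"older bound directly carries the correct scale $\overline{C}2^\ell\delta^k$, while the wavelet's local H\"older estimate \eqref{eqn:Holder_regularity} gives $(d(x,y)/\delta^k)^\eta=(\overline{C}2^\ell)^\eta(d(x,y)/(\overline{C}2^\ell\delta^k))^\eta$, and the extra $(\overline{C}2^\ell)^\eta$ is once more absorbed by the stretched-exponential decay. (iv) Cancellation holds by construction. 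The main obstacle is the scale mismatch in (iii) --- the wavelet enjoys local H\"older regularity at scale $\delta^k$, whereas the target estimate is at the larger scale $\overline{C}2^\ell\delta^k$. This is resolved precisely because $a>0$ in \eqref{eqn:exponential_decay} ensures that $\exp(-\nu'(\overline{C}2^\ell)^a)$ beats every polynomial power of $\overline{C}2^\ell$, thereby absorbing the prefactor $(2^\ell\overline{C})^\gamma$ together with the scale-mismatch factor $(\overline{C}2^\ell)^{\omega+\eta}$ uniformly in $\ell\geq 0$.
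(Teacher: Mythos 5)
Your proposal is correct and verifies conditions (i)--(iv), but it handles the cancellation-restoration step differently from the paper, so a comparison is worthwhile. After constructing the telescoping pieces $g_\ell = (h_\ell - h_{\ell-1})\widetilde{\psi}_\alpha^k$ exactly as the paper does with its $\Lambda_\ell^{\overline{C}}$, you restore the vanishing integral by subtracting $c_\ell\,\phi$ from $g_\ell$, where $\phi$ is a \emph{single} fixed $L^1$-normalized bump at the base scale $\overline{C}\delta^k$. The paper instead uses a summation-by-parts device: it defines $\xi_\ell$ as the $L^1$-normalization of the cutoff at scale $\overline{C}2^\ell\delta^k$ and sets $\widetilde{\Lambda}_\ell^{\overline{C}} = \Lambda_\ell^{\overline{C}} - a_\ell\xi_\ell + s_\ell(\xi_\ell - \xi_{\ell+1}) = \Lambda_\ell^{\overline{C}} + s_{\ell-1}\xi_\ell - s_\ell\xi_{\ell+1}$, where $s_\ell = \sum_{j\le\ell}a_j$ are the partial sums of the cutoff integrals. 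The advantage of the paper's choice is structural: every summand in $\widetilde{\Lambda}_\ell^{\overline{C}}$ lives at the \emph{same} scale $\overline{C}2^\ell\delta^k$, so the required size and H\"older estimates at that scale fall out directly from $s_\ell = O((\overline{C}2^\ell)^{\omega-\gamma})$, with no scale-mismatch to absorb in the correction term. Your fixed-scale correction introduces a piece of $\varphi^{\gamma,\overline{C}}_{\ell,k,\alpha}$ supported near $y_\alpha^k$ at scale $\overline{C}\delta^k$ even when $\ell$ is large, so in verifying (ii) and (iii) for that piece you must cancel the extra polynomial growth $(\overline{C}2^\ell)^{\omega}$ (from comparing $1/\mu(B(y_\alpha^k,\delta^k))$ against $(\overline{C}2^\ell)^\omega/\mu(B(y_\alpha^k,\overline{C}2^\ell\delta^k))$) and $(\overline{C}2^\ell)^\eta$ (from the H\"older scale gap) against the stretched-exponential factor $\exp(-\nu'(\overline{C}2^\ell)^a)$. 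You correctly flag this and observe that since $a>0$ the exponential beats every polynomial power uniformly in $\ell\ge 1$, and that the $\ell = 0$ piece is trivially handled. Both constructions implicitly let $C_\gamma$ depend on $\overline{C}$ (already at $\ell=0$ the bound $(\overline{C}2^\ell)^\gamma |\Lambda_0^{\overline{C}}| \sim \overline{C}^\gamma/\mu(B(y_\alpha^k,\delta^k))$ forces this), and the paper tracks that $\overline{C}$-dependence explicitly in the downstream estimates, so your proposal is consistent with the paper on that point. One caveat worth noting: in the proof of the Main Theorem the paper invokes not only (i)--(iv) but also an intermediate test-function-type smoothness estimate for $\Lambda_\ell^{\overline{C}}$ proved along the way; if your alternative building blocks $\widetilde g_\ell$ are to be used there, that auxiliary estimate would need to be re-derived for the piece $c_\ell\phi$ as well, which requires a similar exponential-decay absorption but does go through.
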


Lemma~\ref{lemma-decomposition} allows for two parameters, a decaying parameter $\gamma>\omega$ and a dilation parameter $\overline{C}>1$. Later on we will pick $\gamma$ large enough so that some geometric series converge and we will need $\overline{C}$ to match  dilation
parameters for the $(p,q)$-atoms which are independent of the wavelets and based on possibly separate dyadic grids.
When $\overline{C}=1$ we simply write $\varphi^{\gamma}_{\ell,k,\alpha}$.

In the local H\"older regularity  condition~(iii) in Lemma~\ref{lemma-decomposition},   the range of validity, $d(x,y)\leq \delta^k$,  is inherited from the wavelets local regularity condition as in Theorem~\ref{thm:AH_orthonormal_basis}(iii). In the proof of Lemma~\ref{lemma-decomposition} we will see that a type of H\"older regularity like the one test functions have, see Definition~\ref{def-of-test-func-space}(ii), with range of validity  $d(x, y) < \big (2A_0)^{-1}(\delta^k + d(x, y^{k}_{\alpha})\big )$  provided $x\in B(y_{\alpha}^k, {A_0^2}\, {\overline{C}}2^{\ell} \delta^k)\setminus B(y_{\alpha}^k, {\overline{C}}2^{\ell -1}\delta^k{/4})$, will also hold because the wavelets are test functions by Theorem~\ref{prop wavelet is test function}. We will need this estimate in the proof of the Main Theorem in Section~\ref{sec:atomicHp}.

What is gained in this decomposition is  the compact support of the building blocks, as opposed to the exponential decay of the wavelets being decomposed. What is lost is the orthonormality  of the wavelets, however the building blocks  will have an appropriate ``almost-orthogonality" property that will be needed in the proof of  Theorem~\ref{theorem-of-fLp-lessthan-fHp-on-product-case}. This almost-orthogonality of the building blocks  is captured in Lemma~\ref{lemma LittlewoodPaley} stated in page~\pageref{Littlewood-Paley Key Lemma} and proved after the the proof of Theorem~\ref{theorem-of-fLp-lessthan-fHp-on-product-case}  in page~\pageref{proof-Lemma-LP}.

\begin{proof}[Proof of Lemma~\ref{lemma-decomposition}]

Fix {$\gamma >\omega$}, $k\in\mathbb{Z}$, and $\alpha\in\mathscr{Y}_k$. Let
\begin{align}
    \Lambda_0^{\overline{C}}(x)
    &:=  h_0(x)\,  
        \widetilde{\psi}_\alpha^k(x)
        \quad\text{and} \label{def:Lambda_0}\\
    \Lambda_\ell^{\overline{C}}(x)
    &:= \big(h_{\ell}(x) - h_{\ell-1}(x)\big )\,
        \widetilde{\psi}_\alpha^k(x) \ 
        \ {\rm for}\ \ \ell\geq1. \label{def:Lambda_ell}
\end{align}
The cut-off functions $h_{\ell}\in C^{\eta}(X)$ are given by Lemma~\ref{lem:cut-off-functions} based on $x_0=y^k_{\alpha}$ and  with parameter $R_0=\overline{C}2^{\ell}\delta^k$ for each $\ell\geq 0$.  They have the following properties
for $\ell>0$: first
 $0\leq h_{\ell}(x)\leq 1$;  second
 \begin{equation}\label{eqn:cut-off-support}
 h_{\ell}(x) \equiv 1 \;\; \mbox{when $x\in B(y^k_{\alpha},\overline{C}2^{\ell}\delta^k/4)$}, \quad\quad h_\ell(x)\equiv 0 \;\; \mbox{when $x\in B(y^k_{\alpha},A_0^2 \,\overline{C}2^{\ell} \delta^k)^c$};
 \end{equation}
and third,  there exists a constant $C>0$ independent of $y^k_{\alpha}$ and  $\ell$, depending only on the geometric constants of the space $X$, such that  for all $x,y\in X$ the following global H\"older regularity holds:
\begin{equation}\label{eqn:cut-off-Holder-regularity}
|h_{\ell}(x)-h_{\ell}(y)| \leq C \Big ({d(x,y)}/ \,{\overline{C}2^{\ell}\delta^k}\Big )^{\eta}.
 \end{equation}
{By definition, the function $\Lambda_0$ is supported on $B(y_{\alpha}^k, {A_0^2}\,\overline{C} \delta^k)$ and the function
$\Lambda_{\ell}^{\overline{C}}$ for $\ell\geq 1$ is supported on the annulus $B(y_{\alpha}^k, {A_0^2}\,\overline{C} 2^{\ell} \delta^k)\setminus B(y_{\alpha}^k, \overline{C}2^{\ell -1}\delta^k{/4})$.} By a telescoping sum argument we see that
\[\sum_{\ell =0}^L \Lambda_{\ell}^{\overline{C}}(x) =  {h_L(x)}\,
\widetilde{\psi}_\alpha^k(x)   
\; \mbox{and is identical to} \;
\widetilde{\psi}_\alpha^k(x) 
\; \mbox{on $B(y_\alpha^k,\overline{C}2^{L}\delta^k{/4})$.}\]

It follows  that
$\widetilde{\psi}_\alpha^k(x) 
= \sum_{\ell\geq 0} \Lambda_\ell^{\overline{C}}(x)$ pointwise. Moreover, for all $x\in X$ and every~$\gamma>0$,
\begin{equation}\label{eqn:sizeLambda}
    |\Lambda_\ell^{\overline{C}}(x)|
    \lesssim_{\gamma} \frac{(\overline{C}2^{\ell})^{-\gamma}}{\mu \big (B(y_\alpha^k, \delta^k) \big )}
    \lesssim_{\gamma}  \frac{(\overline{C}2^{\ell})^{\omega-\gamma}}{\mu \big (B(y_\alpha^k, \overline{C}2^{\ell}\delta^k) \big )}. 
\end{equation}
The second inequality  by the doubling property of the measure. The first inequality can be seen since $\psi_\alpha^k(x)$ has the exponential decay
property~\eqref{eqn:exponential_decay}, { $|h_{\ell}(x)-h_{\ell-1}(x)|\in [0,1]$},
and $\Lambda_{\ell}^{\overline{C}}$ is supported on $B(y_{\alpha}^k, {A_0^2}\, \overline{C}2^{\ell} \delta^k)\setminus B(y_{\alpha}^k, \overline{C}2^{\ell -1}\delta^k{/4})$.  Note that for $\nu, a >0$ the function $e^{-\nu z^a }z^{\gamma}$ defined for $z\geq 0$ is a bounded function for each $\gamma>0$, with an upper bound depending on $\gamma>0$.

Next, following the argument in \cite[p.550-551]{NS},  define $a_\ell:=\int_X \Lambda_\ell^{\overline{C}}(x)\,d\mu(x)$.
Using~\eqref{eqn:sizeLambda} 
 it is clear that
$a_\ell=O\big ((\overline{C}2^{\ell})^{\omega-\gamma}\big )$.
Define $s_\ell := \sum_{0\leq j\leq \ell} a_j$, note that by Lebesgue domination theorem,
$$\sum_{\ell\geq 0} a_\ell =
\int_X \widetilde{\psi}_\alpha^k(x)   
\,d\mu(x)=0,$$
therefore we have
$s_\ell = -\sum_{j>\ell}a_j$, which gives
$s_\ell=O\big ( (\overline{C}2^{\ell})^{\omega-\gamma}\big )$.

We now define the function $ \widetilde{\Lambda}_{\ell}^{\overline{C}}:X\to \mathbb{R}$ by
 $$  \widetilde{\Lambda}_{\ell}^{\overline{C}}(x) :=
\Lambda_{\ell}^{\overline{C}}(x)-a_\ell \,\xi_\ell(x)+ s_\ell \big(\xi_\ell(x)-\xi_{\ell+1}(x) \big) = \Lambda_{\ell}^{\overline{C}}(x)+s_{\ell-1}\, \xi_\ell(x)- s_\ell \,\xi_{\ell+1}(x).$$
Here  for each $\ell\geq 0$ the function  $\xi_{\ell}$ is the  $L^1$-normalization of the function  {$h_{\ell}$}  \begin{equation}\label{def:xi_ell}
    \xi_\ell(x)
    := h_{\ell}(x)   
        \Big[\int_X h_{\ell}(z) 
        \,d\mu(z) \Big]^{-1},
\end{equation}
Finally we define the functions  $\varphi^{\gamma,\overline{C}}_{\ell,k,\alpha}$
in the decomposition of the wavelets
\begin{equation}\label{def:varphi}
    \varphi^{\gamma,\overline{C}}_{\ell,k,\alpha}(x)
    := (\overline{C}2^{\ell})^{\gamma}\widetilde{\Lambda}_{\ell}^{\overline{C}}(x).
\end{equation}
Note that $\widetilde{\Lambda}_{\ell}^{\overline{C}}$ does not depend on $\gamma$, although it depends on the fixed $k$ and  $\alpha$.
It is easy to verify that the decomposition~\eqref{decomposition of wavelet into atom} holds. Namely
\begin{align*}
    \sum_{\ell\geq 0} (\overline{C}2^{\ell})^{-\gamma}\varphi^{\gamma,\overline{C}}_{\ell,k,\alpha}(x)
    &= \sum_{\ell\geq 0} \widetilde{\Lambda}_{\ell}^{\overline{C}}(x)\\
    &= \sum_{\ell\geq 0} \Lambda_{\ell}^{\overline{C}}(x) - \sum_{\ell\geq 0}
        a_\ell \xi_\ell(x)+ \sum_{\ell \geq 0}
        s_\ell \big(\xi_\ell(x)-\xi_{\ell+1}(x) \big)\\
    &= \widetilde{\psi}_\alpha^k(x),  
\end{align*}
where the last equality follows from the facts that
$\widetilde{\psi}_\alpha^k(x)  
= \sum_{\ell\geq 0}\Lambda_\ell^{\overline{C}}(x)$ and  $\sum_{\ell\geq 0} a_\ell
\xi_\ell = \sum_{\ell\geq 0} s_\ell(\xi_\ell(x) - \xi_{\ell+1}(x) )$, using summation by
 parts and noting that $a_{\ell}=s_\ell-s_{\ell -1}$.

Now we verify that $\varphi^{\gamma,\overline{C}}_{\ell,k,\alpha}$ satisfies  properties
(i), (ii), (iii), and~(iv).

In fact, from the definition of $\varphi^{\gamma,\overline{C}}_{\ell,k,\alpha}$ it is easy
to see that properties~(i) and~(iv) hold. We now turn to property~(ii). From the size estimate \eqref{eqn:sizeLambda}
we have that
\begin{equation}\label{size-Lambda-gamma}
    |\Lambda_\ell^{\overline{C}}(x)|
    \lesssim_{\gamma} \frac{ (\overline{C}2^{\ell})^{\omega-\gamma} }
    {\mu\big (B(y_\alpha^k, \overline{C}2^{\ell}\delta^k)\big )}
\end{equation}
for each $\gamma > 0$, where $\omega$ is an upper dimension of
the measure $\mu$.  
Next, it follows from the definition of the function $\xi_\ell$ that
\[
    |\xi_\ell(x)|
    \lesssim  \frac{1}{\mu \big (B(y_\alpha^k,\overline{C}2^{\ell}\delta^k)\big )}
\]
because   $0\leq {h_{\ell}(x)} \leq 1$ and  $\mu \big (B(y_{\alpha}^k,\overline{C}2^{\ell-1}\delta^k{/4})\big )\leq \int_X {h_{\ell} }(z)\,d\mu(z) \leq \mu \big (B(y_{\alpha}^k,{A_0^2}\,\overline{C}2^{\ell}\delta^k)\big )$. Furthermore, using the doubling property of $\mu$, we conclude that   
\begin{equation}\label{integral-widetilde-xi}
\int_X h_{\ell}(z)\, d\mu(z) \sim \mu\big (B(y_{\alpha}^k,\overline{C}2^{\ell}\delta^k)\big ).
\end{equation}
Consequently, recalling that $a_{\ell}= O\big ( (\overline{C}2^{\ell})^{\omega-\gamma})$ and $s_{\ell}= O\big ((\overline{C}2^{\ell})^{\omega-\gamma})$,  we conclude that
property~(ii) holds. 

Similarly, from the H\"older
regularity~\eqref{eqn:Holder_regularity} of the wavelet $\psi_{\alpha}^k$ and estimate~\eqref{eqn:cut-off-Holder-regularity} of the cut-off functions $h_{\ell}$,  
 together with the definition of the
function~$\xi_\ell$, we obtain that property~(iii) holds.
More precisely, we need to verify that  there is a constant $C_{\gamma}>0$ depending only on  the geometric constants of $X$ and on $\gamma$,  such that for all $x,y\in X$ {with $d(x,y)\leq \delta^k$}, and for all $\ell$, $\alpha$, and $k$ the following inequality holds
\[ |\varphi^{\gamma \overline{C}}_{\ell,k,\alpha}(x) - \varphi^{\gamma,\overline{C}}_{\ell,k,\alpha}(y)| \leq  \frac{C_{\gamma} \, (\overline{C}2^\ell \delta^k)^{-\eta}\ (\overline{C}2^{\ell})^{ \omega}}{\mu \big (B(y_\alpha^k,\overline{C}2^\ell
    \delta^k)\big )}
 \, d(x,y)^{\eta}.    \]
Without loss of generality we can assume that $d(x,y)>0$, in other words $x\neq y$.  Using  definition \eqref{def:varphi} of the atoms $\varphi^{\gamma}_{\ell, k,\alpha}$ and the triangle inequality we get that
\[   |\varphi^{\gamma,\overline{C}}_{\ell,k,\alpha}(x) - \varphi^{\gamma,\overline{C}}_{\ell,k,\alpha}(y)| \leq  (\overline{C}2^{\ell})^{\gamma}\big (
|\Lambda_{\ell}^{\overline{C}}(x)-\Lambda_{\ell}^{\overline{C}}(y)| + |s_{\ell-1}| \,|\xi_{\ell}(x)-\xi_{\ell}(y)| +|s_{\ell}| \, |\xi_{\ell +1}(x)-\xi_{\ell +1}(y)| \big ).
\]
Since 
$s_{\ell}=O\big ((\overline{C}2^{\ell})^{-\gamma})$, it suffices to show that there is a constant $C_{\gamma}>0$ such that for all $x,y\in X$ {with $d(x,y)\leq \delta^k$} the following two inequalities hold
\begin{eqnarray}
(\overline{C}2^{\ell})^{\gamma} |\Lambda_{\ell}^{\overline{C}}(x)-\Lambda_{\ell}^{\overline{C}}(y)| & \leq & \frac{C_{\gamma}\, (\overline{C}2^\ell \delta^k)^{-\eta}\ (\overline{C}2^{\ell})^{\omega}}{\mu \big (B(y_\alpha^k,\overline{C}2^\ell   \delta^k)\big )} \, d(x,y)^{\eta},  \label{estimate-for-Lambda}\\
|\xi_{\ell}(x)-\xi_{\ell}(y)| & \leq &  \frac{C_{\gamma}\, (\overline{C}2^\ell \delta^k)^{-\eta} }
{\mu \big (B(y_\alpha^k,\overline{C}2^\ell   \delta^k)\big )} \, d(x,y)^{\eta}. \label{estimate-for-xi}
\end{eqnarray}

We first estimate~\eqref{estimate-for-xi}.
Using definition~\eqref{def:xi_ell} of $\xi_{\ell}$, estimate \eqref{integral-widetilde-xi}, and the fact that $h_{\ell}$
satisfies estimate~\eqref{eqn:cut-off-Holder-regularity} for all $x,y\in X$, we obtain
\[ |\xi_{\ell}(x)-\xi_{\ell}(y)|  \lesssim   \frac{ |h_{\ell}(x)-h_{\ell}(y)|}
{\mu\big (B(y^k_{\alpha},\overline{C}2^{\ell}\delta^k)\big )}
\lesssim  \frac{ (\overline{C}2^{\ell} \delta^k)^{-\eta} 
}{\mu \big (B(y_\alpha^k,\overline{C}2^\ell   \delta^k)\big )} \, d(x,y)^{\eta}.
\]
This is more than what we wanted to show, since $x$ and $y$ are not required to be $\delta^k$-close to each other, and the similarity constants are independent of $\gamma$.

We now estimate \eqref{estimate-for-Lambda}. 
We argue in the case when $\ell>0$ and note that when $\ell=0$ a similar calculation, somewhat simpler, yields the  desired estimate.
By definition~\eqref{def:Lambda_ell} of $\Lambda_{\ell}^{\overline{C}}$ when $\ell>0$,
we conclude that
\begin{eqnarray*}
| \Lambda_{\ell}^{\overline{C}}(x)-\Lambda_{\ell}^{\overline{C}}(y)| & \leq & |h_{\ell}(x)\widetilde{\psi}^k_{\alpha}(x) - h_{\ell}(y)\widetilde{\psi}^k_{\alpha}(y)|
+ |h_{\ell-1}(x)\widetilde{\psi}^k_{\alpha}(x) - h_{\ell-1}(y)\widetilde{\psi}^k_{\alpha}(y)|.
\end{eqnarray*}
For all $\ell>0$ we  estimate using the triangle inequality
\begin{eqnarray*}
|h_{\ell}(x)\, \widetilde{\psi}^k_{\alpha}(x) - h_{\ell}(y)\, \widetilde{\psi}^k_{\alpha}(y)|
 & \leq &
 \| h_{\ell}\|_{L^{\infty}(X)} |\widetilde{\psi}^k_{\alpha}(x) - \widetilde{\psi}^k_{\alpha}(y)| +
 \|\widetilde{\psi}^k_{\alpha}\|_{L^{\infty}(X)} |h_{\ell}(x)-h_{\ell}(y)|.
 \end{eqnarray*}
 Using the exponential decay and H\"older regularity estimates~\eqref{eqn:exponential_decay} and~\eqref{eqn:Holder_regularity} for the wavelet $\psi^k_{\alpha}$, together with the fact that $\| h_{\ell}\|_{L^{\infty}(X)}\leq 1$  and the H\"older regularity estimate~\eqref{eqn:cut-off-Holder-regularity} of~$h_{\ell}$, we conclude that, when  $d(x,y)\leq \delta^k$,
\begin{eqnarray*}
|h_{\ell}(x) \,\widetilde{\psi}^k_{\alpha}(x) - h_{\ell}(y)\, \widetilde{\psi}^k_{\alpha}(y)|  & \lesssim &
\frac{  \exp\Big(-\nu\Big( \frac{d(y^k_\alpha,x)}{\delta^k}\Big)^a\Big)}{\mu \big (B(y_\alpha^k,\delta^k)\big )}
         \big ( \delta^{-k\eta}   d(x,y)^\eta +  (\overline{C}2^{\ell}\delta^k)^{-\eta}d(x,y)^{\eta} \big )
         \\
           & & \hskip -.8in \lesssim_{\Gamma}  \; \frac{\delta^{-k\eta} (\overline{C}2^{\ell})^{\omega} \big(1+  (\overline{C}2^{\ell})^{-\eta}\big)}{{\mu \big (B(y_\alpha^k,\overline{C}2^{\ell}\delta^k)\big )}}
                  d(x,y)^\eta \Big (\frac{\delta^k}{\delta^k+d(y^k_{\alpha},x)}\Big )^{\Gamma},
\end{eqnarray*}
for all $\Gamma>0$. 
Where we used the doubling property~\eqref{eqn:upper dimension}  in the last inequality.  When $x$ is in   the support of $\Lambda_{\ell}^{\overline{C}}$, namely $B(y_{\alpha}^k, {A_0^2}\, \overline{C}2^{\ell} \delta^k)\setminus B(y_{\alpha}^k,\overline{C}2^{\ell -1}\delta^k{/4})$,  then  $d(x,y^k_{\alpha})\, \delta^{-k}\sim \overline{C}2^{\ell}$, we conclude that for all $\Gamma>0$
  \begin{eqnarray*}
(\overline{C}2^{\ell})^{\gamma}| \Lambda_{\ell}^{\overline{C}}(x)-\Lambda_{\ell}^{\overline{C}}(y)| & \lesssim_{\Gamma}&
 \frac{(\overline{C}2^{\ell}\delta^k)^{-\eta}\, (\overline{C}2^{\ell})^{\omega}}{{\mu \big (B(y_\alpha^k,\overline{C}2^{\ell}\delta^k)\big )}} d(x,y)^\eta
\, {(\overline{C}2^{\ell})^{\gamma} \big ((\overline{C}2^{\ell})^{\eta} +1\big )}\Big (\frac{1}{1+d(y^k_{\alpha},x)\,\delta^{-k}}\Big )^{\Gamma}\\
& \lesssim_{\Gamma} & \frac{(\overline{C}2^{\ell}\delta^k)^{-\eta}\, (\overline{C}2^{\ell})^{\omega}}{{\mu \big (B(y_\alpha^k,\overline{C}2^{\ell}\delta^k)\big )}} d(x,y)^\eta
\, (\overline{C}2^{\ell})^{\gamma + \eta -\Gamma} .
 \end{eqnarray*}
 Picking $\Gamma = \gamma+\eta$ we get estimate~\eqref{estimate-for-Lambda} at least when  $x$ is in the support of $\Lambda_{\ell}^{\overline{C}}$ and $d(x,y)\leq \delta^k$.  Clearly when both $x$ and $y$ are not in the support of  $\Lambda_{\ell}^{\overline{C}}$ then $\Lambda_{\ell}^{\overline{C}}(x)-\Lambda_{\ell}^{\overline{C}}(y) =0$. The only remaining case is when $y$ is in the support of $\Lambda_{\ell}^{\overline{C}}$ and $x$ is not. The calculations above are symmetric in $x$ and $y$, interchanging their roles we  conclude that
when $d(x,y)\leq \delta^k$ then
$$(\overline{C}2^{\ell})^{\gamma}| \Lambda_{\ell}^{\overline{C}}(x)-\Lambda_{\ell}^{\overline{C}}(y)| \lesssim_{\gamma} \frac{(\overline{C}2^{\ell}\delta^k)^{-\eta}\, (\overline{C}2^{\ell})^{\omega}}{{\mu \big (B(y_\alpha^k,\overline{C}2^{\ell}\delta^k)\big )}} d(x,y)^\eta.$$
This proves estimate~\eqref{estimate-for-Lambda} and shows that condition~(iii) in the lemma holds.

By Proposition~\ref{prop wavelet is test function},  $\widetilde{\psi}^k_{\alpha}$ is a test function of type $(y^k_{\alpha}, \delta^k,\eta,\gamma+\eta)$. Using the test-function properties instead of the local H\"older regularity of the wavelets as we just did,  one can verify in a similar manner  that when  $x\in {\rm supp} (\Lambda_{\ell}^{\overline{C}})$ and $d(x,y)\leq (2A_0)^{-1}(\delta^k+d(x,y^k_{\alpha}))$
then
\begin{equation}\label{test-function-estimate-Lambda-x}
(\overline{C}2^{\ell})^{\gamma}| \Lambda_{\ell}^{\overline{C}}(x)-\Lambda_{\ell}^{\overline{C}}(y)| \lesssim_{\gamma} \frac{(\overline{C}2^{\ell}\delta^k)^{-\eta}}{{\mu \big (B(y_\alpha^k,\delta^k )\big ) + \mu\big (B(x,d(x,y^k_{\alpha}))\big )}} d(x,y)^\eta.
\end{equation}

Finally we can verify that the convergence  in equality \eqref{decomposition
    of wavelet into atom} is not just pointwise, but also in
    $L^q(X)$ for $q\in (1,\infty)$. Indeed, let $ \psi_\alpha^{k,N}(x) = \sqrt{\mu(B(y_\alpha^k,\delta^k))}
         \sum_{\ell=0}^N (\overline{C}2^{\ell})^{-\gamma}\varphi^{\gamma,\overline{C}}_{\ell,k,\alpha}(x)$. Then, using the already proven boundedness and support properties~(i) and~(ii) of $\varphi^{\gamma,\overline{C}}_{\ell,k,\alpha}$ in Lemma~\ref{lemma-decomposition}, we readily see that, 
     \begin{align*}
         \| \psi_\alpha^{k}-\psi_\alpha^{k,N}\|_{L^q(X)} & \leq \sqrt{\mu \big (B(y_\alpha^k,\delta^k)\big) }
         \sum_{\ell=N+1}^{\infty} (\overline{C}2^{\ell})^{-\gamma} \|\varphi^{\gamma,\overline{C}}_{\ell,k,\alpha}\|_{L^q(X)} \\
         & \lesssim   (\overline{C})^{\omega-\gamma} \mu \big (B(y_\alpha^k,\delta^k)\big )^{\frac{1}{2}}     \sum_{\ell=N+1}^{\infty} 2^{(-\gamma + \omega)\ell}
       \mu\big (B(y_\alpha^k,\overline{C}2^{\ell}\delta^k)\big )^{-\frac{1}{q'}}\\
         &\lesssim (\overline{C})^{\omega-\gamma } \mu \big (B(y_\alpha^k,\delta^k)\big )^{\frac{1}{2}-\frac{1}{q'}}  \sum_{\ell=N+1}^{\infty} 2^{(-\gamma + \omega)\ell}.
         \end{align*}
        As $N\to \infty$,  the series on the right-hand-side  converges to zero, since $\gamma > \omega$.
       In the last inequality we simply observed that  $\mu\big (B(y_\alpha^k,\overline{C}2^{\ell}\delta^k)\big )^{-{1}/{q'}} \leq \mu \big (B(y_\alpha^k,\delta^k)\big )^{-{1}/{q'}} $ since the power is negative.
\end{proof}

We now present the proof of the key auxiliary theorem.

\begin{proof}[Proof of Theorem \ref{theorem-of-fLp-lessthan-fHp-on-product-case}]
Suppose that $f\in H^p(X_1\times X_2)\cap L^{q}(X_1\times X_2)$ and let $\mu$ denote the product measure $\mu_1\times \mu_2$.
Then, by the reproducing formula~\eqref{product reproducing
formula}, Lemma~\ref{lemma-decomposition}  with $\overline{C}_i=1$ for $i=1,2$, and Fubini for summations, we have
\begin{align}
    f(x_1,x_2)
    &= \sum_{k_1\in\mathbb{Z}} \; \sum_{\alpha_1\in \mathscr{Y}^{k_1}} \;\sum_{k_2\in\mathbb{Z}} \; \sum_{\alpha_2\in \mathscr{Y}^{k_2}}
        \langle f,\psi_{\alpha_1}^{k_1}  \psi_{\alpha_2}^{k_2} \rangle
        \, \psi_{\alpha_1}^{k_1}(x_1) \, \psi_{\alpha_2}^{k_2}(x_2)\nonumber\\
    &=: \sum_{\ell_1,\ell_2\geq 0} 2^{-\ell_1\gamma_1}\,2^{-\ell_2\gamma_2}
        f_{\ell_1,\ell_2}(x_1,x_2).
        \label{special-repro-identity}
\end{align}
Where $f_{\ell_1,\ell_2}$ is defined by
\begin{equation}\label{fell1ell2}
  f_{\ell_1,\ell_2}(x_1,x_2) :=  \sum_{k_1\in\mathbb{Z}} \; \sum_{\alpha_1\in \mathscr{Y}^{k_1}}  \; \sum_{k_2\in\mathbb{Z}} \;\sum_{\alpha_2\in \mathscr{Y}^{k_2}}
        \langle f,\psi_{\alpha_1}^{k_1}\psi_{\alpha_2}^{k_2} \rangle
        \, \kappa_1\,\varphi^{\gamma_1}_{\ell_1,k_1,\alpha_1}(x_1)\,\kappa_2
        \, \varphi^{\gamma_2}_{\ell_2,k_2,\alpha_2}(x_2),
        \end{equation}
here we are denoting $\varphi^{\gamma_i}_{\ell_i,k_i,\alpha_i}:=\varphi^{\gamma_i, 1}_{\ell_i,k_i,\alpha_i}$ and  $\kappa_i:= \sqrt{\mu_i \big (B(y_{\alpha_i}^{k_i},\delta^{k_i})\big )}$  for $i = 1, 2$ {(we are abusing notation, to be more precise we should write $\kappa^{k_i}_{\alpha_i}$ instead of simple $\kappa_i$)}.
The parameter  {$\gamma_i$} is an arbitrary constant larger than the
upper dimension of $X_i$, that is $\gamma_i >\omega_i$, for
$i=1,2$,  and to be determined later. All these series converge
{unconditionally} in the $L^{q}(X_1\times X_2)$-norm when
$q>1$, allowing us to reorder the series at will.

Now for $j\in\mathbb{Z}$, we let $\Omega_j$ be a level set for $S(f)$, more precisely
\begin{equation}\label{Omega_j}
    \Omega_j
    :=\bigg\{(x_1,x_2)\in X_1\times X_2: S(f)(x_1,x_2)>2^j\bigg\}.
\end{equation}
Notice that $\Omega_{j+1}\subset \Omega_j$ for all $j\in \mathbb{Z}$ {and that
by the well-known layer-cake\footnote{{Assume $F\in L^p(X,\mu )$ then
$\|F\|^p_{L^p(\mu)}=\int_0^{\infty} p\lambda^{p-1}\mu\{x\in X: |F(x)| > \lambda\} \, d\lambda$.}}
formula for the $L^p$-(semi)norm of $S(f)$ it holds that
\begin{equation}\label{Sf-norm-Lp-coronas}
 \|S(f)\|_{L^p(X_1\times X_2)}^p \sim_p \sum_{j\in\mathbb{Z}} 2^{pj}\mu (\Omega_j ).
 \end{equation}  }
 Also, by Tchebichev's inequality, when $f\in L^p(X_1\times X_2)$,
 \begin{equation}\label{Tchebichev-square}
 \mu (\Omega_j ) \leq 2^{-jp}\int_{\Omega_j} |S(f)(x_1,x_2)|^p \,d\mu_1(x_1)\, d\mu_2(x_2).
 \end{equation}

If $f=0$ in $L^q(X_1\times X_2)$ then $S(f)=0$ in $L^q(X_1\times X_2)$ and the theorem is trivially true.
Assume  $f\neq 0$ in $L^q(X_1\times X_2)$, notice that this implies that $S(f)\neq 0$ in $L^q(X_1\times X_2)$,
and it ensures that there is $j_0\in \mathbb{Z}$ such  that $\mu(\Omega_j)>0$ for all $j\leq j_0$.

 Recall that the  reference dyadic grids  underlying the wavelets on $X_i$ are denoted $\mathcal{D}^{W}_{i}$ for $i=1,2$. Given dyadic cubes $Q^{k_i}_{\alpha_i}\in \mathscr{D}^W_i$   for $i=1,2$,  let $R=R^{k_1,k_2}_{\alpha_1,\alpha_2}$  denote the  dyadic rectangle in $X_1\times X_2$  they determine, that is,  $R^{k_1,k_2}_{\alpha_1,\alpha_2}:=Q_{\alpha_1}^{k_1}\times
Q_{\alpha_2}^{k_2}$. Let
\begin{equation} \label{setBj}
    \mathcal{B}_j
    := \Big\{R \; \mbox{dyadic rectangle}:      \mu( R \cap\Omega_j )>\mu(R)/2, \;
        \mu( R \cap\Omega_{j+1} )\leq \mu(R)/2\Big\}.
\end{equation}

In particular, since $S(f)\neq 0$ in $L^q(X_1\times X_2)$, each dyadic rectangle $R^{k_1,k_2}_{\alpha_1,\alpha_2}$ belongs to exactly  one set $\mathcal{B}_j$. {We can reorder the quadruple sum in \eqref{fell1ell2} over $(k_1, k_2, \alpha_1, \alpha_2)\in \mathbb{Z}^2\times \mathscr{Y}^{k_1}\times \mathscr{Y}^{k_2}$ by first adding over $j\in\mathbb{Z}$ and second adding over those $(k_1, k_2, \alpha_1, \alpha_2)$ such that  $R_{\alpha_1,\alpha_2}^{k_1,k_2}\in \mathcal{B}_j$, obtaining}. 
\begin{equation}\label{special-repro-identity-further}
    f_{\ell_1,\ell_2}(x_1,x_2)
    = \sum_{j\in\mathbb{Z}}\sum_{R_{\alpha_1,\alpha_2}^{k_1,k_2}\in \mathcal{B}_j}
        \langle f,\psi_{\alpha_1}^{k_1}\psi_{\alpha_2}^{k_2} \rangle\,
        \kappa_1\,\varphi^{\gamma_1}_{\ell_1,k_1,\alpha_1}(x_1)\,\kappa_2\,
        \varphi^{\gamma_2}_{\ell_2,k_2,\alpha_2}(x_2).
\end{equation}

Next, we will show below that for each $j\in\mathbb{Z}$,
\begin{align}\label{claim Lp}
    & \hskip -1in \Big\|\sum_{R_{\alpha_1,\alpha_2}^{k_1, k_2}\in \mathcal{B}_j}
        \langle f,\psi_{\alpha_1}^{k_1}\psi_{\alpha_2}^{k_2} \rangle \,
            \kappa_1\, \varphi^{\gamma_1}_{\ell_1,k_1,\alpha_1}\kappa_2\, \varphi^{\gamma_2}_{\ell_2,k_2,\alpha_2}
        \Big\|_{L^p(X_1\times X_2)}^p \nonumber\\
    &\hskip1cm\lesssim {(1+\ell_1 \omega_1+\ell_2 \omega_2 )^{1-\frac{p}{q}}}2^{\ell_1 \omega_1 (1+\frac{p}{{q'}})}2^{\ell_2 \omega_2 (1+\frac{p}{{q'}})}
        2^{jp}\mu(\Omega_{j}).
\end{align}
Together with the special reproducing
formula~\eqref{special-repro-identity} {and estimate~\eqref{Sf-norm-Lp-coronas}}, inequality~\eqref{claim
Lp} yields the conclusion of
Theorem~\ref{theorem-of-fLp-lessthan-fHp-on-product-case}. More precisely, since $0<p\leq 1$,
\begin{align*}
 \|f\|_{L^p(X_1\times X_2)}^p
    &\leq  \sum_{\ell_1,\ell_2\geq 0} 2^{-\ell_1\gamma_1p}2^{-\ell_2\gamma_2p}
        \|f_{\ell_1,\ell_2}\|_{L^p(X_1\times X_2)}^p\\
    &\leq  \sum_{\ell_1,\ell_2\geq 0} 2^{-\ell_1\gamma_1p}2^{-\ell_2\gamma_2p} \sum_{j\in\mathbb{Z}}
        \big\| \hskip -.1in \sum_{R^{k_1,k_2}_{\alpha_1,\alpha_2}\in \mathcal{B}_j}
       \hskip -.15 in \langle f,\psi_{\alpha_1}^{k_1}\psi_{\alpha_2}^{k_2} \rangle
        \kappa_1\varphi^{\gamma_1}_{\ell_1,k_1,\alpha_1}\kappa_2\varphi^{\gamma_2}_{\ell_2,k_2,\alpha_2}
        \big\|_{L^p(X_1\times X_2)}^p\\
    &\lesssim \sum_{\ell_1,\ell_2\geq 0} 2^{-\ell_1\gamma_1p} 2^{-\ell_2\gamma_2p}
        {(1+\ell_1 \omega_1+\ell_2 \omega_2 )^{1-\frac{p}{q}}}2^{\ell_1 \omega_1 (1+\frac{p}{{q'}})} 2^{\ell_2 \omega_2 (1+\frac{p}{{q'}})}
        \sum_{j\in\mathbb{Z}} 2^{jp}\mu(\Omega_j)\\
    &\lesssim \|S(f)\|_{L^p(X_1\times X_2)}^p = \|f\|_{H^p(X_1\times X_2)}^p.
    \end{align*}
Where we have chosen $\gamma_i > \omega_i(1/p+1/{q'})$  for~$i=1,2$\label{constraintThm4.2}, to ensure convergence of the relevant  series over $\ell_1$ and $\ell_2$. {Note that since $1/p\geq 1$,
 this constraint implies that $\gamma_i>\omega_i$ for $i=1,2$, a constraint needed in Lemma~\ref{lemma-decomposition}.}

Thus, it suffices to verify the claim~\eqref{claim Lp}. To this end, we  define
the $\epsilon_0$-\emph{enlargement} $\widetilde{\Omega}_j:= \widetilde{ \Omega}_j^{\epsilon_0}$ of the open set  $\Omega_j$ by
\begin{equation}\label{enlargement Omega j}
    \widetilde{\Omega}_j
    :=\bigg\{(x_1,x_2) \in X_1\times X_2: M_s(\chi_{\Omega_{j}})(x_1,x_2)> \epsilon_0 :=\frac{1}{2 C_{\mu_1} C_{\mu_2} }
    \Big ( \frac{c_1^1}{C^1_1}\Big )^{\omega_1}\Big ( \frac{c_1^2}{C_1^2}\Big )^{\omega_2}
    \bigg\}.
\end{equation}
Here $c_1^i, C_1^i$ are the dilation constants of the grids $\mathscr{D}^W_i$ and  $M_s$ is the \emph{strong maximal function}
\[ M_sg(x_1,x_2):= \sup_{B_1\times B_2 \ni (x_1,x_2)}  \frac{1}{\mu_1(B_1)\mu_2(B_2)} \int_{B_1\times B_2} |g(y_1,y_2)| \,d\mu_1(y_1) d\mu_2(y_2),\]
defined for  functions $g\in L^1_{{\rm loc}}(\XX_1\times\XX_2)$, and where $B_i$ are balls in $X_i$ for $i=1,2$.

The constant $\epsilon_0$ in \eqref{enlargement Omega j}  is determined by the doubling constants of the measures $\mu_i$, the upper dimensions $\omega_i$, and the ratio of the dilation constants $c_1^i=(A_0^{(i)})^{-5}/6$ and $C_1^i=6(A_0^{(i)})^4$ involved in the radius of the inner and outer balls sandwiching the reference dyadic cubes  for the wavelets, as in property~\eqref{prop_cube3},  for $i=1,2$.   More precisely,
$\epsilon_0$ is a constant depending only on the geometric constants of $X_i$ for $i=1,2$,
\begin{equation}\label{epsilon-0}
\epsilon_0 = \Big ( 2 C_{\mu_1} C_{\mu_2} \big(36(A_0^{(1)})^9\big)^{\omega_1} \big (36(A_0^{(2)})^9\big )^{\omega_2}\Big )^{-1}.
\end{equation}
Furthermore  $\epsilon_0\in (0,1)$  and   is chosen so that if $R\in \mathcal{B}_j$ then $R\subset \widetilde{\Omega}_j$. More precisely, if $R\in \mathcal{B}_j$ then by definition $\mu (R\cap \Omega_j ) / \mu(R) >1/2$.
The dyadic rectangle $R=Q_1\times Q_2$ and, for $i=1,2$, each dyadic cube $Q_i\in \mathscr{D}^W_i$ contains $B'_i$, its inner ball, and is contained in $B''_i$, its outer ball, that is
$B'_i\subset Q_i \subset B''_i$. Moreover, $\mu_i(B''_i ) \leq C_{\mu_i} \Big ( \frac{C_1^i}{c_1^i}\Big )^{\omega_i} \mu_i (B'_i)$ by the doubling property~\eqref{eqn:upper dimension}  of the measure $\mu_i$ for $i=1,2$. Hence
\[ \frac{1}{2} < \frac{ \mu (R\cap \Omega_j )}{\mu(R) } \leq \frac{ \mu \big ( (B_1''\times B_2'') \cap \Omega_j \big )}{\mu_1 (B_1') \, \mu_2 (B_2')} \leq C_{\mu_1} C_{\mu_2} \Big ( \frac{C_1^1}{c_1^1}\Big )^{\omega_1} \Big ( \frac{C_1^2}{c_1^2}\Big )^{\omega_2 } \frac{ \mu \big ( (B_1''\times B_2'') \cap \Omega_j \big )}{\mu_1 (B_1'') \, \mu_2 (B_2'')}.\]
We conclude that $B_1''\times B_2''\subset \widetilde{\Omega}_j$ and therefore $R=Q_1\times Q_2 \subset \widetilde{\Omega}_j$. \label{RinBj}

By definition every open set $\Omega$ is contained in its $\epsilon$-enlargement
\begin{equation}\label{epsilon-enlargement}
\widetilde{\Omega}^{\epsilon}:=\big\{(x_1,x_2)\in X_1\times X_2: M_s(\chi_{\Omega})(x_1,x_2)> \epsilon \big\}
\end{equation}
for $\epsilon \in (0,1)$, that is $\Omega \subset \widetilde{\Omega}^{\epsilon}$. In particular $\Omega_j\subset \widetilde{\Omega}_j$  and hence $\mu(\Omega_j)\leq \mu( \widetilde{\Omega}_j)$ for all $j\geq 0$. {More interestingly, by weak-$L^2$ properties of the strong maximal function we get
\begin{equation}\label{weak-L^2-Ms}
\mu( \widetilde{\Omega}_j) \leq C\Big (\frac{\| \chi_{\Omega_j}\|_{L^2(X_1\times X_2)} }{\epsilon_0}\Big )^2 = \frac{C}{\epsilon_0^2} \; \mu (\Omega_j).
\end{equation}}
We also  define the $(\ell_1,\ell_2)$-\emph{enlargement} $\widetilde{\Omega}_{j,\ell_1,\ell_2}$  of $\widetilde{\Omega}_j$. Recall that $2^{\ell_i} Q_i:= B(y_{\alpha_i}^{k_i}, 2^{\ell_i} C^i_1\delta^{k_i})$, where $C_1^i$ is the dilation constant determining the radius of the outer ball of the dyadic cube $Q_i\in\mathcal{D}^W_i$ for each $i=1,2$.  Let
\begin{equation}\label{(ell1,ell2)-enlargement}
    \widetilde{\Omega}_{j,\ell_1,\ell_2}
    := \bigcup_{R=Q_1\times Q_2
        \subset\widetilde{\Omega}_j}
        2^{\ell_1}Q_1\times
        2^{\ell_2}Q_2.
\end{equation}
It is clear from this definition that $\widetilde{\Omega}_j\subset \widetilde{\Omega}_{j,\ell_1,\ell_2}$ for all $\ell_1, \ell_2 \geq 0$. Note that $ \widetilde{\Omega}_{j,\ell_1,\ell_2}$ is a subset of $\{ (x_1,x_2)\in X_1\times X_2: M_s(\chi_{\widetilde{\Omega}_j})(x_1,x_2) \geq 2^{-\ell_1\omega_1-\ell_2\omega_2}\}$. 
Indeed, for every $(x_1,x_2)\in \widetilde{\Omega}_{j,\ell_1,\ell_2}$ there most be a dyadic rectangle $R=Q_1\times Q_2 \in \widetilde{\Omega}_j$ such that $(x_1,x_2) \in 2^{\ell_1}Q_1\times 2^{\ell_2}Q_2$. Also for $2^{\ell_1}Q_1\times 2^{\ell_2}Q_2$ we get
\[
\frac{\mu\big (\widetilde{\Omega}_j\cap (2^{\ell_1}Q_1\times 2^{\ell_2}Q_2)\big )}{\mu (2^{\ell_1}Q_1\times 2^{\ell_2}Q_2)}  \geq \frac{\mu \big (\widetilde{\Omega}_j\cap (Q_1\times Q_2)\big )}{2^{\ell_1\omega_1+\ell_2\omega_2}\mu ( Q_1\times Q_2)}
= \frac{1}{2^{\ell_1\omega_1+\ell_2\omega_2}}.
\]
Hence $M_s(\chi_{\widetilde{\Omega}_j})(x_1,x_2)\geq 2^{-\ell_1\omega_1-\ell_2\omega_2}$.
We conclude that
\begin{equation}\label{eqn:measure-enlargements}
\mu (\widetilde{\Omega}_{j,\ell_1,\ell_2}) \lesssim (1+\ell_1\omega_1 +\ell_2\omega_2)2^{\ell_1\omega_1} 2^{\ell_2\omega_2} \mu (\widetilde{\Omega}_j ).
\end{equation}
by an argument similar to \cite[p.191, line 17]{CF}, using the $L\log_+ L$ to weak $L^1$ estimate for the strong maximal function applied to $f=\chi_{\widetilde{\Omega}_j}$, namely
\begin{equation}\label{LlogL}
\mu \{(x_1,x_2)\in \widetilde{X}: M_s(f)(x_1,x_2)>\lambda\} \lesssim  \int_{\widetilde{X}} \frac{|f(x_1,x_2)|}{\lambda } \log \bigg ( 1+\frac{|f(x_1,x_2)|}{\lambda }\bigg ) \, d\mu(x_1,x_2).
\end{equation}
 The $L\log_+ L$ to weak  $L^1$ estimate~\eqref{LlogL} for the strong maximal function can be deduced for the strong dyadic maximal function (defined as $M_s$ but instead of product of balls we consider products of dyadic cubes in $X_1$ and $X_2$) from the weak $(1,1)$ estimates on each individual dyadic maximal function on $X_i$ for $i=1,2$, see \cite[Theorem 1]{Fa} and also~\cite{F2}.
By \cite[Theorem 3.1(ii)]{KLPW}  we can control pointwise the strong maximal function $M_s$ (with respect to  balls) by a finite sum of strong dyadic maximal functions (with respect to  adjacent systems of dyadic cubes \cite[Section 2.4]{KLPW},
the equivalent to the 1/3 trick in $\mathbb{R}$ for spaces of homogeneous type). Therefore getting the desired estimate~\eqref{LlogL}.

For each set $\mathcal{B}_j$ of dyadic rectangles, we  define the function $f_{\mathcal{B}_j}:X_1\times X_2\to \mathbb{R}$ to be
\begin{equation}\label{function-fBj}
f_{\mathcal{B}_j}(x_1,x_2) := \sum_{R^{k_1,k_2}_{\alpha_1,\alpha_2}\in \mathcal{B}_j}\langle f,\psi_{\alpha_1}^{k_1}\psi_{\alpha_2}^{k_2} \rangle \, \psi_{\alpha_1}^{k_1}(x_1) \, \psi_{\alpha_2}^{k_2}(x_2),
\end{equation} and hence by definition of the square function
\begin{equation}\label{SfBj}
 S(f_{\mathcal{B}_j})(x_1,x_2)= \Big (\sum_{R^{k_1,k_2}_{\alpha_1,\alpha_2}\in \mathcal{B}_j}
 \big|\big\langle f,\widetilde{\psi}_{\alpha_1}^{k_1}\widetilde{\psi}_{\alpha_2}^{k_2}\big\rangle\big|^2 \chi_{R^{k_1,k_2}_{\alpha_1,\alpha_2}}(x_1,x_2)  \Big )^{\frac{1}{2}},
 \end{equation}
where $\widetilde{\psi}^{k_i}_{\alpha_i}={\psi_{\alpha_i}^{k_i}/ \kappa_i}$ denotes the normalized wavelets for $i=1,2$.

Note that  by construction, the function  $\varphi^{\gamma_i}_{\ell_i,k_i,\alpha_i}$  has compact support on
$B(y_{\alpha_i}^{k_i}, {2(A_0^{(i)})^2}\, 2^{\ell_i}\delta^{k_i})$ which is contained in $B(y_{\alpha_i}^{k_i},2^{\ell_i}C^i_1\delta^{k_i})$ for $i=1,2$.  {The last statement holds since in the Auscher-Hyt\"onen construction the dilation constant $C^i_1$ determining the radius of the outer balls is $C^i_1=6(A_0^{(i)})^4 > 2(A_0^{(i)})^2$ for each $i=1,2$ \cite[Theorem 2.11]{AH}.}
As explained in page~\pageref{RinBj}, if  $R^{k_1,k_2}_{\alpha_1,\alpha_2}\in \mathcal{B}_j$, then $R^{k_1,k_2}_{\alpha_1,\alpha_2}\in \widetilde{\Omega}_j$, and thus the support of
$\varphi^{\gamma_1}_{\ell_1,k_1,\alpha_1}(x_1)\,\varphi^{\gamma_2}_{\ell_2,k_2,\alpha_2}(x_2)$ is
contained in $\widetilde{\Omega}_{j,\ell_1,\ell_2}$.

Therefore, by H\"older's inequality {with exponents $s=q/p>1$ and $s'=q/(q-p)$,}
\begin{align}
    &\Big\|\sum_{R_{\alpha_1,\alpha_2}^{k_1,k_2}\in \mathcal{B}_j}
        \langle f,\psi_{\alpha_1}^{k_1}\psi_{\alpha_2}^{k_2} \rangle
        \kappa_1\varphi^{\gamma_1}_{\ell_1,k_1,\alpha_1}\kappa_2\varphi^{\gamma_2}_{\ell_2,k_2,\alpha_2}
        \Big\|_{L^p(X_1\times X_2)}^p \nonumber\\
    &\hskip.5cm\leq
        \mu(\widetilde{\Omega}_{j,\ell_1,\ell_2})^{1-\frac{p}{q}}\Big\|
        \sum_{R^{k_1,k_2}_{\alpha_1,\alpha_2}\in \mathcal{B}_j}
        \langle f,\psi_{\alpha_1}^{k_1}\psi_{\alpha_2}^{k_2} \rangle
        \kappa_1\varphi^{\gamma_1}_{\ell_1,k_1,\alpha_1}\kappa_2\varphi^{\gamma_2}_{\ell_2,k_2,\alpha_2}
        \Big\|_{L^{q}(X_1\times X_2)}^{p}. \label{Lp-estimate-Bj}
\end{align}
To estimate the $L^q$-norm of the sum in the right-hand-side of~\eqref{Lp-estimate-Bj} we use a duality argument. Hence, for all $g\in L^{{q'}}(X_1\times
X_2)$ with $\|g\|_{L^{{q'}}(X_1\times X_2)}\leq 1$,   we estimate  the inner product
\begin{align*}
    &\Big|\Big\langle \sum_{R_{\alpha_1,\alpha_2}^{k_1,k_2}\in \mathcal{B}_j}
        \langle f,\psi_{\alpha_1}^{k_1}\psi_{\alpha_2}^{k_2} \rangle
        \kappa_1\varphi^{\gamma_1}_{\ell_1,k_1,\alpha_1} \kappa_2\varphi^{\gamma_2}_{\ell_2,k_2,\alpha_2},\
        g   \Big\rangle\Big|\\
    &\hskip .5in =\Big|\sum_{R^{k_1,k_2}_{\alpha_1,\alpha_2}\in \mathcal{B}_j} \kappa_1^2 \kappa_2^2
        \big\langle f,\widetilde{\psi}_{\alpha_1}^{k_1}\widetilde{\psi}_{\alpha_2}^{k_2} \big\rangle
        \, \langle \varphi^{\gamma_1}_{\ell_1,k_1,\alpha_1}\varphi^{\gamma_2}_{\ell_2,k_2,\alpha_2},\ g\rangle
        \Big|\\
    & \hskip .5in \leq \sum_{R^{k_1,k_2}_{\alpha_1,\alpha_2}\in \mathcal{B}_j} \mu_1(Q_{\alpha_1}^{k_1})\mu_2(Q_{\alpha_2}^{k_2})
        \Big|\big\langle f,\widetilde{\psi}_{\alpha_1}^{k_1}\widetilde{\psi}_{\alpha_2}^{k_2}  \big\rangle\Big|
        \, \Big|\langle \varphi^{\gamma_1}_{\ell_1,k_1,\alpha_1}\varphi^{\gamma_2}_{\ell_2,k_2,\alpha_2},\ g\rangle
        \Big|\\
    & \hskip .5in \leq \mathop{\int}_{X_1\times X_2}\sum_{R^{k_1,k_2}_{\alpha_1,\alpha_2}\in \mathcal{B}_j}
        \Big|\big\langle f,\widetilde{\psi}_{\alpha_1}^{k_1}\widetilde{\psi}_{\alpha_2}^{k_2} \big\rangle\Big|
        \, \Big|\langle \varphi^{\gamma_1}_{\ell_1,k_1,\alpha_1}\varphi^{\gamma_2}_{\ell_2,k_2,\alpha_2},\ g\rangle
        \Big| \, \chi_{R_{\alpha_1,\alpha_2}^{k_1k_2}}(x_1,x_2) \, d\mu_1(x_1) \,d\mu_2(x_2).
  \end{align*}
In the last inequality  we used that  $\mu_1(Q_{\alpha_1}^{k_1})\mu_2(Q_{\alpha_2}^{k_2})=\int_{X_1\times X_2}\chi_{R^{k_1,k_2}_{\alpha_1,\alpha_2}}(x_1,x_2)\, d\mu_1(x_1)\, d\mu_2(x_2) $.
We  continue estimating,  first  applying the Cauchy-Schwarz inequality on the sum,  second applying H\"older's inequality, with exponents $q>1$ and $q'$, to the integral, and third using the notation introduced in~\eqref{function-fBj} and~\eqref{SfBj},
\begin{align}
&\Big|\Big\langle \sum_{R_{\alpha_1,\alpha_2}^{k_1,k_2}\in \mathcal{B}_j}
        \langle f,\psi_{\alpha_1}^{k_1}\psi_{\alpha_2}^{k_2} \rangle
        \kappa_1\varphi^{\gamma_1}_{\ell_1,k_1,\alpha_1} \kappa_2\varphi^{\gamma_2}_{\ell_2,k_2,\alpha_2},\
        g   \Big\rangle\Big| \nonumber \\
    &\hskip 1in \leq \bigg(\mathop{\int}_{X_1\times X_2} \bigg (\sum_{R^{k_1,k_2}_{\alpha_1,\alpha_2}\in \mathcal{B}_j}
        \Big|\big\langle f,\widetilde{\psi}_{\alpha_1}^{k_1}\widetilde{\psi}_{\alpha_2}^{k_2} \big\rangle\Big|^2
        \chi_{R_{\alpha_1,\alpha_2}^{k_1,k_2}}(x_1,x_2) \bigg )^{{\frac{q}{2}}}d\mu_1(x_1)\, d\mu_2(x_2)\bigg)^{\frac{1}{q}} \nonumber \\
    &\hskip .8in \times \bigg(\mathop{\int}_{X_1\times X_2}\bigg (\sum_{R^{k_1,k_2}_{\alpha_1,\alpha_2}\in \mathcal{B}_j}
         \Big|\langle \varphi^{\gamma_1}_{\ell_1,k_1,\alpha_1}\varphi^{\gamma_2}_{\ell_2,k_2,\alpha_2},\ g\rangle
        \Big|^2 \chi_{R_{\alpha_1,\alpha_2}^{k_1,k_2}}(x_1,x_2) \bigg )^{{\frac{q'}{2}}}d\mu_1(x_1) \, d\mu_2(x_2) \bigg)^{\frac{1}{q'}} \nonumber\\
    &\hskip 1in \lesssim_q 2^{\ell_1 \omega_1}2^{\ell_2 \omega_2}  \|S(f_{\mathcal{B}_j})\|_{L^q(X_1\times X_2)}.
    \label{dual-estimate}
\end{align}
The last inequality is deduced from  the  fact that   $\|g\|_{L^{q'}(X_1\times X_2)}\leq 1$ and the following
Littlewood--Paley estimate, whose proof will be provided after  finishing the proof of Theorem~\ref{theorem-of-fLp-lessthan-fHp-on-product-case}.
\begin{lemma}\label{lemma LittlewoodPaley}
There is a constant $C>0$ {\rm (}depending on the geometric constants and on $q>1${\rm )} such that for  all functions $g\in L^{{q'}}(X_1\times X_2)$ and all positive integers $\ell_1$ and $\ell_2$,
\begin{equation}\label{Littlewood-Paley Key Lemma}
 \bigg \| \Big (\sum_{R^{k_1,k_2}_{\alpha_1,\alpha_2}\in \mathcal{B}_j}
         \Big|\langle \varphi^{\gamma_1}_{\ell_1,k_1,\alpha_1}\varphi^{\gamma_2}_{\ell_2,k_2,\alpha_2},\ g\rangle
        \Big|^2 \chi_{R_{\alpha_1,\alpha_2}^{k_1,k_2}}\Big )^{1/2} \bigg \|_{L^{{q'}}(X_1\times X_2)} \lesssim_q
         2^{\ell_1\omega_1}2^{\ell_2\omega_2} \|g\|_{L^{{q'}}(X_1\times X_2)}.
\end{equation}
\end{lemma}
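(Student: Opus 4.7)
The plan is to reduce the claimed $L^{q'}$ square-function estimate to the Plancherel--P\'olya/square-function characterization of $L^{q'}$ by the orthonormal Auscher--Hyt\"onen product wavelet basis, combined with an almost-orthogonality argument between the building blocks $\varphi^{\gamma_i}_{\ell_i,k_i,\alpha_i}$ and the wavelets $\widetilde{\psi}^{k'_i}_{\alpha'_i}$. First I would observe that, after appropriate normalization, each $\varphi^{\gamma_i}_{\ell_i,k_i,\alpha_i}$ is a test function centered at $y^{k_i}_{\alpha_i}$ at scale $2^{\ell_i}\delta^{k_i}$: property (i) of Lemma \ref{lemma-decomposition} gives compact support on a ball of measure $\sim 2^{\ell_i\omega_i}\mu_i(Q^{k_i}_{\alpha_i})$, property (ii) gives the $L^\infty$-bound, property (iii) the local H\"older regularity inherited from the wavelets, and property (iv) the cancellation.

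Next I would expand $g$ in the orthonormal wavelet basis (using the unconditional convergence in $L^{q'}$ from \cite[Corollary 10.4]{AH}) to write
\[
\langle \varphi^{\gamma_1}_{\ell_1,k_1,\alpha_1}\varphi^{\gamma_2}_{\ell_2,k_2,\alpha_2},\,g\rangle = \sum_{k'_1,\alpha'_1}\sum_{k'_2,\alpha'_2}\langle \varphi^{\gamma_1}_{\ell_1,k_1,\alpha_1},\widetilde{\psi}^{k'_1}_{\alpha'_1}\rangle\langle \varphi^{\gamma_2}_{\ell_2,k_2,\alpha_2},\widetilde{\psi}^{k'_2}_{\alpha'_2}\rangle\,\langle g,\widetilde{\psi}^{k'_1}_{\alpha'_1}\widetilde{\psi}^{k'_2}_{\alpha'_2}\rangle.
\]
The standard one-parameter almost-orthogonality estimate for test functions (used in the proof of Plancherel--P\'olya inequalities in \cite[Theorem 4.3]{HLW} and in the analogous Nagel--Stein setting \cite{NS}) then yields
\[
|\langle \varphi^{\gamma_i}_{\ell_i,k_i,\alpha_i},\widetilde{\psi}^{k'_i}_{\alpha'_i}\rangle| \lesssim 2^{\ell_i\omega_i}\,\delta^{|k_i-k'_i|\eta'_i}\,K_i\big((k_i,\alpha_i),(k'_i,\alpha'_i)\big),
\]
where $K_i$ is a Schur-summable kernel encoding geometric decay in the quasi-metric distance between the centers $y^{k_i}_{\alpha_i}$ and $y^{k'_i}_{\alpha'_i}$.

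Then I would apply a two-step Schur test together with the Cauchy--Schwarz inequality: one summation in $(\alpha_1,\alpha_2)$ uses the disjointness of $\{R^{k_1,k_2}_{\alpha_1,\alpha_2}\}_{\alpha_1,\alpha_2}$ at each fixed $(k_1,k_2)$, while the summation in $(k_1,k_2)$ uses the geometric decay $\delta^{|k_i-k'_i|\eta'_i}$, to obtain the pointwise domination
\[
\sum_{R^{k_1,k_2}_{\alpha_1,\alpha_2}\in\mathcal{B}_j}\big|\langle \Phi^{k_1,k_2}_{\alpha_1,\alpha_2},g\rangle\big|^2\chi_{R^{k_1,k_2}_{\alpha_1,\alpha_2}}(x_1,x_2) \lesssim \big(2^{\ell_1\omega_1}2^{\ell_2\omega_2}\big)^2\sum_{k'_1,\alpha'_1,k'_2,\alpha'_2}\big|\langle g,\widetilde{\psi}^{k'_1}_{\alpha'_1}\widetilde{\psi}^{k'_2}_{\alpha'_2}\rangle\big|^2\,\widetilde{\chi}_{Q^{k'_1}_{\alpha'_1}}(x_1)\widetilde{\chi}_{Q^{k'_2}_{\alpha'_2}}(x_2).
\]
Taking $L^{q'}$-norms and invoking the product square-function characterization of $L^{q'}$ from \cite{HLW} gives the desired bound with constant $2^{\ell_1\omega_1}2^{\ell_2\omega_2}$.

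The main obstacle I foresee is tracking the $\ell_i$-dependence sharply: the natural size bound of $\varphi^{\gamma_i}_{\ell_i,k_i,\alpha_i}$ contributes one factor of $2^{\ell_i\omega_i}$ via the enlarged support, but one must carefully verify, using either local H\"older regularity (for $k'_i\geq k_i$) or the cancellation of the wavelet (for $k'_i<k_i$), that the Schur kernel $K_i$ remains summable in $(k'_i,\alpha'_i)$ uniformly in $\ell_i$, so no further powers of $2^{\ell_i}$ are lost. A second subtlety is that the argument above is transparent at $L^2$ by Plancherel, while the extension to general $L^{q'}$ with $1<q'<\infty$ requires the full product Plancherel--P\'olya machinery (or equivalently a Fefferman--Stein $\ell^2$-valued maximal inequality) rather than Parseval alone.
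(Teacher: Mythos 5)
Your proposal follows essentially the same route as the paper's proof: the paper likewise reduces \eqref{Littlewood-Paley Key Lemma} to an almost-orthogonality estimate between the compactly supported blocks $\varphi^{\gamma_i}_{\ell_i,k_i,\alpha_i}$ and the wavelet-based operators $D^{(i)}_{k_i'}$, carrying exactly the factor $2^{\ell_i\omega_i}\delta_i^{|k_i-k_i'|\eta}$, and then concludes via the product Plancherel--P\'olya inequalities of \cite{HLW} together with the $L^{q'}$-boundedness of the square function. The one caveat is that your displayed ``pointwise domination'' is too strong as written --- passing from $\chi_{R^{k_1,k_2}_{\alpha_1,\alpha_2}}$ to $\widetilde{\chi}_{Q^{k_1'}_{\alpha_1'}}\widetilde{\chi}_{Q^{k_2'}_{\alpha_2'}}$ requires mediation by the strong maximal function and the Fefferman--Stein vector-valued inequality --- but you acknowledge this in your closing paragraph, and that is precisely how the Plancherel--P\'olya step is carried out in the paper.
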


 The dual estimate~\eqref{dual-estimate}  implies that
\begin{align}
    &\Big\|\sum_{R^{k_1,k_2}_{\alpha_1,\alpha_2}\in \mathcal{B}_j}
        \langle f,\psi_{\alpha_1}^{k_1}\psi_{\alpha_2}^{k_2} \rangle
        \kappa_1\varphi^{\gamma_1}_{\ell_1,k_1,\alpha_1}\kappa_2\varphi^{\gamma_2}_{\ell_2,k_2,\alpha_2}
        \Big\|_{L^{{q}}(X_1\times X_2)}\label{dual-estimate2}
         \lesssim_q 2^{\ell_1 \omega_1}2^{\ell_2 \omega_2}  \|S(f_{\mathcal{B}_j})\|_{L^q(X_1\times X_2)} \\
    & =   2^{\ell_1 \omega_1}2^{\ell_2 \omega_2}
       \bigg(\mathop{\int}_{X_1\times X_2}\bigg \{\sum_{R^{k_1,k_2}_{\alpha_1,\alpha_2}\in \mathcal{B}_j}
        \Big|\big\langle f,\widetilde{\psi}_{\alpha_1}^{k_1}\widetilde{\psi}_{\alpha_2}^{k_2}\big\rangle\Big|^2
        \chi_{R_{\alpha_1,\alpha_2}^{k_1,k_2}}(x_1,x_2) \bigg \}^{{\frac{q}{2}}} d\mu_1(x_1)\,d\mu_2(x_2)\bigg)^{\frac{1}{q}}\nonumber \\
     &\lesssim_q 2^{\ell_1 \omega_1}2^{\ell_2 \omega_2} \bigg(\mathop{\int}_{X_1\times X_2}\bigg \{\sum_{R=R^{k_1,k_2}_{\alpha_1,\alpha_2}\in \mathcal{B}_j}
        \Big|\big\langle f,\widetilde{\psi}_{\alpha_1}^{k_1}\widetilde{\psi}_{\alpha_2}^{k_2} \big\rangle\Big|^2 \nonumber \\
    & \hskip 2.0in \times    \Big | {M_s\big (\chi_{R\cap (\widetilde{\Omega}_j\setminus\Omega_{j+1})}  \big ) (x_1,x_2)}\Big |^2
         \bigg \}^{{\frac{q}{2}}} d\mu_1(x_1)\,d\mu_2(x_2)\bigg)^{\frac{1}{q}} \nonumber
            \end{align}
{In the last inequality we have used the definitions~\eqref{setBj},     of  the set $\mathcal{B}_j$, and~\eqref{enlargement Omega j}, of  the enlargement set $\widetilde{\Omega}_j$ via the strong maximal function, to deduce that $$\chi_R(x_1,x_2)\lesssim\big |M_s\big (\chi_{R\cap (\widetilde{\Omega}_j\setminus\Omega_{j+1})})  \big  (x_1,x_2)\big |^2.$$ More precisely, recall that if $R=Q_1\times Q_2$
belongs to  $\mathcal{B}_j$ then   it is a subset of $\widetilde{\Omega}_j$. Hence $R\cap (\widetilde{\Omega}_j\setminus\Omega_{j+1}) = R\setminus\Omega_{j+1}$, and since $R\in \mathcal{B}_j$ it is also true that  $\mu (R\cap\Omega_{j+1}) \leq \frac{1}{2} \mu (R)$. Therefore  $\mu (R\setminus\Omega_{j+1}) \geq \frac{1}{2} \mu (R)$.  \label{R-minus-Omega}
As before, denote by $B_i'$ and $B_i''$ the inner and outer balls    of the  dyadic cubes $Q_i$ for $i=1,2$, recall that $B_i'\subset Q_i \subset B_i''$, therefore $R\setminus\Omega_{j+1}\subset B_1''\times B_2''$. Using the doubling property~\eqref{eqn:upper dimension} we get for $R\in \mathcal{B}_j$
\begin{eqnarray*}
\frac{1}{\mu (B_1''\times B_2'')} \int_{B_1''\times B_2''}\chi_{R \cap (\widetilde{\Omega}_j\setminus \Omega_{j+1})}(z_1,z_2) \,d\mu(z_1,z_2)
& =  & \frac{\mu (R\setminus \Omega_{j+1})}{\mu (B_1''\times B_2'')}
\; \geq \; \frac12 \frac{\mu (R)}{\mu (B_1''\times B_2'')}\\
& & \hskip -2.8in \geq   \frac12\frac{ \mu( B_1'\times B_2')}{\mu (B_1''\times B_2'')}
\; = \; \frac{1}{2} \;\frac{\mu_1(B_1')}{\mu_1(B_1'')}\;\frac{\mu_2(B_2')}{ \mu_2(B_2'')}
\; \geq \; \frac{1}{2 C_{\mu_1}C_{\mu_2}}\Big [ \frac{c^1_1}{C^1_1}\Big ]^{\omega_1}\Big [ \frac{c_1^2}{C_1^2}\Big ]^{\omega_2} = \epsilon_0.
\end{eqnarray*}
 Therefore, for all  $R\in \mathcal{B}_j$ and for all  $(x_1,x_2)\in R$, we get  \[M_s\big (\chi_{R\cap (\widetilde{\Omega}_j\setminus\Omega_{j+1})} \big ) (x_1,x_2)\geq  \epsilon_0>0.\]
 Hence we obtain
 $\chi_R(x_1,x_2)=\chi_R^2(x_1,x_2)\lesssim \big |M_s\big (\chi_{R\cap (\widetilde{\Omega}_j\setminus\Omega_{j+1})})  \big  (x_1,x_2)\big |^2$,} as claimed. Note that the similarity constant is~$\epsilon_0^{-2}$, which only depends  on the geometric constants of~$X_i$ for~$i=1,2$, by definition~\eqref{epsilon-0}.

Recall the Fefferman-Stein vector-valued strong maximal function estimate  in \cite{FS},
 given $q,r>1$, there is a constant $C_q>0$ such that for appropriate sequences of functions $\{f_k\}_{k\geq 1}$
\begin{equation}\label{vector-valued-FS}
\Big \| \big \{ \sum_{k=1}^{\infty} M_s (f_k)^r\big \}^{1/r}\Big \|_{L^q(X_1\times X_2)} \leq C_q  \Big \| \big \{ \sum_{k=1}^{\infty} |f_k|^r\big \}^{1/r}\Big \|_{L^q(X_1\times X_2)}.
\end{equation}
We use estimate~\eqref{vector-valued-FS} with $r=2$ and $q>1$, and we denote $\widetilde{X}=X_1\times X_2$,
to conclude that
 \begin{align}
      \|S(f_{\mathcal{B}_j})\|_{L^q(\widetilde{X})} 
 &\lesssim_q 
 \bigg(\mathop{\int}_{\widetilde{X}}\bigg (\sum_{R=R^{k_1,k_2}_{\alpha_1,\alpha_2}\in \mathcal{B}_j}
        \Big|\big\langle f,\widetilde{\psi}_{\alpha_1}^{k_1}\widetilde{\psi}_{\alpha_2}^{k_2}\big\rangle\Big|^2 \chi_{R\cap (\widetilde{\Omega}_j\setminus\Omega_{j+1})}  (x_1,x_2)
         \bigg )^{{\frac{q}{2}}} d\mu_1(x_1) \,d\mu_2(x_2)\bigg)^{\frac{1}{q}} \nonumber\\
 &= 
  \bigg(\mathop{\int}_{\widetilde{\Omega}_j\setminus\Omega_{j+1}}\bigg (\sum_{R=R^{k_1,k_2}_{\alpha_1,\alpha_2}\in \mathcal{B}_j}
        \Big|\big\langle f,\widetilde{\psi}_{\alpha_1}^{k_1}\widetilde{\psi}_{\alpha_2}^{k_2} \big\rangle\Big|^2 \chi_{R}  (x_1,x_2)
         \bigg )^{{\frac{q}{2}}} d\mu_1(x_1)\, d\mu_2(x_2)\bigg)^{\frac{1}{q}}\nonumber \\
    & = 
     \bigg(\mathop{\int}_{\widetilde{\Omega}_j\setminus\Omega_{j+1}} |S(f_{\mathcal{B}_j})(x_1,x_2)|^q \, d\mu_1(x_1) \, d\mu_2(x_2)\bigg)^{\frac{1}{q}}.
          \label{eqn:Lq-estimate-a/lambda}
                    \end{align} 
The function $f_{\mathcal{B}_j}$ was defined in~\eqref{function-fBj}, and its square function $S(f_{\mathcal{B}_j})$ in \eqref{SfBj}. Note that pointwise $S(f_{\mathcal{B}_j})\leq S(f)$. Moreover when  $(x_1,x_2)\notin \Omega_{j+1}$  by definition $S(f)(x_1,x_2) \leq 2^{j+1}$. Therefore,
\begin{equation}\label{eqn:SBj-less-muOmegaj}
\|S(f_{\mathcal{B}_j})\|_{L^q(X_1\times X_2)}
  \lesssim_q 2^{j}\mu(\widetilde{\Omega}_j)^{1/q}. \end{equation}
All together we conclude that
\begin{equation} \label{Lq-estimate-Bj}
\Big\|\sum_{R^{k_1,k_2}_{\alpha_1,\alpha_2}\in \mathcal{B}_j}
        \langle f,\psi_{\alpha_1}^{k_1}\psi_{\alpha_2}^{k_2} \rangle
        \kappa_1\varphi^{\gamma_1}_{\ell_1,k_1,\alpha_1}\kappa_2\varphi^{\gamma_2}_{\ell_2,k_2,\alpha_2}
        \Big\|_{L^{{q}}(X_1\times X_2)}
         \lesssim_q 2^{\ell_1 \omega_1}2^{\ell_2 \omega_2}      2^{j}\mu(\widetilde{\Omega}_j)^{\frac{1}{q}}.
 \end{equation}

Finally,  first using estimates~\eqref{Lp-estimate-Bj} and ~\eqref{Lq-estimate-Bj}, and second using estimate~\eqref{eqn:measure-enlargements}, we get the $L^p$-estimate claimed in~\eqref{claim Lp}
\begin{align*}
 &   \Big\|\sum_{R^{k_1,k_2}_{\alpha_1,\alpha_2}\in \mathcal{B}_j}
        \langle f,\psi_{\alpha_1}^{k_1}\psi_{\alpha_2}^{k_2} \rangle
        \kappa_1\varphi^{\gamma_1}_{\ell_1,k_1,\alpha_1}\kappa_2\varphi^{\gamma_2}_{\ell_2,k_2,\alpha_2}
        \Big\|_{L^p(X_1\times X_2)}^p
    \lesssim_q \mu(\widetilde{\Omega}_{j,\ell_1,\ell_2})^{1-\frac{p}{q}} 2^{\ell_1\omega_1p}2^{\ell_2\omega_2p} 2^{jp}
        \mu(\widetilde{\Omega}_j)^{\frac{p}{q}}\\
    & \hskip 3cm \lesssim_q (1+\ell_1 \omega_1+\ell_2 \omega_2 )^{1-\frac{p}{q}}2^{\ell_1 \omega_1 (1-\frac{p}{q})}\,2^{\ell_2 \omega_2 (1-\frac{p}{q})}\,
        \mu(\widetilde{\Omega}_{j})^{1-\frac{p}{q}}\,
        2^{\ell_1 \omega_1p}\,2^{\ell_2 \omega_2p}\,
        2^{jp}\,
        \mu(\widetilde{\Omega}_j)^{\frac{p}{q}}\\
         &\hskip 3cm\lesssim_q (1+\ell_1 \omega_1+\ell_2 \omega_2 )^{1-\frac{p}{q}}2^{\ell_1 \omega_1 (1+\frac{p}{q'})}2^{\ell_2 \omega_2 (1+\frac{p}{q'})}
        2^{jp} \mu(\Omega_{j}).
\end{align*}
Where the last estimate follows from $\mu(\widetilde{\Omega}_j)\lesssim \mu (\Omega_j)$ by~\eqref{weak-L^2-Ms}.
Note that all constants depend only on geometric constants of $X_i$ for $i=1,2$, sometimes via the parameter $\epsilon_0$ defined in~\eqref{epsilon-0}.
This estimate finishes the proof of the claim~\eqref{claim Lp},
and hence
Theorem~\ref{theorem-of-fLp-lessthan-fHp-on-product-case} is
proved.
\end{proof}

\begin{proof}[Proof of Lemma~\ref{lemma LittlewoodPaley}]\label{proof-Lemma-LP}

Estimate \eqref{Littlewood-Paley Key Lemma} can be  established
using an argument similar to the one made when proving the second inequality in the product Plancherel--P\'olya inequalities from~\cite[Theorem 4.9, equation~(4.13)]{HLW}. More specifically, there are sufficiently large integers $N_i>0$ for $i=1,2$, and a constant $C_q>0$ (depending only on the geometric constants of $X_i$ for $i=1,2$ and $q>1$) such that for all $g\in L^{q'}(X_1\times X_2)$ the following inequality holds:
\begin{align}
&\bigg\|\bigg\{ \sum_{k_1,k_2}\sum_{\alpha_1\in \mathscr{Y}^{k_1},\alpha_2\in \mathscr{Y}^{k_2}}  \Big|\langle \varphi^{\gamma_1}_{\ell_1,k_1,\alpha_1}\varphi^{\gamma_2}_{\ell_2,k_2,\alpha_2},\ g\rangle
        \Big|^2 \chi_{R_{\alpha_1,\alpha_2}^{k_1,k_2}} \bigg\}^{\frac{1}{2}}\bigg\|_{L^{q'}(X_1\times X_2)} \label{PP1}\\
&   \leq C_q  2^{\ell_1w_1}2^{\ell_2w_2}
      \bigg\| \bigg\{  \sum_{k_1,k_2}\sum_{\substack{\alpha_1\in \mathscr{X}^{k_1+N_1}\\ \alpha_2\in \mathscr{X}^{k_2+N_2} }}  \inf_{\substack{z_1\in Q_{\alpha_1}^{k_1+N_1}\\z_2\in Q_{\alpha_2}^{k_2+N_2}} }
 |D^{(1)}_{k_1}D^{(2)}_{k_2}(g)(z_1,z_2)|^2\chi_{Q_{\alpha_1}^{k_1+N_1}}\chi_{Q_{\alpha_2}^{k_2+N_2}}  \bigg\}^{\frac{1}{2}} \bigg\|_{L^{q'}(X_1\times X_2)}, \nonumber
 \end{align}
where $D^{(1)}_{k_1}$ is the integral operator in $X_1$ with kernel $D^{(1)}_{k_1}(x,y) = \sum_{\beta_1\in  \mathscr{Y}^{k_1}} \psi_{\beta_1}^{k_1}(x)\psi_{\beta_1}^{k_1}(y)$, and similarly for $D^{(2)}_{k_2}$.
The statement in~\cite[Theorem 4.9]{HLW} refers to  Plancherel-P\'olya inequalities  with the wavelets $\psi^{k_i}_{\alpha_i}$ instead  of the functions $\varphi^{\gamma_i}_{\ell_i,k_i,\alpha_i}$ on the left-hand-side of equation~\eqref{PP1}. However, carefully tracing the proof of~\cite[Equation (4.13)]{HLW}, one realizes that  all is required are the size, smoothness, and cancellation conditions  of the functions~$\varphi^{\gamma_i}_{\ell_i,k_i,\alpha_i}$ (proved in Lemma~\ref{lemma-decomposition})  and of the kernels~$D^{(i)}_{k_i}(x,y)$ for $i=1,2$ (proved in~\cite[Lemma~3.6]{HLW}).  The key observations are first, for every $(y_1,y_2)\in X_1\times X_2$
\[ \big\langle \varphi^{\gamma_1}_{\ell_1,k_1,\alpha_1}\varphi^{\gamma_2}_{\ell_2,k_2,\alpha_2}, D^{(1)}_{k_1}D^{(2)}_{k_2}(\cdot,y_1,\cdot, y_2)\big\rangle_{X_1\times X_2} = \big\langle \varphi^{\gamma_1}_{\ell_1,k_1,\alpha_1},D^{(1)}_{k_1}(\cdot, y_1)\big\rangle_{X_1} \,\big\langle
\varphi^{\gamma_2}_{\ell_2,k_2,\alpha_2}, D^{(2)}_{k_2}(\cdot, y_2)\big\rangle_{X_2}.\]
Second,  the following almost-orthogonality estimate is valid for $i=1,2$: for all integers $k_i$ and  $k'_i$  let $\delta'_i:= \delta_i^{\min\{k_i,k'_i\}}$,  where $\delta_i$ is the base side length for the reference dyadic cubes in $X_i$. Then 
for each positive integer $N_i$, each $\gamma>0$, each point $z\in Q^{k'_i+N_i}_{\alpha'_i}\subset X_i$ and each center point $x^{k'_i+N_i}_{\alpha'_i}\in Q^{k'_i+N_i}_{\alpha'_i}$
\begin{equation}\label{almost-orthogonality-claim}
|\langle \varphi^{\gamma_i}_{\ell_i,k_i,\alpha_i}(\cdot), D^{(i)}_{k'_i}(\cdot, z)\rangle | \lesssim
\frac{2^{\ell_i\omega_i}\delta_i^{|k_i-k'_i|\eta}}{V_{\delta'_i}(x^{k_i}_{\alpha_i})+V_{\delta'_i}(x^{k'_i+N_i}_{\alpha'_i}) + V(x^{k_i}_{\alpha_i},x^{k'_i+N_i}_{\alpha'_i})}\Big (\frac{\delta'_i}{\delta'_i+ d_i(x^{k_i}_{\alpha_i},x^{k'_i+N_i}_{\alpha'_i})}\Big )^{\gamma}.
\end{equation}
Where  $V_{r_i}(x_i)=\mu_i\big (B_{X_i}(x_i,r_i)\big )$, $V(x_i,y_i) = \mu_i\big (B_{X_i}(x_i,d_i(x_i,y_i)\big )$, and the similarity constants depend only on the geometric constants of $X_i$ for $i=1,2$.
This estimate is analogue to estimate~\cite[Equation (4.4)]{HLW} with the functions $\varphi$ instead of the wavelets on the left-hand-side of the inner product. It is in proving  estimate~\eqref{almost-orthogonality-claim} that  the size, smoothness, and cancellation properties of the functions $\varphi^{\gamma_i}_{\ell_i,k_i,\alpha_i}$ are needed. Also  needed are the corresponding properties  for the kernels of the operators~$D^{(i)}_{k'_i}$ stablished in~\cite[Lemma~3.6]{HLW}.  

The right-hand-side of~\eqref{PP1} is pointwise bounded by the same expression where the infimum  in the sum is replaced by the supremum.
Another application of Plancherel-P\'olya as stated in \cite[Equation~(4.12)]{HLW} shows that
for all  positive integers $N_1$ and $N_2$ there is a constant $C_q>0$ (depending only on geometric constants and $q>1$) such that
\begin{equation}\label{PP2}
\bigg\| \bigg\{  \sum_{k_1,k_2}\sum_{\substack{\alpha_1\in \mathscr{X}^{k_1+N_1}\\ \alpha_2\in \mathscr{X}^{k_2+N_2} }}  
\sup_{\substack{z_1\in Q_{\alpha_1}^{k_1+N_1}\\z_2\in Q_{\alpha_2}^{k_2+N_2}} } |D^{(1)}_{k_1}D^{(2)}_{k_2}(g)(z_1,z_2)|^2\chi_{Q_{\alpha_1}^{k_1+N_1}}\chi_{Q_{\alpha_2}^{k_2+N_2}}\bigg\}^{1\over2} \bigg\|_{L^{q'}(\widetilde{X})} \leq C_q \| S(g)\|_{L^{q'}(\widetilde{X})}.
 \end{equation}
Here  $S(g)$ is the product Littlewood-Paley square function of $g$  as in Definition~\ref{g function}. This time there are wavelets on both sides of~\eqref{PP2} exactly as in~\cite[Equation~(4.12)]{HLW}.

Based on~\eqref{PP1} and the product Plancherel--P\'olya inequality~\eqref{PP2} we see that the left-hand side of \eqref{Littlewood-Paley Key Lemma}
is bounded by the  $L^{q'}$-norm of  $S(g)$.
From Theorem 4.8 in~\cite{HLW},  since we are in the case $q'>1$, we obtain that
$$\| S(g) \|_{L^{q'}(X_1\times X_2)} 
 \leq C_{q'} \|g\|_{L^{q'}(X_1\times X_2)}.  $$
Putting all the pieces together we get estimate~\eqref{Littlewood-Paley Key Lemma}, with a constant $C>0$ that depends only on the  geometric constants of $X_i$ for $i=1,2$ and on $q>1$. This
finishes the proof of Lemma~\ref{lemma LittlewoodPaley}.
\end{proof}

\section{Atomic product Hardy spaces}\label{sec:atomicHp}

We now provide an atomic decomposition for $H^p( X_1\times X_2
).$ More precisely, we will find an atomic decomposition for
each function $f\in L^q( X_1\times X_2 )\cap H^p( X_1\times X_2 )$ with
$1<q<\infty$ and  $p_0<p\leq 1$, where the decomposition converges  both in the
$L^q( X_1\times X_2 )$-norm and $H^p( X_1\times X_2 )$-(semi)norm.
Recall that $p_0:=\max\{{\omega_i}/{(\omega_i+\eta_i)}: i=1,2\}$.
To achieve this decomposition we will need a Journ\'e-type
covering lemma and a suitable definition of product
$(p,q)$-atoms on $X_1\times X_2$, valid for  $(X_i,d_i,\mu_i)$ spaces of
homogeneous type in the sense of Coifman and Weiss for $i=1,2$.
We will also define atomic product Hardy spaces $H^{p,q}_{{\rm
at}}(X_1\times X_2)$, and as a consequence of the main theorem
we will show these spaces coincide with $H^p(X_1\times X_2)$
for all $q>1$.

The definition of the product Hardy spaces $H^p(X_1\times
X_2)$ utilizes Auscher-Hyt\"onen wavelet bases on each space of
homogeneous type $X_i$, with H\"older regularity $\eta_i\in (0,1]$,  and
corresponding reference dyadic grids $\mathscr{D}^W_i$, for  $i=1,2$, provided $p>p_0$.
In this section we
will show that  functions in $H^p(X_1\times X_2)\cap
L^q(X_1\times X_2)$ can be decomposed into  product
$(p,q)$-atoms based on the wavelets' reference dyadic grids $\mathscr{D}^W_i$ for $i=1,2$.
  Product $(p,q)$-atoms do not require wavelets in
their definition,  but there is an underlying dyadic grid
associated to each atom. We will show that   product
$(p,q)$-atoms,  based on regular families of  dyadic grids,   are
in $H^p(X_1\times X_2)$ with uniform bounds on their $H^p$-(semi)norm
dependent only on the geometric constants of the spaces $X_i$ for $i=1,2$.
  These observations allow us to deduce
that the product $H^p$, ${\rm CMO}^p$, ${\rm BMO}$, and ${\rm
VMO}$-spaces, defined a priori using Auscher-Hyt\"onen
wavelets, are independent of the wavelets and the reference dyadic grids
chosen (and indeed of the reference dyadic points $\{x^k_\alpha\}$
chosen), yielding Corollary~B and Corollary~C stated
in the introduction.   

We would like to point out that the convergence in both the
$L^2(X_1\times X_2 )$-norm and $H^p(X_1\times X_2)$-(semi)norm is
crucial for proving the boundedness of Calder\'on-Zygmund
operators from $H^p( X_1\times X_2 )$ to $L^p( X_1\times X_2 )$
as described in \cite{HLLin}.

\subsection{Journ\'e-type covering lemma}

{In the product theory the Journ\'e-type covering lemmas play a
fundamental role.} The Journ\'e covering lemma was established
by Journ\'e~\cite{J} on $\mathbb{R}\times \mathbb{R}$, and by
Pipher~\cite{P} on $\mathbb{R}^{n_1}\times \dots
\times\mathbb{R}^{n_k}$. Recently, following the same ideas and
techniques as in \cite{P}, a Journ\'e-type covering lemma was
developed for $X_1\times X_2$ by the first two authors and Lin
\cite{HLLin} for certain spaces of homogeneous type.

In this section, for $i=1,2$,  $(X_i,d_i,\mu_i)$ denotes a space of homogeneous type in
the sense of Coifman and Weiss with  $\omega_i$ an upper
dimension, $A_0^{(i)}$ the quasi-triangle constant, $C_{\mu_i}$ the doubling constant, and with an underlying
dyadic grid  $\mathscr{D}_i$ whose structural constants are $c_0^i$, $C_0^i$, $c_1^i$, $C_1^i$, and $\delta_i$, as in Theorem~\ref{thm:dyadiccubes}.

Let $\Omega\subset X_1\times X_2$ be an open set of finite
measure and for $i = 1$, 2, let $m_i(\Omega)$ denote the family
of dyadic rectangles $R=Q_1\times
Q_2$ in $\Omega$ which are maximal in the $i$th
``direction'', here $Q_i\in\mathscr{D}_i$.  Also denote by $m(\Omega)$ the set of all
maximal dyadic rectangles contained in $\Omega$. Note that neither $m(\Omega )$ nor $m_1(\Omega)$ nor $m_2(\Omega)$ are disjoint collections of rectangles, this is one of the main difficulties when
dealing with the product and multi-parameter settings.

 Given a dyadic rectangle $R=Q_1\times Q_2\in m_1(\Omega)$,
let $\widehat{Q}_2=\widehat{Q}_2(Q_1)$ be the largest dyadic cube\label{def:widehatQ2Q1}
in $\mathscr{D}_2$ containing $Q_2$ such that
\begin{equation}\label{m(Omega)}
\mu\big(\big(Q_1\times \widehat{Q}_2\big)\cap \Omega\big)>{1\over 2}\mu(Q_1\times \widehat{Q}_2),
\end{equation}
where $\mu=\mu_1\times\mu_2$ is the measure on $X_1\times X_2$.
Similarly, given a dyadic rectangle $R=Q_1\times Q_2\in m_2(\Omega)$, let
$\widehat{Q}_1=\widehat{Q}_1(Q_2)$ be the largest dyadic cube in $\mathscr{D}_1$
containing $Q_1$ such that
$$\mu\big(\big(\widehat{Q}_1\times Q_2\big)\cap \Omega\big)>{1\over 2}\mu(\widehat{Q}_1\times Q_2).$$

We now state the Journ\'e-type covering lemma on $X_1\times X_2$.

\begin{lemma}[\cite{HLLin}, Lemma 2.2]\label{theorem-cover lemma}
{For $i = 1$, $2$, let $(X_i,d_i,\mu_i)$ be spaces of
    homogeneous type in the sense of Coifman and Weiss as
    described in the Introduction, with quasi-metrics~$d_i$ and
    Borel regular doubling measures~$\mu_i$, each space with an underlying dyadic grid $\mathscr{D}_i$. }
Let  $\Omega$ be an open subset in $X_1\times X_2$ with finite
measure. Let $w:[0,\infty)\to [0,\infty)$ be any {fixed}
increasing function such that $\sum_{j=0}^\infty
jw(C_02^{-j})<\infty$, where $C_0$ is any given positive
constant. Then there exists a positive constant $C$ {\rm (}dependent
on the fixed increasing function~$w$, the geometric constants of the spaces $X_i$,
and the structural constants of the underlying
dyadic grids via the ratios of the dilation constants $C_1^i/c_1^i$, for $i=1,2${\rm )} such that
\begin{eqnarray}\label{1 direction}
\sum_{R=Q_1\times Q_2\in
m_1(\Omega)}\mu(R) \,w\Big({\ell(Q_2)\over\ell(\widehat{Q}_2)}\Big)\leq
C\mu(\Omega)
\end{eqnarray}
and
\begin{eqnarray}\label{2 direction}
\sum_{R=Q_1\times Q_2\in
m_2(\Omega)}\mu(R)\, w\Big({\ell(Q_1)\over\ell(\widehat{Q}_1)}\Big)\leq
C\mu(\Omega).
\end{eqnarray}

\end{lemma}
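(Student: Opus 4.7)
The plan is as follows. By the symmetry between the two factors, it suffices to prove \eqref{1 direction}; statement \eqref{2 direction} follows by the mirror argument. I would first group rectangles $R = Q_1\times Q_2 \in m_1(\Omega)$ according to the integer $j = j(R)\ge 0$ determined by $\ell(Q_2)/\ell(\widehat{Q}_2) = \delta_2^{j}$ in the grid $\mathscr{D}_2$, and set $m_1(\Omega;j):=\{R\in m_1(\Omega): j(R)=j\}$. Since $w$ is increasing and $w(\ell(Q_2)/\ell(\widehat{Q}_2))=w(\delta_2^{j})$, bound \eqref{1 direction} reduces to the per-depth estimate
\[
\sum_{R\in m_1(\Omega;j)}\mu(R)\;\leq\; C\,(1+j)\,\mu(\Omega),\qquad j\ge 0,
\]
followed by summation in $j$ and the hypothesis $\sum_{j\ge 0} j\, w(C_0 2^{-j})<\infty$ applied after absorbing the change-of-base constant $\log_2(1/\delta_2)$ (together with a geometric overcount coming from indices $j$ grouped according to $\lfloor j\log_2(1/\delta_2)\rfloor$) into $C_0$.

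The heart of the argument is the per-depth estimate above. Fix $R=Q_1\times Q_2\in m_1(\Omega;j)$. From the defining property of $\widehat{Q}_2$ one has $\mu((Q_1\times\widehat{Q}_2)\cap\Omega)>\tfrac12\mu(Q_1\times\widehat{Q}_2)$, whereas by its maximality the dyadic parent $\widehat{Q}_2^{(p)}\in\mathscr{D}_2$ satisfies $\mu((Q_1\times\widehat{Q}_2^{(p)})\cap\Omega)\le\tfrac12\mu(Q_1\times\widehat{Q}_2^{(p)})$. Using Fubini in $x_1\in Q_1$ I would extract a measurable set $F_R\subset Q_1$ of positive $\mu_1$-mass on whose slices the one-parameter dyadic maximal function $M_2^{d}(\chi_{\Omega_{x_1}})$, where $\Omega_{x_1}:=\{x_2:(x_1,x_2)\in\Omega\}$, selects $\widehat{Q}_2$ as a ``near-maximal at level $1/2$'' cube and drops below $1/2$ on its parent. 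Iterating through the chain of $j$ intermediate dyadic ancestors from $Q_2$ up to $\widehat{Q}_2$, and invoking the classical weak $(1,1)$ bound for the one-parameter dyadic maximal function on $(X_2,d_2,\mu_2)$---valid for any Borel-regular doubling measure, with no regularity required on $d_2$ and no reverse doubling---accumulates at most $j$ overlapping copies of $\Omega$ along the $x_2$-direction. Combining this with the mutual $\mu_1$-disjointness of $\{Q_1: Q_1\times Q_2\in m_1(\Omega)\}$ for each fixed $Q_2\in\mathscr{D}_2$ (by maximality in direction~1, since otherwise strict dyadic nesting would force a proper $Q_1^{(p)}\times Q_2\subset\Omega$), and the $\mu_2$-disjointness of the $Q_2$'s lying $j$ generations below a common $\widehat{Q}_2$, and integrating in $x_1$, yields the per-depth estimate.

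All implicit constants will depend only on the doubling constants $C_{\mu_i}$, the upper dimensions $\omega_i$, and the ratios $C_1^i/c_1^i$ of the underlying dyadic grids, since the only ingredients beyond elementary doubling are the weak $(1,1)$ bound for the one-parameter dyadic maximal function and Fubini; this matches the asserted dependence. The main obstacle I anticipate is the Fubini-slicing passage from the two-parameter density condition on $Q_1\times\widehat{Q}_2^{(p)}$ to a clean, iterable one-parameter maximal-function statement in $x_2$ that accrues only a linear-in-$j$ penalty. The absence of reverse doubling means the generation gap $j$ and the measure ratio $\mu_2(Q_2)/\mu_2(\widehat{Q}_2)$ are only one-sided comparable, so maintaining consistent control across the $j$-fold iteration---while ensuring the final constant is grid-independent within a regular family in the sense of Definition~\ref{def:regular-dyadic-grids}---is the delicate technical core of the lemma.
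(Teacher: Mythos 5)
Note first that the paper does not actually prove this lemma: it is quoted from \cite{HLLin}, Lemma 2.2, with only the remark that the proof there uses nothing beyond the doubling property and therefore transfers to the present setting. Your road map is the correct Journ\'e/Pipher skeleton that \cite{HLLin} follows: group the rectangles $R=Q_1\times Q_2\in m_1(\Omega)$ by the generation gap $j$ between $Q_2$ and $\widehat{Q}_2$, prove the per-depth bound $\sum_{R\in m_1(\Omega;j)}\mu(R)\lesssim (1+j)\,\mu(\Omega)$, and then sum against $w(\delta_2^{\,j})$ using the hypothesis on $w$; the reduction and the change of base from $\delta_2$ to $2$ are unproblematic.

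The genuine gap is that the per-depth estimate, which is the entire content of \eqref{1 direction}, is asserted rather than proven. The mechanism you describe (a set $F_R\subset Q_1$ on whose slices the dyadic maximal function ``selects $\widehat{Q}_2$ as near-maximal,'' then ``iterating through the chain of $j$ ancestors accumulates at most $j$ overlapping copies of $\Omega$,'' invoking the weak $(1,1)$ bound) does not assemble into the bound: $\widehat{Q}_2=\widehat{Q}_2(Q_1)$ depends on $Q_1$, the cubes $\widehat{Q}_2$ attached to different rectangles of the same depth overlap in an uncontrolled way, so the $\mu_2$-disjointness of the $Q_2$'s below a \emph{common} $\widehat{Q}_2$ is not the relevant bookkeeping, and the weak $(1,1)$ inequality is not what produces the linear-in-$j$ factor. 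The argument that does work, and is what the cited proof runs on, is elementary: since the dyadic parent $\widehat{Q}_2^{(p)}$ of $\widehat{Q}_2$ satisfies $\mu\big((Q_1\times\widehat{Q}_2^{(p)})\cap\Omega\big)\le\tfrac12\mu(Q_1\times\widehat{Q}_2^{(p)})$, Chebyshev in $x_1$ yields $F_R\subset Q_1$ with $\mu_1(F_R)\ge\tfrac13\mu_1(Q_1)$ such that for every $x_1\in F_R$ the slice $\Omega_{x_1}:=\{x_2:(x_1,x_2)\in\Omega\}$ covers at most $\tfrac34$ of $\widehat{Q}_2^{(p)}$; in particular $\widehat{Q}_2^{(p)}\not\subset\Omega_{x_1}$, so the maximal dyadic cube of $\Omega_{x_1}$ containing $Q_2$ lies at most $j$ generations above $Q_2$. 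For fixed $x_1$ and fixed gap $g\in\{0,\dots,j\}$ below those slice-maximal cubes, the corresponding $Q_2$'s are pairwise disjoint subsets of $\Omega_{x_1}$, whence $\sum\mu_2(Q_2)\le(1+j)\,\mu_2(\Omega_{x_1})$; integrating in $x_1$ and using that, for fixed $Q_2$, the sets $F_R$ live in pairwise disjoint $Q_1$'s gives the per-depth bound with a constant coming only from Fubini, Chebyshev and the nestedness of the grids (no maximal-function weak $(1,1)$, no reverse doubling). Without this, or an equivalent, your proposal leaves unproven exactly the step you yourself flag as the delicate technical core.
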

In applications, we may take $w(t)=t^\delta$ for
any $\delta>0$ and the underlying dyadic grids may be  reference dyadic grids for the wavelets, or may belong to a regular family of dyadic grids that contains them. In these cases
the constant $C=C_{\delta}$ depends only on $\delta$ and the geometric constants of the spaces $X_i$ for $i=1$, 2.

In \cite{HLLin} the setting is on the product of two  space of homogeneous type with
a regularity condition on the metrics and a reverse doubling
condition on the measures. However the proof of the Journ\'e-type lemma uses
only  the doubling property of the measures, and  goes through
in the present setting. 
{In the same paper  the authors introduced $(p,q)$-atoms in their
setting, similar to those we define in this paper. Our $(p,q)$-atoms will have  additional enlargement  parameters $(\ell_1,\ell_2)\in \mathbb{Z}_+^2$ that were not present in \cite{HLLin}. }

\subsection{Product $(p,q)$-atoms and atomic Hardy spaces}

First we define product  $(p,q)$-atoms for all $p\in (0,1]$ and $q>1$. Second we define product atomic Hardy spaces,
 $H^{p,q}_{{\rm at}}( X_1\times X_2 )$, for all $q>1$ and for all $p$ with $p_0<p\leq 1$, where $p_0:=\max\{{\omega_i}/{(\omega_i+\eta_i)}: \; i=1,2\}$.

\begin{definition}[Product $(p,q)$-atoms]\label{def-of-p-q-atom}
Suppose that  $0<p\leq1$ and
$1<q<\infty$. For $i = 1, 2$, let $(X_i,d_i,\mu_i)$ be spaces of
    homogeneous type in the sense of Coifman and Weiss, with upper dimension $\omega_i$.
A function $a(x_1,x_2)$ defined on $ X_1\times X_2 $ is a \emph{product $(p,q)$-atom} 
 if it satisfies the following conditions.
\begin{itemize}
\item[(1)]  (Support condition on open set) There  are an open set $\Omega$  of $ X_1\times X_2 $ with finite measure and   integers $\ell_1,\ell_2\geq 0$, 
such that
${\rm supp}\,a\subset\widetilde{\Omega}_{\ell_1,\ell_2}$, where  $\widetilde{\Omega}_{\ell_1,\ell_2}$ is the $(\ell_1,\ell_2)$-enlargement of $\widetilde{\Omega}$, the $\epsilon_0$-enlargement of $\Omega$, defined respectively in~\eqref{(ell1,ell2)-enlargement} and in~\eqref{epsilon-enlargement}, with $\epsilon_0$ as defined in~\eqref{epsilon-0}.\item[(2)] (Size condition) There is a constant $C_q>0$ such that
$$\|a\|_{L^q(X_1\times X_2)}\leq C_q \, \big ( (1+\ell_1 \omega_1+\ell_2 \omega_2 ) 2^{\ell_1\omega_1+\ell_2\omega_2}\mu(\widetilde{\Omega}) \big )^{1/q-1/p} .$$

\item[(3)]  (Further decomposition into rectangle atoms with cancellation) There are underlying dyadic grids $\mathscr{D}^a_i $ on $X_i$ for $i=1,2$, such that the function $a$ can be decomposed into \emph{rectangle $(p,q)$-atoms} $a_R$ associated to a dyadic rectangle $R=Q_1\times Q_2$,  with $Q_i\in \mathscr{D}^a_i$ and satisfying the following conditions.
   \begin{itemize}
\item[(i)] (Support condition) Let $C_i=2(A_0^{(i)})^2>0$ for $i=1,2$. For all rectangles atoms $a_R$, we have that
\[ {\rm supp}\,a_R\subset C_1 2^{\ell_1}Q_1\times C_22^{\ell_2}Q_2 \subset \widetilde{\Omega}_{\ell_1,\ell_2}.\]

\item[(ii)] (Cancellation condition on each variable)
\[ \int_{X_i}a_R(x_1,x_2)\, d\mu_i(x_i)=0 \; \mbox{for a.e. \ $x_j\in X_j$ and  $(i,j)\in \{(1,2), (2,1)\}$}.\]

\item[(iii-a)] (Decomposition and size condition for $2\leq q<\infty$)  If $q\geq 2$ then
    \[a=\sum\limits_{R\in m(\Omega)}a_R \]
and there is a constant $C_q>0$ such that
\[\Big(\sum\limits_{R\in m(\Omega)}\|a_R\|_{L^q(X_1\times X_2)}^q\Big)^{1/q} \leq C_q\, \big ((1+\ell_1 \omega_1+\ell_2 \omega_2 )2^{\ell_1\omega_1+\ell_2\omega_2}\mu(\widetilde{\Omega}) \big )^{1/q-1/p}.\]

\item[(iii-b)] (Decomposition and size condition for $1< q < 2$)  If $q\in (1,2)$ then
\[a=\sum\limits_{R\in m_1(\Omega)}a_R+\sum\limits_{R\in m_2(\Omega)}a_R,\]
and for all $\delta>0$, there exists a constant $C_{q,\delta}>0$ such that  we have, for each~$(i,j)\in\{(1,2),(2,1)\},$
$$
 \hskip .5in   \bigg(\sum_{R\in m_i(\Omega)} \Big({\ell(Q_j)\over\ell(\widehat{Q}_j)}\Big)^\delta \|a_R\|_{L^q(X_1\times X_2)}^q
    \bigg)^{1/q} \leq C_{q,\delta}\, \big ((1+\ell_1 \omega_1+ \ell_2 \omega_2 )2^{\ell_1\omega_1+\ell_2\omega_2}\mu(\widetilde{\Omega}) \big )^{1/q-1/p}.
 $$
\end{itemize}
The constants $\epsilon_0$, $C_q$, $C_{q,\delta}$ depend only on the geometric constants of $X_i$ for $i=1,2$ and as indicated on $q$ and $\delta$. The families of rectangles $m(\Omega)$, $m_i(\Omega)$ for $i=1,2$ were defined in page~\pageref{m(Omega)}. We will call the  integers  $\ell_i\geq 0$, \emph{enlargement parameters} of the atom.

\end{itemize}
\end{definition}

We remark that, when $ X_1\times X_2 =\mathbb{R}^n\times
\mathbb{R}^m$, $(p,2)$-atoms with conditions (i), (ii) and
(iii-a) (with $q = 2$, and $\ell_1=\ell_2=0$) were introduced by R. Fefferman
\cite{F1}.  { When $(X_i,d_i,\mu_i)$ are spaces of homogeneous type with the quasi-metric $d_i$ satisfying the regularity condition~\eqref{smetric} and the doubling measure
$\mu_i$ satisfying a reverse doubling condition~\eqref{eqn:doubling condition}, for $i=1,2$,  the $(p,q)$-atoms with $\ell_1=\ell_2=0$, were defined in \cite[Definition 2.3]{HLLin}.}   
 In \cite[Definition~5.3]{KLPW} the product $(1,2)$-atoms as in Definition~\ref{def-of-p-q-atom} were used when  $\ell_1=\ell_2=0$.

Note that there are no wavelets and  no regularity parameters $\eta_i$ involved in the definition of the $(p,q)$-atoms. In item (3) of Definition~\ref{def-of-p-q-atom} any pair of underlying dyadic grids is acceptable,  as long as properties
(i)-(iii) are met.  However we will be interested when the underlying dyadic grids $\mathscr{D}_i^a$ belong to a regular family of dyadic grids on $X_i$ that contains all  possible reference dyadic grids $\mathscr{D}_i^W$ for all possible wavelets on $X_i$ for $i=1,2$.

The open set $\Omega$ is a placeholder and the maximal rectangles in item (3) do refer to $\Omega$. The positive constants $C_i=2(A_0^{(i)})^2$ for $i=1,2$ in item~(3)(i)   are the same for all $(p,q)$-atoms. 
However the enlargement parameters, $\ell_i$ for $i=1,2$, in item~(1)
may change from $(p,q)$-atom to $(p,q)$-atom.  We will see, in the proof of the atomic decomposition for $H^p(X_1\times X_2)$, that the $(p,q)$-atoms will be indexed by a parameter $j\in\mathbb{Z}$ and by the enlargement parameters $\ell_i\geq 0$ for $i=1,2$.  

We can now define atomic product Hardy spaces $H^{p,q}_{{\rm at}}(X_1\times X_2)$.
\begin{definition}[Atomic product Hardy spaces]\label{def:atomicHp} 
For $i = 1$, $2$, let $(X_i,d_i,\mu_i)$ be spaces of
    homogeneous type in the sense of Coifman and Weiss as
    described in the Introduction, with quasi-metrics~$d_i$ and
    Borel regular doubling measures~$\mu_i$.  Let ${\omega}_i$
    be an upper dimension for~$X_i$, and let $\eta_i$ be the
    exponent of regularity of a family of Auscher-Hyt\"onen wavelets in $X_i$.
Let
$p_0:=\max\{ \omega_i/(\omega_i+ \eta_i): i=1,2\}$, 
suppose that $p_0<p\leq 1$
and $1<q<\infty$. Then
\[H^{p,q}_{{\rm at}}(X_1\times X_2):=\{f\in (\GG)': \;  f=\sum_{j=-\infty}^\infty\lambda_ja_j, \;\;\;\sum_{j=-\infty}^{\infty}|\lambda_j|^p<\infty\},\]
where for each ${j\in\mathbb{Z}}$, the function $a_j$  is a  $(p,q)$-atom with underlying  dyadic grids $\mathscr{D}_i^{a_j}$ for $i=1,2$, belonging to a regular family of dyadic grids on $X_i$ that contains the reference dyadic grids of all possible  Auscher-Hyt\"onen wavelets on $X_i$.  Furthermore, the convergence of the series is in $(\GG)'$.
We define a (semi)norm on $H^{p,q}_{{\rm at}}(X_1\times X_2)$ as follows
\[\|f\|_{H^{p,q}_{{\rm at}}(X_1\times X_2)}:=\inf \Big\{ \Big (\sum_{j=-\infty}^{\infty}|\lambda_j|^p \Big )^{\frac{1}{p}}:
            \,f=\sum_{j=-\infty}^{\infty}\lambda_ja_j\Big\},\]
       where the infimum is taken over all possible atomic decompositions  of $f$. 
       \end{definition}
Recall that $(\GG)'$ is short for the
spaces of distributions
$\big(\GGp(\beta_{1}',\beta_{2}';\gamma_{1}',\gamma_{2}')\big)^{'}$,
respectively, where we have fixed $ \beta_i', \gamma_i' \in (0,\eta_i)$  and $\eta_i$ is the regularity exponent of the Auscher-Hyt\"onen wavelets on $X_i$  for $i = 1$, 2. In the one parameter theory, in the corresponding definition of atomic Hardy space $H^p_{{\rm at}}(X)$, it is required that  $f\in (\mathcal{C}_{\frac{1}{p}-1}(X))'$ the dual of the Campanato space,  see \cite[discussion surrounding Lemma~2.6 in p.3448]{HHL}.}

The underlying dyadic grids  can change from atom to atom. The underlying dyadic grids~$\mathscr{D}_i^a$  for $i=1,2$, for a given atom $a$, can be any dyadic grids belonging to  regular family of dyadic grids on $X_i$ that contains all the reference dyadic grids  associated to all possible wavelets on $X_i$ for $i=1,2$. In particular they may not coincide with the reference dyadic grids $\mathscr{D}_i^W$  associated to the wavelet basis on $X_i$ for $i=1,2$, used in the definition of  the product Hardy space $H^p(X_1\times X_2)$. This ensures that by definition, the product atomic Hardy spaces $H^{p,q}_{{\rm at}}(X_1\times X_2)$ are independent of the reference  dyadic grids and wavelets used in the definition of $H^p(X_1\times X_2)$. We may as well restrict the regular family of dyadic grids on each $X_i$ in the definition of atomic Hardy spaces to be the collection of reference dyadic grids for all possible wavelets on $X_i$ for $i=1,2$.

We will show in Section~\ref{MainThm-Corollaries} that $H^{p,q}_{{\rm at}}(X_1\times X_2)$ is the same space for all $q>1$, hence we can safely write $H^p_{{\rm at}}(X_1\times X_2)$. Moreover we will show that $H^p_{{\rm at}}(X_1\times X_2)=H^p(X_1\times X_2)$. In \cite{HHL} they work with $(p,2)$-atoms only, and therefore $H^p_{{\rm at}}(X_1\times X_2)$ is by definition what we denote $H^{p,2}_{{\rm at}}(X_1\times X_2)$.
Note  that if $f\in H^{p,q}_{{\rm at}}(X_1\times X_2)\cap L^q(X_1\times X_2)$ the convergence of the atomic series also holds in $L^q(X_1\times X_2)$ and that $H^{p,q}_{{\rm at}}(X_1\times X_2)\cap L^q(X_1\times X_2)$ is dense in $H^{p,q}_{{\rm at}}(X_1\times X_2)$ in the atom (semi)norm.

\subsection{Main theorem on atomic decomposition, and corollaries}\label{MainThm-Corollaries}

The main result in this section, Theorem~\ref{theorem-Hp atom
decomp}, is to show that $L^q(X_1\times X_2)\cap H^p(X_1\times
X_2 )$ has an atomic decomposition. This theorem was cited and
used in \cite[Theorem 5.4]{KLPW}, in the case $p=1$ and $q=2$,
to establish dyadic structure theorems for $H^1(X_1\times X_2)$
and $\bmo (X_1\times X_2)$.

Theorem~\ref{theorem-Hp atom decomp} was stated in the
introduction and called Main Theorem. For the convenience of
the reader we restate the theorem here, being more precise about the dyadic grids.

\begin{theorem}[Main Theorem]\label{theorem-Hp atom decomp}
    For $i = 1$, $2$, let $(X_i,d_i,\mu_i)$ be spaces of
    homogeneous type in the sense of Coifman and Weiss as
    described in the Introduction, with quasi-metrics~$d_i$ and
    Borel regular doubling measures~$\mu_i$. Let ${\omega}_i$
    be an upper dimension for~$X_i$,  let $\eta_i$ be the
    exponent of regularity of the Auscher-Hyt\"onen wavelets
    used in the construction of the Hardy
    space~$H^p(X_1\times X_2)$,  let $p_0:=\max\{ \omega_i/(\omega_i+ \eta_i): i=1,2\}$,
    and let $\mathscr{D}_i^W$ be the reference dyadic grids for the wavelets in $X_i$. Suppose that $p_0<p\leq
    1$, $1<q<\infty$, and {$f\in L^q(X_1\times X_2)$}.     Then $f\in H^p( X_1\times X_2 )$ if  and only if  $f$ has an
    atomic decomposition, that is,
    \begin{eqnarray}\label{atom decom}
        f=\sum_{j=-\infty}^\infty\lambda_ja_j.
    \end{eqnarray}
    Where, first  the  functions $a_j$ are $(p, q)$-atoms with respect to an underlying dyadic grid $\mathscr{D}^{a_j}_i$ belonging to a regular family of dyadic grids on $X_i$  that contains all possible reference grids for wavelets on $X_i$ for $i=1,2$,
    second $\sum_{j=-\infty}^{\infty}|\lambda_j|^p<\infty,$
    and third  {the series converges in 
    $L^q( X_1\times X_2 )$}.  Moreover, the series also
    converges in $H^p(X_1\times X_2)$ and
    \begin{eqnarray*}
        \|f\|_{H^p( X_1\times X_2 )}
        \sim \inf \Big\{ \Big (\sum_{j=-\infty}^{\infty}|\lambda_j |^p \Big )^{\frac{1}{p}}:
            \,f=\sum_{j=-\infty}^{\infty}\lambda_ja_j\Big\},
    \end{eqnarray*}
    where the infimum is taken over all decompositions as in \eqref{atom decom} and
    the implicit constants are independent of the $L^q( X_1\times
    X_2 )$ and $H^p( X_1\times X_2 )$-{\rm (}semi{\rm )}norms of $f$, only dependent on the geometric constants of $X_i$ for $i=1,2$.
\end{theorem}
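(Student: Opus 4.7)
The plan is to prove the two implications separately. For the \emph{sufficiency} direction (atomic decomposition implies membership in $H^p$), the key step is to show that every product $(p,q)$-atom satisfies $\|a\|_{H^p(\widetilde X)}\leq C$ with a constant $C$ depending only on the geometric constants of $X_1$ and $X_2$. Since $S$ is sublinear and $0<p\leq 1$ gives $S(f)^p\leq\sum_j|\lambda_j|^p\,S(a_j)^p$, such a uniform bound immediately yields $\|f\|^p_{H^p}\lesssim\sum_j|\lambda_j|^p$. To estimate $\|S(a)\|^p_{L^p}$ for a single atom, I will split the integral into a suitable further enlargement of $\widetilde\Omega_{\ell_1,\ell_2}$ and its complement. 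On the enlargement, H\"older's inequality with exponents $q/p$ and $q/(q-p)$, the $L^q$-boundedness of $S$ \cite[Theorem~4.8]{HLW}, the size bound in Definition~\ref{def-of-p-q-atom}(2), and the doubling estimate~\eqref{eqn:measure-enlargements} combine to give a bound of order one. Off the enlargement I will invoke the double cancellation of the rectangle atoms (Definition~\ref{def-of-p-q-atom}(3)(ii)), the test-function-type H\"older regularity of the wavelets from Proposition~\ref{prop wavelet is test function}, and an almost-orthogonality estimate in the spirit of~\eqref{almost-orthogonality-claim}; summing those decay estimates over maximal rectangles in $\Omega$ via Journ\'e's covering Lemma~\ref{theorem-cover lemma} with weight $w(t)=t^\delta$ delivers the required bound, and it is precisely here that the Journ\'e factor $(\ell(Q_j)/\ell(\widehat Q_j))^\delta$ appearing in~(iii-b) is exploited.

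For the \emph{necessity} direction, starting from $f\in H^p(\widetilde X)\cap L^q(\widetilde X)$, I will combine the reproducing formula~\eqref{product reproducing formula} with the wavelet decomposition~\eqref{decomposition of wavelet into atom} of Lemma~\ref{lemma-decomposition} applied in each variable to write $f=\sum_{\ell_1,\ell_2\geq 0}2^{-\ell_1\gamma_1}2^{-\ell_2\gamma_2}f_{\ell_1,\ell_2}$, with decay parameters chosen so that $\gamma_i>\omega_i(1/p+1/q')$. Using the level sets $\Omega_j=\{S(f)>2^j\}$ together with the families $\mathcal{B}_j$ from~\eqref{setBj}, I reorder the sum so that
\[
f_{\ell_1,\ell_2}=\sum_{j\in\mathbb{Z}}\sum_{R^{k_1,k_2}_{\alpha_1,\alpha_2}\in\mathcal{B}_j}\langle f,\psi^{k_1}_{\alpha_1}\psi^{k_2}_{\alpha_2}\rangle\,\kappa_1\,\varphi^{\gamma_1}_{\ell_1,k_1,\alpha_1}\,\kappa_2\,\varphi^{\gamma_2}_{\ell_2,k_2,\alpha_2},
\]
and I set $a_{j,\ell_1,\ell_2}:=\lambda_{j,\ell_1,\ell_2}^{-1}$ times the inner sum, with
\[
\lambda_{j,\ell_1,\ell_2}:=2^{-\ell_1\gamma_1-\ell_2\gamma_2}\,2^{\ell_1\omega_1+\ell_2\omega_2}\,2^j\,\mu(\widetilde\Omega_j)^{1/q}\,\big((1+\ell_1\omega_1+\ell_2\omega_2)2^{\ell_1\omega_1+\ell_2\omega_2}\mu(\widetilde\Omega_j)\big)^{1/p-1/q}.
\]
The support condition Definition~\ref{def-of-p-q-atom}(1) follows from Lemma~\ref{lemma-decomposition}(i) together with the inclusion $R\subset\widetilde\Omega_j$ for $R\in\mathcal{B}_j$ verified on page~\pageref{RinBj}; the size condition~(2) is precisely the $L^q$-estimate~\eqref{Lq-estimate-Bj} established in the proof of Theorem~\ref{theorem-of-fLp-lessthan-fHp-on-product-case}. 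Summing $|\lambda_{j,\ell_1,\ell_2}|^p$ using the layer-cake identity~\eqref{Sf-norm-Lp-coronas} together with~\eqref{weak-L^2-Ms} yields $\sum_{j,\ell_1,\ell_2}|\lambda_{j,\ell_1,\ell_2}|^p\lesssim\|f\|^p_{H^p}$, the geometric series in $\ell_1,\ell_2$ converging because of the choice of $\gamma_i$.

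The principal obstacle is verifying property~(3) of Definition~\ref{def-of-p-q-atom}, the further decomposition of each $a_{j,\ell_1,\ell_2}$ into rectangle atoms. Cancellation~(ii) in each variable is immediate from Lemma~\ref{lemma-decomposition}(iv) applied to $\varphi^{\gamma_i}_{\ell_i,k_i,\alpha_i}$. For the decomposition~(iii), I will assign each $R\in\mathcal{B}_j$ via a deterministic selection rule to a unique maximal rectangle of $m(\widetilde\Omega_j)$, $m_1(\widetilde\Omega_j)$, or $m_2(\widetilde\Omega_j)$. When $q\geq 2$, the elementary inequality $\sum_R b_R^{q/2}\leq\big(\sum_R b_R\big)^{q/2}$ (valid for $b_R\geq 0$ and $q/2\geq 1$) combined with the Littlewood--Paley-type estimate of Lemma~\ref{lemma LittlewoodPaley} cleanly produces~(iii-a). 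When $1<q<2$ this convexity fails, and this is the hardest step: instead of grouping by $m(\widetilde\Omega_j)$ I must decompose along $m_1(\widetilde\Omega_j)$ and $m_2(\widetilde\Omega_j)$, control each rectangle atom by its ``Journ\'e-enlarged'' piece $Q_i\times\widehat Q_j$, and then invoke Journ\'e's covering Lemma~\ref{theorem-cover lemma} with weight $w(t)=t^\delta$ to absorb the excess summation in exchange for precisely the factor $(\ell(Q_j)/\ell(\widehat Q_j))^\delta$ that~(iii-b) admits. Convergence of $\sum_{j,\ell_1,\ell_2}\lambda_{j,\ell_1,\ell_2}\,a_{j,\ell_1,\ell_2}$ in $L^q(\widetilde X)$ follows from the unconditional convergence of the wavelet expansion on $L^q$ for $q>1$ \cite[Corollary~10.4]{AH}, while convergence in $H^p$ follows from the already-proved sufficiency direction applied to partial sums, combined with the $\ell^p$-summability of the coefficients.
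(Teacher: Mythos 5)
Your overall plan mirrors the paper's, and the $q\geq 2$ part of the necessity direction is essentially correct, but your choice of coefficient fails when $1<q<2$. You propose a single
\[
\lambda_{j,\ell_1,\ell_2}:=2^{-\ell_1\gamma_1-\ell_2\gamma_2}\,2^{\ell_1\omega_1+\ell_2\omega_2}\,2^j\,\mu(\widetilde\Omega_j)^{1/q}\,\big((1+\ell_1\omega_1+\ell_2\omega_2)2^{\ell_1\omega_1+\ell_2\omega_2}\mu(\widetilde\Omega_j)\big)^{1/p-1/q}
\]
for all $q>1$, and note correctly that the size condition (2) and the rectangle bound (iii-a) for $q\geq2$ then follow from~\eqref{Lq-estimate-Bj} and~\eqref{eqn:Lq-estimate-aR}. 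For $1<q<2$, however, the verification of Definition~\ref{def-of-p-q-atom}(3)(iii-b) is forced through an $L^2$ estimate: one bounds $\|a_R\|_{L^q}\lesssim\|a_R\|_{L^2}\,(2^{\ell_1\omega_1+\ell_2\omega_2}\mu(\overline R))^{1/q-1/2}$ from the support, applies H\"older on the sum with exponents $2/q$ and $2/(2-q)$, and then invokes Journ\'e's lemma. With your $\lambda$, tracing this chain (using $\sum_R\|a_R\|_{L^2}^2\lesssim 2^{2(\ell_1\omega_1+\ell_2\omega_2)}\lambda^{-2}\|S(f_{\mathcal{B}_j})\|_{L^2}^2$ and $\|S(f_{\mathcal{B}_j})\|_{L^2}^2\lesssim 2^{2j}\mu(\widetilde\Omega_j)$) leaves an uncontrolled excess factor $2^{(\ell_1\omega_1+\ell_2\omega_2)(1-q/2)}$ in the bound for $\sum_R(\ell(Q)/\ell(\widehat Q))^\delta\|a_R\|_{L^q}^q$ beyond the target $\big((1+\ell_1\omega_1+\ell_2\omega_2)2^{\ell_1\omega_1+\ell_2\omega_2}\mu(\widetilde\Omega)\big)^{1-q/p}$, and Journ\'e's covering lemma only removes the $(\ell(Q)/\ell(\widehat Q))^\delta$ weight, not this extra growth in $\ell_1,\ell_2$. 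The paper handles this by taking, for $1<q<2$, a different coefficient built from $\|S(f_{\mathcal{B}_j})\|_{L^2}$ and exponent $1/p-1/2$ rather than $1/p-1/q$, as in~\eqref{atom lambda k q small}; this coefficient exceeds yours by $\sim 2^{(\ell_1\omega_1+\ell_2\omega_2)(1/q-1/2)}$ and precisely absorbs the excess. The price is the strengthened constraint $\gamma_i>\omega_i(1/p+1/2)$ (not $\gamma_i>\omega_i(1/p+1/q')$) to keep the $\ell^p$ sum of coefficients finite. So as written your $a_{j,\ell_1,\ell_2}$ for $1<q<2$ need not be a $(p,q)$-atom: the constant in (iii-b) picks up dependence on $\ell_1,\ell_2$, which the definition forbids.

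A smaller discrepancy in the sufficiency direction: the paper does not estimate the far-field contribution $B$ directly from the wavelets' exponential decay, almost-orthogonality, and test-function regularity; it reuses Lemma~\ref{lemma-decomposition} a second time, now with dilation parameter $\overline C_i=2(A_0^{(i)})^2\,2^{\ell_i}$ tuned to the enlargement parameters of the atom, so that the supports of the building blocks $\varphi^{\gamma,\overline C_i}_{\ell,k_i,\alpha_i}$ align geometrically with the supports of the rectangle atoms $a_R$. That alignment is what produces the weight $(\ell(Q)/\ell(\widehat Q))^{p\eta_1+(p-1)\omega_1}$ with a strictly positive exponent (positive exactly because $p>p_0$), which feeds into Journ\'e's covering lemma. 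Your sketch, which works with the wavelets directly, is plausible in spirit but omits this matching; without it, it is unclear you recover the correct power of $\ell(Q)/\ell(\widehat Q)$ uniformly in the atom's enlargement parameters.
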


We repeat,  the underlying dyadic grid  $\mathscr{D}^a_i$ needed
for each atom may or not coincide with the reference dyadic grid $\mathscr{D}^W_i$
associated to the underlying Auscher-Hyt\"onen wavelets  on $X_i$ for $i=1,2$, used in
the definition of $H^p(X_1\times X_2)$.

As corollaries of the Main Theorem~\ref{theorem-Hp atom
decomp} we conclude first that $H^{p,q}_{{\rm at}}(X_1\times
X_2)$ coincides with $H^p(X_1\times X_2)$ for all $q>1$, and
second that the Hardy spaces $H^p(X_1\times X_2)$ defined via
specific Auscher-Hyt\"onen wavelet bases based on specific
reference dyadic grids on $X_i$ for $i=1,2$, are indeed independent of
the choices of both wavelet bases and reference dyadic grids.

\begin{corollary}[Corollary A in the
Introduction]\label{Corollary1}
    For all  $1<q<\infty$ and  $ p_0< p\leq 1$ then
    $$H^{p,q}_{{\rm at}}(X_1\times X_2)= H^p(X_1\times X_2).$$
\end{corollary}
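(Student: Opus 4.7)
The plan is to establish the two inclusions $H^p(X_1\times X_2)\subseteq H^{p,q}_{\rm at}(X_1\times X_2)$ and $H^{p,q}_{\rm at}(X_1\times X_2)\subseteq H^p(X_1\times X_2)$ separately and with comparable (semi)norms, deducing Corollary~A directly from the Main Theorem together with the Journ\'e-type covering Lemma~\ref{theorem-cover lemma} and the density of $H^p\cap L^q$ in $H^p$. I expect the inclusion $H^{p,q}_{\rm at}\subseteq H^p$ to be the harder one and the real content of the argument, while $H^p\subseteq H^{p,q}_{\rm at}$ should follow from the Main Theorem by a standard telescoping device.

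For the easier inclusion $H^p\subseteq H^{p,q}_{\rm at}$, I would first invoke the density (recalled at the beginning of Section~\ref{sec:key-lemma-and-theorem}) of $H^p(X_1\times X_2)\cap L^q(X_1\times X_2)$ in $H^p(X_1\times X_2)$. Given $f\in H^p$, I would select a sequence $\{g_n\}\subset H^p\cap L^q$ with $\|g_1\|_{H^p}^p\lesssim \|f\|_{H^p}^p$ and $\|g_{n+1}-g_n\|_{H^p}^p\leq 2^{-n}\|f\|_{H^p}^p$, so that $f=g_1+\sum_{n\geq 1}(g_{n+1}-g_n)$ with convergence in $H^p$. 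Each summand lies in $H^p\cap L^q$ and therefore, by the Main Theorem, admits an atomic decomposition whose atomic $\ell^p$-norm is controlled by its $H^p$-(semi)norm. Concatenating these decompositions produces an atomic representation of $f$ in $(\GG)'$ with $\sum_j|\lambda_j|^p\lesssim \|f\|_{H^p}^p$, which gives the inclusion with $\|f\|_{H^{p,q}_{\rm at}}\lesssim \|f\|_{H^p}$.

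For $H^{p,q}_{\rm at}\subseteq H^p$, I would use the $p$-(sub)additivity of the $H^p$-quasinorm, which reduces the problem to proving a uniform bound $\|a\|_{H^p(X_1\times X_2)}\lesssim 1$ for every product $(p,q)$-atom $a$, with the constant depending only on the geometric constants of $X_i$, $i=1,2$. I would bound $\|a\|_{H^p}^p=\|S(a)\|_{L^p}^p$ by splitting into an integral over a sufficiently large further enlargement $\Omega^{*}$ of $\widetilde{\Omega}_{\ell_1,\ell_2}$ and its complement. On $\Omega^{*}$, I would apply H\"older's inequality with exponents $q/p$ and its conjugate, using the $L^q$-boundedness of the product square function $S$ together with the size condition in Definition~\ref{def-of-p-q-atom}(2) and the measure estimate~\eqref{eqn:measure-enlargements}, to obtain a bound of the correct order. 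On the complement of $\Omega^{*}$, I would expand $a$ in terms of its rectangle atoms $a_R$ via Definition~\ref{def-of-p-q-atom}(3) and use the two-variable cancellation of each $a_R$, combined with the H\"older regularity of the Auscher--Hyt\"onen wavelets (as in Theorem~\ref{thm:AH_orthonormal_basis}(iii) and Proposition~\ref{prop wavelet is test function}), to extract decay factors of the form $\bigl(\ell(Q_i)/\mathrm{dist}\bigr)^{\eta_i}$.

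The main obstacle, and the technically delicate step, is summing these decay estimates over all maximal rectangles $R\in m(\Omega)$ when $q\geq 2$, or over $R\in m_1(\Omega)\cup m_2(\Omega)$ when $1<q<2$, in such a way that the total bound is independent of $\Omega$ and of the atom $a$. This is precisely where the Journ\'e-type covering Lemma~\ref{theorem-cover lemma} applied with the weight $w(t)=t^\delta$, for a small $\delta>0$ chosen in terms of $\eta_i$ and the gap $p-p_0>0$, converts the geometric sum $\sum_R \mu(R)\bigl(\ell(Q_j)/\ell(\widehat{Q}_j)\bigr)^\delta$ into $O(\mu(\Omega))$. Combined with the size estimate of Definition~\ref{def-of-p-q-atom}(iii-a) or~(iii-b), depending on whether $q\geq 2$ or $q<2$, this closes the estimate and yields $\|a\|_{H^p}\lesssim 1$ uniformly. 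Summing over $j$ gives $\|f\|_{H^p}^p\leq \sum_j|\lambda_j|^p\|a_j\|_{H^p}^p\lesssim \sum_j|\lambda_j|^p$, and taking the infimum over atomic decompositions produces $\|f\|_{H^p}\lesssim \|f\|_{H^{p,q}_{\rm at}}$, finishing the proof of Corollary~A.
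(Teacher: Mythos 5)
Your proposal is correct in substance but organized quite differently from the paper, and it does more work than the corollary requires. The paper's proof is a short closure argument: by the Main Theorem (Theorem~\ref{theorem-Hp atom decomp}), $H^{p,q}_{{\rm at}}(X_1\times X_2)\cap L^q(X_1\times X_2)=H^p(X_1\times X_2)\cap L^q(X_1\times X_2)$ with equivalent (semi)norms; the closure of this common set in the $H^p$-(semi)norm is $H^p$, its closure in the atom (semi)norm is $H^{p,q}_{{\rm at}}$, and since the two (semi)norms are equivalent on the common dense set the two closures coincide. Your first inclusion (telescoping $f=g_1+\sum_{n\geq1}(g_{n+1}-g_n)$ with $g_n\in H^p\cap L^q$ and concatenating the atomic decompositions supplied by the Main Theorem) is a hands-on version of one half of that closure argument; it is fine, provided you record --- as the paper also leaves implicit --- that convergence in the $H^p$-(semi)norm implies convergence in $(\GG)'$, so the concatenated series converges in the sense demanded by the definition of $H^{p,q}_{{\rm at}}$. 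Your second inclusion, however, proposes to re-prove the uniform bound $\|a\|_{H^p(X_1\times X_2)}\lesssim 1$ for $(p,q)$-atoms from scratch; this is precisely the content of the ($\Leftarrow$) half of the Main Theorem, and the paper's proof of it needs more than the wavelet H\"older regularity you invoke: the wavelets are not compactly supported, the atom's grids $\mathscr{D}^a_i$ need not be the reference grids, and the enlargement parameters $\ell_1,\ell_2$ must be tracked, which is why the paper runs the decomposition of Lemma~\ref{lemma-decomposition} with dilation parameter $\overline{C}_i=C_i2^{\ell_i}$ together with vector-valued Littlewood--Paley estimates before Journ\'e's lemma enters. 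Since the corollary is meant to be deduced from the Main Theorem, you can skip all of this: each $(p,q)$-atom lies in $L^q$ by the size condition, and $a=1\cdot a$ is an admissible atomic decomposition, so the Main Theorem already yields $\|a\|_{H^p(X_1\times X_2)}\lesssim 1$; combined with your $p$-subadditivity step this gives $H^{p,q}_{{\rm at}}\subseteq H^p$ directly for general distributions, without the density of $H^{p,q}_{{\rm at}}\cap L^q$ in $H^{p,q}_{{\rm at}}$ that the paper's closure argument uses (and asserts without proof). In short: your route is valid and slightly more self-contained on the atomic side, the paper's route is much shorter; the re-derivation of the atom estimate should either be dropped in favour of citing the Main Theorem or expanded to the full argument of Section~\ref{Proof-Main-Theorem}.
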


\begin{proof}
By Theorem~\ref{theorem-Hp atom decomp} for each $q>1$,
\[
    H^{p,q}_{{\rm at}}(X_1\times X_2)\cap L^q(X_1\times X_2)
    = L^q(X_1\times X_2)\cap H^p(X_1\times X_2 ),
\]
the closure of the right-hand-side in the $H^p$-(semi)norm is
$H^p(X_1\times X_2)$, and the closure of the left-hand-side in
the atom (semi)norm is $H^{p,q}_{{\rm at}}(X_1\times X_2)$. Both
(semi)norms are equivalent by Theorem~\ref{theorem-Hp atom
decomp}, therefore we conclude that $H^p(X_1\times
X_2)=H^{p,q}_{{\rm at}}(X_1\times X_2)$. This is precisely what
we wanted to prove.
\end{proof}

For any p with $p_0<p\leq 1$ we  now define $H^p_{{\rm at}}(X_1\times X_2)$, the
 \emph{atomic product $H^p$-space},  by 
$$H^p_{{\rm at}}(X_1\times X_2):= H^{p,q}_{{\rm at}}(X_1\times X_2),$$  
for any given $q>1$. The atomic product $H^p$-space  
is well-defined by Corollary~\ref{Corollary1}.

{\begin{corollary}[Corollary B in the
Introduction]\label{corollary-independence2}
    Let $p > p_0$, then the Hardy spaces
    $H^p(X_1\times X_2)$ as defined in~\cite{HLW} are
    independent of the particular choices of the
    Auscher-Hyt\"onen wavelets and of the reference dyadic grids used in
    their construction.
\end{corollary}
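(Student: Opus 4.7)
The strategy is to deduce Corollary~\ref{corollary-independence2} as an almost immediate consequence of Corollary~\ref{Corollary1}, exploiting the fact that the atomic Hardy space $H^{p,q}_{{\rm at}}(X_1\times X_2)$ is defined intrinsically: Definition~\ref{def-of-p-q-atom} of $(p,q)$-atoms only invokes support, cancellation and $L^q$-size conditions together with underlying dyadic grids drawn from a regular family, with no reference whatsoever to any specific wavelet basis.

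Concretely, I would fix two Auscher--Hyt\"onen wavelet bases $W$ and $W'$ on $X_i$ for $i=1,2$, with associated reference dyadic grids $\mathscr{D}_i^W$, $\mathscr{D}_i^{W'}$ and regularity exponents $\eta_i$, $\eta_i'$, and denote by $H^p_W(X_1\times X_2)$ and $H^p_{W'}(X_1\times X_2)$ the corresponding product Hardy spaces of~\cite{HLW}. To make the comparison well-posed, I would work inside a single ambient distribution space by choosing test-function parameters $\beta_i', \gamma_i'\in(0,\min(\eta_i,\eta_i'))$; shrinking $\beta_i'$ and $\gamma_i'$ enlarges the test-function space and so shrinks its dual accordingly, so that $(\GGp)'$ with this choice contains both $H^p_W$ and $H^p_{W'}$. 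Furthermore, the regular family of dyadic grids appearing in the atomic definition on each $X_i$ can (and should) be chosen to contain both $\mathscr{D}_i^W$ and $\mathscr{D}_i^{W'}$, so that the same $H^{p,q}_{{\rm at}}$ serves both identifications.

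Applying Corollary~\ref{Corollary1} to each wavelet family then yields
$$H^p_W(X_1\times X_2) \;=\; H^{p,q}_{{\rm at}}(X_1\times X_2) \;=\; H^p_{W'}(X_1\times X_2)$$
for any fixed $q>1$, with equivalent (semi)norms. Equivalently, one may invoke the Main Theorem (Theorem~\ref{theorem-Hp atom decomp}) directly: for $f\in L^q(X_1\times X_2)$, the equivalence between $\|f\|_{H^p_W}$ and the intrinsic atomic (semi)norm $\inf\{(\sum_j|\lambda_j|^p)^{1/p}: f=\sum_j\lambda_j a_j\}$ forces $L^q\cap H^p_W = L^q\cap H^p_{W'}$ with equivalent (semi)norms, and the density of $L^q\cap H^p$ in $H^p$ (recalled at the start of Section~\ref{sec:key-lemma-and-theorem}) then propagates this equality to the full Hardy spaces.

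I do not expect a genuine obstacle: the result is a corollary in the strict sense, and the sole point that requires care is the bookkeeping to ensure that the common ambient distribution space and the common regular family are compatible with both wavelet choices, as arranged above. Once this setup is fixed, the identification $H^p_W = H^p_{W'}$ is extracted verbatim from Corollary~\ref{Corollary1}, together with the completeness of each Hardy space in its (semi)norm so that the density step extends equality from the common dense subset $L^q\cap H^p$ to all of $H^p$.
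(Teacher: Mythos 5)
Your proposal is correct and follows essentially the same route as the paper: invoke the Main Theorem (equivalently, Corollary~A) to identify $H^p\cap L^q$ with the set of $L^q$-functions admitting $(p,q)$-atomic decompositions, observe that the atom (semi)norm is manifestly wavelet- and grid-independent and equivalent to the $H^p$-(semi)norm, and then propagate to all of $H^p$ via density of $H^p\cap L^q$. The one small omission is that the statement covers \emph{all} $p>p_0$, including $p>1$, whereas Corollary~A and the Main Theorem only address $p_0<p\leq 1$; the paper disposes of $p>1$ at the outset by recalling from \cite{HLW} that $H^p(X_1\times X_2)=L^p(X_1\times X_2)$ in that range, a line you should add. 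Your extra bookkeeping about fixing a common ambient distribution space $(\GG)'$ with $\beta_i',\gamma_i'\in(0,\min(\eta_i,\eta_i'))$ is a legitimate point of care that the paper leaves implicit, and your observation that the regular family of grids in the atomic definition can be chosen to contain both reference grids is exactly the device Definition~\ref{def:atomicHp} is built to supply.
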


\begin{proof}
    Given $p >p_0$, define $H^p(X_1\times X_2)$ as
    in~\cite{HLW}, using a particular choice of  reference dyadic grids, $\mathscr{D}_i^W$ for $i=1,2$,
    and a particular choice of  basis of Auscher-Hyt\"onen
    wavelets defined on those grids. For $p>1$ we already know
    that $H^p(X_1\times X_2)=L^p(X_1\times X_2)$, see
    \cite{HLW}. For $p_0< p\leq 1$, choose $q > 1$. By the Main
    Theorem, the set $H^p(X_1\times X_2) \cap L^q(X_1\times
    X_2)$ coincides with the set of functions in $L^q(X_1\times
    X_2)$ that have atomic decompositions in terms of
    $(p,q)$-atoms. Each $(p,q)$-atom $a$ in a decomposition, has underlying   dyadic grids $\mathscr{D}_i^a$ for $i=1,2$,
    possibly different from $\mathscr{D}_i^W$, but belonging to regular families of dyadic grids on $X_i$ that contain all possible reference dyadic grids on $X_i$.
     The atomic decompositions are a priori unrelated
    to the Auscher-Hyt\"onen wavelets and their reference dyadic grids. Further, $H^p(X_1\times
    X_2) \cap L^q(X_1\times X_2)$ is dense in $H^p(X_1\times
    X_2)$ in the $H^p$-(semi)norm. Note that the
    closure is independent  on the choice of square function
    (which depends on the choice of wavelets and hence of
    reference dyadic grids) in the $H^p$-(semi)norm, because we
    can instead use the equivalent atom (semi)norm. Thus
    $H^p(X_1\times X_2)$ is independent of the particular
    choice of  reference dyadic grids and the particular choice of  basis
    of Auscher-Hyt\"onen wavelets defined on these grids, as
    required.
\end{proof}
}

{As a further corollary of these results and the duality
theorems, Theorem~\ref{thm:CMOp-duality-Hp} and
Theorem~\ref{thm:VMO-duality-H1}, we conclude that Carleson
measure spaces ${\rm CMO}^p(X_1\times X_2)$, the space of
bounded  mean oscillation ${\rm BMO}(X_1\times X_2)$, and the
space of vanishing mean oscillation${\rm VMO}(X_1\times X_2)$
are all independent of the chosen wavelets and reference dyadic grids.

\begin{corollary}[Corollary C in the
Introduction]
    Let $p_0<p\leq 1$, then the
    Carleson measure spaces ${\rm CMO}^p(X_1\times X_2)$, the
    space of bounded mean oscillation ${\rm BMO}(X_1\times
    X_2)$, and the space of vanishing mean oscillation ${\rm
    VMO}(X_1\times X_2)$, as defined in~\cite{HLW}, are
    independent of the particular choices of the
    Auscher-Hyt\"onen wavelets and of the reference dyadic grids used in
    their construction.
\end{corollary}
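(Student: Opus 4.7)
The plan is to derive all three independence statements from the duality theorems of Section~\ref{sec:productHp} combined with Corollary~B. The statements for ${\rm CMO}^p$ and ${\rm BMO}$ follow almost immediately from duality, while the statement for ${\rm VMO}$ requires an extra argument since preduals of Banach spaces are not unique in general.

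For ${\rm CMO}^p$, I would fix two pairs of Auscher-Hyt\"onen reference dyadic grids $\mathscr{D}_i^W$ and $\widetilde{\mathscr{D}}_i^W$ on $X_i$ for $i=1,2$ with associated wavelet bases, and write ${\rm CMO}^p$ and $\widetilde{\rm CMO}{}^p$ for the Carleson measure spaces constructed from the two systems according to Definition~\ref{def-CMOp}. Theorem~\ref{thm:CMOp-duality-Hp} identifies each as the Banach dual of the corresponding product Hardy space under the natural distributional pairing: ${\rm CMO}^p = (H^p(X_1\times X_2))'$ and $\widetilde{\rm CMO}{}^p = (\widetilde{H}^p(X_1\times X_2))'$. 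By Corollary~B the two Hardy spaces coincide as sets of distributions with equivalent $H^p$-(semi)norms, so their duals coincide as sets of linear functionals with equivalent norms. Hence ${\rm CMO}^p = \widetilde{\rm CMO}{}^p$; specializing to $p=1$ yields the analogous identity for ${\rm BMO}(X_1\times X_2)$.

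For ${\rm VMO}$, I would use that ${\rm VMO}(X_1\times X_2)$ sits inside ${\rm BMO}(X_1\times X_2)$ and that BMO is now known to be wavelet-independent; it therefore suffices to show that the three vanishing conditions (a), (b), (c) in Definition~\ref{def-VMO} single out the same subspace of BMO regardless of the wavelet system chosen. The main tool I would develop is a localized product Plancherel-P\'olya inequality: for every $f\in {\rm BMO}(X_1\times X_2)$ and every open set $\Omega \subset X_1\times X_2$ of finite measure, there should exist an open set $\widetilde{\Omega}$ with $\mu(\widetilde{\Omega})\lesssim \mu(\Omega)$, ${\rm diam}(\widetilde{\Omega})\lesssim {\rm diam}(\Omega)$, and $\widetilde{\Omega}$ contained in a controlled fattening of $\Omega$, such that
\[
\sum_{\widetilde{R}\subset \Omega}
\big|\langle f,\, \widetilde{\psi}{}^{\widetilde{k}_1}_{\widetilde{\alpha}_1}\widetilde{\psi}{}^{\widetilde{k}_2}_{\widetilde{\alpha}_2}\rangle\big|^2
\;\lesssim\;
\sum_{R \subset \widetilde{\Omega}}
\big|\langle f,\, \psi^{k_1}_{\alpha_1}\psi^{k_2}_{\alpha_2}\rangle\big|^2.
\]
Given such an estimate, symmetric in the two wavelet systems, each of conditions (a), (b), (c) transfers from one basis to the other: on the $\widetilde{\psi}$ side, the supremum over $\Omega$ of the left-hand quantity divided by $\mu(\Omega)$ is controlled by the supremum over $\widetilde{\Omega}$ of the corresponding $\psi$-expression, and the constraints on $\mu(\widetilde{\Omega})$, ${\rm diam}(\widetilde{\Omega})$, and location preserve each of the three limits.

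The main obstacle will be proving this localized Plancherel-P\'olya inequality. The global analogue is essentially contained in the proof of~\cite[Theorem~4.9]{HLW}, and proceeds by expanding one wavelet in the other basis and invoking almost-orthogonality. To localize, I would estimate the inner products $\langle \psi^{k_1}_{\alpha_1}\psi^{k_2}_{\alpha_2},\, \widetilde{\psi}{}^{\widetilde{k}_1}_{\widetilde{\alpha}_1}\widetilde{\psi}{}^{\widetilde{k}_2}_{\widetilde{\alpha}_2}\rangle$ using the exponential decay, H\"older regularity, and cancellation of the Auscher-Hyt\"onen wavelets recorded in Theorem~\ref{thm:AH_orthonormal_basis}, obtaining off-diagonal bounds analogous to the almost-orthogonality estimate~\eqref{almost-orthogonality-claim} used in the proof of Lemma~\ref{lemma LittlewoodPaley}. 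Such bounds ensure that rectangles $R$ sitting far outside $\widetilde{\Omega}$ contribute negligibly, yielding the desired localized comparability and completing the argument.
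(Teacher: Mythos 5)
For ${\rm CMO}^p$ and ${\rm BMO}$ your argument is exactly the paper's: Theorem~\ref{thm:CMOp-duality-Hp} identifies ${\rm CMO}^p(X_1\times X_2)$ with the dual of $H^p(X_1\times X_2)$, Corollary~B makes $H^p$ independent of the wavelets and reference grids, and ${\rm BMO}={\rm CMO}^1$ by Definition~\ref{def-CMOp}. The divergence is in the ${\rm VMO}$ case. The paper disposes of it in one line: by Theorem~\ref{thm:VMO-duality-H1} one has $({\rm VMO}(X_1\times X_2))'=H^1(X_1\times X_2)$, and since $H^1$ is independent of the choices, the paper concludes the same for ${\rm VMO}$. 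You decline this route on the (legitimate) ground that preduals are not unique in general, and instead propose to show directly that conditions (a), (b), (c) of Definition~\ref{def-VMO} are stable under a change of wavelet system, via a localized Plancherel--P\'olya inequality comparing the square sum of coefficients over rectangles of one grid inside $\Omega$ with the square sum over rectangles of the other grid inside a controlled enlargement $\widetilde\Omega$.

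The gap is that this localized inequality, which carries the entire weight of your ${\rm VMO}$ argument, is conjectured rather than proved, and it is not a routine localization of \cite[Theorem~4.9]{HLW}. The two wavelet systems sit on unrelated reference grids, the Auscher--Hyt\"onen wavelets have only exponential decay rather than compact support, and for $f\in{\rm BMO}$ the individual coefficients attached to rectangles far outside $\Omega$ need not be small; they are controlled only in an averaged, Carleson-measure sense. So after expanding one wavelet in the other basis and invoking cross-grid almost-orthogonality of the type in~\eqref{almost-orthogonality-claim}, you must still sum the tails coming from all rectangles not contained in $\widetilde\Omega$, and doing this with the stated control on $\mu(\widetilde\Omega)$, on ${\rm diam}(\widetilde\Omega)$, and on the location of $\widetilde\Omega$ (all three are needed to transfer (a), (b) and (c)) requires a genuine ${\rm CMO}$-level Plancherel--P\'olya/Journ\'e-type argument comparable in difficulty to the duality theorems themselves, which this paper does not contain and which your sketch does not supply. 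As written, then, your proposal establishes Corollary~C for ${\rm CMO}^p$ and ${\rm BMO}$ but leaves ${\rm VMO}$ resting on an unproven lemma; either prove that lemma in full, or follow the paper and deduce the ${\rm VMO}$ statement from Theorem~\ref{thm:VMO-duality-H1} together with the independence of $H^1$, which is the short route the authors actually take.
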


\begin{proof}
By Theorem~\ref{thm:CMOp-duality-Hp},  if $p_0<p\leq 1$ then  ${\rm CMO}^p(X_1\times X_2)$ is the
dual of $H^p(X_1\times X_2)$. By
Corollary~\ref{corollary-independence2}, the Hardy space
$H^p(X_1\times X_2)$ is independent of the particular choice of
reference dyadic grids and the  particular choice of basis of
Auscher-Hyt\"onen wavelets  defined on these grids, therefore
so will be its dual ${\rm CMO}^p(X_1\times X_2)$. Also by
Definition~\ref{def-CMOp} we know that ${\rm BMO}(X_1\times
X_2)={\rm CMO}^1(X_1\times X_2)$, and by
Theorem~\ref{thm:VMO-duality-H1} we know that $\big ({\rm
VMO}(X_1\times X_2)\big )'=H^1(X_1\times X_2)$, hence since
$H^1(X_1\times X_2)$ is independent of chosen reference dyadic grids and
wavelets so will be ${\rm BMO}(X_1\times X_2)$ and ${\rm
VMO}(X_1\times X_2)$.
\end{proof}}

\subsection{Proof of the main theorem}\label{Proof-Main-Theorem}

In the proof of the Main Theorem~\ref{theorem-Hp atom decomp}, given a function $f\in H^p(X_1\times X_2)\cap L^q(X_1\times X_2)$ we will show it can be decomposed into $(p,q)$-atoms based upon the  reference dyadic grids, $\mathscr{D}_i^W$ for $i=1,2$,  corresponding to the underlying  wavelets.
For the converse, it will suffice to verify that a given $(p,q)$-atom $a$, based on possibly different  dyadic grids $\mathscr{D}_i^a$  belonging to a regular family of dyadic gris that contains all possible reference dyadic grids for wavelets on $X_i$ for $i=1,2$, must belong to $H^p(X_1\times X_2)$ with uniform control on its $H^p$-(semi)norm.  We will have to carefully balance the geometry on both sets of dyadic grids with the size, support, and  cancellation properties of the functions $\varphi^{\gamma,\overline{C}_i}_{\ell,k_i,\alpha_i}$ for $i=1,2$ (building blocks for the wavelet $\psi^{k_i}_{\alpha_i}$ found in Lemma~\ref{decomposition of wavelet into atom})   and the rectangular $(p,q)$-atoms
$a_R$. For example, when estimating the inner product $\langle \varphi^{\gamma,\overline{C}_1}_{\ell,k_,\alpha_1}(\cdot), a_R(\cdot, x_2)\rangle_{L^2(X_1)}$  for $\mu_2$-a.e.   $x_2\in X_2$,
as we do in page~\pageref{inner-product}. To achieve this balance we will choose $\overline{C}_i= C_i2^{\ell_i}$ where
$C_i=2(A_0^{(i)})^2$ and $\ell_i$ for $i=1,2$ are the enlargement parameters appearing in the definition of the $(p,q)$-atom.

\begin{proof}[Proof of Theorem \ref{theorem-Hp atom decomp}]

($\Rightarrow$) Following the proof of
Theorem~\ref{theorem-of-fLp-lessthan-fHp-on-product-case}, for
$f\in H^p(X_1\times X_2)\cap L^q(X_1\times X_2)$, we have
by~\eqref{special-repro-identity}
and~\eqref{special-repro-identity-further}, that for some sufficiently large
$\gamma_i >0$  (in fact  for~$\gamma_i>\omega_i(1/p+1/q')$), letting
$\overline{C}_i=1$,
and denoting
$\varphi^{\gamma_i,1}_{\ell_i,k_i,\alpha_i}=\varphi^{\gamma_i}_{\ell_i,k_i,\alpha_i}$,  for $i=1,2$,
\begin{align*}
    f(x_1,x_2)&= \sum_{\ell_1,\ell_2\geq 0} 2^{-\ell_1\gamma_1-\ell_2\gamma_2}
        f_{\ell_1,\ell_2}(x_1,x_2)\\
        &=  \sum_{\ell_1,\ell_2\geq 0} 2^{-\ell_1\gamma_1-\ell_2\gamma_2}
       \sum_{j\in\mathbb{Z}}\sum_{R^{k_1,k_2}_{\alpha_1,\alpha_2}\in \mathcal{B}_j}
        \langle f,\psi_{\alpha_1}^{k_1}\psi_{\alpha_2}^{k_2} \rangle\,
        \kappa_1\,\varphi_{\ell_1,k_1,\alpha_1}^{\gamma_1}(x_1)\,\kappa_2\,
        \varphi_{\ell_2,k_2,\alpha_2}^{\gamma_2}(x_2).
\end{align*}
Here the series converges unconditionally in the $L^q(X_1\times
X_2)$-norm. As before, the constants~$\kappa_i=\sqrt{\mu_i\big
(B(y^{k_i}_{\alpha_i},\delta^{k_i})\big )}$ for $i=1,2$, the dyadic
rectangle
$R^{k_1,k_2}_{\alpha_1,\alpha_2} = Q_{\alpha_1}^{k_1}\times
Q_{\alpha_2}^{k_2}$,  with $Q_{\alpha_i}^{k_i}\in \mathscr{D}_i^W$ for $i=1,2$, and the set $\mathcal{B}_j$ was defined by
\eqref{setBj}.
We now set
\begin{eqnarray}\label{eqn:f(x1,x2)}
    f(x_1,x_2)
    &=& \sum_{\ell_1,\ell_2\geq 0} \sum_{j\in\mathbb{Z}} 2^{-\ell_1\gamma_1-\ell_2\gamma_2}
        \lambda_{j,\ell_1,\ell_2} \,
        a^{\gamma_1,\gamma_2}_{j,\ell_1,\ell_2}(x_1,x_2),
\end{eqnarray}
where the  functions $a_{j,\ell_1,\ell_2}^{\gamma_1,\gamma_2}$
will be $(p,q)$-atoms {with respect to the reference dyadic grids $\mathscr{D}_i^W$  for ~$i=1,2$ associated to the wavelets (as shown below), provided $\gamma_1$ and $\gamma_2$ are sufficiently large,} and are defined by
\begin{eqnarray*}\label{atom ak}
    a^{\gamma_1,\gamma_2}_{j,\ell_1,\ell_2}(x_1,x_2)
    := {1\over\lambda_{j,\ell_1,\ell_2} }
        \sum_{R^{k_1,k_2}_{\alpha_1,\alpha_2}\in \mathcal{B}_j}
        \langle f,\psi_{\alpha_1}^{k_1}\psi_{\alpha_2}^{k_2} \rangle\,
        \kappa_1\,\varphi_{\ell_1,k_1,\alpha_1}^{\gamma_1}(x_1)\,\kappa_2\,
        \varphi_{\ell_2,k_2,\alpha_2}^{\gamma_2}(x_2),
\end{eqnarray*}
and the coefficients $\lambda_{j,\ell_1,\ell_2}$ are defined
differently according to whether $q<2$ or not.

First, when $2\leq q<\infty$, define the coefficient
$\lambda_{j,\ell_1,\ell_2}$ as follows:
\begin{align}\label{atom lambda k q big}
    \lambda_{j,\ell_1,\ell_2}:= 2^{\ell_1\omega_1+\ell_2\omega_2}\, \| S(f_{\mathcal{B}_j})\|_{L^q(X_1\times X_2)} \,
  \big ((1+\ell_1 \omega_1+\ell_2 \omega_2 )2^{\ell_1\omega_1+\ell_2\omega_2} \mu(\widetilde{\Omega}_{j} )\big )^{{1\over p}-{1\over q}}.     
\end{align}
Second, when $1 < q < 2$, define the coefficient
$\lambda_{j,\ell_1,\ell_2}$ as follows:
\begin{align}\label{atom lambda k q small}
    \lambda_{j,\ell_1,\ell_2}:=  2^{\ell_1\omega_1 + \ell_2\omega_2} \, \|
        S(f_{\mathcal{B}_j})\|_{L^2(X_1\times X_2)}\,
\big ((1+\ell_1 \omega_1+\ell_2 \omega_2 )2^{\ell_1\omega_1+\ell_2\omega_2} \mu(\widetilde{\Omega}_{j} )\big )^{{1\over p}-{1\over 2}} 
\end{align}
Here $f_{\mathcal{B}_j}$ was defined in \eqref{function-fBj}, and hence
$S(f_{\mathcal{B}_j})=\Big
(\sum_{R^{k_1,k_2}_{\alpha_1,\alpha_2}\in
\mathcal{B}_j}\Big|\Big\langle f,\widetilde{\psi}_{\alpha_1}^{k_1}\widetilde{\psi}_{\alpha_2}^{k_2}
\Big\rangle\Big|^2 \chi_{R^{k_1,k_2}_{\alpha_1,\alpha_2}}
\Big )^{\frac{1}{2}} $, where $\widetilde{\psi}_{\alpha_i}^{k_i}={\psi}_{\alpha_i}^{k_i}/\kappa_i$ denotes the normalized wavelets for $i=1,2$. The open set $\widetilde{\Omega}_{j,\ell_1,\ell_2}$ is the $(\ell_1,\ell_2)$-enlargement of~$\widetilde{\Omega}_{j}$ defined in~\eqref{(ell1,ell2)-enlargement}, the open set $\widetilde{\Omega}_{j}$ is the $\epsilon_0$-enlargement of $\Omega_j$ defined in~\eqref{enlargement Omega j}, and the  level set $\Omega_j$ is defined in~\eqref{Omega_j}. The constant $\epsilon_0>0$ was defined in~\eqref{epsilon-0} and is purely dependent  on the geometric constants of the spaces $X_i$ for $i=1,2$.

 Notice that when $1<q<\infty$  estimate~\eqref{dual-estimate2} provides
\begin{eqnarray}\label{eqn:a-bounded-by-SBj}
\|a^{\gamma_1,\gamma_2}_{j,\ell_1,\ell_2}\|_{L^q(X_1\times X_2)}^q & \lesssim_q &
 \lambda_{j,\ell_1,\ell_2}^{-q} 2^{q(\ell_1\omega_1 +\ell_2\omega_2)}
 \| S(f_{\mathcal{B}_j})\|_{L^q(X_1\times X_2)}^q, 
 \end{eqnarray}
where the similarity depends only on the geometric constants of $X_i$ for $i=1,2$ and on $q>1$.

When $2\leq q < \infty$,  
using ~\eqref{atom lambda k q big}, the definition of   the coefficient $\lambda_{j,\ell_1,\ell_2}$, provides the following  $L^q$-estimate for the atom:
\begin{equation}\label{Lq estimate atom p>=2}
\|a^{\gamma_1,\gamma_2}_{j,\ell_1,\ell_2}\|_{L^q(X_1\times X_2)}^q  \lesssim_q \,
\big ((1+\ell_1 \omega_1+\ell_2 \omega_2 )2^{\ell_1\omega_1+\ell_2\omega_2} \mu(\widetilde{\Omega}_{j} )\big )^{{1}-{q\over p}}.  
 \end{equation}
 In particular when $q=2$ we obtain the following $L^2$-estimate for the atom:
\begin{equation}\label{L2 estimate atom p<2}
\|a^{\gamma_1,\gamma_2}_{j,\ell_1,\ell_2}\|_{L^2(X_1\times X_2)}^2  \lesssim  \,
\big ((1+\ell_1 \omega_1+\ell_2 \omega_2 )2^{\ell_1\omega_1+\ell_2\omega_2} \mu(\widetilde{\Omega}_{j} )\big )^{{1}-{2\over p}}.   
 \end{equation}

We now verify that the functions $a_{j,\ell_1,\ell_2}^{\gamma_1,\gamma_2}$ are $(p,q)$-atoms with respect to the reference dyadic grids $\mathscr{D}_i^W$ for $i=1,2$ associated to the underlying wavelets, with the open set $\Omega_j$ playing the role of $\Omega$ in Definition~\ref{def-of-p-q-atom},  and with enlargement parameters $\ell_1,\ell_2 \geq 0$.

First we check  that $a^{\gamma_1,\gamma_2}_{j,\ell_1,\ell_2}$ satisfies condition (1) of
Definition~\ref{def-of-p-q-atom}. Recall that  $\varphi^{\gamma_i}_{\ell_i,k_i,\alpha_i}(x_i)$
is supported on the ball  $B(y_{\alpha_i}^{k_i}, {2(A_0^{(i)})^2} \, 2^{\ell_i}\delta^{k_i}) \subset X_i$
 for each $i=1,2$.
Hence, if $R\in \mathcal{B}_j$, then the support of
$\varphi_{\ell_1,k_1,\alpha_1}^{\gamma_1}(x_1)\,\varphi_{\ell_2,k_2,\alpha_2}^{\gamma_2}(x_2)$ is
contained in the open set $\widetilde{\Omega}_{j,\ell_1,\ell_2}=(\widetilde{\Omega}_j)_{\ell_1,\ell_2}$, as explained in page~\pageref{RinBj}.  Note that since $f\in L^q(\widetilde{X})$, where $\widetilde{X}=X_1\times X_2$, for $1<q<\infty$, then ${\Omega}_{j}$ and $\widetilde{\Omega}_{j,\ell_1,\ell_2}$ have finite measure. More precisely, by estimates~\eqref{eqn:measure-enlargements} and~\eqref{weak-L^2-Ms}  and by Tchebichev's inequality~\eqref{Tchebichev-square},
\begin{align*}
\mu(\widetilde{\Omega}_{j,\ell_1,\ell_2}) &\lesssim (1+\ell_1 \omega_1+\ell_2 \omega_2 )2^{\ell_1\omega_1+\ell_2\omega_2} \mu (\Omega_j) \leq (1+\ell_1 \omega_1+\ell_2 \omega_2 )2^{\ell_1\omega_1+\ell_2\omega_2} 2^{-jq} \| S(f)\|_{L^q(\widetilde{X})}^q\\
& \lesssim_q (1+\ell_1 \omega_1+\ell_2 \omega_2 )2^{\ell_1\omega_1+\ell_2\omega_2} 2^{-jq} \|f\|_{L^q(\widetilde{X})}^q <\infty.
\end{align*}
Thus condition (1) of
Definition~\ref{def-of-p-q-atom} holds.

Second we verify that $a^{\gamma_1,\gamma_2}_{j,\ell_1,\ell_2}$ satisfies condition (2) of
Definition~\ref{def-of-p-q-atom}.
For $2\leq q<\infty$ this is estimate~\eqref{Lq estimate atom p>=2}.
For $1<q<2$, since $a^{\gamma_1,\gamma_2}_{j,\ell_1,\ell_2}$ is supported in $\widetilde{\Omega}_{j,\ell_1,\ell_2}$, applying H\"older's inequality  with exponent $s=2/q>1$, and using \eqref{eqn:measure-enlargements} and the $L^2$-estimate~\eqref{L2 estimate atom p<2}, yields
\begin{eqnarray*}
\|a_{j,\ell_1,\ell_2}^{\gamma_1,\gamma_2}\|_{L^q(X_1\times X_2)} &\leq &
\|a^{\gamma_1,\gamma_2}_{j,\ell_1,\ell_2}\|_{L^2(X_1\times X_2)}\,\mu(\widetilde{\Omega}_{j,\ell_1,\ell_2})^{{1\over q}-{1\over 2}} \\
& &\hskip -1in \lesssim \;  \big ((1+\ell_1 \omega_1+\ell_2 \omega_2 )2^{\ell_1\omega_1+\ell_2\omega_2} \mu(\widetilde{\Omega}_{j} )\big )^{{1\over 2}-{1\over p}}\, 
\big ((1+\ell_1 \omega_1+\ell_2 \omega_2 )2^{\ell_1\omega_1+\ell_2\omega_2} \mu(\widetilde{\Omega}_{j} )\big )^{{1\over q}-{1\over 2}} \\
& & \hskip -1in \lesssim \; \big ( (1+\ell_1 \omega_1+\ell_2 \omega_2 )2^{\ell_1\omega_1+\ell_2\omega_2} \mu(\widetilde{\Omega}_{j} )\big )^{{1\over q}-{1\over p}}. 
\end{eqnarray*}
As a consequence, we get that $a^{\gamma_1,\gamma_2}_{j,\ell_1,\ell_2}$ satisfies condition (2) of
Definition~\ref{def-of-p-q-atom}.

Third, it remains to check that $a^{\gamma_1,\gamma_2}_{j,\ell_1,\ell_2}$ satisfies  condition (3) of
Definition~\ref{def-of-p-q-atom}.
To see this, we can further decompose $a^{\gamma_1,\gamma_2}_{j,\ell_1,\ell_2}$
into  rectangular atoms $a^{\gamma_1,\gamma_2}_{j,\ell_1,\ell_2, \overline{R}}$ defined by
$$
a^{\gamma_1,\gamma_2}_{j,\ell_1,\ell_2, \overline{R}}(x_1,x_2) :={1\over
\lambda_{j,\ell_1,\ell_2}} \sum_{R=R^{k_1,k_2}_{\alpha_1,\alpha_2}\in \mathcal{B}_j, \, \tau(R)=\overline R }
        \langle f,\psi_{\alpha_1}^{k_1}\psi_{\alpha_2}^{k_2} \rangle\,
        \kappa_1\,\varphi_{\ell_1,k_1,\alpha_1}^{\gamma_1}(x_1)\,\kappa_2\,
        \varphi_{\ell_2,k_2,\alpha_2}^{\gamma_2}(x_2),
$$
where $\overline{R} = \overline{Q}_1\times \overline{Q}_2$ with $\overline{Q}_i\in \mathscr{D}_i^W$, a dyadic cube associated to the wavelets on $X_i$ for $i=1,2$.  Here $\tau: \mathcal{B}_j\to m(\Omega_j)$ denotes a function  that assigns to each $R\in\mathcal{B}_j$ a rectangle  $\tau(R)=\overline{R}\in m(\Omega_j)$,  so that $R\subset \overline{R}$. This will be important when verifying condition~(3)(iii-a)  in Definition~\ref{def-of-p-q-atom}. Likewise when verifying condition~(3)(iii-b) in Definition~\ref{def-of-p-q-atom} we will assign each $R\in\mathcal{B}_j$ to only one $\overline{R}\in m_1(\Omega_j)\cup m_2(\Omega_j)$ with $R\subset \overline{R}$.

We can verify that ${\rm supp}\,a^{\gamma_1,\gamma_2}_{j,\ell_1,\ell_2, \overline{R}} \subset 2(A_0^{(1)})^2\, 2^{\ell_1}\overline{Q}_1\times 2(A_0^{(2)})^2\,2^{\ell_2} \overline{Q}_2$,  by definition of the rectangle atoms and the support conditions of the functions $\varphi_{\ell_i,k_i,\alpha_i}^{\gamma_i}$, for $i=1,2$.  We deduce that
$ \int_{X_i} a^{\gamma_1,\gamma_2}_{j,\ell_1,\ell_2, \overline{R}}(x_1,x_2)\,d\mu_i(x_i)=0 $ for a.e. $x_j\in X_j$,
by the cancellation conditions of the functions  $\varphi_{\ell_i,k_i,\alpha_i}^{\gamma_i}$
 for $(i,j)\in \{(1,2), (2,1)\}$, and the facts that the integrand~$a^{\gamma_1,\gamma_2}_{j,\ell_1,\ell_2, \overline{R}}\in L^q(X_1\times X_2)$ for $q>1$ and has compact support.
These show that the support and cancellation  conditions~(3)(i) and~(3)(ii) of Definition~\ref{def-of-p-q-atom} hold, with support constants $C_i=2(A_0^{(i)})^2$, as required,   and enlargement constants ${\ell_i}\geq 0$, for~$i=1,2$.

We now show that
$a^{\gamma_1,\gamma_2}_{j,\ell_1,\ell_2}$ satisfies the decomposition and size conditions~(3)(iii-a), when $2\leq q<\infty$, and~(3)(iii-b),  when $1<q<2$, of Definition~\ref{def-of-p-q-atom}.

For $2\leq q<\infty$,  first observe that $a^{\gamma_1,\gamma_2}_{j,\ell_1,\ell_2}=\sum_{\overline{R}\in m({\Omega}_{j})} a^{\gamma_1,\gamma_2}_{j,\ell_1,\ell_2, \overline{R}}
$, this is true because each $R\in \mathcal{B}_j$ is assigned to exactly one $\overline{R}\in m(\Omega)$, namely to $\overline{R}=\tau(R)$.
 Second, we have by definition of the rectangular atom and the triangle inequality
\begin{align*}
&\|a^{\gamma_1,\gamma_2}_{j,\ell_1,\ell_2,\overline{R}}\|_{L^q(X_1\times X_2)}
= \sup_{g: \ \|g\|_{  L^{q'}(X_1\times X_2)}\leq1} \big|\big\langle  a^{\gamma_1,\gamma_2}_{j,\ell_1,\ell_2,\overline{R}} , g \big\rangle\big|\\
 & \hskip .2in \leq
\sup_{g: \ \|g\|_{  L^{q'}(X_1\times X_2)}\leq1}\lambda_{j,\ell_1,\ell_2}^{-1} \sum_{R=R^{k_1,k_2}_{\alpha_1,\alpha_2}\in \mathcal{B}_j, \, \tau(R)=\overline R }
       \big| \langle f,\widetilde {\psi}_{\alpha_1}^{k_1}\widetilde{\psi}_{\alpha_2}^{k_2} \rangle\big|\,
        \kappa_1^2\,\kappa_2^2\,  \big|\langle \varphi_{\ell_1,k_1,\alpha_1}^{\gamma_1}
        \varphi_{\ell_2,k_2,\alpha_2}^{\gamma_2},\ g\rangle\big|.
\end{align*}
Therefore, first raising to the $q$ power, and second using the Cauchy-Schwarz inequality
on the sum together with
Lemma~\ref{lemma LittlewoodPaley} as we did when estimating~\eqref{dual-estimate},  we conclude  that
\begin{align*}
&\|a^{\gamma_1,\gamma_2}_{j,\ell_1,\ell_2,\overline{R}}\|_{L^q(X_1\times X_2)}^q\\
& \lesssim_q  \sup_{g: \ \|g\|_{  L^{q'}(X_1\times X_2)}\leq1} \lambda_{j,\ell_1,\ell_2}^{-q}    \bigg( \sum_{R=R^{k_1,k_2}_{\alpha_1,\alpha_2}\in \mathcal{B}_j,\,  \tau(R)=\overline R }
       \big| \langle f,\widetilde {\psi}_{\alpha_1}^{k_1}\widetilde{\psi}_{\alpha_2}^{k_2} \rangle\big|\,
        \kappa_1^2\,\kappa_2^2\,  \big|\langle \varphi_{\ell_1,k_1,\alpha_1}^{\gamma_1}
        \varphi_{\ell_2,k_2,\alpha_2}^{\gamma_2},\ g\rangle\big|\bigg)^q\\
 &  \lesssim_q   2^{(\ell_1\omega_1+\ell_2\omega_2)q} \lambda_{j,\ell_1,\ell_2}^{-q}  \int_{X_1\times X_2}  \bigg( \sum_{R=R^{k_1,k_2}_{\alpha_1,\alpha_2}\in \mathcal{B}_j,\,  \tau(R)=\overline R }  \big| \langle f,\widetilde {\psi}_{\alpha_1}^{k_1}\widetilde{\psi}_{\alpha_2}^{k_2} \rangle\big|^2 \chi_{R^{k_1,k_2}_{\alpha_1,\alpha_2}}(x_1,x_2) \bigg)^{q\over2} d\mu_1(x_1)d\mu_2(x_2).
\end{align*}
 We now add this estimate over all $\overline{R}\in m(\Omega_j)$, note that the power $q/2\geq 1$ can be pulled out of the sum (namely $\sum_k |a_k|^{q/2}\leq (\sum_k |a_k|)^{q/2}$),   and remember that each $R\in \mathcal{B}_j$ is assigned to exactly one $\overline{R}\in m(\Omega_j )$ that contains it, and get 
 \begin{align}\label{eqn:Lq-estimate-aR}
 \sum_{\overline{R}\in m({\Omega}_{j})}\big\|a^{\gamma_1,\gamma_2}_{j,\ell_1,\ell_2, \overline{R}}\big\|^q_{L^q(X_1\times X_2 )}
& \lesssim_q   2^{(\ell_1\omega_1+\ell_2\omega_2)q} \, \lambda_{j,\ell_1,\ell_2}^{-q}  \|S(f_{\mathcal{B}_j})\|_{L^q(X_1\times X_2)}^q
 \nonumber\\
&  \lesssim_q   \big ( (1+\ell_1\omega_1+\ell_2\omega_2) \, 2^{\ell_1\omega_1+\ell_2\omega_2} \mu(\widetilde{\Omega}_{j} )\big )^{{1}-{q\over p}},
\end{align}
where in the last inequality we used the definition~\eqref{atom lambda k q big} of $\lambda_{j,\ell_1,\ell_2}$. This proves  condition (3)(iii-a)  of Definition~\ref{def-of-p-q-atom}.

For $1<q<2$, applying H\"older's inequality and the Journ\'e-type covering lemma,
 we will show that condition~(3)(iii-b) of Definition~\ref{def-of-p-q-atom} holds.
First we observe that
 in this case the decomposition $a^{\gamma_1,\gamma_2}_{j,\ell_1,\ell_2}=\sum_{\overline{R}\in m_1({\Omega}_{j})} a^{\gamma_1,\gamma_2}_{j,\ell_1,\ell_2, \overline{R}} +
 \sum_{\overline{R}\in m_2'({\Omega}_{j})} a^{\gamma_1,\gamma_2}_{j,\ell_1,\ell_2, \overline{R}} \,$ holds.
 Where  the second sum is over $m_2'({\Omega}_{j}):=m_2({\Omega}_{j})\setminus m_1({\Omega}_{j})$ to avoid duplicates. The decomposition is true because this time we assign each $R\in \mathcal{B}_j$ to exactly one $\overline{R}\in m_1(\Omega_j)\cup m_2(\Omega_j)$, namely  $\overline{R}=\tau(R)$ where the function $\tau:\mathcal{B}_j\to m_1(\Omega_j)\cup m_2(\Omega_j)$.
Second,  let us show that given $\delta>0$ there is a constant $C_{q,\delta}>0$ such that
$$\sum_{\overline{R}\in m_1({\Omega}_{j})} \Big({\ell(Q_2)\over\ell(\widehat{Q}_2)}\Big)^\delta \|a^{\gamma_1,\gamma_2}_{j,\ell_1,\ell_2,\overline{R}}\|_{L^q(X_1\times X_2)}^q
\leq C_{q,\delta}\, \big ( (1+\ell_1 \omega_1+\ell_2 \omega_2 )2^{\ell_1\omega_1+\ell_2\omega_2} \mu(\widetilde{\Omega}_{j} )\big )^{{1}-{q\over p}}. $$  
A similar argument will take care of the  sum over $\overline{R}\in m_2(\Omega_{j})$, and hence over $\overline{R}\in m'_2(\Omega_{j})$. First, using  H\"older's inequality with exponent $s=2/q>1$, the support property of the rectangular atoms, and the doubling condition of the measures (as in~\eqref{doubling-dilate-cubes}),
we get that
$$\|a^{\gamma_1,\gamma_2}_{j,\ell_1,\ell_2,\overline{R}}\|_{L^q(X_1\times X_2)}^q\lesssim
\|a_{j,\ell_1,\ell_2,\overline{R}}\|_{L^2(X_1\times X_2)}^{q}\big (
2^{\ell_1\omega_1+\ell_2\omega_2} \mu (\overline{R})\big )^{\frac{2-q}{2}}.$$
Second, substituting this estimate and using H\"older's inequality in the sum with exponents $s=2/q$ and $s'=2/(2-q)$, 
we get
\begin{align*}
\sum_{\overline{R}\in m_1(\Omega_{j})} \Big({\ell(Q_2)\over\ell(\widehat{Q}_2)}\Big)^\delta \|a^{\gamma_1,\gamma_2}_{j,\ell_1,\ell_2,\overline{R}}\|_{L^q(\widetilde{X})}^q
&\lesssim \sum_{\overline{R}\in m_1(\Omega_{j})} \Big({\ell(Q_2)\over\ell(\widehat{Q}_2)}\Big)^\delta
\|a^{\gamma_1,\gamma_2}_{j,\ell_1,\ell_2,\overline{R}}\|_{L^2(X_1\times X_2)}^{q}\big (2^{\ell_1\omega_1+\ell_2\omega_2} \mu (\overline{R})\big )^{\frac{2-q}{2}}\\
&\hskip -4cm
 \lesssim \Big ( 2^{\ell_1\omega_1+\ell_2\omega_2} \sum_{\overline{R}\in m_1(\Omega_{j})} \Big({\ell(Q_2)\over\ell(\widehat{Q}_2)}\Big)^{\frac{2\delta}{2-q}} \mu (\overline{R}) \Big )^{\frac{2-q}{2}}
\Big (\sum_{\overline{R}\in m_1(\Omega_{j})} \|a^{\gamma_1,\gamma_2}_{j,\ell_1,\ell_2,\overline{R}}\|^2_{L^2(X_1\times X_2)} \Big )^{\frac{q}{2}}\\
& \hskip -4cm \lesssim_{q,\delta}\, \big (2^{\ell_1\omega_1+\ell_2\omega_2} \mu(\Omega_j)\big )^{1-\frac{q}{2}}
\big ( (1+\ell_1 \omega_1+\ell_2 \omega_2 )2^{\ell_1\omega_1+\ell_2\omega_2} \mu(\widetilde{\Omega}_{j} )\big )^{{q\over 2}-{q\over p}} \\  
& \hskip -4cm \lesssim_{q,\delta}\,  \big ( (1+\ell_1 \omega_1+\ell_2 \omega_2 )2^{\ell_1\omega_1+\ell_2\omega_2} \mu(\widetilde{\Omega}_{j} )\big )^{{1}-{q\over p}}.
\end{align*}
We used   the Journ\'e-type covering lemma with $\delta'=\frac{2\delta}{2-q}>0$, and estimate \eqref{eqn:Lq-estimate-aR} (for $q=2$),  in the third inequality. In the last inequality we used the fact that $\mu(\widetilde{\Omega}_j )\sim \mu(\Omega_j )$.
Altogether we obtain  the desired atomic decomposition for $f$.

Finally by  computations similar to those in the proof of Theorem~\ref{theorem-of-fLp-lessthan-fHp-on-product-case} we conclude that when $f\in H^p(X_1\times X_2)\cap L^q(X_1\times X_2)$ then
$\inf \sum_{j\in\mathbb{Z}} | \lambda_{j}|^p \leq C\,\|f\|_{H^p(X_1,X_2)}^p$, where the infimum is taken over all decompositions of the form $f=\sum_{j\in\mathbb{Z}}\lambda_{j} a_{j} $, the functions $a_j$ are $(p,q)$-atoms,
and $\sum_{j\in\mathbb{Z}} |\lambda_j|^p<\infty$.  More precisely, it suffices to show that for the decomposition we just proved, namely $f(x_1,x_2)= \sum_{j\in\mathbb{Z}; \, \ell_1,\ell_2\geq 0} 2^{-\ell_1\gamma_1-\ell_2\gamma_2}  \lambda_{j,\ell_1,\ell_2} a^{\gamma_1,\gamma_2}_{j,\ell_1,\ell_2}(x_1,x_2)$,
  the following inequality holds:
\begin{equation}\label{norm-estimate-lambda-less-than-Hp}
 \sum_{j\in\mathbb{Z}; \, \ell_1,\ell_2\geq 0} |2^{-\ell_1\gamma_1-\ell_2\gamma_2}
 \lambda_{j,\ell_1,\ell_2}|^p\lesssim_q \, \|S(f)\|_{L^p(X_1\times X_2)}^p.
 \end{equation}

{When $1<q<2$, according to definition \eqref{atom lambda k q small} we get, using   that the square function is bounded on $L^2(X_1\times X_2)$, that 
\begin{align*}
\sum_{j\in\mathbb{Z}; \, \ell_1,\ell_2\geq 0} |2^{-\ell_1\gamma_1-\ell_2\gamma_2}
\lambda_{j,\ell_1,\ell_2}|^p & \\
& \hskip -2in= \sum_{j\in\mathbb{Z}\, \ell_1,\ell_2\geq 0} \|S(f_{\mathcal{B}_j})\|_{L^2(X_1\times X_2)}^p 2^{\ell_1p(\omega_1-\gamma_1)}2^{\ell_2p(\omega_2-\gamma_2)}
 \big ((1+\ell_1 \omega_1+\ell_2 \omega_2 )2^{\ell_1\omega_1+\ell_2\omega_2} \mu(\widetilde{\Omega}_{j} )\big )^{{1}-{p\over 2}}\\ 
 &\hskip -2in \lesssim  \;\sum_{j\in\mathbb{Z}} \|f_{\mathcal{B}_j}\|_{L^2(X_1\times X_2)}^p \mu (\widetilde{\Omega}_{j})^{1-\frac{p}{2}}\sum_{\ell_1,\ell_2\geq 0} 2^{\ell_1p (\omega_1(\frac{1}{p}+\frac{1}{2})-\gamma_1 )}2^{\ell_2p (\omega_2 (\frac{1}{p}+\frac{1}{2} )-\gamma_2 )}  (1+\ell_1 \omega_1+\ell_2 \omega_2 )^{1-{p\over2}}.
\end{align*}
 The  series over $\ell_1, \ell_2$ converges if we choose $\gamma_i>\omega_i\big (\frac{1}{p}+\frac{1}{2}\big )$ for $i=1,2$.
 Therefore,
 \[\sum_{j\in\mathbb{Z}; \,\ell_1,\ell_2\geq 0} |2^{-\ell_1\gamma_1-\ell_2\gamma_2}
 \lambda_{j,\ell_1,\ell_2}|^p \lesssim  \sum_{j\in\mathbb{Z}} 2^{pj}\mu(\widetilde{\Omega}_j\setminus \Omega_{j+1})^{\frac{p}{2}} \mu (\widetilde{\Omega}_{j})^{1-\frac{p}{2}}\lesssim  \sum_{j\in\mathbb{Z}} 2^{pj}\mu(\widetilde{\Omega}_j). \] 
In the first inequality we have used the following estimate for the $L^2$-norm of  $f_{\mathcal{B}_j}$:
\begin{align*}
\| f_{\mathcal{B}_j}\|_{L^2(X_1\times X_2)}^2 &=\sum_{R^{k_1,k_2}_{\alpha_1,\alpha_2}\in \mathcal{B}_j}
        \big|\langle f,\psi_{\alpha_1}^{k_1}\psi_{\alpha_2}^{k_2} \rangle\big|^2\\
        &\leq 2\sum_{R^{k_1,k_2}_{\alpha_1,\alpha_2}\in \mathcal{B}_j}
        \mu_1(Q_{\alpha_1}^{k_1})^{-1}\mu_2(Q_{\alpha_2}^{k_2})^{-1}
        \big|\langle f,\psi_{\alpha_1}^{k_1}\psi_{\alpha_2}^{k_2}
        \rangle\big|^2\mu\big (R_{\alpha_1,\alpha_2}^{k_1,k_2}\cap (\widetilde{\Omega}_j\backslash \Omega_{j+1}) \big )\\
      &= 2\| S(f_{\mathcal{B}_j})\|_{L^2(\widetilde{\Omega}_j\backslash \Omega_{j+1})}^2
      \,\leq \, 2\| S(f)\|_{L^2(\widetilde{\Omega}_j\backslash \Omega_{j+1})}^2
      \, \lesssim 2^{2j}\mu(\widetilde{\Omega}_j\setminus \Omega_{j+1}).
\end{align*}
{In the above calculation  we used Plancherel in the first line, and we used the fact that when $R\in \mathcal{B}_j$ then  $2\mu\big (R\cap
(\widetilde{\Omega}_j\backslash \Omega_{j+1}) \big )>\mu (R)$ in the second line (as shown in page~\pageref{R-minus-Omega}). In the third line, the last inequality holds because if $(x_1,x_2)\notin \Omega_{j+1}$ then $|S(f)(x_1,x_2)| \leq 2^{j+1}$.}

Finally, recalling that $\mu(\widetilde{\Omega}_j) \lesssim \mu (\Omega_j )$,  and using \eqref{Sf-norm-Lp-coronas} we conclude that
$$\sum_{j\in\mathbb{Z}; \,\ell_1,\ell_2\geq 0} |2^{-\ell_1\gamma_1-\ell_2\gamma_2}
 \lambda_{j,\ell_1,\ell_2}|^p\lesssim\sum_{j\in\mathbb{Z}} 2^{pj}\mu(\Omega_{j}) \lesssim \|S(f)\|_{L^p(X_1\times X_2)}^p.$$
 Therefore inequality  \eqref{norm-estimate-lambda-less-than-Hp} holds when $1<q<2$ whenever the parameters $\gamma_i$ satisfy the constraint $\gamma_i> \omega_i \big(\frac{1}{p}+\frac{1}{2}\big )$
for $i=1,2$.}  {Notice that in this range $q'>2$ and  $ \big(\frac{1}{p}+\frac{1}{2}\big ) > \big(\frac{1}{p}+\frac{1}{q'}\big )$, therefore the constraint needed in the proof of Theorem~\ref{theorem-of-fLp-lessthan-fHp-on-product-case} in page~\pageref{constraintThm4.2} is satisfied.}

When $q \geq 2$, according to definition \eqref{atom lambda k q big}, by a similar argument  to that  in the proof of Theorem~\ref{theorem-of-fLp-lessthan-fHp-on-product-case}, specifically using~\eqref{eqn:SBj-less-muOmegaj}  
 and  provided that $\gamma_i>\omega_i\big (\frac{1}{p}+\frac{1}{q'}\big )$ for $i=1,2$, we get that
\begin{align*}
\sum_{j\in\mathbb{Z}; \ell_1,\ell_2\geq 0} |2^{-\ell_1\gamma_1-\ell_2\gamma_2}
\lambda_{j,\ell_1,\ell_2}|^p & \\
& \hskip -1.7in = \sum_{j\in\mathbb{Z};\ell_1,\ell_2\geq 0} \|S(f_{\mathcal{B}_j})\|_{L^q(X_1\times X_2)}^p  2^{\ell_1p(\omega_1-\gamma_1)}2^{\ell_2p(\omega_2-\gamma_2)}  \big ( (\ell_1 \omega_1+\ell_2 \omega_2 )2^{\ell_1\omega_1+\ell_2\omega_2} \mu(\widetilde{\Omega}_{j} )\big )^{{1}-{p\over q}}\\
& \hskip -1.7in\lesssim_q \sum_{j\in\mathbb{Z}} 2^{pj} \mu(\widetilde{\Omega}_j)^{\frac{p}{q}}\, \mu (\widetilde{\Omega}_{j})^{1-\frac{p}{q}}\sum_{\ell_1,\ell_2\geq 0} 2^{\ell_1p(\omega_1(\frac{1}{p}+\frac{1}{q'} )-\gamma_1)}2^{\ell_2p(\omega_2(\frac{1}{p}+\frac{1}{q'})-\gamma_2)}  (1+\ell_1 \omega_1+\ell_2 \omega_2 )^{1-{p\over q}}\\
&\hskip -1.7in \lesssim_q  \sum_{j\in\mathbb{Z}} 2^{pj}\mu( \Omega_{j})
\; \lesssim_q \; \|S(f)\|_{L^p(X_1\times X_2)}^p.
 \end{align*}

We conclude that \eqref{norm-estimate-lambda-less-than-Hp} holds when $q\geq 2$ whenever the parameters $\gamma_i$ satisfy the constraint $\gamma_i> \omega_i\big (\frac{1}{p}+\frac{1}{q'}\big )$
for $i=1,2$. {Notice that this is the same constraint needed in the proof of Theorem~\ref{theorem-of-fLp-lessthan-fHp-on-product-case} in page~\pageref{constraintThm4.2}. All the constants appearing in  the inequalities/similarities depend on the geometric constants of the spaces $X_i$ for $i=1,2$, and possibly on the parameters $q>1$ or $\delta>0$ as indicated. }\\

\noindent ($\Leftarrow$)  Given an atomic decomposition $f=\sum_{j\in\mathbb{Z}} \lambda_ja_j$ for $f\in L^q(X_1\times X_2)\cap H^{p,q}_{{\rm at}}(X_1\times X_2)$, with $\sum_{j\in\mathbb{Z}} |\lambda_j|^p<\infty$.
By definition each  product $(p,q)$-atom $a_j$ has underlying dyadic grids~$\mathscr{D}^{a_j}_i$ on $X_i$ for $i=1,2$ belonging to regular families of dyadic grids on $X_i$ that contain all the reference dyadic grids for wavelets on $X_i$. 
The series is assumed to converge in $L^q(X_1\times X_2)$, hence it suffices to verify that
 there is a constant $C>0$  such that for all such $(p,q)$-atoms $a$
\begin{eqnarray}\label{S a uniformly bounded}
\|S(a)\|_{L^p( X_1\times X_2 )}\leq C.
\end{eqnarray}
The constant $C>0$ depends only on the geometric constants of the spaces $X_i$ for $i=1,2$
  and on $p$ and $q$, but not on  the enlargement parameters~$\ell_1,\ell_2\geq 0$   of Definition~\ref{def-of-p-q-atom} of  the $(p,q)$-atom. The constant will depend on the structural constants of the atom's underlying dyadic grids, $\mathcal{D}_i^a$ for $i=1,2$, via the outer balls dilation constants $C_1^i$ and the ratio of the outer and inner balls dilation constants $C_1^i/c_1^i$. These quantities will appear when using the doubling property for dilates of cubes as in~\eqref{doubling-dilate-cubes1}.
 Both  quantities are uniformly bounded by a constant depending only on the quasi-triangle constants of the quasi-metric $d_i$, since the grids $\mathcal{D}_i^a$ are assumed to belong to a regular family of dyadic grids on $X_i$ for $i=1,2$,
 see Definition~\ref{def:regular-dyadic-grids} and \eqref{doubling-dilate-cubes}.

Once we prove estimate~\eqref{S a uniformly bounded} for the atoms,  if $f\in L^q(X_1\times X_2)$ has an atomic decomposition $f=\sum_i \lambda_i a_i,$ where the series converges in both $L^q$-norm and $H^p$-(semi)norm, then by subadditivity of the square function, and since $p\leq 1$,
 together with  \eqref{S a uniformly bounded}, we conclude that
\begin{eqnarray*}
\|f\|^p_{H^p(X_1\times X_2)}=\|{S}(f)\|_{L^p(X_1\times X_2)}^p\leq \sum_{i\in\mathbb{Z}} |\lambda_i|^p \|{ S}(a_i)\|_{L^p(X_1\times X_2)}^p\leq C^p\sum_{i\in\mathbb{Z}} |\lambda_i|^p<\infty,
\end{eqnarray*}
which immediately  proves the norm estimate   $\|f\|_{H^p(X_1\times X_2)}\lesssim \inf\{ \big (\sum_{i\in \mathbb{Z}} |\lambda_i|^p\big )^{1/p}\}$.

To this end, fix a $(p,q)$-atom $a$ with ${\rm supp}\,a\subset \Omega_*$, where $\Omega_*$ is an appropriate enlargement of the open set $\Omega$ in Definition~\ref{def-of-p-q-atom}, more precisely $\Omega_*=\widetilde{\Omega}^{\epsilon_0}_{\ell_1,\ell_2}$ for some enlargement parameters $\ell_1, \ell_2>0$.
Recall that  $\mu(\Omega)\sim \mu(\widetilde{\Omega}^{\epsilon_0}) \leq \mu (\widetilde{\Omega}^{\epsilon_0}_{\ell_1,\ell_2})\lesssim (1+\ell_1\omega_1+\ell_2\omega_2)2^{\ell_1\omega_1+\ell_2\omega_2}\mu(\widetilde{\Omega}^{\epsilon_0})$, where the last inequality holds by~\eqref{eqn:measure-enlargements}.  Assume the $(p,q)$-atom has
a {decomposition $a=\sum_{R\in m(\Omega)}a_R$ when $q\geq2$, and a decomposition
$a=\sum_{R\in m_1(\Omega)}a_R + \sum_{R\in m_2'(\Omega)}a_R$ when $1<q<2$}.
We will work in detail the case when $q\geq 2$. A similar argument will take care of the  case $1<q<2$,  we only need to  start with dyadic rectangles $R$ in $m_1(\Omega)$ or in $m_2(\Omega)$.

Let  $\widetilde{\Omega}$ 
be the {$\epsilon$-}enlargement of $\Omega$ and let $\widetilde{\widetilde{\Omega}}$ be the {$\epsilon$-}enlargement of $\widetilde{\Omega}$, as defined in \eqref{epsilon-enlargement} {for $\epsilon= 1/2$}, that is,
\begin{eqnarray*}
{\widetilde{\Omega}} & = & \{(x_1,x_2)\in  X_1\times X_2 :\ {M}_s(\chi_{{\Omega}})(x_1,x_2)>1/2 \}, \\\widetilde{\widetilde{\Omega}} &= & \{(x_1,x_2)\in  X_1\times X_2 :\ {M}_s(\chi_{\widetilde{\Omega}})(x_1,x_2)>1/2 \}.
\end{eqnarray*}
It will be useful to keep in mind that $\Omega\subset \widetilde{\Omega}\subset \widetilde{\widetilde{\Omega}}$ and that $\mu (\Omega) \sim \mu(\widetilde{\Omega}) \sim \mu (\widetilde{\widetilde{\Omega}})$ by \eqref{weak-L^2-Ms}.

Moreover,  recall that $m_i(\Omega)$ denotes the family of dyadic rectangles
$R\subset\Omega$, $R=Q_1\times Q_2$, with $Q_i\in\mathscr{D}_i^a$,  which are maximal in the $i$th ``direction'', $i=1,2$, we  define $m_i(\widetilde{\Omega})$ similarly.
Also recall that  $m(\Omega)$ is the set of all maximal dyadic rectangles contained in $\Omega$. 
Then for any $R=Q_1\times Q_2 \in m(\Omega)$,  
set $\widehat{R}:=\widehat{Q}_1\times Q_2$.
 By definition of $\widehat{Q}_1$  in page~\pageref{def:widehatQ2Q1}, one has that  $Q_1\subset \widehat{Q}_1$,  $\mu(\widehat{R}\cap\Omega)>{\mu(\widehat{R})/2}$, and that $\widehat{Q}_1\in\mathscr{D}^a_1$ is maximal  with respect to these properties, hence $\widehat{R}\in m_1(\widetilde{\Omega})$. Similarly, set
 $\widehat{\widehat{R}}:=\widehat{Q}_1\times \widehat{Q}_2 \in
m_2(\widetilde{\widetilde{\Omega}}),$ since  $Q_2\subset \widehat{Q}_2$, $ \mu(\widehat{\widehat{R}}\cap\widetilde{\Omega})>{\mu(\widehat{\widehat{R}})/2}$, and  $\widehat{Q}_2\in\mathscr{D}_2^a$ is maximal  with respect to these properties.

The set $\Omega$ is a placeholder, rectangles $R$ refer back to $\Omega$, rectangles $\widehat{R}$ to $\widetilde{\Omega}$, and rectangles $\widehat{\widehat{R}}$ to $\widetilde{\widetilde{\Omega}}$.
However we want to relate to the support of the $(p,q)$-atom  $a$ for the estimates, hence we will consider the
$(\ell_1,\ell_2)$-enlargement of these sets. Specifically   echoing the $*$ notation we are using for $\Omega_*$  the support of $a$, we denote
$\widetilde{\Omega}_*:=(\widetilde{\Omega})_{\ell_1,\ell_2}$  and
$\widetilde{\widetilde{\Omega}}_*:=(\widetilde{\widetilde{\Omega}})_{\ell_1,\ell_2}$. We will also consider appropriate $(\ell_1,\ell_2)$-enlargements of the rectangles, specifically~$\widehat{\widehat{R}}_*:= 2^{\ell_1}\widehat{Q}_1\times  2^{\ell_2}\widehat{Q}_2$ and~$R_*=2^{\ell_1}Q_1\times 2^{\ell_2}Q_2$.

Decompose $ \|S(a)\|_{L^p( X_1\times X_2 )}^p$ into pieces that are near or far from {$\Omega_*$ (the support of $a$).} 
 $$ \|S(a)\|_{L^p( X_1\times X_2 )}^p = A +B, \quad \mbox{where }$$
\begin{eqnarray*}
A & := &  \int_{\cup_{R\in m(\Omega)}100\overline{C}\widehat{\widehat{R}}_*}
|S(a)(x_1,x_2)|^p \, d\mu_1(x_1) \, d\mu_2(x_2) \quad\quad\quad \mbox{(near $\Omega_*$)}, \\
B & := & \int_{(\cup_{R\in m(\Omega)}100\overline{C}\widehat{\widehat{R}}_*)^c}
|S(a)(x_1,x_2)|^p \, d\mu_1(x_1) \, d\mu_2(x_2) \quad \mbox{(far from $\Omega_*$)}.
\end{eqnarray*}
Here $\overline{C}\widehat{\widehat{R}}_*:= C_12^{\ell_1}\widehat{Q}_1\times C_2 2^{\ell_2}\widehat{Q}_2$. The constants
$C_i=2(A_0^{(i)})^2$,  for $i=1,2$, are the dilation constants appearing in the support of the rectangular atoms  property~(3)(i) of Definition~\ref{def-of-p-q-atom}, and the parameters $\ell_i$,  for $i=1,2$, are the enlargement parameters in the support of  the $(p,q)$-atom in property~(1)  of Definition~\ref{def-of-p-q-atom}.  To ease notation, we will denote $\overline{C}_i=C_i2^{\ell_i}$ for $i=1,2$.  This ensures that  $ \overline{C}_1Q_1\times \overline{C}_2Q_2 \subset \overline{C} \widehat{\widehat{R}}_*$, and ${\rm supp}(a) \subset \cup_{R\in m(\Omega)}\overline{C}\widehat{\widehat{R}}_*$.

Applying H\"older's inequality with exponent $s=q/p>1$, the desired estimate $A\lesssim 1$ for the  integral $A$  follows from the  $L^q$-boundedness of $S$ and the $L^q$-norm estimate of the atom $a$ as in (2) of Definition~\ref{def-of-p-q-atom}. 
More precisely,
 \begin{align*}
 A & \lesssim \|a\|_{L^q(X_1\times X_2)}^p \big ( \mu (\cup_{R\in m(\Omega)}100\overline{C}\widehat{\widehat{R}}_*)\big )^{1-\frac{p}{q}}\\
 & \lesssim_q \big ( (1+\ell_1 \omega_1+\ell_2 \omega_2 )2^{\ell_1\omega_1} 2^{\ell_2\omega_2}\mu ({\Omega})\big )^{\frac{p}{q}-1}\big ((1+\ell_1 \omega_1+\ell_2 \omega_2 )(100\overline{C}_1)^{\omega_1}(100\overline{C}_2 )^{\omega_2} \mu (\widetilde{\widetilde{\Omega}})\big )^{1-\frac{p}{q}}\\
 & \lesssim_q  \big (\mu ({\Omega})\big )^{\frac{p}{q}-1}\big (\mu (\widetilde{\widetilde{\Omega}})\big )^{1-\frac{p}{q}} \\ &\lesssim_q \; 1.
 \end{align*}
 In the second  inequality, similar to~\eqref{eqn:measure-enlargements}, we again  used the $L\log_+ L$ to weak $L^1$ estimate of the strong maximal function to estimate the upper bound of $\mu (\cup_{R\in m(\Omega)}100\overline{C}\widehat{\widehat{R}}_*)$.
  In the last inequality we used the fact that $\mu({\Omega}) \sim \mu(\widetilde{\widetilde{\Omega}})$.

Using the decomposition of  the atom $a$ as in (3)(ii-a)  of Definition \ref{def-of-p-q-atom}, 
the sublinearity of the product square function $S$, and that $p\leq 1,$ the integral $B$ can be estimated as follows:
$$B\leq \sum_{R\in m(\Omega)}\int_{(100\overline{C}\widehat{\widehat{R}}_*)^c}
|S(a_R)(x_1,x_2)|^p  \, d\mu_1(x_1) \, d\mu_2(x_2).$$

We split the integral over $(100\overline{C}\widehat{\widehat{R}}_*)^c$ into two parts, one over  $(100\overline{C}_1 \widehat{Q}_1)^c\times X_2$ and  the other over $X_1\times (100\overline{C}_2\widehat{Q}_2)^c$. Denote
the sum over  $R\in m(\Omega)$ of the first integrals by $B_1$ and  of the second integrals by $B_2,$ respectively, so that $B\leq B_1+B_2$. It suffices to estimate $B_1$ since the estimate for $B_2$ is similar by symmetry.

To estimate $B_1,$ we  further  split each integral into two pieces, one over $(100\overline{C}_1\widehat{Q}_1)^c\times 100\overline{C}_2Q_2$ and the other over $(100\overline{C}_1\widehat{Q}_1)^c\times (100\overline{C}_2 Q_2)^c$. 
Denote the sum over  $R\in m(\Omega)$ of the first integrals by $B_{11}$ and of the second integrals $B_{12}$  respectively, so that $B_1= B_{11}+B_{12}$.
\subsubsection*{Estimate for $B_{11}$}
Applying Fubini for the integrals, then H\"older's inequality on the second variable with exponent $s=q/p>1$, and using the doubling property of $\mu_2$, we  estimate
\begin{eqnarray*}
B_{11} & = &
\sum_{R\in m(\Omega)}\int_{(100\overline{C}_1\widehat{Q}_1)^c\times (100\overline{C}_2{Q}_2)}
|S(a_R)(x_1,x_2)|^p  \, d\mu_1(x_1) \, d\mu_2(x_2)\\
 &\lesssim & \sum_{R\in m(\Omega)}\big ( (\overline{C}_2)^{\omega_2}\mu_2(Q_2)\big )^{1-\frac{p}{q}}
\int_{x_1\not\in100\overline{C}_1\widehat{Q}_1}\bigg[ \int_{X_2} |S(a_R)(x_1,x_2)|^q \, d\mu(x_2)\bigg ]^{\frac{p}{q}} d\mu (x_1).
\end{eqnarray*}
We estimate the inner integral  on the right-hand side using an $L^q$-vector-valued one-parameter square function estimate with respect to the variable $x_2$ for $\mu_1$-a.e.$\,x_1$, where we consider $x_1$ a fixed parameter.  More precisely,  let $F: X_2\to {L^q_{\ell^2(\mathbb{S})}(X_2,\mu_2)=: L^q_{\ell^2}(X_2)}$ where $\mathbb{S}$ is a countable set,
meaning that for each $x_2\in X_2$, $F(x_2) = \{F_k(x_2)\}_{k\in\mathbb{S}}\in \ell^2(\mathbb{S})$ where
$\|F(\cdot)\|_{\ell_2(\mathbb{S})}\in L^q(X_2)$, furthermore we let
$\|F\|_{L^q_{\ell_2}(X_2)}:= \big \| \|F(\cdot)\|_{\ell_2(\mathbb{S})} \big \|_{L^q(X_2)}$.
Then, using the notation $\widetilde{\chi}_{Q_{\alpha_i}^{k_i}}= \chi_{Q_{\alpha_i}^{k_i}} / { \mu_i ({Q_{\alpha_i}^{k_i}})}$ {(denoting an $L^1$-normalization instead of denoting an $L^2$-normalization)} and where $Q^{k_i}_{\alpha_i}\in\mathscr{D}_i^W$, we define
$$ S_2 (F)(x_2): = \Big ( \sum_{k_2\in\mathbb{Z}}\sum_{\alpha_2 \in \mathscr{Y}^{k_2}}
\Big \| \langle \psi^{k_2}_{\alpha_2}, F\rangle_{L^2(X_2)} \Big\|^2_{\ell^2(\mathbb{S})}\widetilde{\chi}_{Q_{\alpha_2}^{k_2}}(x_2)  \Big )^{\frac12}.$$
Here $\langle \psi^{k_2}_{\alpha_2}, F\rangle_{L^2(X_2)}$ denotes the sequence $\{ \langle \psi^{k_2}_{\alpha_2}, F_{k}\rangle_{L^2(X_2)}\}_{k\in\mathbb{S}}$.  For all $q>1$ the following vector-valued inequality holds:
$  \ \big \| S_2(F)\big\|_{L^q(X_2)}\; \leq  \; C_q \big \| F\big\|_{L^q_{\ell^2}(X_2)}$.
We point out that $\{\psi^{k_2}_{\alpha_2}\}_{k_2\in \mathbb{Z},\mathscr{Y}^{k_2}}$ is an orthogonal wavelet basis in $X_2$ satisfying suitable size, smoothness, and cancellation conditions. Hence by following the proof of the $L^q$-boundedness of the Littlewood-Paley square function as in \cite{HLW}   for $q>1$, we obtain the $L^q$-boundedness of the vector-valued Littlewood-Paley operator $S_2$. For the Euclidean version, we refer to \cite[Section~5.1.2]{Gra}.

With these preliminaries in mind, we can now estimate for $\mu_1$-a.e. $x_1\in X_1$ the $L^q(X_2)$-norm of $S(a_R)(x_1,\cdot)$.
\begin{eqnarray*}
&& \hskip -.2in \int_{X_2} |S(a_R)(x_1,x_2)|^q \,d\mu_2(x_2 )\\
&&= \int_{X_2} \Big [ \sum_{k_2\in\mathbb{Z}}\sum_{\alpha_2 \in \mathscr{Y}^{k_2}}\sum_{k_1\in\mathbb{Z}}\sum_{\alpha_1\in \mathscr{Y}^{k_1}}
\big| \langle \psi^{k_1}_{\alpha_1}\psi^{k_2}_{\alpha_2}, a_R\rangle_{L^2(X_1\times X_2)}
\big|^2 {\widetilde{\chi}_{Q_{\alpha_1}^{k_1}}(x_1)}\widetilde{\chi}_{Q_{\alpha_2}^{k_2}}(x_2) \Big]^{\frac{q}{2}}  d\mu_2(x_2)\\
&& = \int_{X_2} \Big [\sum_{k_2\in\mathbb{Z}}\sum_{\alpha_2 \in \mathscr{Y}^{k_2}} \Big ( \sum_{k_1\in\mathbb{Z}}\sum_{\alpha_1\in \mathscr{Y}^{k_1}}
\big| \big \langle \psi^{k_2}_{\alpha_2},\langle \psi^{k_1}_{\alpha_1}, a_R\rangle_{L^2(X_1)} \big \rangle_{L^2(X_2)}
\big|^2 {\widetilde{\chi}_{Q_{\alpha_1}^{k_1}}(x_1)}\Big ) \widetilde{\chi}_{Q_{\alpha_2}^{k_2}}(x_2) \Big]^{\frac{q}{2}}  d\mu_2(x_2)\\
&& = \int_{X_2} \Big [\sum_{k_2\in\mathbb{Z}}\sum_{\alpha_2 \in \mathscr{Y}^{k_2}}
\Big \| \langle \psi^{k_2}_{\alpha_2}, F^{(x_1)}\rangle_{L^2(X_2)} \Big\|^2_{\ell^2(\mathbb{S})}\widetilde{\chi}_{Q_{\alpha_2}^{k_2}}(x_2) \Big]^{\frac{q}{2}}   d\mu_2(x_2)\\
&& = \int_{X_2} \big | S_2(F^{(x_1)})(x_2)\big |^q d\mu_2(x_2) \; \leq  \; C\int_{X_2} \big \| F^{(x_1)}(x_2)\big\|^q_{\ell^2(\mathbb{S})}\, d\mu_2(x_2).
\end{eqnarray*}
Here $F^{(x_1)}(x_2)=\{ F^{x_2,x_1}_{k_1,\alpha_1}\}_{k_1\in\mathbb{Z},\alpha_1\in\mathscr{Y}^{k_1}}$, where  $F_{k_1,\alpha_1}^{x_2,x_1}:= \langle a_R(\cdot,x_2), \psi^{k_1}_{\alpha_1}\rangle_{L^2(X_1)}\, \big (\widetilde{\chi}_{Q^{k_1}_{\alpha_1}}(x_1)\big )^{1/2}$ and  $\mathbb{S}=\{(k_1,\alpha_1): \; k_1\in\mathbb{Z}, \; \alpha_1\in\mathcal{Y}^{k_1}\}$ is a countable set.

Altogether we now estimate the term $B_{11}$ as follows:
\begin{eqnarray*}
B_{11}
&\lesssim & \sum_{R\in m(\Omega)}\big ( (\overline{C}_2)^{\omega_2}\mu_2(Q_2)\big )^{1-\frac{p}{q}} \int_{x_1\not\in100\overline{C}_1\widehat{Q}_1}
\bigg [\int_{X_2} \big \| F^{(x_1)}(x_2)\big\|^q_{\ell^2(\mathbb{S})}\, d\mu_2(x_2)\bigg]^{\frac{p}{q}}   d\mu_1(x_1) \\
&= & \sum_{R\in m(\Omega)}\big ( (\overline{C}_2)^{\omega_2} \mu_2(Q_2)\big )^{1-\frac{p}{q}} \\
& & \hskip -.5in\times \hskip -.2in\mathop{\int}_{x_1\not\in100\overline{C}_1\widehat{Q}_1}\bigg[ \mathop{\int}_{X_2}\Big[\sum_{k_1\in\mathbb{Z}}\sum_{{\alpha_1}\in\mathscr{Y}^{k_1}}
\big|\mathop{\int}_{X_1}\psi_{\alpha_1}^{k_1}(y_1)a_R(y_1,x_2) \, d\mu_1(y_1)\big|^2 {\widetilde{\chi}_{Q_{\alpha_1}^{k_1}}(x_1)} \Big]^{\frac{q}{2}}  d\mu_2(x_2)\bigg]^{\frac{p}{q}}   d\mu_1(x_1).
\end{eqnarray*}

Applying the decomposition \eqref{decomposition of wavelet into atom} in Lemma~\ref{lemma-decomposition} to $\psi_{\alpha_1}^{k_1}$, we get that for $\gamma>\omega_1$ (where $\gamma$ is to be determined later)
and for $\overline{C}_1=C_1\,2^{\ell_1}$ playing the role of  $\overline{C}$,
    $$    {\psi_{\alpha_1}^{k_1}(y_1) }
        = \sqrt{\mu\big (B(y_{\alpha_1}^{k_1},\delta^{k_1})\big )}\sum_{\ell=0}^\infty (2^{\ell}\overline{C}_1)^{-\gamma}\varphi^{\gamma,\overline{C}_1}_{\ell,k_1,\alpha_1}(y_1), $$
Substituting   and noting that ${\mu\big (B(y_{\alpha_1}^{k_1},\delta^{k_1})\big )} \,\widetilde{\chi}_{Q_{\alpha_1}^{k_1}}(x_1)={\chi}_{Q_{\alpha_1}^{k_1}}(x_1)$, we  continue estimating:
\begin{eqnarray*}
B_{11} &\lesssim &\sum_{R\in m(\Omega)} \big ( (\overline{C}_2)^{\omega_2}\mu_2(Q_2)\big )^{1-\frac{p}{q}} \\
&&\hskip -.5in\times  \hskip -.2in \mathop{\int}_{x_1\not\in100\overline{C}_1\widehat{Q}_1}\bigg[\mathop{\int}_{X_2}\Big[ \mathop{\sum_{k_1\in\mathbb{Z}}}_{\alpha_1 \in\mathscr{Y}^{k_1}} 
\Big| \sum_{\ell=0}^\infty (2^{\ell}\overline{C}_1)^{-\gamma}\big \langle  \varphi^{\gamma,\overline{C}_1}_{\ell,k_1,\alpha_1}, a_R(\cdot,x_2)\big \rangle_{L^2(X_1)}
\Big|^2 {{\chi}_{Q_{\alpha_1}^{k_1}}(x_1)} \Big]^{\frac{q}{2}}  d\mu_2(x_2)\bigg]^{\frac{p}{q}}   d\mu_1(x_1).
\end{eqnarray*}
First applying the Cauchy-Schwarz  inequality to the sum over $\ell\geq 0$ after factoring out the constant $(\overline{C}_1)^{-\gamma}$,  
and considering the decaying exponential factor as a weight so that $\sum_{\ell\geq 0} 2^{-\ell\gamma}<\infty$ is a harmless finite constant.
Second,  interchanging sums over $\ell$ and over $(k_1, \alpha_1)\in \mathbb{S}$, applying H\"older's inequality  with exponent $s=q/2>1$  (we are in the case $q\geq 2$ and when $q=2$ this step is unnecessary. When $1<q<2$ the power $q/2<1$ and it will travel into the sum over $\ell$, the only difference being that the exponential factor will be  $2^{-\frac{\ell\gamma q}{2}}$ instead of $2^{-\ell\gamma} $)
 to the sum over $\ell$ and considering the decaying exponential factor as a weight as before. Third,
interchanging the sum over $\ell$ and the integral over $X_2$, and using the fact that $p/q<1$ and the exponent can travel inside the sum over $\ell$. Finally, interchanging the sum over $\ell$ with the outer integral over  $(100\overline{C}_1\widehat{Q}_1)^c$ and then with the sum over $R\in m(\Omega)$,  we  find that
\begin{eqnarray*}
B_{11}  &\lesssim  & (\overline{C}_1)^{-\gamma p}
\sum_{R\in m(\Omega)}\big ( (\overline{C}_2)^{\omega_2}\mu_2(Q_2)\big )^{1-\frac{p}{q}} \\
&& \hskip -.5in \times \mathop{\int}_{x_1\not\in100\overline{C}_1\widehat{Q}_1}\sum_{\ell=0}^\infty 2^{{-\frac{\ell\gamma p}{{q}}}}\bigg[ \mathop{\int}_{X_2}\Big[\mathop{\sum_{k_1\in\mathbb{Z}}}_{\alpha_1\in\mathscr{Y}^{k_1}} 
\Big| \big \langle\varphi^{\gamma,\overline{C}_1}_{\ell,k_1,\alpha_1}, a_R(\cdot,x_2)\big \rangle_{L^2(X_1)}
\Big|^2 {{\chi}_{Q_{\alpha_1}^{k_1}}(x_1)} \Big]^{\frac{q}{2}}  d\mu_2(x_2)\bigg]^{\frac{p}{q}}   d\mu_1(x_1)\\
& \lesssim & (\overline{C}_1)^{-\gamma p}
\sum_{\ell=0}^\infty 2^{-\frac{\ell\gamma p}{{q}}} \sum_{R\in m(\Omega)} \big ( (\overline{C}_2)^{\omega_2}\mu_2(Q_2)\big )^{1-\frac{p}{q}}
 \\
& & \hskip -.5in \times \mathop{\int}_{x_1\not\in100\overline{C}_1\widehat{Q}_1}\bigg[ \mathop{\int}_{X_2}\Big[
\mathop{\sum_{k_1\in\mathbb{Z}}}_{\alpha_1\in\mathscr{Y}^{k_1}} 
\Big| \big \langle \varphi^{\gamma,\overline{C}_1}_{\ell,k_1,\alpha_1}, a_R(\cdot,x_2)\big \rangle_{L^2(X_1)}
\Big|^2 {{\chi}_{Q_{\alpha_1}^{k_1}}(x_1)} \Big]^{\frac{q}{2}}  d\mu_2(x_2)\bigg]^{\frac{p}{q}}   d\mu_1(x_1).
\end{eqnarray*}
(In the case $1<q<2$ the only difference in the estimate is that instead of $2^{-\frac{\ell\gamma p}{q}}$ one gets the exponential $2^{-\frac{\ell\gamma p}{2}}$, where~$q$ has been replaced by~2 in the exponent's denominator.)

The support of $a_R$ is $\overline{C}_1Q_1\times \overline{C}_2Q_2$, note that if $y_1\in \overline{C}_1Q_1$ then  $d_1(y_1, z_1)\leq {C_1^1\overline{C}_1} \, \ell(Q_1)$, where $z_1$ is the center of $Q_1$ and $C^1_1$ is the dilation constant for the outer balls in the fixed  dyadic grid $\mathscr{D}_1^a$ on $X_1$. Recall that $R=Q_1\times Q_2$.  If ${C_1^1\overline{C}_1}\, \ell(Q_1)\leq \delta_1^{k_1}$, then $d_1(y_1,z_1)\leq \delta_1^{k_1}$ and using the  smoothness property~(iii)  in Lemma~\ref{lemma-decomposition} of $\varphi^{\gamma,\overline{C}_1}_{\ell,k_1,\alpha_1}$,  the cancellation condition~(3)(ii) in the first variable of $a_R$ in Definition~\ref{def-of-p-q-atom}, together with the geometric considerations and H\"older's inequality,  we conclude that when both $x_1$ and $y^{k_1}_{\alpha_1}$ are in $Q^{k_1}_{\alpha_1}$, \label{inner-product}
\begin{eqnarray*}
\left  |\big \langle \varphi^{\gamma,\overline{C}_1}_{\ell,k_1,\alpha_1}(\cdot), a_R(\cdot,x_2)\big \rangle_{L^2(X_1)} \right |
& \leq  &  \int_{{\overline{C}_1Q_1}}
{\big | \varphi^{\gamma,\overline{C}_1}_{\ell,k_1,\alpha_1}(y_1) - \varphi^{\gamma,\overline{C}_1}_{\ell,k_1,\alpha_1}(z_1)\big | }\big |a_R(y_1,x_2) \big | \, d\mu_1(y_1)  \\
&& \hskip -.7in\lesssim \; \frac{\big (\overline{C}_1\ell(Q_1)\big )^{\eta_1} (\overline{C}_12^{\ell}\delta_1^{k_1})^{-\eta_1}(\overline{C}_12^{\ell})^{\omega_1}}{\mu_1\big (B_{X_1}(y^{k_1}_{\alpha_1}, \overline{C}_12^{\ell}\delta_1^{k_1}) \big )} \int_{{\overline{C}_1Q_1}}\big |a_R(y_1,x_2) \big | \, d\mu_1(y_1) \\
&& \hskip -.7in \lesssim \; \frac{\ell(Q_1) ^{\eta_1} (2^{\ell}\delta_1^{k_1})^{-\eta_1}(\overline{C}_12^{\ell})^{\omega_1}}{\mu_1\big (B_{X_1}(x_1, \overline{C}_12^{\ell}\delta_1^{k_1}) \big )}\big ( (\overline{C}_1)^{\omega_1} \mu_1 (Q_1)\big )^{{\frac{q-1}{q}}} \|a_R(\cdot, x_2)\|_{L^q(X_1)}.
\end{eqnarray*}
Here the doubling condition on the measure allows us to compare nearby balls with the same radius; specifically,  $\frac{\mu_1(B_{X_1}({x_1}, 2^{\ell}\delta_1^{k_1}))}{\mu_1(B_{X_1}(y^{k_1}_{\alpha_1}, 2^{\ell}\delta_1^{k_1})) } \sim 1$,  since we are assuming that $x_1$ and $y^{k_1}_{\alpha_1}$ are in $Q^{k_1}_{\alpha_1}$.

Assume now that  $C_1^1\overline{C}_1\ell(Q_1)> \delta_1^{k_1}$. 
Recall that to get the desired estimate for the inner product it suffices to obtain the estimate for the inner product with differences of the functions $(2^{\ell}\overline{C}_1)^{\gamma}\Lambda_{\ell}^{\overline{C}_1}$ instead of differences of the functions $\varphi^{\gamma,\overline{C}_1}_{\ell,k_1,\alpha_1}$, the other piece can be estimated as above.
 Therefore we can assume that  $y_1\in \overline{C_1}Q_1\cap   {\rm supp}(\Lambda_{\ell}^{\overline{C}_1})$, this means
$2^{\ell-3}\overline{C}_1\delta_1^{k_1}\leq d_1(y_1,y^{k_1}_{\alpha_1})\leq (A_0^{(1)})^22^{\ell}\overline{C}_1\delta_1^{k_1}$ and $d_1(y_1,z_1)\leq C_1^1\overline{C}_1\ell(Q_1)$. We also know that $x_1\in (100 (2A_0^{(1)}) \overline{C}_1\widehat{Q}_1)^c$, hence $d_1(z_1, x_1)\geq 100 (2A_0^{(1)}) C^1_1\overline{C}_1\ell (\widehat{Q}_1)\geq 100(2A_0^{(1)}) C^1_1\overline{C}_1\ell (Q_1)$ and $x_1\in Q^{k_1}_{\alpha_1}$ hence $d_1(z_1, y^{k_1}_{\alpha_1}) \sim  d_1(y_1, y^{k_1}_{\alpha_1}) \geq 100 (2A_0^{(1)}) C^1_1\overline{C}_1\ell (Q_1) $.
From the proof of Lemma~\ref{lemma-decomposition},  we can use a test-function-like  smoothness
property   for  the function  $\Lambda^{\overline{C}_1}_{\ell}$ encoded in~\eqref{test-function-estimate-Lambda-x} and valid when $y_1\in {\rm supp}(\Lambda^{\overline{C}_1}_{\ell})$ and
$d(y_1,z_1)\leq (2A_0^{(1)})^{-1}\big (\delta^{k_1}_1 + d(y_1,y^{k_1}_{\alpha_1})\big ),$ which both hold by the assumptions made in this case, namely:
\[
(2^{\ell}\overline{C}_1)^{\gamma}| \Lambda^{\overline{C}_1}_{\ell}(y_1)-\Lambda^{\overline{C}_1}_{\ell}(z_1)|
\lesssim \frac{
(\overline{C}_12^{\ell}\delta^{k_1}_1)^{-\eta_1}}{\mu \big (B(y_{\alpha_1}^{k_1},\delta^{k_1}_1 )\big ) + \mu\big (B(y_1,d(y_1,y^{k_1}_{\alpha_1}))\big )} d(y_1,z_1)^{\eta_1}.
\]
Furthermore since nearby balls with same radius have comparable measure by the doubling property,
$\mu\big (B(y_1,d(y_1,y^{k_1}_{\alpha_1}))\big )\sim \mu\big (B(y_{\alpha_1}^{k_1},\overline{C}_12^{\ell}\delta_1^{k_1})\big )$ we get that in our case
\begin{equation} \label{test-function-estimate-Lambda-x2}
(2^{\ell}\overline{C}_1)^{\gamma}| \Lambda^{\overline{C}_1}_{\ell}(x)-\Lambda^{\overline{C}_1}_{\ell}(y)|
\lesssim \frac{
(\overline{C}_12^{\ell}\delta^{k_1}_1)^{-\eta_1}}{\mu\big (B(y_{\alpha_1}^{k_1},\overline{C}_12^{\ell}\delta_1^{k_1})\big )}\big ( C_1^1\overline{C}_1\ell(Q_1)\big )^{\eta_1}.
\end{equation}

Inequality~\eqref{test-function-estimate-Lambda-x2} together with the geometric considerations and H\"older's inequality,  and given that  both $x_1$ and $y^{k_1}_{\alpha_1}$ are in $Q^{k_1}_{\alpha_1}$, yield
\begin{eqnarray*}
  \int_{{\overline{C}_1Q_1}\cap \,{\rm supp} (\Lambda^{\overline{C}_1}_{\ell})} 
 (\overline{C}_1 2^{\ell})^{\gamma}\big |\Lambda_{\ell}^{\overline{C}_1}(y_1)-\Lambda_{\ell}^{\overline{C}_1}(z_1) \big | \big |a_R(y_1,x_2) \big | \, d\mu_1(y_1)   && \\
& & \hskip -3in\lesssim \;  \frac{ \ell(Q_1)^{\eta_1} (2^{\ell}\delta_1^{k_1})^{-\eta_1}}
{\mu_1\big (B_{X_1}(y^{k_1}_{\alpha_1}, \overline{C}_12^{\ell}\delta_1^{k_1}) \big )}\int_{{\overline{C}_1Q_1}}\big |a_R(y_1,x_2) \big | \, d\mu_1(y_1) \\
& & \hskip -3in \lesssim  \frac{\ell(Q_1)^{\eta_1} (2^{\ell}\delta_1^{k_1})^{-\eta_1}}
{\mu_1\big (B_{X_1}(x_1, \overline{C}_12^{\ell}\delta_1^{k_1}) \big )} \big ( (\overline{C}_1)^{\omega_1}  \mu_1 (Q_1)\big )^{{\frac{q-1}{q}}} \|a_R(\cdot, x_2)\|_{L^q(X_1)}.
\end{eqnarray*}
Note that in the first $\lesssim$ the constant $(C_1^1)^{\eta_1}\leq C_1^1$ has been absorbed since it is bounded above by a constant depending only on the geometric constants of the space $X_1$.

Therefore we conclude that in all cases, when both $x_1$ and $y^{k_1}_{\alpha_1}$ are in $Q^{k_1}_{\alpha_1}$,
\[\left  |\big \langle \varphi^{\gamma,\overline{C}_1}_{\ell,k_1,\alpha_1}, a_R(\cdot,x_2)\big \rangle_{L^2(X_1)} \right | \lesssim
\frac{ \ell(Q_1)^{\eta_1}(2^{\ell}\delta_1^{k_1})^{-\eta_1}(\overline{C}_12^{\ell})^{\omega_1}}{\mu_1\big (B_{X_1}(x_1, \overline{C}_12^{\ell}\delta_1^{k_1}) \big )} \big ( (\overline{C}_1)^{\omega_1} \mu_1 (Q_1)\big )^{{\frac{q-1}{q}}} \|a_R(\cdot, x_2)\|_{L^q(X_1)}.\]

Notice that in above calculation  
$\ell(Q_1)$ refers to the underlying dyadic grid $\mathscr{D}_1^a$ for the atom, possibly different than the reference dyadic grid $\mathscr{D}_1^W$ for the wavelets on $X_1$. Also note that the inequalities $\lesssim$ and the similarities $\sim$ introduce constants depending only on the geometric constants of the space of homogeneous type, in this case $X_1$.

{Now recall that supp$(\varphi^{\gamma,\overline{C}_1}_{\ell,k_1,\alpha_1})\subset B_{X_1}(y^{k_1}_{\alpha_1},{ 2(A_0^{(1)})^2}\, 2^{\ell}\overline{C}_1\delta_1^{k_1})$, so the inner product we just estimated will be nonzero only when $(\overline{C}_1\,Q_1)\cap B_{X_1}(y^{k_1}_{\alpha_1},{ 2(A_0^{(1)})^2}\, 2^{\ell}\overline{C}_1\delta_1^{k_1})\neq \emptyset$,
 where  $y^{k_1}_{\alpha_1}$ is the center of the cube $Q^{k_1}_{\alpha_1}\in \mathscr
 {D}_1^W$ that contains $x_1\not\in100\overline{C}_1\widehat{Q}_1$.
Therefore,  when estimating $B_{11}$, in the sum over $k_1$ and $\alpha_1$ the only scales that intervene are
those integers $\ell \geq 0$ such that $2^{\ell}\overline{C}_1\delta_1^{k_1}\sim d_1(x_1,z_1)$, where $z_1$ is the center of $Q_1$ (it helps to draw a picture to understand the geometry).
With this in mind, applying the above  estimate on the inner product we conclude that
\begin{eqnarray*}
B_{11} &\lesssim & (\overline{C}_1)^{-\gamma p}
 \sum_{\ell=0}^\infty 2^{-\frac{\ell\gamma p}{{q}}} \sum_{R\in m(\Omega)}\big ( (\overline{C}_2)^{\omega_2} \mu_2(Q_2)\big )^{1-\frac{p}{q}}
 \\
& & \hskip -1cm \times \hskip -.3in \mathop{\int}_{x_1\not\in100\overline{C}_1\widehat{Q}_1}\bigg[ \mathop{\int}_{X_2}\Big[\sum_{{k_1,\alpha_1:\,2^{\ell}\overline{C}_1\delta_1^{k_1}\sim d_1(x_1,z_1)}}
\bigg| \frac{{\ell(Q_1)^{\eta_1} (2^{\ell}\delta_1^{k_1})^{-\eta_1}(\overline{C}_12^{\ell})^{\omega_1}}}{\mu_1\big (B_{X_1}({x_1}, 2^{\ell}\delta_1^{k_1}) \big )} \big ( (\overline{C}_1)^{\omega_1}\mu_1 (Q_1)\big )^{1-\frac1q}  \\
&& \hskip 1cm \times \;  \|a_R(\cdot, x_2)\|_{L^q(X_1)} \bigg|^2\,{{\chi}_{Q_{\alpha_1}^{k_1}}(x_1)} \Big]^{\frac{q}{2}}  d\mu_2(x_2)\bigg]^{\frac{p}{q}}   d\mu_1(x_1).
\end{eqnarray*}
There is exactly one dyadic cube $Q^{k_1}_{\alpha_1}\in\mathscr{D}_1^W$ in generation $k_1$ containing $x_1$, so the double sum over $k_1, \alpha_1$ reduces to a single sum over $k_1$. Furthermore, note that
when $2^{\ell}\overline{C}_1\delta_1^{k_1}\sim d_1(x_1,z_1)$ then $\mu_1\big (B_{X_1}({x_1}, 2^{\ell}\overline{C}_1\delta_1^{k_1}) \big ) \sim\mu_1\big (B_{X_1}({x_1}, d_1(x_1,z_1)) \big ) \lesssim (\overline{C}_1)^{\omega_1}\mu_1\big (B_{X_1}({x_1}, 2^{\ell}\delta_1^{k_1})\big )$. Therefore
\begin{eqnarray*}
B_{11} &\lesssim & (\overline{C}_1)^{-\gamma p}
 \sum_{\ell=0}^\infty 2^{-\frac{\ell\gamma p}{{q}}} \sum_{R\in m(\Omega)}
\big ((\overline{C}_2)^{\omega_2}\mu_2(Q_2)\big )^{1-\frac{p}{q}}
\|a_R\|_{L^q(X_1\times X_2)}^p
 \\
& & \hskip -1cm \times \int_{x_1\not\in100\overline{C}_1\widehat{Q}_1}
\bigg[ \sum_{k_1: {\,2^{\ell}\overline{C}_1\delta_1^{k_1} \sim  d_1(x_1,z_1)}}
\Big| \frac{\ell(Q_1)^{\eta_1}(2^{\ell}\delta_1^{k_1})^{-\eta_1}(2^{\ell}\overline{C}_1)^{\omega_1}}{\mu_1\big (B_{X_1}({x_1}, d_1(x_1,z_1)) \big )} \big ((\overline{C}_1)^{\omega_1}\mu_1 (Q_1)\big )^{1-\frac1q}
\Big|^2 
\bigg]^{\frac{p}{2}}   d\mu_1(x_1) \\
&\lesssim & (\overline{C}_1)^{-\gamma p}
 \sum_{\ell=0}^\infty 2^{-\frac{\ell\gamma p}{{q}}} \sum_{R\in m(\Omega)}
\big ((\overline{C}_2)^{\omega_2}\mu_2(Q_2)\big )^{1-\frac{p}{q}}
\|a_R\|_{L^q(X_1\times X_2)}^p \, 2^{\ell\omega_1p} \big ( (\overline{C}_1)^{\omega_1}\mu_1(Q_1)\big )^{p-\frac{p}{q}}\\
& & \hskip .3cm \times \int_{x_1\not\in100\overline{C}_1\widehat{Q}_1}
\bigg[ \sum_{k_1:  {\,2^{\ell} \overline{C}_1 \delta^{k_1}\sim d_1(x_1,z_1)}}
{ (2^{\ell}\delta^{k_1})^{-2\eta_1}}
\bigg]^{\frac{p}{2}}  \frac{\ell(Q_1)^{\eta_1 p}(\overline{C}_1)^{\omega_1p}}{\mu_1\big (B_{X_1}({x_1}, d_1(x_1,z_1)) \big )^p} \, d\mu_1(x_1).
\end{eqnarray*}
Notice that the sum over $k_1$ is a geometric sum comparable to $d_1(x_1,z_1)^{-2\eta_1}(\overline{C}_1)^{2\eta_1}$. Therefore
\begin{eqnarray*}
B_{11}&\lesssim &  (\overline{C}_1)^{(\omega_1-\gamma) p} \sum_{\ell=0}^\infty 2^{-\frac{\ell\gamma p}{{q}}} \sum_{R\in m(\Omega)}
\big ((\overline{C}_2)^{\omega_2}\mu_2(Q_2)\big )^{1-\frac{p}{q}}
\|a_R\|_{L^q(X_1\times X_2)}^p 
2^{\ell\omega_1p} \big ((\overline{C}_1)^{\omega_1} \mu_1(Q_1)\big )^{p-\frac{p}{q}}\\
& & \hskip 1cm \times \int_{x_1\not\in100\overline{C}_1\widehat{Q}_1}
\frac{d_1(x_1,z_1)^{-\eta_1 p} \ell(Q_1)^{\eta_1 p}(\overline{C}_1)^{\eta_1p}}{\mu_1\big (B_{X_1}({x_1}, d_1(x_1,z_1))\big )^p}
\,   d\mu_1(x_1).
  \end{eqnarray*}
 The integral  over $(100\overline{C}_1\widehat{Q}_1)^c$ can be further decomposed into integrals over disjoint annuli~$D_{j+1}\setminus D_j$. Here
$D_j:=2^j100\overline{C}_1\widehat{Q}_1$, so that for all $j\geq 0$,
$(100\overline{C}_1\widehat{Q}_1)^c = \cup_{j\geq 0}  (D_{j+1}\setminus D_j)$.
 For $x_1\in D_{j}\setminus D_{j-1}$ we have that 
  $(\overline{C}_1)^{\eta_1 p}d_1(x_1,z_1)^{-\eta_1 p} \sim 2^{-j\eta_1 p} \,  \ell(\widehat{Q}_1)^{-\eta_1 p} $.
 Note that nearby balls with the same radius have comparable mass  by the doubling property of the measure. In particular $\mu_1\big (B_{X_1}(z_1,d_1(x_1,z_1))\big ) \sim \mu_1\big (B_{X_1}(x_1,d_1(x_1,z_1))$, and certainly  $\widehat{Q}_1\subset D_j \subset B_{X_1}(z_1,d_1(x_1,z_1)) \subset D_{j+1}$.  All together,
  we obtain the following estimate  
   \[\mathop{\int}_{x_1\in(100\overline{C}_1\widehat{Q}_1)^c}
\frac{d_1(x_1,z_1)^{-\eta_1 p}\ell(Q_1)^{\eta_1 p}(\overline{C}_1)^{\eta_1p}}{\mu_1\big (B_{X_1}({x_1}, d_1(x_1,z_1))\big )^p} \, d\mu_1(x_1)
  \lesssim \frac{\ell(Q_1)^{\eta_1p}\ell(\widehat{Q}_1)^{-\eta_1 p}}{ \mu_1 (\widehat{Q}_1)^{p-1}
}  \sum_{j\geq 0} \bigg (\frac{\mu_1 (D_{j+1})}{\mu_1 ( D_j)} 2^{-j\eta_1 p}\bigg ),\]
where  the sum over $j$ is comparable to $1$ by the doubling condition of $\mu_1$. Substituting and noting that
$\gamma>\omega_1$ hence $(\overline{C}_1)^{(\omega_1-\gamma)p}<1$,   that $p-1<0$ hence $(\overline{C}_1)^{\omega_1(p-1)}<1$, and recalling that $(\overline{C}_i)^{\omega_i}\sim 2^{\ell_i\omega_i}$ for $i=1,2$,  we obtain
  \begin{eqnarray*}
 B_{11} &\lesssim & 
  \sum_{\ell=0}^\infty 2^{-\frac{\ell\gamma p}{{q}}} \sum_{R\in m(\Omega)}\big (2^{\ell_1\omega_1+\ell_2\omega_2}\mu(R)\big )^{1-\frac{p}{q}}
\|a_R\|_{L^q(X_1\times X_2)}^p \ell(Q_1)^{\eta_1p}\,2^{\ell\omega_1p}  \\
& & \hskip 2in\times \big ((\overline{C}_1)^{\omega_1}\mu_1(Q_1)\big )^{p-1}
\frac{\ell(\widehat{Q}_1)^{-\eta_1 p}}{ \mu_1 (\widehat{Q}_1)^{p-1}}  \\
  &\lesssim &
   \sum_{R\in m(\Omega)}\big (2^{\ell_1\omega_1+\ell_2\omega_2}\mu(R)\big )^{1-\frac{p}{q}} \|a_R\|_{L^q(X_1\times X_2)}^p
  \left [\frac{\ell(Q_1)}{\ell(\widehat{Q}_1)}\right ]^{\eta_1 p} \left [
  \frac{\mu_1(Q_1)}{\mu_1(\widehat{Q}_1)}\right ]^{p-1}   \sum_{\ell=0}^\infty 2^{-\frac{\ell\gamma p}{{q}} +\ell\omega_1p}.
  \end{eqnarray*}
For the geometric sum to converge we need to choose $\gamma > {q}\,\omega_1$ when $q\geq 2$ and when $1<q<2$ we choose $\gamma > 2\omega_1$.} With this choice
and using the doubling property~\eqref{eqn:upper dimension} once more since $p-1<0$, we estimate for $2\leq q<\infty$,
\begin{eqnarray}\label{eqn:B11}
B_{11}&\lesssim&  
2^{(\ell_1\omega_1+\ell_2\omega_2)(1-\frac{p}{q})}\sum_{R\in m(\Omega)}\|a_R\|_{L^q( X_1\times X_2 )}^p\, \mu(R)^{1-\frac{p}{q}} \, w\Big({\ell(Q_1)\over \ell(\widehat{Q}_1)}\Big),
\end{eqnarray}
where $w(x)=x^\alpha$ with $\alpha=p\eta_1+(p-1)\omega_1>0$. This is where we explicitly used the choice of $p>p_0$ where $p_0=\max\big ( \omega_i/(\omega_i+\eta_i): \, i=1,2\}$. It was also used in the definition of $H^p(X_1\times X_2)$ in \cite{HLW}.

To be more precise on how the  doubling condition was used in~\eqref{eqn:B11}.
 Let $\widehat{z}_1$ be the center of $\widehat{Q}_1$ and $z_1$ the center of $Q_1$. Recall that $Q_1\subset \widehat{Q}_1$,  then
\begin{eqnarray*}
\frac{\mu_1(\widehat{Q}_1)}{\mu_1(Q_1)} &\leq & \frac{\mu_1\big (B_{X_1}(\widehat{z}_1,C^1_1\ell(\widehat{Q}_1))\big )}{\mu_1\big (B_{X_1}(z_1,c^1_1\ell(Q_1))\big )}
\, \leq \,  \frac{\mu_1\big (B_{X_1}({z}_1,A_0^{(1)}(d_1(\widehat{z}_1,z_1)+C^1_1\ell(\widehat{Q}_1)))\big )}{\mu_1\big (B_{X_1}(z_1,c^1_1\ell(Q_1))\big )} \\
&\leq & \frac{\mu_1\big (B_{X_1}(z_1,2A_0^{(1)}C^1_1\ell(\widehat{Q}_1))\big )}{\mu_1\big (B_{X_1}(z_1,c^1_1\ell(Q_1))\big )}  \, \lesssim \, \Big (\frac{2A_0^{(1)}C^1_1\ell(\widehat{Q}_1)}{c^1_1\ell(Q_1)}\Big )^{\omega_1}
\, \lesssim \, \Big (\frac{\ell(\widehat{Q}_1)}{\ell(Q_1)}\Big )^{\omega_1}.
\end{eqnarray*}

We continue estimating $B_{11}$.
Applying H\"older's inequality  to the right-hand-side of \eqref{eqn:B11}, with exponent $s=q/p>1$, setting $\widetilde{w}= w^{\frac{q}{q-p}}$, using {property (iii-a)} in the definition  of $(p,q)$-atoms, and applying the Journ\'e-type covering lemma gives   
\begin{eqnarray*}
B_{11}&\lesssim &  
2^{(\ell_1\omega_1+\ell_2\omega_2)(1-\frac{p}{q})}\bigg( \sum_{R\in m(\Omega)} \|a_R\|_{L^q( X_1\times X_2 )}^q\ \bigg)^{\frac{p}{q}}\bigg( \sum_{R\in m(\Omega)} \mu(R) \, {\widetilde{w}}\Big({\ell(Q_1)\over \ell(\widehat{Q}_1)}\Big) \bigg)^{1-\frac{p}{q}}\\
&\lesssim&  
2^{(\ell_1\omega_1+\ell_2\omega_2)(1-\frac{p}{q})} \big ( (1+\ell_1\omega_1+\ell_2\omega_2)2^{\ell_1\omega_1+\ell_2\omega_2} \mu(\widetilde{\Omega})\big )^{{p\over q}-{1}}
\mu(\Omega)^{1-\frac{p}{q}} \; \lesssim \;1. 
\end{eqnarray*}
The last inequality because $\mu(\widetilde{\Omega})\sim \mu(\Omega)$.

For $1<q<2$, setting $\overline{w}=w^{1\over 2}$, $\widetilde{w}=\overline{w}^{q\over q-p}$ and $\widetilde{\widetilde{w}}=\overline{w}^{q\over p}$ and applying the same estimate as above, we obtain
\begin{eqnarray*}
B_{11}&\lesssim&  
2^{(\ell_1\omega_1+\ell_2\omega_2)(1-\frac{p}{q})}\sum_{R\in m(\Omega)}\|a_R\|_{L^q( X_1\times X_2 )}^p\, \mu(R)^{1-\frac{p}{q}} \, w\Big({\ell(Q_1)\over \ell(\widehat{Q}_1)}\Big)\\
&\lesssim&  
2^{(\ell_1\omega_1+\ell_2\omega_2)(1-\frac{p}{q})} \sum_{R\in m(\Omega)}\|a_R\|_{L^q( X_1\times X_2 )}^p \, \overline{w}\Big({\ell(Q_1)\over \ell(\widehat{Q}_1)}\Big)\, \mu(R)^{1-\frac{p}{q}} \, \overline{w}\Big({\ell(Q_1)\over \ell(\widehat{Q}_1)}\Big).
\end{eqnarray*}
Applying H\"older's inequality with exponent $s=q/p>1$, and then using property (iii-b), with $\delta=q/(2p)>0$,  from the atoms and the Journ\'e-type covering lemma implies
\begin{eqnarray*}
B_{11} &\lesssim& 
2^{(\ell_1\omega_1+\ell_2\omega_2)(1-\frac{p}{q})}\bigg( \sum_{R\in m(\Omega)} \|a_R\|_{L^q( X_1\times X_2 )}^q\ \widetilde{\widetilde{w}}\Big({\ell(Q_1)\over \ell(\widehat{Q}_1)}\Big)\bigg)^{\frac{p}{q}}\bigg( \sum_{R\in m(\Omega)} \mu(R) \, \widetilde{w}\Big({\ell(Q_1)\over \ell(\widehat{Q}_1)}\Big) \bigg)^{1-\frac{p}{q}}\\
&\lesssim&  
2^{(\ell_1\omega_1+\ell_2\omega_2)(1-\frac{p}{q})}  \big ( (1+\ell_1\omega_1+\ell_2\omega_2)2^{\ell_1\omega_1+\ell_2\omega_2} \mu(\widetilde{\Omega})\big )^{\frac{p}{q}-1}     
\mu(\Omega)^{1-\frac{p}{q}} \; \lesssim \;  1.
\end{eqnarray*}
The last inequality because $\frac{p}{q}-1 <0$ so $(1+\ell_1\omega_1+\ell_2\omega_2)^{\frac{p}{q} -1}<1$.
The constants involved in the similarities depend only on $p$ and $q$, the geometric constants of the spaces directly (quasi-triangle constants $A_0^{(i)}$, doubling constants, and upper dimensions $\omega_i$ for $i=1,2$) or indirectly via   the absolute constants $C_i$ appearing in the definition of the $(p,q)$-atoms, or the constants appearing in the Journ\'e Lemma, or the dilation constants of the underlying dyadic grids or their ratios, themselves depending only on the geometric constants.

\subsubsection*{Estimate for $B_{12}$}
Using the cancellation condition of the atoms $a_{R}$, we write $B_{12}$ as
\begin{eqnarray*}
B_{12}&=&\sum_{R\in m(\Omega)}\int_{x_1\not\in100\overline{C}_1\widehat{Q}_1}\int_{x_2\not\in 100\overline{C}_2Q_2}\bigg| \sum_{k_1=-\infty}^{\widehat{k}_1}\sum_{k_2=-\infty}^{\widehat{k}_2}\Big|\int_{ X_1\times X_2 }[\psi_{\alpha_1}^{k_1}(y_1)-\psi_{\alpha_1}^{k_1}(z_1)]\\
&&\hskip -.3in\times [\psi_{\alpha_1}^{k_1}(y_2)-\psi_{\alpha_1}^{k_1}(z_2)]
\, a_{R}(y_1,y_2) \, d\mu_1(y_1) \,  d\mu_2(y_2) {\chi_{Q_{\alpha_1}^{k_1}}(x_1)\over \mu_1(Q_{\alpha_1}^{k_1})}  {\chi_{Q_{\alpha_2}^{k_2}}(x_2)\over \mu_2(Q_{\alpha_2}^{k_2})} \Big|^q \bigg|^{\frac{p}{q}}  d\mu_1(x_1) \, d\mu_2(x_2).
\end{eqnarray*}
Here the constants $\widehat{k}_1$ and $\widehat{k}_2$ satisfy $\delta_1^{\widehat{k}_1}\approx \ell(\widehat{Q}_1)$ and $\delta_2^{\widehat{k}_2}\approx \ell(Q_2)$, respectively. Applying the smoothness properties of $\psi_{\alpha_1}^{k_1}(x_1,y_1)$ and $\psi_{\alpha_1}^{k_1}(x_2,y_2)$ yields that $B_{12}$ satisfies the same estimate as $B_{11}$ does as in \eqref{eqn:B11}. 
This concludes the proof of Theorem \ref{theorem-Hp atom decomp}.
\end{proof}



\bigskip

\noindent Department of Mathematics, Auburn University, AL
36849-5310, USA.

\noindent {\it E-mail address}: \texttt{hanyong@auburn.edu}

\medskip

\noindent Department of Mathematics, Macquarie University, NSW
2019, Australia.

\noindent {\it E-mail address}: \texttt{ji.li@mq.edu.au}

\medskip
\noindent Department of Mathematics and Statistics,
         University of New Mexico,
         Albuquerque, NM 87131, USA

\noindent {\it E-mail address}: \texttt{crisp@math.unm.edu}

\medskip

\noindent School of Information Technology and Mathematical
Sciences, University of South Australia, Mawson Lakes SA 5095,
Australia.

\noindent {\it E-mail address}:
\texttt{lesley.ward@unisa.edu.au}

\end{document}